\newcolumntype{C}{>{$}c<{$}}
\newcolumntype{L}{>{$}l<{$}}
\title[Fano index $\le 66$]{A canonical Fano threefold has Fano index $\le 66$}
\date{\today}
\subjclass[2020]{Primary 14J45; Secondary 14J30, 14J10, 14E30, 14C20, 14C40}
\keywords{Fano index, Fano threefold, Riemann--Roch formula, canonical singularity}
\author{Chen Jiang}
\address{\rm Shanghai Center for Mathematical Sciences \& School of Mathematical Sciences, Fudan University, Shanghai 200438, China}
\email{chenjiang@fudan.edu.cn}
\author{Haidong Liu}
\address{\rm School of Mathematics, Sun Yat-sen University, Guangzhou 510275, China}
\email{liuhd35@mail.sysu.edu.cn, jiuguiaqi@gmail.com}
\DeclareMathOperator{\Supp}{Supp}
\DeclareMathOperator{\rank}{rank}
\DeclareMathOperator{\mult}{mult}
\DeclareMathOperator{\reg}{reg}
\DeclareMathOperator{\Sing}{Sing}
\DeclareMathOperator{\Cl}{Cl}
\DeclareMathOperator{\diag}{diag}
\DeclareMathOperator{\lcm}{lcm}
\DeclareMathOperator{\LB}{LB}
\newcommand{\qQ}{\text{\rm q}_{\mathbb{Q}}}
\DeclareMathOperator{\red}{red}
\DeclareMathOperator{\cd}{cd}
\newtheorem{thm}{Theorem}[section]
\newtheorem{lem}[thm]{Lemma}
\newtheorem{prop}[thm]{Proposition}
\newtheorem{cor}[thm]{Corollary}
\theoremstyle{definition}
\newtheorem{ex}[thm]{Example}
\newtheorem{set}[thm]{Setting}
\newtheorem{defn}[thm]{Definition}
\newtheorem{claim}[thm]{Claim}
\newtheorem{algo}[thm]{Algorithm}
\theoremstyle{remark}
\newtheorem{rem}[thm]{Remark}
\numberwithin{equation}{section}
\begin{document}

\begin{abstract}
    We show that the $\mathbb{Q}$-Fano index of a canonical weak Fano $3$-fold is at most $66$. This upper bound is optimal and gives an affirmative answer to a conjecture of Chengxi Wang in dimension $3$. During the proof, we establish a new Riemmann--Roch formula for canonical $3$-folds and provide a detailed study of non-isolated singularities on canonical Fano $3$-folds, concerning both their local and global properties. Our proof also involves a Kawamata--Miyaoka type inequality and geometry of foliations of rank $2$ on canonical Fano $3$-folds. 
\end{abstract}

\dedicatory{Dedicated to Professor Meng Chen on the occasion of his 60th birthday}

\maketitle 
\tableofcontents

\section{Introduction}\label{sec1}

A normal projective variety is called \emph{Fano} (resp., \emph{weak Fano}) if its anti-canonical divisor is ample (resp., nef and big). According to the minimal model program,  Fano varieties with mild singularities serve as fundamental building blocks of algebraic varieties.

For a weak Fano variety $X$ with mild singularities (e.g., terminal or canonical singularities), its $\mathbb Q$-Fano index $\qQ(X)$ is defined to be 
\begin{align*} 
    \qQ(X) {}&\coloneq \max\{q \mid -K_X \sim_{\mathbb Q}qA, \, A\in \Cl (X) \}. 
\end{align*}
The $\mathbb{Q}$-Fano index of a terminal Fano $3$-fold is quite well-understood, which belongs to the set $\{1,\dots,9, 11, 13,17, 19\}$ (see \cites{suzuki, prokhorov2010}) and it plays a crucial role in the classification of terminal Fano $3$-folds \cites{bs1,bs2,prokhorov2007, prokhorov2013,kasprzyk, liu-liu, liu-liu2024}. However, for canonical Fano $3$-folds, progress has been limited due to the absence of Riemann--Roch formula and the classification of canonical singularities,
see \S\,\ref{sec 1.1} for a detailed explanation. 
Even a good upper bound for $\mathbb{Q}$-Fano indices remains unknown. In literature, for a canonical Fano $3$-fold $X$, we only have a very rough upper bound $\qQ(X)\leq 840^2\cdot 324\approx 2.3\cdot 10^8$ (cf. \cite{Birkar4fold}*{Lemma~2.3}): we know that $rK_X$ is Cartier for some positive integer $r\leq 840$ by \cite{CJ2016}*{Proposition~2.4} and $(-K_X)^3\leq 324$ by \cite{JZ}; if $-K_X \sim_{\mathbb{Q}} qA$ for some ample Weil divisor $A$, then 
\[
    q\leq qA\cdot (-rK_X)^2= r^2(-K_X)^3\leq 840^2\cdot 324.
\]

In this paper, we give the optimal upper bound for the $\mathbb{Q}$-Fano index of a canonical weak Fano $3$-fold, which gives an affirmative answer to a conjecture of Wang {\cite{wang}*{Conjecture~3.7}} in dimension $3$.

\begin{thm}\label{main.thm}
    Let $X$ be a canonical weak Fano $3$-fold. Then $\qQ(X)\leq 66$.
\end{thm}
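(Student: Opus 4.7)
My plan is to suppose for contradiction that $q:=\qQ(X)\ge 67$ and derive incompatible numerical constraints. As a first reduction, since $-K_X$ is nef and big and $X$ has canonical singularities, the anti-canonical ring is finitely generated; replacing $X$ by $\Proj R(X,-K_X)$ I may assume $X$ is a canonical Fano $3$-fold with the same $\mathbb{Q}$-Fano index. Write $-K_X\sim_{\mathbb{Q}} qA$ with $A$ a primitive ample $\mathbb{Q}$-Cartier Weil divisor (recall $K_X$ is automatically $\mathbb{Q}$-Cartier since $X$ is canonical); then $(-K_X)^3\le 324$ from \cite{JZ} forces
\[
A^3 \;=\; \frac{(-K_X)^3}{q^3} \;<\; 1.1\cdot 10^{-3},
\]
so the local Gorenstein indices along the singular locus are driven to be very large.

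The arithmetic engine is a Riemann--Roch analysis for $\chi(X,\mathcal{O}_X(mA))$. Using a Riemann--Roch formula adapted to canonical (possibly non-isolated) singularities, this Euler characteristic is a cubic in $m$ whose coefficients involve $A^3$, $A\cdot c_2(X)$, and explicit local contributions from the singular strata. Because $A$ is $\mathbb{Q}$-Cartier and ample and canonical singularities are klt, Kawamata--Viehweg vanishing gives $H^i(X,\mathcal{O}_X(mA))=0$ for $i>0$ and $m\ge 0$, hence $\chi(mA)=h^0(mA)\in\mathbb{Z}_{\ge 0}$ for all such $m$, and in particular $h^0(-K_X)\ge 1$. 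Scanning $m=0,1,\dots,q$ imposes a rigid system of integrality and positivity conditions on the basket data; together with $(-K_X)^3\le 324$ and the small upper bound on $A^3$, this should restrict the admissible tuples $(q,A^3,\text{basket})$ to a finite checklist covered by $q\le 66$ up to a few borderline possibilities.

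To dispatch the remaining cases, I would invoke a Kawamata--Miyaoka type inequality for canonical Fano $3$-folds, bounding $A\cdot c_1(X)^2=q^2A^3$ against $A\cdot c_2(X)$ with the appropriate orbifold corrections. Borderline equality forces a destabilizing rank-$2$ subsheaf $\mathcal{F}\subset T_X$, i.e.\ a foliation of rank $2$; analyzing its singularities and the induced quotient $T_X/\mathcal{F}$ should produce either a low-degree pencil among sections of small multiples of $A$ or a Mori-fiber structure incompatible with $q\ge 67$. The hardest step, I expect, is the Riemann--Roch formula itself: Reid's basket formalism covers only isolated terminal cyclic quotient singularities, whereas a canonical Fano $3$-fold may carry one-dimensional strata of $cA_n$ or $cD_n$ singularities whose correct local analytic invariants and their assembly into a global formula (compatible with equivariant Riemann--Roch on a terminalization) must be built essentially from scratch. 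A close second is the foliation-theoretic step, which is needed precisely at the extremal baskets where $q$ is closest to $66$.
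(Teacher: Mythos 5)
Your outline correctly identifies several of the ingredients that the actual proof uses (a Riemann--Roch formula for canonical $3$-folds with non-isolated singularities, integrality of $h^0(X,\mathcal{O}_X(mA))$, a Kawamata--Miyaoka type inequality, and rank-$2$ foliations in the borderline cases), but as written it is a research program rather than a proof, and two of its steps are not merely unfinished but would fail as stated. First, the reduction: passing to $\Proj R(X,-K_X)$ makes $-K_X$ ample but does not give Picard number $1$, and more importantly it misses the key structural condition the argument needs. The paper reduces to an \emph{unmodifiable} pair $(X,A)$ (maximizing $((-K_X)^3,-\cd(X))$ and running MMP whenever some $\mult_E f^*A$ is integral); this is what forces every crepant curve to be of type $\mathsf{A}$ with $A$ primitive along it (Corollary~\ref{cor A primitive}), and without that the local contributions $c_C(sA)$ in any Riemann--Roch formula are uncontrollable (type $\mathsf{D}$ and $\mathsf{E}$ curves, non-primitive restrictions). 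Your heuristic that $A^3<1.1\cdot10^{-3}$ ``drives the Gorenstein indices to be large'' does not lead anywhere by itself; the effective constraints actually come from $q^2\mid J_Ar_X(-K_X)^3$ together with $\sum(r-1/r)<24$.

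Second, and more seriously, even granting a Riemann--Roch formula of the shape you describe, you have no way to evaluate it: the formula involves the degrees $(-K_X\cdot C)$ of the crepant curves, and the only a priori bound is $(-K_X\cdot C)\geq 1/r_X$, which is far too weak for the integrality conditions to pin anything down. A substantial part of the paper (all of \S\,\ref{sec.LB}) is devoted to proving divisibility statements for $(-r_XK_X\cdot C)$ via Reid's formula applied to exceptional divisors over $C$; without this input the ``finite checklist'' you assert does not materialize. Finally, the claim that scanning $m=0,\dots,q$ leaves only ``a few borderline possibilities'' is optimistic: the search produces $36$ numerical types with $q>66$, and $12$ of them have exactly the same Hilbert function as $\mathbb{P}(5,6,22,33)$ for all $0<s<66$, so no amount of integrality or positivity of $h^0$ can kill them. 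Those require the full foliation argument (a general leaf $L$ satisfies $L\sim sA$ with $s\geq 60$, while $(-K_{\mathcal F})^2\cdot L\leq 8$ because the leaf dominates $\mathbb{F}_1$ or $\mathbb{F}_2$), which your sketch gestures at but does not supply.
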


Kasprzyk classified terminal or canonical toric Fano $3$-folds in \cites{kasprzyk, kasprzyk2}. The following are some motivating examples with large $\mathbb{Q}$-Fano indices. In particular, Theorem~\ref{main.thm} is optimal. See Remark~\ref{rem q=66} for some discussion of the case of equality. 

\begin{ex}[{\cite{kasprzyk2}*{Table~3}}]\label{ex 123}
    \begin{enumerate}
        \item The weighted projective space $\mathbb P(5,6,22,33)$ is a $\mathbb Q$-factorial canonical Fano $3$-fold of Picard number $1$ with $\mathbb Q$-Fano index $66$. It has a terminal singularity of type $\frac{1}{5}(1,2,3)$ and $3$ non-isolated canonical singularities which are lines of orbifold points of type $\frac{1}{2}(1,1)$, $\frac{1}{3}(1,2)$, and $\frac{1}{11}(5,6)$. 
        
        \item The weighted projective space $\mathbb P(3,5,11,19)$ is a $\mathbb Q$-factorial Fano $3$-fold of Picard number $1$ with isolated canonical singularities. Its $\mathbb Q$-Fano index is $38$. 
        
        \item The weighted projective space $\mathbb P(5,8,9,11)$ is a $\mathbb Q$-factorial Fano $3$-fold of Picard number $1$ with isolated canonical singularities which is non-Gorenstein at a crepant point. Its $\mathbb Q$-Fano index is $33$. 
    \end{enumerate}
\end{ex}

For a direct application, Theorem~\ref{main.thm} gives an explicit boundedness property for a subgroup of K3 type of the automorphism group of 
a rationally connected $3$-fold (see \cite{Loginov}*{Proposition~1.9}).

\subsection{Difficulties and strategy}\label{sec 1.1}

We briefly explain the main difficulty and the strategy of the proof of Theorem~\ref{main.thm}. After a reduction step (see Proposition~\ref{prop.reducetopicard1}), we may assume that $X$ is a $\mathbb Q$-factorial canonical Fano $3$-fold of Picard number $1$ with torsion-free Weil class group $\Cl(X)$ and $-K_X\sim qA$ where $A$ is the ample generator of $\Cl(X)$. The goal is to show that $q\leq 66$.

The main difficulty comes from the singularities. A common way to deal with a $3$-fold $X$ with canonical singularities is to pass to a partial resolution $f\colon Y\to X$ where $Y$ has only terminal singularities and $f^*K_X=K_Y$, then deal with $Y$. The advantage of working on $Y$ is that there is a complete classification of $3$-dimensional terminal singularities and a powerful tool: Reid's Riemann--Roch formula for Weil divisors on terminal $3$-folds. But for the proof of Theorem~\ref{main.thm}, taking such a partial resolution is not very helpful, as $f^*A$ is no longer a Weil divisor and most of the information of $A$ is lost on $Y$. So we usually have to deal with $X$ directly. This is more difficult: firstly, $3$-dimensional canonical singularities are much more complicated than $3$-dimensional terminal singularities, we do not have a classification of such singularities and they can be non-isolated; secondly, Reid's Riemann--Roch formula does not work directly for Weil divisors on canonical $3$-folds, it only works for Weil divisors satisfying Reid's condition, namely, those divisors locally proportional to $K_X$.

Our first ingredient is to establish a new Riemann--Roch formula based on Reid's Riemann--Roch formula. Our idea is to associate a Weil divisor $D$ on $X$ to a Weil divisor $f^{\lfloor *\rfloor}(D)$ on $Y$ which is called the {\it Weil pullback} of $D$ (see Definition~\ref{def weilpullback}) which preserves cohomologies, and then apply Reid's Riemann--Roch formula to $f^{\lfloor *\rfloor}(D)$ on $Y$.
As a consequence, we show that (see Theorem~\ref{thm.rr.sA}) for any integer $0<s<q$,
\begin{align}
    \begin{split}\label{eq intro 1}
        h^0(X,\mathcal O_X(sA))={}& 
        -\frac{s^2}{2}A^2\cdot K_X+2+\sum_{C\subset \Sing(X)}(-K_X\cdot C)c_C(sA)\\
        {}&-\sum_{Q\in B_X}\frac{(\overline{\mathsf{i}_{f^{\lfloor *\rfloor}(sA), Q}b_Q})_{r_Q}(\overline{-\mathsf{i}_{f^{\lfloor *\rfloor}(sA), Q}b_Q})_{r_Q}}{2r_Q},
    \end{split}
\end{align} 
where the first sum comes from the contribution of non-isolated singularities of $X$ and the second sum comes from the contribution of isolated singularities and {\it dissident} points (see \cite[(0.12)(g)]{reidmm}) of $X$. As long as we know everything about the singularities of $X$, we can use \eqref{eq intro 1} to calculate $h^0(X,\mathcal O_X(sA))$ for $0<s<q$. But often we use \eqref{eq intro 1} in another way: as $h^0(X,\mathcal O_X(sA))$ is always an integer, \eqref{eq intro 1} gives strong restrictions (which we call integrality constraints) on the singularities of $X$, which can be used to determine the contributions of singularities of $X$ to the formula although we do not have a full classification of those singularities. 

The second ingredient is a Kawamata--Miyaoka type inequality established in our earlier joint work with Jie Liu \cite{jiang-liu-liu} (see \S\,\ref{sub.KMineq}) which gives an inequality (see Theorem~\ref{thm Delta>jC})
\begin{align}\label{eq intro 2}
    r_Xc_2(X)\cdot c_1(X)- \frac{q^2+2q-4}{4q^2}r_Xc_1(X)^3 \geq \sum_{C\subset\Sing(X)}\left(j_C-\frac{1}{j_C}\right)(-r_XK_X\cdot C),  
\end{align}
where $r_X$ is the Gorenstein index of $X$, $C$ runs over curves in $\Sing(X)$, and $j_C$ is a local invariant of $C$ (see Definition~\ref{def ecgc}). 
This formula gives another restriction on non-isolated singularities of $X$, as well as a restriction for other numerical data of $X$ such as $r_X$, $q$, $c_1(X)^3$, and $c_2(X)\cdot c_1(X)$. Indeed, only this inequality itself already gives an upper bound for $q$, but not as good as we expect.

In the application of \eqref{eq intro 1} and \eqref{eq intro 2}, a natural technical difficulty is to determine the value or the lower bound of the intersection number $(-K_X\cdot C)$ where $C$ is a curve in $\Sing(X)$. This actually depends on the global geometry of $X$ and $C$ rather than local singularities. Priorly, we only have a naive lower bound $(-K_X\cdot C)\geq \frac{1}{r_X}$ which is not very helpful; actually, we observed in many examples that often $(-K_X\cdot C)\geq 1$ holds (see Example~\ref{ex compute -rKC}), which suggests that the lower bound could be greatly improved under good conditions. 
So we establish a method to give a better lower bound for $(-K_X\cdot C)$ in \S\,\ref{sec.LB} with an algorithm depending on the type of $C$ and singularities of $Y$. The idea is to relate the intersection number $(-K_X\cdot C)$ with Reid's Riemann--Roch formula for exceptional divisors centered at $C$ on $Y$, then we can get some integrality constraints which eventually show that in many cases the denominator of $(-K_X\cdot C)$ is small, sometimes it is just $1$.

Putting all the above ideas together, we are able to give an algorithm to 
find possible numerical data of $X$ with $q>66$. 
It turns out that any $X$ with $q>66$ belongs to a list of $36$ numerical types (see Theorem~\ref{thm.36cases}) with the help of a computer program (which took about 15--30 minutes to run on a laptop). As a comparison, the number of canonical Fano $3$-folds is very huge, for example, there are 674,688 toric canonical Fano $3$-folds \cite{kasprzyk2}. 

Then the final mission is to rule out those $36$ numerical types. In all cases, we have a good understanding of curves $C\subset \Sing(X)$ and 
the intersection numbers $(-K_X\cdot C)$, so we can calculate $h^0(X,\mathcal O_X(sA))$ up to finitely many choices of the second sum. Then we can rule out $24$ numerical types in the list as $h^0(X,\mathcal O_X(sA))$ fails to be an integer for some $s$ (see \S\,\ref{sec 7}). In order to deal with integrality constraints, often we need to treat  congruence equations on square residues.

For the remaining $12$ numerical types, it turns out that $h^0(X,\mathcal O_X(sA))$ can be computed explicitly. An interesting phenomenon is that in all these $12$ cases,
\[
h^0(X, \mathcal O_X(sA))=h^0(\mathbb{P}(5,6,22,33), \mathcal{O}_{\mathbb{P}(5,6,22,33)}(s))
\]
for $0<s<66$. So there is no contradiction from $h^0(X,\mathcal O_X(sA))$. 
In order to rule out the remaining cases, we shall study the geometric structure of $X$ in more detail. In the end, we find a contradiction on the algebraically integrable foliation $\mathcal{F}$ of rank $2$ induced by a refinement of \eqref{eq intro 2}: it turns out that $(-K_\mathcal{F})^2\cdot L\leq 8$ where $L$ is a general leaf of $\mathcal{F}$; on the other hand, by the numerical behavior of $h^0(X,\mathcal O_X(sA))$ and the geometry of the foliation $\mathcal{F}$, we show that $L\sim sA$ for some $s\geq 60$, which implies that $(-K_\mathcal{F})^2\cdot L\geq (-K_\mathcal{F})^2\cdot 60A>8$ and gives a contradiction (see \S\,\ref{sec group C1}--\S\,\ref{sec group C2}). 

\section{Preliminaries}

Throughout this paper, we work over the complex number field $\mathbb C$.
We will freely use the basic notation in \cites{KMM, kollar-mori}.
 
\subsection{Singularities}

Let $X$ be a normal variety such that $K_X$ is $\mathbb Q$-Cartier. The \emph{Gorenstein index} $r_X$ of $X$ is defined as the smallest positive integer $m$ such that $mK_X$ is Cartier. 
A prime divisor $E$ \emph{over} $X$ is a prime divisor on some model $Y$ where  $f\colon Y\to X$ is a proper birational morphism. The image $f(E)$ is called the \emph{center} of $E$ on $X$, and $E$ is said to be \emph{centered at} $f(E)$. 
Write
\[
    K_Y=f^*K_X+\sum_Ea(E,X)E,
\]
where $a(E,X)\in \mathbb Q$ is called the \emph{discrepancy} of $E$. We say that $X$ has \emph{terminal} (resp., \emph{canonical}) singularities if $a(E,X)>0$ (resp., $a(E,X)\geq 0$) for any exceptional divisor $E$ over $X$. Often we just simply say that $X$ is {\it terminal} or {\it canonical}, respectively. 

A \emph{crepant center} of $X$ is the center of an exceptional divisor $E$ over $X$ with $a(E,X)=0$. 
A \emph{crepant curve} (resp., \emph{crepant point})  is a crepant center of dimension $1$ (resp., $0$). 
Denote by $\Sing(X)$ the singular locus of $X$ and by $\Sing_1(X)$ the $1$-dimensional part of $\Sing(X)$. 
Here recall that if $X$ is a canonical $3$-fold, then $\Sing_1(X)$ consists of crepant curves as $3$-dimensional terminal singularities are isolated (\cite{kollar-mori}*{Corollary~5.18}). 

If $X$ is a canonical variety, then by the minimal model program, there exists a \emph{terminalization} $f\colon Y\to X$ such that $Y$ is a $\mathbb{Q}$-factorial terminal variety with $f^*K_X=K_Y$ and $f$ is projective birational (see \cite{BCHM}*{Corollary~1.4.3}).
As $f^*K_X=K_Y$, for a prime exceptional divisor $E$ on $Y$, $f(E)$ is a crepant center; also $X$ is Gorenstein in a neighborhood of $f(E)$ if and only if $Y$ is Gorenstein in a neighborhood of $E$.

\subsection{Fano indices}\label{subsec.fi}

Let $X$ be a canonical weak Fano variety. The \emph{$\mathbb Q$-Fano index} of $X$ is defined by 
\begin{align*} 
    \qQ(X) {}&\coloneq \max\{q \mid -K_X \sim_{\mathbb Q}qA, \, A\in \Cl (X) \}. 
\end{align*}
It is known that $\Cl (X)$ is a finitely generated Abelian group, so $\qQ(X)$ is a positive integer. For more details, see \cite{ip}*{\S\,2} or \cite{prokhorov2010}.

Recall the following theorem which relates Fano index with anti-canonical degree and other invariants. 

\begin{thm}[\cite{jiang-liu-liu}*{Theorem~4.2}]\label{thm.degreeandindex}
    Let $X$ be a canonical Fano $3$-fold. Let $A$ be an ample Weil divisor on $X$ such that $-K_X\sim qA$ for some positive integer $q$. Let $J_A$ be the smallest positive integer such that $J_AA$ is Cartier in codimension $2$. Then $J_Ar_X(-K_X)^3$ is an integer and the following divisibility constraints hold: \[J_A\mid q \text{ and } q^2\mid J_Ar_X(-K_X)^3.\] 
\end{thm}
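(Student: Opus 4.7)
The plan is to prove the two divisibility claims separately; integrality of $J_A r_X (-K_X)^3$ then follows automatically from $q^2\mid J_A r_X(-K_X)^3$, since any rational number divisible by $q^2\in\mathbb{Z}$ is itself an integer.

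First I would establish $J_A\mid q$ by a local analysis at each curve $C\subset\Sing(X)$. Because $X$ is a canonical $3$-fold and $3$-dimensional terminal singularities are isolated, every $1$-dimensional component $C$ of $\Sing(X)$ is a crepant curve, and the generic transverse singularity of $X$ along $C$ is a $2$-dimensional canonical singularity, i.e., a Du Val (rational double point) singularity, which is Gorenstein. Hence $K_X$ is Cartier in a neighborhood of the generic point $\eta_C$ of $C$, so the class of $qA$ is trivial in the local class group $\Cl(\mathcal{O}_{X,\eta_C})$, forcing the local index $j_{C,A}$ (the order of $[A]$ there) to divide $q$. For curves $C\not\subset\Sing(X)$, $X$ is smooth at $\eta_C$ and $j_{C,A}=1$. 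Since $J_A$ equals the least common multiple of the $j_{C,A}$ over all curves $C\subset X$, we conclude $J_A\mid q$.

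Next, for $q^2\mid J_A r_X(-K_X)^3$, I would compute the triple intersection number
\[
    N \coloneqq (J_A A)\cdot A\cdot (r_X K_X)
\]
in two different ways. For integrality: $r_X K_X$ is Cartier on $X$, so its restriction to a prime Weil divisor representing $A$ is a Cartier divisor, yielding a $1$-cycle $Z\coloneqq A\cdot(r_X K_X)$ on $X$. The generic point of each irreducible component of $Z$ is a codimension-$2$ point of $X$, where $J_A A$ is Cartier by the very definition of $J_A$. Thus $N=(J_A A)\cdot Z$ is a sum of degrees of line bundles on curves, hence an integer. For evaluation: using $-K_X\sim qA$ we get $q^2 A^2\cdot K_X=(qA)^2\cdot K_X=K_X^3=-(-K_X)^3$, so
\[
    N = J_A r_X \cdot A^2\cdot K_X = -\frac{J_A r_X(-K_X)^3}{q^2}.
\]
Comparing the two expressions yields $q^2\mid J_A r_X(-K_X)^3$.

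The main point requiring care is the intersection-theoretic justification that $N\in\mathbb{Z}$: it rests on the general principle that a Weil divisor on a normal projective $3$-fold which is Cartier in codimension $2$ can be intersected with any $1$-cycle to produce an integer, by restricting to the components and taking degrees of the induced line bundles at their generic points. The remainder of the argument is a straightforward consequence of the linear equivalence $-K_X\sim qA$.
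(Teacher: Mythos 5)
The paper itself does not prove this statement --- it is imported from \cite{jiang-liu-liu}*{Theorem~4.2} --- so your argument stands on its own. Your proof of $J_A\mid q$ is correct: a canonical $3$-fold is Gorenstein at every codimension-$2$ point (the transverse singularity along a crepant curve is Du Val, hence a hypersurface singularity), so $qA\sim -K_X$ is Cartier at each such point; since the set of integers $m$ with $mA$ Cartier in codimension $2$ is a subgroup of $\mathbb{Z}$ whose smallest positive element is $J_A$, we get $J_A\mid q$. The reduction of both remaining claims to the integrality of $N=(J_AA)\cdot A\cdot(r_XK_X)$, and the evaluation $N=-J_Ar_X(-K_X)^3/q^2$, are also correct.

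The gap is in your justification that $N\in\mathbb{Z}$. The ``general principle'' you invoke --- that a Weil divisor which is Cartier in codimension $2$ has integral intersection with \emph{any} $1$-cycle, checked at generic points --- is false: the degree of $\mathcal{O}_X(J_AA)$ on a curve is not determined by the behaviour of $J_AA$ at the generic point of that curve. For example, on the cone over the Veronese surface (an isolated $\frac{1}{2}(1,1,1)$ point), a ruling through the vertex meets the cone over a line, a Weil divisor that is Cartier in codimension $2$, with multiplicity $\frac{1}{2}$. In your situation $J_AA$ may fail to be Cartier on a finite set $T$ of points, and a component of $Z$ could pass through $T$. The repair is standard but must be made explicit: since $r_XK_X$ is Cartier, on each irreducible surface $A_i$ occurring in $A$ one can represent $c_1(\mathcal{O}_X(r_XK_X))|_{A_i}\cap[A_i]$ by a difference of two \emph{general} very ample divisors on $A_i$, each avoiding the finite set $T\cap A_i$; the resulting integral $1$-cycle $Z$ is supported where $J_AA$ is Cartier, so $(J_AA)\cdot Z\in\mathbb{Z}$, and because $J_AA$ is $\mathbb{Q}$-Cartier (as $qr_XA$ is Cartier) this number is independent of the chosen representative of the class $A\cdot(r_XK_X)$. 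Note also that $A$ need not be effective or prime --- for the candidates in the paper's Group~C one even has $h^0(X,\mathcal{O}_X(A))=0$ --- so ``a prime Weil divisor representing $A$'' may not exist; one must work by linearity with the cycle $\sum_i a_iA_i$ as above. With these repairs your argument is complete.
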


\subsection{Crepant curves on canonical $3$-folds}

We recall basic properties of crepant curves on canonical $3$-folds; generically they behave the same as Du Val singularities which are well-understood.

\begin{defn}\label{def ecgc}
    Let $X$ be a canonical $3$-fold and let $C\subset \Sing(X)$ be a crepant curve. Then at a general point of $C$, $X$ is analytically isomorphic to $\mathbb{A}^1\times S_C$ where $S_C$ is a Du Val singularity (see \cite{reidc3f}*{Corollary~1.14}). 

    We say that $C\subset \Sing(X)$ is {\it of type $\mathsf{T}$} if $S_C$ is a Du Val singularity of type $\mathsf{T}$, or equivalently, for a general hyperplane $H$ on $X$, $H$ has Du Val singularities of type $\mathsf{T}$ in a neighborhood of any point of $H\cap C$. Here $\mathsf{T}\in \{\mathsf{A}_n, \mathsf{D}_m, \mathsf{E}_k\mid n\geq 1, m\geq 4, k=6,7,8\}.$ 

    We associate the following invariants to $C$:
    \begin{enumerate}
        \item $e_C$ is defined to be $1$ plus the number of exceptional curves on the minimal resolution of $S_C$; 
    
        \item $e'_C$ is defined to be $1$ plus the length of the longest chain of exceptional curves on the minimal resolution of $S_C$;
    
        \item $g_C$ is defined to be the order of the local fundamental group of $S_C$;
        
        \item  $j_C$ is defined to be the order of the Weil divisor class group of $S_C$ (see \cite{Kawakita2024}*{Remark~4.2.9}). 
    \end{enumerate} 
    Namely, 
    \begin{align*}
        (e_C, e'_C, g_C, j_C)=
        \begin{cases} (n+1, n+1, n+1, n+1) & \text{ if } C\subset \Sing(X) \text{ is of type } \mathsf{A}_n;\\
        (m+1, m, 4m-8, 4) & \text{ if } C\subset \Sing(X) \text{ is of type } \mathsf{D}_m;\\
        (7, 6, 24, 3) & \text{ if } C\subset \Sing(X) \text{ is of type } \mathsf{E}_6;\\
        (8,7, 48, 2) & \text{ if } C\subset \Sing(X) \text{ is of type } \mathsf{E}_7;\\
        (9, 8, 120,1) & \text{ if } C\subset \Sing(X) \text{ is of type } \mathsf{E}_8.
        \end{cases} 
    \end{align*}
\end{defn}

\begin{defn}[Non-split crepant divisor]\label{def nonsplit}
    Keep the notation in Definition~\ref{def ecgc}. Let $f\colon Y\to X$ be a terminalization and let $S'_C$ be the minimal resolution of $S_C$.
    Then over a general point of $C$, the morphism $f\colon Y\to X$ is analytically isomorphic to $\mathbb{A}^1\times S'_C\to \mathbb{A}^1\times S_C$. 
    For a prime $f$-exceptional divisor $E$ on $Y$ centered at $C$, a general fiber of $E\to C$ is reduced but not necessarily irreducible, and it corresponds to a union of exceptional curves of $S'_C\to S_C$. One should be careful that $E$ is irreducible but $E$ might be analytically reducible over a general point of $C$.

    We say that $C$ is {\it with non-split crepant divisors} if for any prime $f$-exceptional divisor $E$ centered at $C$, a general fiber of $E\to C$ is irreducible. In this case, a prime $f$-exceptional divisor $E$ centered at $C$ corresponds to an exceptional curve (i.e., a $(-2)$-curve) of $S'_C$ over $S_C$, and vice versa. We denote this correspondence by $E\longleftrightarrow \ell_E$. 
\end{defn}

The following lemma gives the relation between the intersection of a crepant curve with a divisor and the intersection of exceptional divisors centered at this curve with the pullback of the divisor. 

\begin{lem}\label{lem EEK=EECK} 
    Let $X$ be a projective canonical $3$-fold and let $C\subset \Sing(X)$ be a crepant curve with non-split crepant divisors. Let $f\colon Y\to X$ be a terminalization. Keep the notation in Definition~\ref{def ecgc} and Definition~\ref{def nonsplit}. Then for any prime $f$-exceptional divisors $E, E'$ centered at $C$ and any $\mathbb{Q}$-Cartier $\mathbb{Q}$-divisor $H$ on $X$, 
    \[
        (E\cdot E'\cdot f^*H)=(\ell_E\cdot  \ell_{E'})\cdot (H\cdot C).
    \]
    In particular, 
    \[
        (E\cdot E'\cdot K_Y)=(\ell_E\cdot  \ell_{E'})\cdot (K_X\cdot C).
    \]
\end{lem}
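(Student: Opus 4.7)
The plan is to reduce the identity to a computation on a general hyperplane section. By multilinearity, both sides are $\mathbb{Q}$-linear functionals on $\mathbb{Q}$-Cartier $\mathbb{Q}$-divisors, so it suffices to prove the equality when $H = H_0$ is a very general member of a sufficiently ample linear system on $X$. For such $H_0$ we may assume that $H_0$ meets $C$ transversally at $n = H_0\cdot C$ general points $p_1,\dots,p_n\in C$, and by Definition~\ref{def ecgc}, $H_0$ has Du Val singularities of type $\mathsf{T}$ at each $p_i$. Set $S = f^*H_0$. Since $Y$ has only isolated terminal singularities, for $H_0$ sufficiently general $S$ avoids the singular points of $Y$, and Bertini gives that $S$ is smooth and irreducible. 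Adjunction yields $K_S = f^*K_{H_0}$, so $S \to H_0$ is a crepant birational morphism from a smooth surface. Since the minimal resolution is the unique smooth crepant model over a Du Val singularity, $S \to H_0$ is the minimal resolution above each $p_i$; writing $\ell_E^{(i)} \subset S$ for the exceptional curve above $p_i$ corresponding to $\ell_E \subset S'_C$, the exceptional locus of $S \to H_0$ over $p_i$ is combinatorially identified with that of $S'_C \to S_C$.

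Next I would compute $E|_S$ using the local analytic structure. At each $p_i$, the identification $(X,C) \simeq (\mathbb{A}^1 \times S_C,\, \mathbb{A}^1 \times \{0\})$ and the product structure of the terminalization $(\mathbb{A}^1 \times S'_C) \to (\mathbb{A}^1 \times S_C)$ place every prime $f$-exceptional divisor $E$ centered at $C$ locally as $\mathbb{A}^1 \times \ell_E$, which is exactly the content of the correspondence $E \longleftrightarrow \ell_E$ in Definition~\ref{def ecgc}. Since $H_0$ locally has the form $\{t = t_0\}$ near $p_i$, the pullback $f^*H_0$ becomes $\{t = t_0\} \times S'_C$ and the scheme-theoretic restriction $E|_S$ equals $\ell_E^{(i)}$ with multiplicity one near $p_i$. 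Summing over $p_1,\dots,p_n$ gives $E|_S = \sum_{i=1}^n \ell_E^{(i)}$ in a neighborhood of the exceptional locus of $S \to H_0$, which is where $E \cap E'$ is supported. The projection formula then gives
\[
(E \cdot E' \cdot f^*H_0) = (E|_S \cdot E'|_S)_S = \sum_{i,j=1}^n (\ell_E^{(i)} \cdot \ell_{E'}^{(j)})_S = \sum_{i=1}^n (\ell_E \cdot \ell_{E'}) = (H_0 \cdot C)(\ell_E \cdot \ell_{E'}),
\]
since $\ell_E^{(i)}$ and $\ell_{E'}^{(j)}$ lie over distinct singular points of $H_0$ when $i \neq j$ and are therefore disjoint. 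The equality for an arbitrary $\mathbb{Q}$-Cartier $\mathbb{Q}$-divisor $H$ follows by linearity, and the ``in particular'' clause is obtained by taking $H = K_X$ together with $f^*K_X = K_Y$.

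The main technical obstacle lies in justifying the multiplicity-one equality $E|_S = \sum_i \ell_E^{(i)}$ rigorously. This rests on the analytic product structure of $X$ along a general point of $C$, together with the matching product structure of the terminalization $f\colon Y \to X$, so that the correspondence $E \longleftrightarrow \ell_E$ persists after restriction to the transverse slice cut out by $H_0$. Once these local identifications are in place, the remainder of the argument is a standard projection-formula calculation on the smooth surface $S$.
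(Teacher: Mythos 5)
Your proof is correct and follows essentially the same route as the paper's: reduce by linearity to a general very ample $H$, observe that $f^*H\to H$ is the minimal resolution of a Du Val surface, identify $E|_{f^*H}$ with the disjoint union of $(-2)$-curves $\ell_E^{(i)}$ over the points of $H\cap C$, and conclude by the projection formula. The extra detail you supply on the local product structure $\mathbb{A}^1\times S_C$ is exactly what the paper compresses into the phrase ``by construction.''
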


\begin{proof}
    The assertion is linear on $H$, so we may assume that $H$ is a general very ample divisor on $X$. Then $H$ is a projective surface with Du Val singularities by Bertini's theorem. Take $H'\coloneq f^*H=f^{-1}H$ on $Y$. Then $f|_{H'}\colon H'\to H$ is the minimal resolution of $H$. By construction, $E|_{H'}$ (resp., $E'|_{H'}$) is a disjoint union of $(-2)$-curves $C_i$ (resp., $C'_i$) ($1\leq i \leq  (H\cdot C)$) over the points in $H\cap C$. Here $(C_i\cdot C_i')=(\ell_{E}\cdot \ell_{E'})$ for each $i$ via the correspondence $E\longleftrightarrow \ell_E$, and $(C_i\cdot C'_j)=0$ for $i\neq j$. Then it follows that 
    \[
        (E\cdot E'\cdot f^*H)=(E|_{H'}\cdot E'|_{H'})=(\ell_E\cdot  \ell_{E'})\cdot (H\cdot C). \qedhere
    \] 
\end{proof}

\begin{rem}\label{rem kawakita}
    In this subsection, we mainly focus on the relation between the generic point of a crepant curve and a Du Val singularity. The classification of crepant points and special points on crepant curves is a much more complicated task, and we know very little about it. Here we recall an interesting and useful result due to Kawakita \cite{kawakita}*{Theorem~1.1} on crepant points: let $X$ be a canonical $3$-fold with a crepant point $P$, then $r_P\leq 6$ where $r_P$ is the Gorenstein index of $X$ at $P$; moreover, by \cite{kawakita}*{Table~2}, Reid's basket $B_X$ (see \S\,\ref{sec RR3}) contains the following orbifold points:
    \[
        \begin{cases}
            \{4\times (2,1)\} &\text{if } r_P=2;\\
            \{3\times (3,1)\} &\text{if } r_P=3;\\
            \{(2,1), 2\times (4,1)\} &\text{if } r_P=4;\\
            \{(5,1), (5,2)\} &\text{if } r_P=5;\\
            \{(2,1), (3,1), (6,1)\} &\text{if } r_P=6.
        \end{cases}
    \]
\end{rem}

\subsection{Kawamata--Miyaoka type inequality for canonical Fano threefolds}\label{sub.KMineq}

The following theorem is a detailed version of the Kawamata--Miyaoka type inequality of \cite{jiang-liu-liu}*{Theorem~3.8}. Here we refer to \cite{gkpt} or \cite{jiang-liu-liu}*{\S\,3} for the definition of \emph{generalized second Chern class}  (or \emph{orbifold second Chern class}) $\hat{c}_2(X)$ of a $3$-fold $X$. 

\begin{thm}[\cite{jiang-liu-liu}*{Theorem~3.8}]\label{thm.kmineqrefined}
    Let $X$ be a $\mathbb{Q}$-factorial canonical Fano $3$-fold of Picard number $1$ and let $q\coloneq \qQ(X)$ be the $\mathbb{Q}$-Fano index of $X$. Let
    \begin{align}\label{eq.hnfiltration}
        0=\mathcal{E}_0\subsetneq \mathcal{E}_1\subsetneq \dots \subsetneq \mathcal{E}_l=\mathcal{T}_X
    \end{align}
    be the Harder--Narasimhan filtration of the tangent sheaf $\mathcal T_X$ with respect to $c_1(X)$, where $1\leq l\leq 3$. Denote by $r_1\coloneq\rank \mathcal{E}_1$ and take $p$ to be the integer such that $c_1(\mathcal{E}_{l-1})\equiv \frac{p}{q}c_1(X)$. Here $p<q$, and $p>\frac{l-1}{l}q$ if $l>1$. Then we have
    \[
        \frac{c_1(X)^3}{\hat{c}_2(X)\cdot c_1(X)} \leq 
        \begin{dcases}
        3   & \text{if } (l, r_1)=(1,3);\\
        \frac{16}{5}  & \text{if } (l, r_1)=(2,1);\\
        \frac{4q^2}{p(4q-3p)}  \leq \frac{4q^2}{q^2+2q-3}  & \text{if } (l, r_1)=(2,2);\\
        \frac{4q^2}{-4p^2+6pq-q^2}  \leq \frac{4q^2}{q^2+2q-4}  & \text{if } (l, r_1)=(3,1).
        \end{dcases}
    \]
\end{thm}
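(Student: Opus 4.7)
Since the statement is announced as a detailed version of \cite{jiang-liu-liu}*{Theorem~3.8}, my plan is to revisit that proof and keep track of the constants in each case of the Harder--Narasimhan filtration. The starting point is the orbifold Bogomolov inequality in the Greb--Kebekus--Peternell--Taji formalism: for every $c_1(X)$-semistable reflexive sheaf $\mathcal{G}$ of rank $r$ on a canonical projective $3$-fold with ample $c_1(X)$,
\[
    \bigl(2r\,\hat{c}_2(\mathcal{G})-(r-1)c_1(\mathcal{G})^2\bigr)\cdot c_1(X)\geq 0.
\]
I would apply this to each graded piece $\mathcal{G}_i:=\mathcal{E}_i/\mathcal{E}_{i-1}$ of \eqref{eq.hnfiltration}, all of which are $c_1(X)$-semistable by construction.

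Next, the Whitney-type additivity of Chern classes along a filtration lets me rewrite $\hat{c}_2(\mathcal{T}_X)\cdot c_1(X)$ in terms of the $\hat{c}_2(\mathcal{G}_i)\cdot c_1(X)$ and the cross terms $c_1(\mathcal{G}_i)\cdot c_1(\mathcal{G}_j)\cdot c_1(X)$. Because $X$ has Picard number one and is $\mathbb{Q}$-factorial, each $c_1(\mathcal{G}_i)$ is a rational multiple $\alpha_i\,c_1(X)$ with $\sum_i\alpha_i=1$, so every such term collapses to a rational multiple of $c_1(X)^3$. The HN condition becomes strict decrease of the slopes $\mu_i=\alpha_i c_1(X)^3/r_i$, and in the setting of the theorem the remaining freedom is exactly the pair $(l,r_1)$ together with the integer $p$ recording $\sum_{i\leq l-1}\alpha_i=p/q$. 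Substituting, one obtains an inequality of the shape $\hat{c}_2(X)\cdot c_1(X)\geq \Phi(l,r_1,p,q)\cdot c_1(X)^3$ for an explicit rational function $\Phi$.

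The four cases are then dispatched separately. Case $(l,r_1)=(1,3)$ uses the single Bogomolov inequality on $\mathcal{T}_X$ and immediately gives the bound $3$. Case $(l,r_1)=(2,1)$ combines the rank-$2$ quotient inequality with the positivity of $c_1(\mathcal{E}_1)$ forced by $\mu_1>\mu_2$; optimizing in the single remaining parameter $\alpha_1$ yields the value $16/5$. In cases $(l,r_1)=(2,2)$ and $(3,1)$ I would substitute $c_1(\mathcal{E}_{l-1})=\tfrac{p}{q}c_1(X)$ into the combined inequality; elementary algebra produces the rational functions $4q^2/(p(4q-3p))$ and $4q^2/(6pq-4p^2-q^2)$ respectively. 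Using the monotonicity of these expressions on the integer interval $p\in\bigl(\tfrac{(l-1)q}{l},q\bigr)\cap\mathbb{Z}$ together with $p\leq q-1$, one arrives at the uniform bounds $4q^2/(q^2+2q-3)$ and $4q^2/(q^2+2q-4)$.

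The hard part, to my mind, is the orbifold bookkeeping rather than the final optimization: one must verify that $\hat{c}_2$ of each graded piece of the HN filtration is defined on $X$ (passing to reflexive hulls and, if needed, to a terminalization or a suitable resolution), and that the Greb--Kebekus--Peternell--Taji Bogomolov inequality applies on canonical Fano threefolds in the required generality. Once that is in place, the non-negativity of the cross terms $c_1(\mathcal{G}_i)\cdot c_1(\mathcal{G}_j)\cdot c_1(X)$ is automatic from Picard rank one and ampleness of $c_1(X)$, and the rest is a careful but elementary calculation in each of the four listed cases.
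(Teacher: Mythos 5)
Your overall skeleton --- the orbifold Bogomolov inequality applied to the graded pieces of the Harder--Narasimhan filtration, Whitney additivity, and the collapse of all first Chern classes to rational multiples of $c_1(X)$ via Picard number one --- is indeed how the cited \cite{jiang-liu-liu}*{Theorem~3.8} is organized, and it does give the cases $(l,r_1)=(1,3)$ and $(2,2)$ exactly as you describe. But the case $(l,r_1)=(2,1)$ cannot be obtained this way, and this is a genuine gap. Writing $c_1(\mathcal{T}_X/\mathcal{E}_1)\equiv\frac{q-p}{q}c_1(X)$, your bookkeeping yields
\[
\hat{c}_2(X)\cdot c_1(X)\;\geq\;\frac{(q-p)(q+3p)}{4q^2}\,c_1(X)^3,
\]
which equals $\frac{5}{16}c_1(X)^3$ precisely at $p=\frac{q}{2}$ and is \emph{strictly smaller} for $p>\frac{q}{2}$, tending to $0$ as $p\to q$. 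The HN slope inequalities only bound $p$ from below (they give $p>\frac{q}{3}$ when $r_1=1$), so ``optimizing in the single remaining parameter $\alpha_1$'' produces a degenerate bound, not $\frac{16}{5}$. In \cite{jiang-liu-liu} this case is handled by a different mechanism: $\mathcal{E}_1$ is a rank-one algebraically integrable foliation whose general leaf closures are rational curves, and one controls their anticanonical degree. The same geometric input is what justifies the asserted constraint $p>\frac{l-1}{l}q$ when $r_1=1$, which likewise does not follow from the slope inequalities, contrary to what your write-up suggests.

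A second, smaller gap occurs in case $(l,r_1)=(3,1)$. There the combined inequality depends on $q_1$ and $q_2$ individually (through the cross terms $\sum_{i<j}q_iq_j$), not only on $p=q_1+q_2$, so ``substituting $c_1(\mathcal{E}_{2})=\frac{p}{q}c_1(X)$ and elementary algebra'' is not enough: one must maximize $(q_1-q_2)^2+(2q_1+q_2-q)^2+(q_1+2q_2-q)^2$ over the admissible $q_1$ with $p$ fixed, and that maximization requires the upper bound $q_1\leq\frac{q}{2}$ --- again supplied by the rank-one foliation $\mathcal{E}_1$ rather than by the HN slopes. It is exactly this step that produces the polynomial $-4p^2+6pq-q^2$. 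Your final monotonicity argument in $p$ over the integer range $p\leq q-1$ is fine once these inputs are in place.
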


\begin{proof}
    We refer the reader to \cite{jiang-liu-liu}*{Proof of Theorem~3.8} with some necessary explanations as the following. Recall that in \cite{jiang-liu-liu}*{Proof of Theorem~3.8}, we denote $\mathcal{F}_i\coloneqq (\mathcal{E}_i/\mathcal{E}_{i-1})^{**}$ and denote by $q_i$ the unique positive integer such that $c_1(\mathcal{F}_i)\equiv \frac{q_i}{q}c_1(X)$. Then $p=\sum_{i=1}^{l-1}q_i$. 
    
    If $(l, r_1)=(1,3)$, that is, $\mathcal{T}_X$ is semistable with respect to $c_1(X)$, then the inequality follows directly from the $\mathbb{Q}$-Bogomolov--Gieseker inequality \cite{KMM94}*{Lemma~6.5}.
    
    If $(l, r_1)=(2,1)$, then the inequality follows from \cite{jiang-liu-liu}*{Proof of Theorem~3.8, Case~1}.
    
    If $(l, r_1)=(2,2)$, then $p=q_1$ and $\frac23 q< p\leq q-1$.
    By \cite{jiang-liu-liu}*{Proof of Theorem~3.8, Case~2}, we have  
    \[
        6 \hat{c}_2(X)\cdot c_1(X) - 2c_1(X)^3 \geq -\frac{(3p-2q)^2}{2q^2} c_1(X)^3,
    \]
    which yields the desired inequality.
    
    If $(l, r_1)=(3,1)$, then from \cite{jiang-liu-liu}*{Proof of Theorem~3.8, Case~3}, we have $p=q_1+q_2$ with $2 \leq q_2 \leq q_1- 1 \leq \frac q2- 1$ and
    \begin{align*}
        6\hat{c}_2(X)\cdot c_1(X) - 2 c_1(X)^3 
        & \geq - \left((q_1-q_2)^2 + (2q_1+q_2-q)^2 + (q_1+2q_2-q)^2\right) \cdot \frac{1}{q^2} c_1(X)^3 \\
        & = - \left((2q_1-p)^2 + (q_1+p-q)^2 + (2p-q_1-q)^2\right) \cdot \frac{1}{q^2} c_1(X)^3 \\
        & \geq - \left(6 p^2 - 9pq + \frac{7}{2}q^2\right) \frac{1}{q^2} c_1(X)^3,
    \end{align*}
    which yields the desired inequality. Here for the last inequality, we used the fact that the first two leading terms of $ (2q_1-p)^2 + (q_1+p-q)^2 + (2p-q_1-q)^2$ as a polynomial of $q_1$ is $6q_1^2-6pq_1$, so it attains the maximal value at $q_1=\frac{q}{2}$ as $q_1>\frac{q_1+q_2}{2}=\frac{p}{2}$. 
\end{proof}

The relation between $c_2(X)\cdot c_1(X)$ and $\hat{c}_2(X)\cdot c_1(X)$ is given by the following formula which is a special case of \cite{jiang-liu-liu}*{Theorem~4.6}.

\begin{thm}[{\cite{jiang-liu-liu}*{Theorem~4.6}}]\label{thm.c1c2 diff}
    Let $X$ be a projective canonical $3$-fold. Then
    \[
        c_2(X)\cdot c_1(X)- \hat{c}_2(X)\cdot c_1(X)=\sum_{C\subset \text{\rm Sing}(X)}\left(e_C-\frac{1}{g_C}\right)(-K_X\cdot C),
    \]
    where the sum runs over crepant curves $C\subset \text{\rm Sing}(X)$ and $e_C, g_C$ are defined in Definition~\ref{def ecgc}.
\end{thm}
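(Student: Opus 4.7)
The plan is to pass to a terminalization $f\colon Y\to X$ with $K_Y=f^*K_X$ and to localise the difference $(c_2(X)-\hat c_2(X))\cdot c_1(X)$ to contributions coming from each crepant curve $C\subset\Sing(X)$, exploiting that the transverse Du Val germ $S_C$ controls the geometry at the generic point of $C$. Since $Y$ has only isolated terminal singularities and $c_1(Y)=f^*c_1(X)$, any pairing of an exceptional cycle on $Y$ with $c_1(Y)$ transports cleanly to $X$, and the correction terms are supported on the $f$-exceptional divisors over $\Sing_1(X)$.

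For the $c_2$-side, I would compare $c_2(X)\cdot c_1(X)$ with $c_2(Y)\cdot c_1(Y)$ by expressing $\mathcal T_Y$ as a modification of the reflexive pullback of $\mathcal T_X$ along the $f$-exceptional divisors. The resulting difference decomposes into triple intersections $(E\cdot E'\cdot f^*K_X)$, where $E,E'$ run over $f$-exceptional prime divisors centered on a common crepant curve $C$; by Lemma~\ref{lem EEK=EECK} each such term factors as $(\ell_E\cdot\ell_{E'})\cdot(K_X\cdot C)$, and a direct bookkeeping on the minimal resolution of the ADE surface singularity $S_C$ collects the sum into the factor $e_C$ per crepant curve. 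For the $\hat c_2$-side, I would use that $\hat c_2$ is characterised in codimension two by multiplicativity under quasi-étale covers of the smooth locus; pulling back along the local canonical cover of degree $g_C$ at the generic point of $C$ resolves the transverse Du Val singularity and produces exactly the correction $\tfrac{1}{g_C}(-K_X\cdot C)$. Subtracting the two identities yields the stated formula.

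The hard part will be to rule out extraneous contributions supported at points: isolated terminal singularities on $Y$, special points on crepant curves where the transverse geometry is not generic, and the isolated crepant points of $X$ listed in Remark~\ref{rem kawakita}. For zero-dimensional cycles contracted by $f$, the pairing with the $\mathbb Q$-Cartier class $f^*K_X$ vanishes, which handles the first two sources. For isolated crepant points of $X$, one must verify---either via Kawakita's classification or through a direct comparison of local orbifold Euler data---that they contribute equally to $c_2(X)\cdot c_1(X)$ and to $\hat c_2(X)\cdot c_1(X)$ and therefore cancel in the difference. Once these local cancellations are secured, the remaining computation is routine ADE bookkeeping using the invariants $e_C$ and $g_C$ recorded in Definition~\ref{def ecgc}.
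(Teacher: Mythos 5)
The paper does not actually prove this statement: it is quoted from \cite{jiang-liu-liu}*{Theorem~4.6} with no internal argument, so there is nothing in the text to compare your sketch against. Your overall strategy --- localize the difference to the crepant curves, compute transversally through the Du Val germ $S_C$, and discard everything supported in codimension $3$ --- is the standard and correct one. Your treatment of the zero-dimensional loci is also fine, and in fact easier than you make it: both $c_2(X)\cdot D$ (defined as $c_2(W)\cdot\pi^*D$ for a resolution $\pi\colon W\to X$, so that cycles contracted to points pair to zero with $\pi^*D$) and $\hat c_2(X)\cdot D$ (defined on the big open orbifold locus) are blind to codimension-$3$ sets, so isolated crepant points and special points of crepant curves drop out for free; no appeal to Kawakita's classification is needed. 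A terminological slip: the degree-$g_C$ local cover you want is the smooth quasi-\'etale cover by the local fundamental group of $S_C$, not the ``canonical cover'' --- Du Val singularities are Gorenstein, so their index-one cover is trivial.

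The genuine gap is on the $c_2$-side. With the paper's definition, $c_2(X)\cdot c_1(X)=c_2(W)\cdot\pi^*c_1(X)$ for \emph{any} resolution; choosing $W\to Y\to X$ and using $c_1(Y)=f^*c_1(X)$ gives $c_2(X)\cdot c_1(X)=c_2(Y)\cdot c_1(Y)$ on the nose, so there is no nonzero difference between $X$ and $Y$ to decompose into triple intersections $(E\cdot E'\cdot f^*K_X)$, and the coefficient $e_C$ cannot be harvested from such terms: since the dual graph of a Du Val singularity is a tree, $\sum_{E,E'}(\ell_E\cdot\ell_{E'})=-2$ for every type, whereas $e_C$ varies. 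The factor $e_C$ is really the topological Euler characteristic of the fiber of the resolution over a general point of $C$ (a tree of $e_C-1$ rational curves has $\chi=e_C$), and the clean way to extract it is the one the paper uses elsewhere (Lemma~\ref{lem EEK=EECK}, Proposition~\ref{prop.rr.new}): intersect with a general very ample $H$, so that $\pi^*H\to H$ is the minimal resolution of a Du Val surface, apply the Whitney formula to $c(T_W)|_{\pi^*H}$, note that the $c_1\cdot c_1$ correction terms on the two sides agree because $\pi|_{\pi^*H}$ is crepant, and observe that the remaining $c_2$ terms differ by $\sum_{Q\in H\cap\Sing_1(X)}(e_Q-\frac{1}{g_Q})=\sum_C(H\cdot C)(e_C-\frac{1}{g_C})$; then extend linearly from very ample classes to $-K_X$. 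As written, your first bracket contributes $0$ and your second bracket only accounts for the $\frac{1}{g_C}$, so the sketch would lose the $e_C$ term entirely unless the $c_2$-side comparison is replaced by an argument of this type.
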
 
 
\subsection{Unmodifiable canonical Fano $3$-folds}

We introduce the concept of an unmodifiable canonical Fano $3$-fold.

\begin{defn}\label{def unmod}
    We say that $(X, A)$ is an {\it unmodifiable} canonical Fano $3$-fold if the following conditions are satisfied:
    \begin{enumerate}
        \item $X$ is a $\mathbb Q$-factorial canonical Fano $3$-fold of Picard number $1$;
        \item $\Cl(X)$ is torsion-free and $A$ is the ample generator of  $\Cl(X)$;
        \item For any terminalization $f\colon Y\to X$ and any prime $f$-exceptional divisor $E$ on $Y$, $\mult_Ef^*A\not\in \mathbb{Z}$.
    \end{enumerate}
\end{defn}

A more geometric interpretation of Definition~\ref{def unmod}(3) will be given in Corollary~\ref{cor A primitive}, where we will show that Definition~\ref{def unmod}(3) implies that any crepant curve $C\subset \Sing(X)$ is of type $\mathsf{A}$ with non-split crepant divisors, and $A$ generates the local Weil class group of $X$ at the generic point of $C$. 

The following proposition allows us to reduce Theorem~\ref{main.thm} to unmodifiable canonical Fano $3$-folds. The idea is that we can use the minimal model program to construct a new model if Definition~\ref{def unmod}(3) fails for some $E$. The terminology ``unmodifiable" means that $(X,A)$ can not be further modified by the minimal model program. 

\begin{prop}[{cf. \cite{jiang-liu-liu}*{Proposition~5.2}}]\label{prop.reducetopicard1}
    Let $X$ be a canonical weak Fano $3$-fold with $\qQ(X)\geq 7$. Then there exists an unmodifiable canonical Fano $3$-fold $(X', A')$ such that $\qQ(X')\geq \qQ(X)\geq 7$.
\end{prop}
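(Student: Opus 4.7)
The plan is to produce $(X',A')$ by a finite sequence of birational modifications and quasi-\'etale covers of $X$, each step preserving the inequality $\qQ\geq q$, with the terminal pair satisfying all three conditions of Definition~\ref{def unmod}. Parts (1) and (2) are reached by comparatively standard manipulations: passing to the anti-canonical model contracts only $K_X$-trivial curves and leaves $\Cl(X)$ and $\qQ$ unchanged while making $-K_X$ ample; a small $\mathbb Q$-factorialization (from \cite{BCHM}*{Corollary~1.4.3}) is an isomorphism in codimension one and again preserves $\Cl(X)$ and $\qQ$; a suitable $K$-negative MMP then reduces the Picard number to $1$ without decreasing $\qQ$, using that divisorial and small contractions keep us in the category of canonical Fano varieties and that the Weil class group on the image is a quotient of a subgroup of the original; and if $\Cl(X)$ retains torsion, the quasi-\'etale cover associated with a torsion class produces a canonical Fano with $K_{\widetilde X}=\pi^*K_X$, strictly smaller torsion, and Fano index at least as large.

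To secure condition (3), suppose on the contrary that for some terminalization $f\colon Y\to X$ a prime $f$-exceptional divisor $E$ satisfies $n:=\mult_Ef^*A\in\mathbb Z_{\geq 1}$. The heuristic is that $A$ is ``integral in the direction of $E$'', so that contracting $E$ in an intermediate $\mathbb Q$-factorial model produces a more singular canonical Fano $X_1$ onto which $A$ descends as an ample Weil divisor $A_1$. Concretely I would run an appropriate extremal MMP on $Y$ over $X$, perturbing the boundary by $\varepsilon E$ or by $-\varepsilon f^*A$, chosen so that $E$ is the unique divisor contracted, and then re-apply the reductions of the first paragraph to obtain $(X_1,A_1)$ satisfying (1) and (2). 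Because $X_1$ is ``more singular'' than $X$ its Weil class group acquires additional generators, so the Fano index can only grow: $\qQ(X_1)\geq\qQ(X)$.

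Termination is then a matter of introducing a discrete non-negative invariant that strictly decreases at each such modification. A natural choice is $\mu(X,A):=\sum_E\mult_Ef^*A$ summed over prime exceptional divisors of a fixed terminalization, whose denominators are bounded in terms of the global Gorenstein index $r_X\leq 840$ by \cite{CJ2016}*{Proposition~2.4}, and whose value drops by at least $n\geq 1$ each iteration; hence after finitely many steps no prime exceptional divisor has integer multiplicity, yielding the desired unmodifiable pair. The principal obstacle is the middle paragraph: one must arrange the MMP so that $E$ alone is contracted and verify in detail that the resulting $(X_1,A_1)$ again satisfies (1) and (2) with $\qQ$ non-decreasing --- essentially a careful bookkeeping inside the $\mathbb Q$-factorial canonical MMP framework.
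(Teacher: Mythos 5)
There is a genuine gap, concentrated in your treatment of condition (3) of Definition~\ref{def unmod}. Your modification step is circular: if you run an MMP on the terminalization $Y$ \emph{over} $X$ whose only divisorial contraction is $E$, the output is just another partial terminalization $Y_1\to X$, and since $-K_{Y_1}$ is the pullback of $-K_X$, the anti-canonical model of $Y_1$ (the first of your "reductions") is $X$ itself --- you never leave $X$, so no new pair $(X_1,A_1)$ is produced and no invariant can drop. The paper's move is the opposite one: use \cite{BCHM}*{Corollary~1.4.3} to \emph{extract} $E$ alone, obtaining $f'\colon Y'\to X'$ with $E$ the unique exceptional divisor; the hypothesis $\mult_Ef^*A'\in\mathbb Z$ is then exactly what makes $f'^*A'$ a genuine Weil divisor on the weak Fano $Y'$, and one runs a \emph{global} $K$-MMP on $Y'$ ending in a Mori fiber space $X''\to T$. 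At that point a second ingredient you omit entirely becomes essential: one must exclude $\dim T>0$, which is where the hypothesis $\qQ(X)\geq 7$ enters (a general fiber would be $\mathbb P^1$ or a canonical del Pezzo surface carrying a divisor of index $\geq 7$, contradicting \cite{wang}*{Proposition~3.3}). Without this step your MMP could terminate in a conic bundle or del Pezzo fibration and the argument stops.

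Your termination invariant is also not viable as stated: $\mu(X,A)=\sum_E\mult_Ef^*A$ is computed on a terminalization of $X$ with respect to $A$, but after one modification both the variety and the generator change, the exceptional divisors over $X''$ are not those over $X'$, and there is no monotonicity statement relating the two sums. Likewise, "$X_1$ is more singular, so $\Cl$ acquires generators, so $\qQ$ grows" is a heuristic, not a proof; the paper instead exhibits $-K_{X''}\sim \qQ(X')A''$ directly, with $A''$ the strict transform of $f'^*A'$. The termination itself is handled not by a decreasing sum of multiplicities but by choosing $X'$ from the start to maximize $\left((-K_{X'})^3,\,-\cd(X')\right)$ lexicographically: the construction above satisfies $(-K_{X'})^3\leq(-K_{X''})^3$ and $\cd(X'')\leq\cd(X')-1$, contradicting maximality. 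Your first paragraph (reaching conditions (1) and (2)) is consistent with what the paper delegates to \cite{jiang-liu-liu}*{Proposition~5.2, Lemma~5.1}, but the core of the proposition --- achieving (3) and terminating --- is not established by your argument.
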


\begin{proof}
    For a projective canonical variety $W$, define $\cd(W)$ to be the number of prime divisors $E$ exceptional over $W$ such that $a(E,W)=0$. This is a well-defined non-negative integer (see \cite{kollar-mori}*{Proposition~2.36}).
 
    Denote by $\mathcal{X}$ the set of $\mathbb Q$-factorial canonical Fano $3$-folds $X'$ of Picard number $1$ with $\qQ(X')\geq \qQ(X)\geq 7.$ By \cite{jiang-liu-liu}*{Proposition~5.2}, $\mathcal{X}$ is not empty. 

    Take $X'\in \mathcal{X}$ such that $((-K_{X'})^3, -\cd(X'))$ is maximal in lexicographical order. Then $X'$ is a $\mathbb Q$-factorial canonical Fano $3$-fold of Picard number $1$ by definition, and $\Cl(X')$ is torsion-free by \cite{jiang-liu-liu}*{Lemma~5.1} and the maximality of $((-K_{X'})^3, -\cd(X'))$. 

    Take $A'$ to be the ample generator of  $\Cl(X')$. Then we claim that $(X', A')$ is an unmodifiable canonical Fano $3$-fold. It suffices to check Definition~\ref{def unmod}(3).

    If there exists a terminalization $f\colon Y\to X'$ and a prime $f$-exceptional divisor $E$ on $Y$ such that $\mult_{E}f^*A'\in \mathbb{Z}$, 
    then by \cite{BCHM}*{Corollary~1.4.3}, there is a projective birational morphism $f'\colon Y'\to X'$ such that $Y'$ is $\mathbb{Q}$-factorial and $E$ is the only prime $f'$-exceptional divisor on $Y'$. 
    Then $Y'$ is a $\mathbb{Q}$-factorial canonical weak Fano $3$-fold with $-K_{Y'}\sim \qQ(X')f'^*A'$ where $f'^*A'$ is a Weil divisor by the assumption that $\mult_{E}f^*A'\in \mathbb{Z}$. 
    Then we can run a $K$-MMP on $Y'$ which ends up with a Mori fiber space $X''\to T$ where $X''$ is $\mathbb{Q}$-factorial and canonical. Hence $-K_{X''}\sim \qQ(X')A''$ where $A''$ is the strict transform of $f'^*A'$. If $\dim T>0$, then for a general fiber $F$ of $X''\to T$, we have $-K_{F}\sim \qQ(X')A''|_F$ where $F$ is either $\mathbb{P}^1$ or a canonical del Pezzo surface, but this contradicts \cite{wang}*{Proposition~3.3} as $\qQ(X')\geq 7$. Hence $\dim T=0$ and $X''$ is a $\mathbb Q$-factorial canonical Fano $3$-fold of Picard number $1$ with $\qQ(X'')\geq \qQ(X')\geq \qQ(X)$. But from the construction of $X''$, we have 
    \[
        \cd(X')=\cd({Y'})+1\geq \cd(X'')+1
    \]
    by \cite{kollar-mori}*{Lemma~3.38} and 
    \[
        (-K_{X'})^3=(-K_{Y'})^3\leq (-K_{X''})^3
    \]
    by \cite{jiang21}*{Lemma~4.4}. This contradicts the maximality of $((-K_{X'})^3, -\cd(X'))$. 
\end{proof}

The condition in Definition~\ref{def unmod}(3) is closely related to the following observation on Du Val singularities. 

\begin{lem}\label{lem DE type is modifiable}
    Let $S$ be a Du Val singularity and let $\pi\colon S'\to S$ be the minimal resolution. Let $D$ be a Weil divisor on $S$. Suppose that one of the following holds:
    \begin{enumerate}
        \item $S$ is not of type $\mathsf{A}$; or

        \item $S$ is of type $\mathsf{A}$ and $D$ is not a generator of $\Cl(S)$.
    \end{enumerate}
    Then there exists a prime exceptional curve $\ell$ on $S'$ such that $\mult_\ell\pi^*D\in \mathbb{Z}$.
\end{lem}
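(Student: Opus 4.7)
\emph{Proof proposal.} My plan is to reformulate the lemma as a linear-algebra problem on the class group and then handle each Dynkin type separately. Writing $\pi^*D = \tilde D + \sum_{i=1}^n a_i E_i$, the coefficients $\vec a$ are determined by $\pi^*D \cdot E_j = 0$ for all $j$, so $\vec a = C^{-1} \vec v$ where $C$ is the Cartan matrix of the dual graph and $v_j = \tilde D \cdot E_j$. Replacing $D$ by another representative of its class in $\Cl(S) \cong \mathbb Z^n / C\mathbb Z^n$ changes $\vec a$ by an element of $\mathbb Z^n$, so the vector of fractional parts of $\vec a$ depends only on $[D]$. The task is therefore to find, for each relevant $[D]$, some index $i$ at which $a_i \in \mathbb Z$.

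For case~(2), when $S$ is of type $\mathsf A_n$ and $[D] = k[D_0]$ with $d = \gcd(k, n+1) > 1$, the standard $\mathsf A_n$ formula $a_i = k(n+1-i)/(n+1)$ (a one-line consequence of inverting the tridiagonal $A_n$ Cartan matrix against $\vec v = e_1$) gives $a_{(n+1)/d} = k(d-1)/d \in \mathbb Z$, since $d \mid k$ and $(n+1)/d \in \{1, \dots, n\}$.

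For case~(1), I would exhibit, in each remaining Dynkin type, a specific exceptional curve $E_{i_0}$ whose multiplicity is integer for every class of $\Cl(S)$. The type $\mathsf E_8$ is immediate as $\Cl(S) = 0$. For $\mathsf D_m$ with $m \geq 4$ (in Bourbaki labeling, with fork at $\alpha_{m-2}$ and spinor nodes $\alpha_{m-1}, \alpha_m$), the known fundamental weight formulas
\begin{align*}
    \omega_1 &= \sum_{i=1}^{m-2}\alpha_i + \tfrac{1}{2}(\alpha_{m-1} + \alpha_m), \\
    \omega_{m-1} &= \tfrac{1}{2}\sum_{i=1}^{m-2} i\,\alpha_i + \tfrac{m}{4}\alpha_{m-1} + \tfrac{m-2}{4}\alpha_m
\end{align*}
(together with $\omega_m$, obtained from $\omega_{m-1}$ by swapping $\alpha_{m-1}$ and $\alpha_m$) generate $\Cl(S)$ modulo the root lattice, and each has integer coefficient at $\alpha_2$, so $E_{\alpha_2}$ works (for $m=4$ this node is the fork itself). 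For $\mathsf E_6$, a direct computation of the minuscule weight $\omega_1$ (equivalently, of the corresponding column of $C^{-1}$) shows the fork node has integer coefficient, which suffices since $\Cl(S) \cong \mathbb Z/3$ is generated by $[\omega_1]$. For $\mathsf E_7$, the same computation for the minuscule $\omega_7$ yields integer coefficient at several nodes of the long chain, handling the unique non-trivial class of $\Cl(S) \cong \mathbb Z/2$.

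The main obstacle is that there is no obvious type-independent argument: the $\mathsf D$ and $\mathsf E$ cases require inspecting the minuscule (and, for $\mathsf D_m$, the vector) fundamental weights individually. However, once the relevant formulas are in place, each verification reduces to a one-line check that a specific entry of $C^{-1}$ (or of the relevant fundamental weight) is an integer, and the sharp contrast with case~(2) shows precisely why the $\mathsf A_n$ generator is genuinely excluded.
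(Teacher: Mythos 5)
Your proposal is correct. The reformulation $\vec a = C^{-1}\vec v$ with $\Cl(S)\cong \mathbb{Z}^n/C\mathbb{Z}^n$ is sound (linear equivalence changes the exceptional coefficients by integers, so their fractional parts depend only on $[D]$), the $\mathsf{A}_n$ computation $a_{(n+1)/d}=k(d-1)/d\in\mathbb{Z}$ is the same explicit formula the paper uses (up to reversing the chain), and your checks of the fundamental weights at a fixed node in types $\mathsf{D}_m$, $\mathsf{E}_6$, $\mathsf{E}_7$ are accurate; since the listed weights generate $P/Q$, integrality at that node propagates to every class. Where you differ from the paper is in how the non-$\mathsf{A}$ cases are verified: the paper never writes down $C^{-1}$, but instead uses the torsion order of $\Cl(S)$ to bound denominators ($2a_i$, $3a_i$, or $4a_i\in\mathbb{Z}$) and then intersects $\pi^*D=D'+\sum a_iE_i$ with hand-picked combinations of exceptional curves (e.g.\ $E_1$, $E_2-E_1$, $\sum 2E_{2m}$) to extract a congruence forcing some single $a_i\in\mathbb{Z}$. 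Your route is more systematic — it identifies once and for all a node at which the entire row of $C^{-1}$ is integral, which makes the "why" transparent and generalizes mechanically — at the cost of importing the fundamental-weight tables; the paper's route is self-contained and needs only the orders of the class groups, but each case is an ad hoc intersection computation. Both are complete proofs.
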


\begin{proof}
    We may write 
    \begin{align}
        \pi^*D=D'+\sum_i a_i\ell_i \label{eq D=D'+E}
    \end{align}
    where $D'$ is the strict transform of $D$ on $S'$ and $\ell_i$ are all exceptional curves on $S'$. The goal is to show that $a_i\in \mathbb{Z}$ for some $i$.

    The statement is clear if $S$ is of type $\mathsf{E}_8$ as in this case $\Cl(S)$ is trivial. Recall that the Weil divisor class group of $S$ is $\mathbb{Z}_2\times\mathbb{Z}_2$, $\mathbb{Z}_4$, $\mathbb{Z}_3$, $\mathbb{Z}_2$ if $S$ is of type $\mathsf{D}_{2m}$, $\mathsf{D}_{2m+1}$, $\mathsf{E}_{6}$, $\mathsf{E}_{7}$ respectively (see \cite{Kawakita2024}*{Remark~4.2.9}).

    If $S$ is of type $\mathsf{D}_{2m}$ or $\mathsf{E}_7$, then $2a_i\in \mathbb{Z}$ for any $i$. We may assume that $\ell_1$ only intersects with $\ell_2$. Intersecting \eqref{eq D=D'+E} with $\ell_1$, we have $-2a_1+a_2\in \mathbb{Z}$, which implies that $a_2\in \mathbb{Z}$. 

    If $S$ is of type $\mathsf{E}_6$, then $3a_i\in \mathbb{Z}$ for any $i$. We may label the first $3$ exceptional curves in the dual graph as the following:
    \[
        \dynkin [scale=2, labels={ \ell_1,, \ell_2,\ell_3 ,}] E6
    \]
    Intersecting \eqref{eq D=D'+E} with $\ell_2-\ell_1$, we have $a_3-3a_2+3a_1\in\mathbb{Z}$, which implies that $a_3\in \mathbb{Z}$. 
 
    If $S$ is of type $\mathsf{D}_{2m+1}$, then $4a_i\in \mathbb{Z}$ for any $i$. We may label the exceptional curves in the dual graph as the following:
    \[
        \dynkin [scale=2, labels={\ell_1,\ell_2, \ell_{2m-2} ,\ell_{2m-1}, \ell_{2m}, \ell_{2m+1}}, label directions={,,,right, ,  }] D{}
    \]
    Intersecting \eqref{eq D=D'+E} with $\sum_{i=1}^m 2\ell_{2m}$, we have 
    \[
        2a_1-\sum_{j=2}^{2m}(-1)^j4a_j\in \mathbb{Z},
    \]
    which implies that $2a_1\in \mathbb{Z}$. 
    Intersecting \eqref{eq D=D'+E} with $\ell_{1}$, we have $-2a_1+a_{2}\in \mathbb{Z}$, which implies that $a_2\in \mathbb{Z}$.

    Finally, suppose that $S$ is of type $\mathsf{A}_{n}$. We may label the exceptional curves in the dual graph as the following:
    \[
        \dynkin [scale=2, labels={ \ell_1,\ell_2,\ell_3,\ell_n}] A{***.*}
    \]  
    As $S$ can be viewed as a quotient singularity of type $\frac{1}{n+1}(1, -1)$, there is a Weil divisor $D_0$ on $S$ generating $\Cl(S)\simeq \mathbb{Z}_{n+1}$ such that 
    \[
        \pi^*D_0=D'_0+\sum_{i=1}^n\frac{i}{n+1}\ell_i,
    \]
    where $D'_0$ is the strict transform of $D_0$ on $S'$.
    If $D$ is not a generator of $\Cl(S)$, then $D\sim kD_0$ for some $1\leq k\leq n+1$ such that $\gcd(k,n+1)>1$. This implies that $\mult_{\ell_{i}}\pi^*D\in \mathbb{Z}$ for any $i$ divided by $\frac{n+1}{\gcd(k,n+1)}$.
\end{proof}

As a corollary, crepant curves on an {unmodifiable} canonical Fano $3$-fold are all of type $\mathsf{A}$ with non-split crepant divisors.

\begin{cor}\label{cor A primitive}
    Let $(X, A)$ be an {unmodifiable} canonical Fano $3$-fold and let $C\subset \Sing(X)$ be a crepant curve. Then $C$ is of type $\mathsf{A}$ with non-split crepant divisors, and the Cartier index of $A$ at the generic point of $C$ is exactly $j_C$.
\end{cor}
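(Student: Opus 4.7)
The plan is to reduce the statement to a local transverse picture along the generic point of $C$ and then invoke Lemma~\ref{lem DE type is modifiable} as a contrapositive.

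First I would fix a terminalization $f\colon Y\to X$ and restrict attention to a general point $p$ of the crepant curve $C$. At such a general point, $X$ is analytically isomorphic to $\mathbb{A}^1\times S_C$, where $S_C$ is the transverse Du Val singularity defining the type of $C$ in Definition~\ref{def ecgc}. Because terminalizations of a product of a smooth curve with a Du Val singularity are (analytically locally) the product of the identity with the minimal resolution $\pi\colon S_C'\to S_C$, the morphism $f$ is locally $\mathrm{id}\times\pi$ near $p$. Under this identification, the prime $f$-exceptional divisors $E$ centered at $C$ become $\mathbb{A}^1\times\ell_E$ for the exceptional curves $\ell_E$ on $S_C'$, realizing the correspondence $E\longleftrightarrow\ell_E$ recorded at the end of Definition~\ref{def ecgc}.

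Next I would look at how $A$ behaves in this local model. Writing $D\coloneq A|_{\{0\}\times S_C}$ (a Weil divisor on $S_C$), the Weil pullback $f^*A$ is locally $\mathbb{A}^1\times \pi^*D$, so that for every prime $f$-exceptional divisor $E$ centered at $C$,
\[
\mult_E f^*A = \mult_{\ell_E} \pi^* D.
\]
Definition~\ref{def unmod}(3) forces the left-hand side to be non-integral for \emph{every} such $E$, so $\mult_{\ell_E}\pi^*D\notin\mathbb{Z}$ for every exceptional curve $\ell_E$ on $S_C'$.

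Now I invoke the contrapositive of Lemma~\ref{lem DE type is modifiable}. The hypothesis ``either $S_C$ is not of type $\mathsf{A}$, or $S_C$ is of type $\mathsf{A}$ but $D$ is not a generator of $\Cl(S_C)$'' would produce an exceptional curve whose multiplicity in $\pi^*D$ is an integer, contradicting what we just proved. Hence $S_C$ must be of type $\mathsf{A}$ \emph{and} $D$ must generate $\Cl(S_C)$. The first conclusion says that $C\subset\Sing(X)$ is of type $\mathsf{A}$ in the sense of Definition~\ref{def ecgc}. For the second, the Cartier index of $A$ at the generic point of $C$ equals the order of the class $[D]$ in $\Cl(S_C)$; since $D$ generates this cyclic group of order $j_C$, that order is exactly $j_C$, which is the remaining statement.

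The main obstacle I anticipate is only the bookkeeping in the first step: one needs to know that the terminalization of an analytic product $\mathbb{A}^1\times S_C$ over the generic point of $C$ really is the product of the identity with the minimal resolution of $S_C$, so that both the correspondence $E\longleftrightarrow\ell_E$ and the identity $\mult_E f^*A=\mult_{\ell_E}\pi^*D$ hold literally. This is essentially built into Definition~\ref{def ecgc} and the proof of Lemma~\ref{lem EEK=EECK}, so once this local identification is cleanly recorded the rest of the argument reduces to applying Lemma~\ref{lem DE type is modifiable}.
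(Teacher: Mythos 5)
Your proposal is correct and follows essentially the same route as the paper's own proof: reduce to the transverse Du Val slice $S_C$ at a general point of $C$, translate $\mult_E f^*A$ into $\mult_{\ell_E}\pi^*A_0$ via the correspondence $E\longleftrightarrow\ell_E$, and apply Lemma~\ref{lem DE type is modifiable} contrapositively together with Definition~\ref{def unmod}(3). The only cosmetic difference is that the paper justifies the local splitting of $A$ by the linear equivalence $A\sim\mathbb{A}^1\times A_0$ (via \cite{hartshorne}*{Proposition~II.6.6}) rather than by restriction to a slice, which is the clean way to record the bookkeeping step you flagged.
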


\begin{proof} 
    Let $f\colon Y\to X$ be a terminalization. Then as in Definition~\ref{def ecgc}, at a general point of $C$, $X$ is analytically isomorphic to $\mathbb{A}^1\times S_C$ where $S_C$ is a Du Val singularity. By \cite{hartshorne}*{Proposition~II.6.6}, locally we have a linear equivalence $A\sim \mathbb{A}^1\times A_0$ where $A_0$ is a Weil divisor on $S_C$. 
    
    We claim that $S_C$ is of type $\mathsf{A}$ and $A_0$ is a generator of $\Cl(S_C)$. Otherwise, by Lemma~\ref{lem DE type is modifiable}, there exists an exceptional curve $\ell_0$ on the minimal resolution $\pi\colon  S'_C\to S_C$ such that $\mult_{\ell_0}\pi^*A_0\in \mathbb{Z}$. Then there exists a prime $f$-exceptional divisor $E_0$ on $Y$ centered at $C$ such that $\ell_0$ corresponds to an irreducible component of a general fiber of $E_0\to C$ (see Definition~\ref{def nonsplit}). As a general fiber of $E_0\to C$ is reduced, we know that $\mult_{E_0}f^*A-\mult_{\ell_0}\pi^*A_0\in \mathbb{Z}$ by the local linear equivalence $A\sim \mathbb{A}^1\times A_0$. Then $\mult_{E_0}f^*A\in \mathbb{Z}$. But this contradicts Definition~\ref{def unmod}(3). So $S_C$ is of type $\mathsf{A}$ and $A_0$ is a generator of $\Cl(S_C)$; in particular, $C$ is of type $\mathsf{A}$ and the Cartier index of $A$ at the generic point of $C$ is exactly $j_C$.

    Finally we show that $C$ is with non-split crepant divisors. Suppose that there exists a prime $f$-exceptional divisor $E$ on $Y$ centered at $C$ such that a general fiber of $E\to C$ is not irreducible, then it contains $2$ irreducible curves corresponding to exceptional curves $\ell, \ell'$ on the minimal resolution $\pi\colon S'_C\to S_C$. Then we know that $\mult_{E}f^*A-\mult_{\ell}\pi^*A_0\in \mathbb{Z}$ and $\mult_{E}f^*A-\mult_{\ell'}\pi^*A_0\in \mathbb{Z}$ by the local linear equivalence $A\sim \mathbb{A}^1\times A_0$. In particular, $ \mult_{\ell}\pi^*A_0-\mult_{\ell'}\pi^*A_0\in \mathbb{Z}$. But from the proof of Lemma~\ref{lem DE type is modifiable}, there is a Weil divisor $D_0$ on $S_C$ such that 
    \[
        \pi^*D_0=D'_0+\sum_{i=1}^n\frac{i}{n+1}\ell_i,
    \]
    where $D'_0$ is the strict transform of $D_0$ on $S'_C$, $\ell_i$ are exceptional curves, and $A_0\sim kD_0$ for some integer $k$ coprime to $n+1$. Then we have $\ell=\ell_i$ and $\ell'=\ell_j$ for some $i\neq j$. But then $\mult_{\ell}\pi^*A_0-\mult_{\ell'}\pi^*A_0\in \mathbb{Z}$ implies that $\frac{ki}{n+1}-\frac{kj}{n+1}\in \mathbb{Z}$, which is absurd.   
    %    Also there is a 1--1 correspondence between $f$-exceptional divisors on $Y$ centered over $C$ and exceptional divisors on the minimal resolution of $S_C$ which is denoted by $E\longleftrightarrow \ell_E$. 
    %    Then $\mult_Ef^*A\in \mathbb{Z}$ if and only if $\mult_{\ell_E}\pi^*A_0\in \mathbb{Z}$ where $\pi$ is the minimal resolution map of $S_C$. 
    %    By Definition~\ref{def unmod} and Lemma~\ref{lem DE type is modifiable}, we know that $S_C$ is of type $\mathsf{A}$ and $A_0$ is a generator of $\Cl(S_C)$, which implies that $C$ is of type $\mathsf{A}$ and the Cartier index of $A$ at the generic point of $C$ is exactly $j_C$.
\end{proof}

\section{Reid's Riemann--Roch formula for surfaces and threefolds}

In this section, we recall Reid's Riemann--Roch formula for Weil divisors on surfaces with Du Val singularities and for Weil divisors satisfying Reid's condition on $3$-folds with canonical singularities. 

\subsection{Reid's Riemann--Roch formula for surfaces with Du Val singularities} 

Let $S$ be a projective surface with Du Val singularities and let $D$ be a Weil divisor on $S$. According to Reid \cite{reid}*{(9.1)}, there is a formula
\begin{align}\label{eq RR duval}
    \chi(S, \mathcal{O}_S(D))=\chi(S, \mathcal{O}_S)+\frac12D\cdot (D-K_S)+\sum_{Q\in\Sing(S)}c_Q(D).
\end{align}
Here for a singular point $Q\in\Sing(S)$, $c_Q(D)$ is defined as the following:
\begin{enumerate}
    \item If $(Q\in S, D)$ is a cyclic quotient singularity of type 
    $i(\frac{1}{r}(1, -1))$ for some $0\leq i<r$
    (see \cite{reid}*{(8.3)}), then $c_Q(D)=\frac{-i(r-i)}{2r}$;

    \item In general, we can locally deform $(Q\in S, D)$ into a collection of cyclic quotient singularities $(Q_\alpha\in S_\alpha, D_\alpha)$ of type 
    $i_\alpha(\frac{1}{r_\alpha}(1, -1))$, and define
    \[
        c_Q(D)=\sum_\alpha c_{Q_\alpha}(D_\alpha)=-\sum_\alpha\frac{i_\alpha(r_\alpha-i_\alpha)}{2r_\alpha}.
    \]
\end{enumerate}
Note that $c_Q(D)$ depends only on the local Weil divisor class of $D$ at $Q$.

For a crepant curve on a canonical $3$-fold, we introduce the following invariant.
\begin{defn}\label{def ccD}
    Let $X$ be a canonical $3$-fold and let $D$ be a Weil divisor on $X$. Let $C\subset \Sing(X)$ be a crepant curve. Then at a general point $P\in C$, $X$ is analytically isomorphic to a neighborhood of $(0, P_0)\in \mathbb{A}^1\times S_C$, where $P_0\in S_C$ is a Du Val singularity. By \cite{hartshorne}*{Proposition~II.6.6}, locally we have a linear equivalence $D\sim \mathbb{A}^1\times D_0$ where $D_0$ is a Weil divisor on $S_C$.
    So we just define
    \[
        c_C(D)\coloneq c_{P_0}(D_0).
    \] 
    Here the definition is well-defined as $c_{P_0}(D_0)$ is independent of the choice of the general point $P\in C$: take a general hyperplane section $H\subset X$ which intersects $C$ transversely at a point $P'$ in an analytic neighborhood of $P$, then $P'\in H$ is locally isomorphic to $P_0\in S_C$ via the natural projection $\mathbb{A}^1\times S_C\to S_C$, which implies that $c_{P'}(D|_H)=c_{P_0}(D_0)$. Then the independency follows by varying $P$ and $P'$ along $C$. 
\end{defn}

\begin{ex}\label{ex cc A123}
    Let $(X, A)$ be an {unmodifiable} canonical Fano $3$-fold and let $C\subset \Sing(X)$ be a crepant curve of type $\mathsf{A}_k$ for some $k\leq 3$. Then by Corollary~\ref{cor A primitive},
    \[
    c_C(A)=
        \begin{dcases}
            -\frac{1}{4} & \text{if } C \text{ is of type } \mathsf{A}_1;\\
             -\frac{1}{3} & \text{if } C \text{ is of type } \mathsf{A}_2;\\
             -\frac{3}{8} & \text{if } C \text{ is of type } \mathsf{A}_3.
        \end{dcases}    
    \]
    More generally,
    \[
    c_C(sA)=
        \begin{dcases}
            -\frac{(\overline{s})_2(\overline{-s})_2}{4} & \text{if } C \text{ is of type } \mathsf{A}_1;\\
             -\frac{(\overline{s})_3(\overline{-s})_3}{6} & \text{if } C \text{ is of type } \mathsf{A}_2;\\
             -\frac{(\overline{s})_4(\overline{-s})_4}{8} & \text{if } C \text{ is of type } \mathsf{A}_3.
        \end{dcases}    
    \]
    Here for a positive integer $r$ and an integer $a$, the symbol $(\overline{a})_r$ means the smallest non-negative residue of $a$ mod $r$. 
\end{ex}

\subsection{Reid's Riemann--Roch formula for canonical $3$-folds}
\label{sec RR3}

Let $X$ be a projective canonical $3$-fold. According to Reid \cite{reid}*{(10.2)}, there is a collection of pairs of integers (permitting weights)
\[
    B_{X}=\{(r_{i}, b_{i}) \mid i=1, \dots, s; 0<b_{i}\leq\frac{r_{i}}{2} ; b_{i} \text{ is coprime to } r_{i}\}
\]
associated to $X$, called {\it Reid's basket}, where a pair $(r, b)$ corresponds to an orbifold point of type $\frac{1}{r}(1, -1, b)$ which comes from locally deforming non-Gorenstein singularities of a terminalization of $X$. In other words, $B_X$ is identified with the collection of virtual orbifold points (a.k.a fictitious singularities) of $X$. For simplicity, we sometimes just call a pair in $B_X$ an orbifold point of $X$ and often formally write 
\[
    B_X=\{Q=(r_Q, b_Q)\}.
\]
By definition, if $Y$ is a terminalization of $X$, then $B_Y=B_X$. Denote by $\mathcal{R}_X$ the collection of $r_i$ (permitting weights) appearing in $B_X$. Note that the Gorenstein index $r_X$ of $X$ is just $\lcm\{r\mid r\in \mathcal{R}_X\}$.

Let $D$ be a $\mathbb{Q}$-Cartier Weil divisor on $X$. We say that $D$ satisfies {\it Reid's condition} if locally at any point $P\in X$, $D\sim iK_X$ for some integer $i$ (depending on $P$). Note that if $X$ has only terminal singularities, then $D$ always satisfies Reid's condition by \cite{kawamata-crepant}*{Corollary~5.2}. 

For $D$ satisfying Reid's condition, according to \cite{reid}*{(10.2)}, there exists an orbifold Riemann--Roch formula, called \emph{Reid's formula}:
\begin{align}\label{eq.reidformula}
\begin{split}
   {}&  \chi(X, \mathcal O_X(D))\\={}&\chi(X, \mathcal O_X)+\frac{1}{12}D\cdot (D-K_X)\cdot (2D-K_X)+\frac{1}{12}c_2(X)\cdot D+\sum_{Q\in  B_X}c_Q(D),
\end{split}
\end{align}
where the last sum runs over Reid’s basket of orbifold points. The term $c_2(X)\cdot D$ is defined to be $c_2(W)\cdot \pi^*D$ for any resolution $\pi\colon W\to X$. 

For an orbifold point $Q=(r_Q,b_Q)\in B_X$, $c_Q(D)$ is defined as the following: the local index $\mathsf{i}_{D,Q}$ of $D$ at $Q$ is defined to be 
an integer $i\geq 0$ such that $D\sim iK_X$ around $Q$ after taking terminalization and local deformation, where $\mathsf{i}_{D,Q}$ is determined up to modulo $r_Q$ and well-defined as $D$ satisfies Reid's condition; then 
\[
    c_Q(D)\coloneq -\frac{\mathsf{i}_{D,Q}(r_Q^2-1)}{12r_Q}+\sum_{j=0}^{\mathsf{i}_{D,Q}-1}\frac{(\overline{jb_Q})_{r_Q}(\overline{-jb_Q})_{r_Q}}{2r_Q}.
\]
Here for a positive integer $r$ and an integer $a$, the symbol $(\overline{a})_r$ means the smallest non-negative residue of $a$ mod $r$; we set $\sum_{j=0}^{-1}\coloneq 0$. 

In particular, by \cite{reid}*{(10.3)}, we have
\begin{align}\label{eq.range}
    c_2(X)\cdot c_1(X) + \sum_{r\in \mathcal{R}_X} \left(r-\frac{1}{r}\right)=24\chi(X, \mathcal O_X),
\end{align}
and 
\begin{align}\label{eq.RR-Fano}
    \frac{1}{2}c_1(X)^3+3\chi(X, \mathcal O_X)- \sum_{Q\in B_X}\frac{b_Q(r_Q-b_Q)}{2r_Q}= \chi(X,\mathcal{O}_X(-K_X))\in \mathbb Z.
\end{align}

\begin{rem}\label{rem.posofc2}
    For a canonical weak Fano $3$-fold $X$, $\chi(X, \mathcal{O}_X)=1$ by the Kawamata--Viehweg vanishing theorem \cite{KMM}*{Theorem~1.2.5} and $c_2(X)\cdot c_1(X)>0$ by \cite{ijl}*{Corollary~7.3}. 
\end{rem}

The following lemma is an easy and well-known consequence of \eqref{eq.reidformula}.
\begin{lem}\label{lem.DDKterminal}
    Let $X$ be a projective canonical $3$-fold and let $D$ be a $\mathbb{Q}$-Cartier Weil divisor on $X$ satisfying Reid's condition. Then 
    \begin{align*}
        {}&\chi(X,  \mathcal{O}_X(D))-\chi(X, \mathcal{O}_X(D+K_X))\\
        ={}&-\frac{1}{2}D^2\cdot K_X+2\chi(X, \mathcal{O}_X)-\sum_{Q\in B_X}\frac{(\overline{ \mathsf{i}_{D,Q}b_Q})_{r_Q}(\overline{-\mathsf{i}_{D,Q}b_Q})_{r_Q}}{2r_Q}.
    \end{align*}
\end{lem}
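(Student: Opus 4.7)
The plan is to apply Reid's Riemann--Roch formula \eqref{eq.reidformula} to both $D$ and $D+K_X$ and subtract. Note first that since $D$ satisfies Reid's condition, so does $D+K_X$: if locally $D\sim iK_X$ at a point, then locally $D+K_X\sim (i+1)K_X$; in terms of local indices, $\mathsf{i}_{D+K_X,Q}\equiv \mathsf{i}_{D,Q}+1\pmod{r_Q}$ for every $Q\in B_X$.

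The computation splits into three parts. First, the cubic term. A direct expansion gives
\begin{align*}
\tfrac{1}{12}\bigl[D(D-K_X)(2D-K_X)-(D+K_X)D(2D+K_X)\bigr]
&=\tfrac{1}{12}\bigl[(2D^3-3D^2K_X+DK_X^2)-(2D^3+3D^2K_X+DK_X^2)\bigr]\\
&=-\tfrac{1}{2}D^2\cdot K_X.
\end{align*}
Second, the $c_2$ term contributes $\tfrac{1}{12}c_2(X)\cdot D-\tfrac{1}{12}c_2(X)\cdot(D+K_X)=-\tfrac{1}{12}c_2(X)\cdot K_X$. Using $c_1(X)=-K_X$ and \eqref{eq.range}, this equals $2\chi(X,\mathcal O_X)-\tfrac{1}{12}\sum_{r\in\mathcal R_X}(r-\tfrac{1}{r})$, which produces the desired $2\chi(X,\mathcal O_X)$ plus a basket correction that I expect to cancel against the basket piece.

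The third and most delicate part is the basket sum $\sum_{Q\in B_X}[c_Q(D)-c_Q(D+K_X)]$. Write $i\coloneq\mathsf{i}_{D,Q}$ with $0\le i<r_Q$. If $i<r_Q-1$, then $\mathsf{i}_{D+K_X,Q}=i+1$ and the definition of $c_Q$ directly gives
\[
c_Q(D)-c_Q(D+K_X)=\frac{r_Q^2-1}{12r_Q}-\frac{(\overline{ib_Q})_{r_Q}(\overline{-ib_Q})_{r_Q}}{2r_Q}.
\]
The subtle case is $i=r_Q-1$, where $\mathsf{i}_{D+K_X,Q}=0$ (not $r_Q$) and $c_Q(D+K_X)=0$. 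Here I will observe that if one formally plugs $\mathsf{i}=r_Q$ into the defining formula for $c_Q$, the standard identity $\sum_{k=1}^{r_Q-1}k(r_Q-k)=\tfrac{r_Q(r_Q^2-1)}{6}$ yields $-\tfrac{r_Q^2-1}{12}+\tfrac{r_Q^2-1}{12}=0$, matching the true value. Hence the formula for $c_Q(D)-c_Q(D+K_X)$ above is valid for every $i\in\{0,\dots,r_Q-1\}$ uniformly.

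Summing the basket contribution gives $\tfrac{1}{12}\sum_{r\in\mathcal R_X}(r-\tfrac{1}{r})-\sum_{Q\in B_X}\tfrac{(\overline{\mathsf{i}_{D,Q}b_Q})_{r_Q}(\overline{-\mathsf{i}_{D,Q}b_Q})_{r_Q}}{2r_Q}$, and the first sum here cancels exactly against the correction from step two, producing the stated identity. The main obstacle is the bookkeeping in step three --- in particular verifying uniformity across the wrap-around case $i=r_Q-1$ --- but once the formal extension to $\mathsf{i}=r_Q$ is noted, this becomes routine algebra.
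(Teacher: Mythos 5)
Your proof is correct and follows essentially the same route as the paper: apply Reid's formula \eqref{eq.reidformula} to $D$ and $D+K_X$, use $\mathsf{i}_{D+K_X,Q}\equiv \mathsf{i}_{D,Q}+1\bmod r_Q$ to compute the basket difference, and cancel the resulting $\frac{1}{12}\sum_{r\in\mathcal R_X}(r-\frac1r)$ against the $c_2$ term via \eqref{eq.range}. Your explicit verification of the wrap-around case $\mathsf{i}_{D,Q}=r_Q-1$ (which the paper leaves implicit) is accurate and welcome.
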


\begin{proof}
    By \eqref{eq.reidformula}, 
    \begin{align*}
        {}&\chi(X, \mathcal{O}_X(D))-\chi(X, \mathcal{O}_X(D+K_X))\\
        ={}&-\frac{1}{2}D^2\cdot K_X-\frac{1}{12}c_2(X)\cdot K_X+\sum_{Q\in B_X} (c_Q(D)-c_Q(D+K_X)).
    \end{align*} 
    For an orbifold point $Q \in B_X$, we have 
    \[
        \mathsf{i}_{D+K_X, Q}\equiv \mathsf{i}_{D, Q}+\mathsf{i}_{K_X, Q}\equiv \mathsf{i}_{D, Q}+1 \bmod r_Q.
    \]
    Hence
    \[
        c_Q(D)-c_Q(D+K_X)= \frac{r_Q^2-1}{12r_Q}-\frac{(\overline{ \mathsf{i}_{D,Q}b_Q})_{r_Q}(\overline{-\mathsf{i}_{D,Q}b_Q})_{r_Q}}{2r_Q}.
    \]
    Then the conclusion follows from  \eqref{eq.range}.
\end{proof}

\section{A Riemann--Roch formula for canonical threefolds}

In the study of Fano indices of canonical Fano $3$-folds, we have to consider Weil divisors which do not satisfy Reid's condition (e.g., divisors that are not Cartier in codimension $2$), for which Reid's Riemann--Roch formula is not applicable.

In this section, we establish a Riemann--Roch formula for canonical $3$-folds based on Reid's formula. The idea is to take a special transform called the Weil pullback of the divisor on $X$ to a terminalization, and then apply Reid's formula on the terminalization.

\subsection{Sequential terminalizations and Weil pullbacks}\label{sec STWP}
As we mentioned in the introduction, for a terminalization $f\colon Y\to X$ and a $\mathbb{Q}$-Cartier Weil divisor $D$ on $X$, in general $f^*D$ is not a Weil divisor. To make it a Weil divisor we might just consider the round down $\lfloor f^*D\rfloor$, but then the cohomologies of $\lfloor f^*D\rfloor$ usually differ from those of $D$. So in order to get a good Weil divisor on $Y$ from $D$, we consider a special kind of terminalization called sequential terminalization 
which is decomposed into a sequence of birational morphisms and do the pullback and round down operations via this sequence. 

\begin{defn}[Sequential terminalization]\label{def seq terminalization}
    Let $X$ be a canonical variety. A projective birational morphism $f\colon Y\to X$ together with a sequence of projective birational morphisms $f_k\colon X_k\to X_{k-1}$ for $1\leq k\leq m$ is called a {\it sequential terminalization} if the following properties are satisfied: 
    \begin{enumerate}
        \item $Y$ is $\mathbb{Q}$-factorial terminal;
        \item $X_0=X$, $X_m=Y$, $f=f_1\circ \dots \circ f_m$;  
        \item for any $1\leq k\leq m$, $f_k$ is crepant, i.e., $f_k^*K_{X_{k-1}}=K_{X_k}$;
        \item for any $1\leq k\leq m$, denote by $E_k$ the sum of exceptional divisors of $f_k$, then either $E_k=0$, or $E_k$ is a $\mathbb{Q}$-Cartier prime divisor and $-E_k$ is $f_k$-nef.
    \end{enumerate} 
    For simplicity, we just say that $f\colon Y\to X$ is a sequential terminalization. 
\end{defn}

\begin{rem}
\begin{enumerate}
    \item We can always construct a sequential terminalization by applying \cite{BCHM}*{Corollary~1.4.3}  to extract one prime divisor at a time. See also \cite{kawakita}*{Corollary~2.6}.

    \item If $f\colon Y\to X$ is a sequential terminalization and $U\subset X$ is an open subset, then $f$ naturally induces a sequential terminalization $f^{-1}(U)\to U$.
\end{enumerate}
    
\end{rem}

\begin{defn}[Weil pullback]\label{def weilpullback}
    Let $X$ be a canonical variety and let $f\colon Y\to X$ be a sequential terminalization. For any $\mathbb{Q}$-Cartier Weil divisor $D$ on $X$,  the {\it Weil pullback} $f^{\lfloor *\rfloor}(D)$ of $D$ on $Y$ is defined as the following: keep the notation in Definition~\ref{def seq terminalization}, take $D_0=D$, and define inductively $D_k=\lfloor f_k^*D_{k-1} \rfloor$ for $1\leq k\leq m$, and finally set $f^{\lfloor *\rfloor}(D)\coloneq D_m$. Here we can inductively show that $D_k$ is $\mathbb{Q}$-Cartier since $f_k^*D_{k-1}$ and $E_k$ are $\mathbb{Q}$-Cartier. So $f^{\lfloor *\rfloor}(D)$ is a $\mathbb{Q}$-Cartier Weil divisor on $Y$. 
\end{defn}

One key good property of Weil pullback is that it preserves cohomologies. 

\begin{lem}\label{lem same coh}
    Let $X$ be a canonical variety and let $f\colon Y\to X$ be a sequential terminalization. Then for any $\mathbb{Q}$-Cartier Weil divisor $D$ on $X$ and any integer $i\geq 0$, we have 
    \[
        H^i(Y, \mathcal O_Y(f^{\lfloor *\rfloor}(D)))=H^i(X, \mathcal O_X(D)).
    \]
\end{lem}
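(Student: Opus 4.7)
The plan is to induct on the length $m$ of the sequential terminalization. Writing $f = f_m \circ \cdots \circ f_1$ and applying the Leray spectral sequence iteratively to each $f_k$, the lemma reduces to the following single-step statement: for every $1 \le k \le m$,
\[
(f_k)_*\mathcal{O}_{X_k}(D_k) = \mathcal{O}_{X_{k-1}}(D_{k-1}), \qquad R^i(f_k)_*\mathcal{O}_{X_k}(D_k) = 0 \text{ for } i > 0.
\]
Granting this, $H^i(X_k, \mathcal{O}_{X_k}(D_k)) = H^i(X_{k-1}, \mathcal{O}_{X_{k-1}}(D_{k-1}))$ holds at every step, and composing gives the lemma. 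So the whole game is to analyze one step.

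For the single step, write $f_k^*D_{k-1} = D_k + \beta_k E_k$ with $\beta_k \in [0,1)$ (and $\beta_k=0$ if $E_k=0$). Since $f_k$ is crepant, this rearranges to
\[
D_k - K_{X_k} = f_k^*(D_{k-1} - K_{X_{k-1}}) - \beta_k E_k,
\]
so relatively over $X_{k-1}$ we have $D_k - K_{X_k} \sim_{f_k,\mathbb{Q}} -\beta_k E_k$. For the pushforward equality, both sides are reflexive sheaves on the normal variety $X_{k-1}$, so it suffices to compare them at the generic point of each prime divisor $P \subset X_{k-1}$. If $P$ is not in $f_k(\mathrm{Ex}(f_k))$, then $f_k$ is an isomorphism there and the strict transform $\widetilde{P}$ appears in $D_k$ with the same coefficient as $P$ in $D_{k-1}$, giving matching pole conditions on rational sections. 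Conversely, for a rational function $\phi$ on $X_{k-1} = X_k$, the divisor $\mathrm{div}(f_k^*\phi)$ is integral, so the inequality $\mathrm{div}(\phi) + D_{k-1} \ge 0$ pulls back and rounds to $\mathrm{div}(f_k^*\phi) + \lfloor f_k^*D_{k-1}\rfloor \ge 0$, and the converse is immediate by taking direct image.

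For the higher direct image vanishing, I would apply relative Kawamata--Viehweg to the klt pair $(X_k, 0)$ (permissible since $X_k$ is $\mathbb{Q}$-factorial terminal). The relation $D_k - K_{X_k} \sim_{f_k,\mathbb{Q}} -\beta_k E_k$, together with $-E_k$ being $f_k$-nef, makes $D_k - K_{X_k}$ $f_k$-nef. In the divisorial-contraction case with $\beta_k > 0$, the relative Picard number over $X_{k-1}$ equals one, so the $f_k$-nef divisor $-E_k$ is in fact $f_k$-ample, hence $D_k - K_{X_k}$ is $f_k$-ample, and relative Kawamata--Viehweg (in the version for $\mathbb{Q}$-Cartier Weil divisors on $\mathbb{Q}$-factorial klt varieties) yields $R^i(f_k)_*\mathcal{O}_{X_k}(D_k) = 0$ for $i > 0$. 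When $f_k$ is an isomorphism the claim is trivial.

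The main obstacle is the degenerate case when $E_k \neq 0$ but $\beta_k = 0$, where $D_k = f_k^*D_{k-1}$ is $f_k$-numerically trivial after subtracting $K_{X_k}$, so one cannot directly invoke the nef-and-big form of Kawamata--Viehweg. Here I would argue by a small perturbation: pick a positive integer $N$ with $ND_{k-1}$ Cartier and observe $\mathcal{O}_{X_k}(D_k) \cong f_k^{[*]}\mathcal{O}_{X_{k-1}}(D_{k-1})$; then combine the projection formula for the Cartier divisor $ND_k = f_k^*(ND_{k-1})$ together with rationality of the canonical singularities of $X_{k-1}$ (giving $Rf_{k*}\mathcal{O}_{X_k} = \mathcal{O}_{X_{k-1}}$) and a cyclic-cover argument to descend the vanishing from the $N$-th reflexive power back to $\mathcal{O}_{X_k}(D_k)$. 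Handling this edge case uniformly with the generic $\beta_k > 0$ case is the main technical point that deserves care in the write-up.
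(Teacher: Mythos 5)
Your reduction to a single step via Leray, and your identification $f_{k*}\mathcal O_{X_k}(D_k)=\mathcal O_{X_{k-1}}(D_{k-1})$ (argued through reflexivity and the integrality of $\operatorname{div}(f_k^*\phi)$), match the paper's proof, which cites \cite{nakayama-zariski}*{Lemma~II.2.11} for exactly this fact and then applies relative Kawamata--Viehweg vanishing. So the skeleton is right.

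The problem is the ``degenerate case'' you single out as the main technical point: it does not exist. For a \emph{birational} morphism $f_k$, every $f_k$-nef $\mathbb{Q}$-Cartier divisor is automatically $f_k$-big, since relative bigness is tested on the generic fiber, which is a point. Hence $D_k-K_{X_k}\equiv_{f_k}-\beta_k E_k$ being $f_k$-nef already suffices, uniformly in $\beta_k$ (including $\beta_k=0$, where it is $f_k$-numerically trivial, still $f_k$-nef and $f_k$-big), and the relative Kawamata--Viehweg theorem \cite{KMM}*{Theorem~1.2.5} applied to the klt variety $X_k$ with the integral $\mathbb{Q}$-Cartier divisor $D_k=K_{X_k}+(D_k-K_{X_k})$ gives $R^jf_{k*}\mathcal O_{X_k}(D_k)=0$ for $j>0$ in all cases. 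This is precisely what the paper does. Your proposed patch for the $\beta_k=0$ case --- descending vanishing from the Cartier divisor $ND_k$ via projection formula and a cyclic cover --- is not only unnecessary but also not a valid argument as sketched: there is no direct implication from $R^jf_{k*}$ of a reflexive power to $R^jf_{k*}$ of the sheaf itself without substantial extra work. Two smaller corrections: the intermediate $X_k$ are only canonical (hence klt), not terminal or $\mathbb{Q}$-factorial --- only $X_m=Y$ is --- so you should invoke vanishing in the klt setting, using that $D_k$ is $\mathbb{Q}$-Cartier by construction; and the detour through $f_k$-ampleness of $-E_k$ via relative Picard number one is likewise not needed, since nefness is all the hypothesis requires.
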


\begin{proof}
    Keep the notation in Definition~\ref{def seq terminalization} and Definition~\ref{def weilpullback}. It suffices to prove that \[H^i(X_k, \mathcal O_{X_k}(D_k))=H^i(X_{k-1}, \mathcal O_{X_{k-1}}(D_{k-1}))\] for every $1\leq k\leq m$ and $i\geq 0$. 

    By construction, we have $D_k=\lfloor f_k^*D_{k-1} \rfloor=f_k^*D_{k-1}-a_kE_k$ for some $0\leq a_k<1$. So $f_{k*}\mathcal O_{X_k}(D_k)=\mathcal O_{X_{k-1}}(D_{k-1})$ by \cite{nakayama-zariski}*{Lemma~II.2.11}. Moreover, $D_k$ is $f_k$-nef and $f_k$-big and $K_{X_k}$ is $f_k$-trivial as $f_k$ is crepant. Hence the Kawamata--Viehweg vanishing theorem \cite{KMM}*{Theorem~1.2.5} yields that
    \[
        R^jf_{k*}\mathcal O_{X_k}(D_k)=R^jf_{k*}\mathcal O_{X_k}(K_{X_k}+D_k-K_{X_k})=0
    \]
    for $j>0$. By a spectral sequence argument, it follows that 
    \[
        H^i(X_k, \mathcal O_{X_k}(D_k))=H^i(X_{k-1}, f_{k*}\mathcal O_{X_k}(D_k))=H^i(X_{k-1}, \mathcal O_{X_{k-1}}(D_{k-1}))
    \]
    for every $i\geq 0$. 
\end{proof}

The Weil pullback behaves not so well as it is not additive but only superadditive, namely, for $2$ divisors $D$ and $D'$, $f^{\lfloor *\rfloor}(D+D') \geq  f^{\lfloor *\rfloor}(D)+f^{\lfloor *\rfloor}(D')$. We have additivity in the following case which is useful.

\begin{lem}\label{lem pullback of D+G}
    Let $X$ be a canonical variety and let $f\colon Y\to X$ be a sequential terminalization. Let $D$ and $D'$ be $\mathbb{Q}$-Cartier Weil divisors on $X$. If the usual pullback $f^*D'$ is a Weil divisor on $Y$, then $f^{\lfloor *\rfloor}(D+D') = f^{\lfloor *\rfloor}(D)+f^*D'$; in particular, $f^{\lfloor *\rfloor}(D') = f^*D'$.
\end{lem}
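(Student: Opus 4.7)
The plan is to induct along the decomposition $f = f_1 \circ \cdots \circ f_m$ of the sequential terminalization, tracking both sequences of partial pullbacks simultaneously. Keeping the notation of Definition~\ref{def seq terminalization} and Definition~\ref{def weilpullback}, write $D_k \coloneq \lfloor f_k^* D_{k-1}\rfloor$ and let $(D+D')_k$ denote the analogous sequence starting from $(D+D')_0 = D+D'$. Put $D'_k \coloneq (f_1 \circ \cdots \circ f_k)^*D'$ for the ordinary $\mathbb{Q}$-Cartier pullback; by hypothesis $D'_m = f^* D'$ lies in $\Div(Y)$.

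The first step is to show that \emph{every} intermediate $D'_k$ is in fact integral. Each $f_k$ is either small or extracts the single prime exceptional divisor $E_k$, so one can write
\[
    D'_k = (f_k^{-1})_* D'_{k-1} + a_k E_k
\]
for some $a_k \in \mathbb{Q}$. The strict transform shares its coefficients with $D'_{k-1}$ on every non-exceptional prime divisor, so the integrality of $D'_k$ forces both $a_k \in \mathbb{Z}$ and all coefficients of $D'_{k-1}$ to be integers. A downward induction from $k=m$ therefore yields $D'_k \in \Div(X_k)$ for every $k$.

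With that in hand, I would prove by induction on $k$ the stronger identity $(D+D')_k = D_k + D'_k$. The case $k=0$ is tautological. For the inductive step, $\mathbb{Q}$-linearity of $f_k^*$ combined with integrality of $f_k^* D'_{k-1}$ lets me pull that summand out of the floor:
\[
    (D+D')_k = \lfloor f_k^*(D_{k-1} + D'_{k-1})\rfloor = \lfloor f_k^* D_{k-1}\rfloor + f_k^* D'_{k-1} = D_k + D'_k.
\]
Taking $k=m$ gives $f^{\lfloor *\rfloor}(D+D') = f^{\lfloor *\rfloor}(D) + f^* D'$. For the ``in particular,'' I would specialize to $D=0$ and note that $f^{\lfloor *\rfloor}(0)=0$ since at each step the floor of the zero divisor is zero.

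The only non-formal step is the downward induction showing integrality of each $D'_k$; once that is in place, the proof is essentially a bookkeeping exercise on how $\lfloor\,\cdot\,\rfloor$ interacts with addition of a $\mathbb{Z}$-divisor. The main thing to be careful about is precisely why the hypothesis ``$f^*D'$ is a Weil divisor'' has to propagate through \emph{all} intermediate stages rather than only appear at the last one, which is what makes the sequential (rather than one-shot) nature of the terminalization essential.
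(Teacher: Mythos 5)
Your proposal is correct and follows essentially the same route as the paper: define the intermediate ordinary pullbacks $D'_k = f_k^*D'_{k-1}$, observe that integrality of $f^*D'$ forces each $D'_k$ to be integral (the paper asserts this tersely, whereas you justify it by the downward induction via strict transforms, which amounts to the same pushforward argument), and then pull the integral summand out of the floor at each stage. The extra care you take with the intermediate integrality and the $D=0$ specialization is a harmless elaboration of the paper's argument, not a different method.
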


\begin{proof}
    Keep the notation in Definition~\ref{def seq terminalization} and Definition~\ref{def weilpullback}. Take $D'_0=D'$, and define inductively $D'_k= f_k^*D'_{k-1} $ for $1\leq k\leq m$. Then as $f^*D'$ is a Weil divisor, $D'_k$ is a Weil divisor for $1\leq k\leq m$. This implies that 
    \[
        \lfloor f_k^*(D_{k-1}+D'_{k-1}) \rfloor=\lfloor f_k^*D_{k-1} +f_k^*D'_{k-1}\rfloor=\lfloor f_k^*D_{k-1} \rfloor +f_k^*D'_{k-1}=D_{k}+D'_k 
    \] 
    for $1\leq k\leq m$. So we get $f^{\lfloor *\rfloor}(D+D') = f^{\lfloor *\rfloor}(D)+f^*D'$.
\end{proof}
 
To conclude this subsection, we prove the following proposition which determines the difference between certain intersection numbers of the Weil pullback and the original divisor. It is an application of Reid's Riemann--Roch formula for surfaces.  

\begin{prop}\label{prop.rr.new}
    Let $X$ be a projective canonical $3$-fold and let $f\colon Y\to X$ be a sequential terminalization. Let $D$ be a $\mathbb{Q}$-Cartier Weil divisor on $X$. Then for any $\mathbb{Q}$-Cartier $\mathbb{Q}$-divisor $H$ on $X$,
    \begin{align}\label{eq D2H}
        (f^{\lfloor *\rfloor}(D))^2\cdot f^*H-D^2\cdot H=  \sum_{C\subset \Sing (X)}2(H\cdot C)c_C(D).
    \end{align}
\end{prop}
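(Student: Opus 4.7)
The plan is to reduce to the surface case by restricting to a general hyperplane section and invoking Reid's Riemann--Roch formula \eqref{eq RR duval} for surfaces with Du Val singularities. Since both sides of \eqref{eq D2H} are $\mathbb{Q}$-linear in $H$, I may assume that $H$ is a general very ample Cartier divisor. Let $S \coloneqq H \in |H|$ be a general member and set $S' \coloneqq f^{*}H \subset Y$. By Bertini and the local description of $X$ along $\Sing_1(X)$, $S$ is a projective surface with Du Val singularities whose singular locus consists exactly of the transverse intersection points $H \cap C$ for crepant curves $C \subset \Sing_1(X)$, and at each $Q \in H \cap C$ the Du Val type of $S$ agrees with that of $C$. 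Since $Y$ has only isolated terminal singularities (which a general $f^{*}H$ avoids), $S'$ is smooth; and since $K_{S'} = (K_Y+S')|_{S'} = f|_{S'}^{*}(K_X+H)|_S = f|_{S'}^{*}K_S$, the induced map $f|_{S'}\colon S' \to S$ is crepant, hence the minimal resolution.

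The key step is to prove
\[
    \chi(S,\mathcal{O}_S(D|_S)) \;=\; \chi(S',\mathcal{O}_{S'}(f^{\lfloor *\rfloor}(D)|_{S'})).
\]
For this I would use the two short exact sequences
\begin{align*}
    0 &\to \mathcal{O}_X(D-H) \to \mathcal{O}_X(D) \to \mathcal{O}_S(D|_S) \to 0, \\
    0 &\to \mathcal{O}_Y(f^{\lfloor *\rfloor}(D)-f^{*}H) \to \mathcal{O}_Y(f^{\lfloor *\rfloor}(D)) \to \mathcal{O}_{S'}(f^{\lfloor *\rfloor}(D)|_{S'}) \to 0.
\end{align*}
Lemma~\ref{lem pullback of D+G}, applied with the Cartier divisor $-H$, yields $f^{\lfloor *\rfloor}(D-H) = f^{\lfloor *\rfloor}(D) - f^{*}H$; combined with Lemma~\ref{lem same coh} the Euler characteristics of the leftmost and middle terms agree across the two rows, hence so do those of the rightmost terms. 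Now comparing Reid's Riemann--Roch formula \eqref{eq RR duval} on $S$ with the ordinary Riemann--Roch formula on the smooth surface $S'$, and using $\chi(\mathcal{O}_{S'}) = \chi(\mathcal{O}_S)$ (Du Val singularities are rational) together with $K_{S'} \cdot f^{\lfloor *\rfloor}(D)|_{S'} = K_S \cdot D|_S$ (projection formula, via $K_{S'} = f|_{S'}^{*}K_S$), I would obtain
\[
    (f^{\lfloor *\rfloor}(D)|_{S'})^{2} - (D|_S)^{2} \;=\; 2\sum_{Q \in \Sing(S)} c_Q(D|_S).
\]

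Finally I would translate this surface identity back to a global one: $(f^{\lfloor *\rfloor}(D)|_{S'})^{2} = (f^{\lfloor *\rfloor}(D))^{2} \cdot f^{*}H$ and $(D|_S)^{2} = D^{2} \cdot H$, while by Definition~\ref{def ccD} each singular point of $S$ lying on a fixed crepant curve $C$ contributes $c_C(D)$, with exactly $H\cdot C$ such points. This yields \eqref{eq D2H} for general very ample $H$, and the general case follows from $\mathbb{Q}$-linearity of both sides in $H$. The main technical obstacle is justifying the top short exact sequence, i.e.\ the identification $\mathcal{O}_X(D)|_S \cong \mathcal{O}_S(D|_S)$, since a priori the restriction of a reflexive sheaf need not be reflexive. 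This is handled by a local analysis at each $Q \in H\cap C$: there $X$ is analytically isomorphic to $\mathbb{A}^1 \times S_C$ and $D$ is locally pulled back from a Weil divisor on $S_C$ (cf.\ Definition~\ref{def ccD} and \cite{hartshorne}*{Proposition~II.6.6}), while the transverse hyperplane $H$ identifies $S$ locally with $S_C$, and the isomorphism follows. An analogous but simpler local argument on $Y$, where $f^{*}H$ avoids the isolated non-Gorenstein points of $Y$, justifies the second sequence.
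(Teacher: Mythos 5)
Your proposal is correct and follows essentially the same route as the paper: reduce by linearity to a general very ample $H$, show $\chi(H,\mathcal{O}_H(D|_H))=\chi(H',\mathcal{O}_{H'}(f^{\lfloor*\rfloor}(D)|_{H'}))$ via Lemmas~\ref{lem pullback of D+G} and \ref{lem same coh}, and compare Reid's surface Riemann--Roch \eqref{eq RR duval} on $H$ with the classical formula on the smooth $H'$, using the projection formula and rationality of Du Val singularities. The only difference is cosmetic: you spell out the restriction exact sequences and the local identification $\mathcal{O}_X(D)|_H\cong\mathcal{O}_H(D|_H)$, which the paper leaves implicit.
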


\begin{proof}  
    By linearity, it suffices to prove \eqref{eq D2H} for $H$ very ample. Take a general very ample divisor $H$ on $X$ and denote $H'\coloneq f^*H=f^{-1}_*H$ on $Y$. Denote $S\coloneq f^{\lfloor *\rfloor}(D)$. As $H$ is Cartier, by Lemma~\ref{lem pullback of D+G}, 
    \[
        f^{\lfloor *\rfloor}(D-H)={S}-H'.
    \] 
    So by Lemma~\ref{lem same coh},
    \[
        \chi(X, \mathcal{O}_X(D))-\chi(X, \mathcal{O}_X(D-H))=\chi(Y, \mathcal{O}_Y(S))-\chi(Y, \mathcal{O}_Y(S-H')),
    \]
    which implies that
    \begin{align}
        \chi(H, \mathcal{O}_H(D|_H))=\chi(H', \mathcal{O}_{H'}(S|_{H'})).\label{eq S-D1}
    \end{align}     
    As $H$ is a surface with Du Val singularities and $H'$ is smooth by Bertini's theorem, by \eqref{eq RR duval} and the usual Riemann--Roch formula, we have
    \begin{align}
        \chi(H, \mathcal{O}_H(D|_{H}))={}& \chi(H, \mathcal{O}_H)+\frac{1}{2}D|_H\cdot (D|_H-K_H)+\sum_{Q\in \Sing (H)}c_{Q}(D|_H); \label{eq S-D2}\\
        \chi(H', \mathcal{O}_{H'}(S|_{H'}))={}&\chi(H', \mathcal{O}_{H'})+\frac{1}{2}S|_{H'}\cdot (S|_{H'}-K_{H'}).\label{eq S-D3}
    \end{align}
    As $f_*S=D$, by the projection formula, we have
    \begin{align}
        (D|_H\cdot K_H)= (D\cdot (K_X+H)\cdot H)=(S\cdot (K_Y+H')\cdot H')=(S|_{H'}\cdot K_{H'}).\label{eq S-D4}
    \end{align} 
    Also we have $\chi(H, \mathcal{O}_{H})=\chi(H', \mathcal{O}_{H'})$ as  $H'\to H$ is birational and $H$ has only rational singularities. So combining \eqref{eq S-D1}--\eqref{eq S-D4}, we have
    \[
        S^2\cdot H'- D^2\cdot H= \sum_{Q\in \Sing (H)}2c_{Q}(D|_H).
    \]
    Note that by Bertini's theorem, the singularities of $H$ are exactly those points in $H\cap C$ for all crepant curves $C\subset\Sing(X)$, so by Definition~\ref{def ccD},
    \[
        \sum_{Q\in \Sing (H)}c_{Q}(D|_H)=\sum_{C\subset \Sing(X)}\sum_{Q\in H\cap C}c_{Q}(D|_H)=\sum_{C\subset \Sing(X)}(H\cdot C)c_{C}(D).
    \]
    This proves \eqref{eq D2H}.
\end{proof}

\subsection{A Riemann--Roch formula}

After the preparation of \S\,\ref{sec STWP}, we are able to prove the main theorem of this section, which is a Riemann--Roch formula (in a weak form) for 
Weil divisors on canonical $3$-folds.

\begin{thm}\label{thm.rr.new}
    Let $X$ be a projective canonical $3$-fold and let $f\colon Y\to X$ be a sequential terminalization. Let $D$ be a $\mathbb{Q}$-Cartier Weil divisor on $X$. Then
    \begin{align*}
        {}&\chi(X, \mathcal{O}_X(D))-\chi(X, \mathcal{O}_X(D+K_X))\\
        ={}&-\frac{1}{2}D^2\cdot K_X+2\chi(X, \mathcal{O}_X)+\sum_{C\subset \Sing(X)}(-K_X\cdot C)c_C(D)\\{}&-\sum_{Q\in  B_X}\frac{(\overline{ \mathsf{i}_{f^{\lfloor *\rfloor}(D), Q} b_Q})_{r_Q}(\overline{-\mathsf{i}_{f^{\lfloor *\rfloor}(D),Q} b_Q})_{r_Q}}{2r_Q}.
    \end{align*} 
    Here recall that $\mathsf{i}_{f^{\lfloor *\rfloor}(D), Q}$ makes sense for the Weil divisor $f^{\lfloor *\rfloor}(D)$ on $Y$ as $B_Y=B_X$ and $Y$ is terminal.
\end{thm}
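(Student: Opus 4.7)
The plan is to transport Reid's Riemann--Roch formula from the terminal model $Y$ back to $X$ via the Weil pullback. Since $Y$ is $\mathbb{Q}$-factorial terminal, every $\mathbb{Q}$-Cartier Weil divisor on $Y$ automatically satisfies Reid's condition, so Lemma~\ref{lem.DDKterminal} applies directly to $f^{\lfloor *\rfloor}(D)$ on $Y$. The entire proof then amounts to matching each term in that formula with the corresponding term on $X$ demanded by the statement.

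First I would set up the left-hand side. Because $f$ is crepant, $f^*K_X=K_Y$, so $f^*K_X$ is itself a Weil divisor on $Y$ and Lemma~\ref{lem pullback of D+G} (applied with $D'=K_X$) gives the additivity
\[
    f^{\lfloor *\rfloor}(D+K_X)=f^{\lfloor *\rfloor}(D)+K_Y.
\]
Combining this with Lemma~\ref{lem same coh}, applied to both $D$ and $D+K_X$, identifies $\chi(X,\mathcal{O}_X(D))-\chi(X,\mathcal{O}_X(D+K_X))$ with the analogous difference on $Y$ involving $f^{\lfloor *\rfloor}(D)$ and $f^{\lfloor *\rfloor}(D)+K_Y$. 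Applying Lemma~\ref{lem.DDKterminal} on $Y$ to $f^{\lfloor *\rfloor}(D)$ expands this difference into three pieces: the intersection term $-\frac12(f^{\lfloor *\rfloor}(D))^2\cdot K_Y$, the constant $2\chi(Y,\mathcal{O}_Y)$, and the orbifold sum over $B_Y$. The identifications $\chi(Y,\mathcal{O}_Y)=\chi(X,\mathcal{O}_X)$ (from Lemma~\ref{lem same coh} with $D=0$) and $B_Y=B_X$ (built into the definition of Reid's basket via any terminalization) make the last two terms match those in the statement of the theorem.

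What remains is to convert the intersection number $(f^{\lfloor *\rfloor}(D))^2\cdot K_Y=(f^{\lfloor *\rfloor}(D))^2\cdot f^*K_X$ into $D^2\cdot K_X$ plus a correction supported on the crepant curves. This is precisely the content of Proposition~\ref{prop.rr.new} taken with $H=K_X$, which produces the discrepancy $\sum_{C\subset\Sing(X)}2(K_X\cdot C)c_C(D)$; after multiplication by $-\tfrac12$ this becomes $\sum_{C}(-K_X\cdot C)c_C(D)$, matching the curve-contribution term exactly. Substituting everything back then yields the formula.

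The argument is essentially a direct substitution once the bookkeeping is arranged, so there is no major obstacle; the conceptual work has been done in Lemmas~\ref{lem same coh} and~\ref{lem pullback of D+G} and in Proposition~\ref{prop.rr.new}. The subtlest point is noticing that the Weil pullback is in general only superadditive, and that it becomes additive against $K_X$ here only because $f^*K_X=K_Y$ is automatically a Weil divisor, a fact that relies crucially on the crepancy of each $f_k$ in the sequential terminalization.
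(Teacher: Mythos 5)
Your proposal is correct and follows essentially the same route as the paper: additivity of the Weil pullback against $K_X$ via Lemma~\ref{lem pullback of D+G}, transfer of cohomology via Lemma~\ref{lem same coh}, application of Lemma~\ref{lem.DDKterminal} on $Y$, and conversion of the intersection term via Proposition~\ref{prop.rr.new} with $H=K_X$. The only cosmetic difference is that you justify $\chi(Y,\mathcal{O}_Y)=\chi(X,\mathcal{O}_X)$ by Lemma~\ref{lem same coh} with $D=0$, whereas the paper invokes rationality of canonical singularities; both are valid.
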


\begin{proof}
    Denote $S\coloneq f^{\lfloor *\rfloor}(D)$. As $f$ is crepant, by Lemma~\ref{lem pullback of D+G}, 
    \[
        f^{\lfloor *\rfloor}(D+K_X)={S}+K_Y.
    \] 
    So by Lemma~\ref{lem same coh} and Lemma~\ref{lem.DDKterminal},
    \begin{align*}
        {}&\chi(X, \mathcal{O}_X(D))-\chi(X, \mathcal{O}_X(D+K_X))\\
        ={}&\chi(Y, \mathcal{O}_Y(S))-\chi(Y, \mathcal{O}_Y(S+K_Y))\\
        ={}&-\frac{1}{2}S^2\cdot K_Y+2\chi(Y, \mathcal{O}_Y)-\sum_{Q\in B_Y}\frac{(\overline{ \mathsf{i}_{f^{\lfloor *\rfloor}(D),Q} b_Q})_{r_Q}(\overline{-\mathsf{i}_{f^{\lfloor *\rfloor}(D),Q}b_Q})_{r_Q}}{2r_Q}.
    \end{align*} 
    The conclusion follows from  Proposition~\ref{prop.rr.new} for $H=K_X$ and the fact that $\chi(Y, \mathcal{O}_Y)=\chi(X, \mathcal{O}_X)$ as $X$ has rational singularities.
\end{proof}

\begin{rem}
    In Theorem~\ref{thm.rr.new}, by Lemma~\ref{lem same coh} and \eqref{eq.reidformula} we can get a more general form of Riemann--Roch formula:
    \begin{align*} 
        \chi(X, \mathcal O_X(D))={}&\chi(X, \mathcal O_X)+\frac{1}{12}f^{\lfloor *\rfloor}(D)\cdot (f^{\lfloor *\rfloor}(D)-K_Y)\cdot (2f^{\lfloor *\rfloor}(D)-K_Y)\\{}&+\frac{1}{12}c_2(Y)\cdot f^{\lfloor *\rfloor}(D)+\sum_{Q\in  B_Y}c_Q(f^{\lfloor *\rfloor}(D)).
    \end{align*}
    But in order to make use of this formula, we need to understand $f^{\lfloor *\rfloor}(D)$ in more detail. For example, we need to calculate the intersection numbers $(f^{\lfloor *\rfloor}(D))^3$ and $c_2(Y)\cdot f^{\lfloor *\rfloor}(D)$. This requires a better understanding of singular points of $X$ rather than generic points of crepant curves. So in Theorem~\ref{thm.rr.new} we consider the form $\chi(X, \mathcal{O}_X(D))-\chi(X, \mathcal{O}_X(D+K_X))$ which cancels those terms that are difficult to compute. 

    One can compare this formula with \cite{BuckleySZ}*{Theorem~2.1} where a Riemann--Roch formula for $3$-folds with only quotient singularities (but not necessarily canonical singularities) is given. 
\end{rem}

As a corollary, we have the following theorem which can be effectively applied to problems on Fano indices of canonical Fano $3$-folds.

\begin{thm}\label{thm.rr.sA}
    Let $X$ be a canonical Fano $3$-fold and let $f\colon Y\to X$ be a sequential terminalization. Suppose that $-K_X\sim_{\mathbb{Q}} qA$ for some positive rational number $q$ and some ample Weil divisor $A$.  Then for any integer $s$ with $0<s<q$,
    \begin{align*}
        h^0(X,\mathcal O_X(sA))={}& 
        -\frac{s^2}{2}A^2\cdot K_X+2+\sum_{C\subset \Sing(X)}(-K_X\cdot C)c_C(sA)\\{}&-\sum_{Q\in B_X}\frac{(\overline{\mathsf{i}_{f^{\lfloor *\rfloor}(sA), Q}b_Q})_{r_Q}(\overline{-\mathsf{i}_{f^{\lfloor *\rfloor}(sA), Q}b_Q})_{r_Q}}{2r_Q}.
    \end{align*} 
\end{thm}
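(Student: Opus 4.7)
The plan is to obtain the formula as a direct specialization of Theorem~\ref{thm.rr.new} to the divisor $D=sA$. After this substitution, the intersection term $-\tfrac{s^2}{2}A^2\cdot K_X$ and the two sums over $C\subset\Sing(X)$ and $Q\in B_X$ appear in exactly the form claimed. What remains is to evaluate the two Euler characteristic expressions produced by Theorem~\ref{thm.rr.new}: namely, to identify $\chi(X,\mathcal O_X(sA))$ with $h^0(X,\mathcal O_X(sA))$, to show that $\chi(X,\mathcal O_X(sA+K_X))=0$, and to use $\chi(X,\mathcal O_X)=1$ from Remark~\ref{rem.posofc2}.

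For the first identification, since $-K_X$ is ample and $-K_X\sim_{\mathbb Q}qA$, the divisor $sA$ can be written as $K_X+L$ with $L\sim_{\mathbb Q}(s+q)A$ ample (using $s>0$). Because $X$ is canonical, and so klt and Cohen--Macaulay, the Kawamata--Viehweg vanishing theorem gives $H^i(X,\mathcal O_X(sA))=0$ for every $i>0$, whence $\chi(X,\mathcal O_X(sA))=h^0(X,\mathcal O_X(sA))$.

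For the second identification, I would show that every cohomology group of $sA+K_X$ vanishes. Writing $sA+K_X=K_X+sA$ with $sA$ ample, Kawamata--Viehweg immediately yields $H^j(X,\mathcal O_X(sA+K_X))=0$ for $j>0$. For $H^0$, I would apply Serre duality on the Cohen--Macaulay variety $X$ to obtain
\[
H^0(X,\mathcal O_X(sA+K_X))\cong H^3(X,\mathcal O_X(-sA))^{\vee},
\]
and then observe that $-sA\sim_{\mathbb Q}K_X+(q-s)A$ with $(q-s)A$ ample (using $0<s<q$), so Kawamata--Viehweg once more forces the right-hand side to vanish. Combined with the previous bullet, this gives $\chi(X,\mathcal O_X(sA+K_X))=0$.

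Plugging these two vanishing statements together with $\chi(X,\mathcal O_X)=1$ into Theorem~\ref{thm.rr.new} applied to $D=sA$ yields the stated formula. There is no genuine obstacle here: once Theorem~\ref{thm.rr.new} is in hand, the result is a routine combination of Serre duality and Kawamata--Viehweg vanishing, made possible by the ampleness of $-K_X$ and $A$ together with the strict inequalities $0<s<q$.
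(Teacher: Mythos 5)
Your proposal is correct and follows essentially the same route as the paper: apply Theorem~\ref{thm.rr.new} to $D=sA$ and evaluate the three Euler characteristics via Kawamata--Viehweg vanishing. The only place you deviate is the vanishing of $h^0(X,\mathcal O_X(sA+K_X))$, where you invoke Serre duality and a second application of Kawamata--Viehweg; the paper instead just observes that $sA+K_X\sim_{\mathbb Q}(s-q)A$ with $s<q$, i.e.\ the divisor is $\mathbb{Q}$-linearly equivalent to a negative multiple of an ample divisor and hence cannot have sections. The paper's route is preferable not only because it is shorter but because Serre duality in the naive form $H^0(X,\mathcal O_X(K_X+D))\cong H^3(X,\mathcal O_X(-D))^{\vee}$ is delicate for Weil divisors that are not Cartier: on a normal Cohen--Macaulay $3$-fold the dual of $H^i(X,\mathcal F)$ for a reflexive rank-one sheaf $\mathcal F$ is an Ext group, and identifying it with the cohomology of the reflexive hull $\mathcal O_X(K_X-D)$ requires $\mathcal O_X(D)$ to be maximal Cohen--Macaulay, which is not automatic here. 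Since the elementary argument makes this step unnecessary, I would replace it.
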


\begin{proof}
    Since $-K_X$ is ample, all higher cohomologies of $sA$ and $sA+K_X$ vanish 
    by the Kawamata--Viehweg vanishing theorem \cite{KMM}*{Theorem~1.2.5}. Also $h^0(X, \mathcal{O}_X(sA+K_X))=0$ as $sA+K_X\sim_{\mathbb{Q}} (s-q)A$ and $s<q$. So we have 
    \begin{align*}
        {}&\chi(X, \mathcal{O}_X)=h^0(X, \mathcal{O}_X)=1,\\
        {}&\chi(X, \mathcal{O}_X(sA))=h^0(X, \mathcal{O}_X(sA)),\\
        {}&\chi(X, \mathcal{O}_X(sA+K_X))=0.
    \end{align*}
    Then the conclusion follows directly from Theorem~\ref{thm.rr.new} for $D=sA$.
\end{proof}

\begin{rem}
    In the applications of Theorem~\ref{thm.rr.sA}, the technical difficulty is to determine the contributions of singularities: we have to handle values of $(-K_X\cdot C)$, $c_C(sA)$, and ${\mathsf{i}_{f^{\lfloor *\rfloor}(sA), Q}}$. We will treat the first two in \S\,\ref{sec.LB} and \S\,\ref{sec det C}. On the other hand, determining ${\mathsf{i}_{f^{\lfloor *\rfloor}(sA), Q}}$ is much harder due to the complicated definition of Weil pullback; the problem is that ${\mathsf{i}_{f^{\lfloor *\rfloor}(sA), Q}}$ is not linear in $s$ (see Lemma~\ref{lem pullback of D+G}) and we do not have a good way to calculate it due to the lack of classification of singularities. However, as we can usually determine $B_X$, ${\mathsf{i}_{f^{\lfloor *\rfloor}(sA), Q}}$ takes value in a finite set and often we can determine it from the integrality constraints discussed in \S\,\ref{sec int cons}.
\end{rem}

\subsection{Integrality constraints}\label{sec int cons}

An important phenomenon for the Riemann--Roch formula is that the Euler characteristic is always an integer while the contribution from singularities is only a sum of rational numbers. This provides many integrality constraints which give strong restrictions on singularities. 
In this subsection, we explore several integrality constraints that will be used later.

We will frequently use the fact that for a positive integer  $r$ and an integer $a$, 
\[
    a(r-a)\equiv (\overline{a})_r(\overline{-a})_r\bmod 2r.
\] 

As a direct consequence of Theorem~\ref{thm.rr.new}, we have the following integrality constraint.

\begin{cor}\label{cor RR integer general}
    Let $X$ be a projective canonical $3$-fold and let $f\colon Y\to X$ be a sequential terminalization. Let $D$ be a $\mathbb{Q}$-Cartier Weil divisor on $X$. Then for any positive integer $r'$,  
    \begin{align*}
        -\frac{r'}{2}D^2\cdot K_X +\sum_{C\subset \Sing(X)}(-r'K_X\cdot C)c_C(D)-\sum_{Q\in  B_X}\frac{r' \mathsf{i}_{f^{\lfloor *\rfloor}(D), Q} b_Q(r_Q-\mathsf{i}_{f^{\lfloor *\rfloor}(D), Q}b_Q)}{2r_Q}\in \mathbb{Z}.
    \end{align*} 
\end{cor}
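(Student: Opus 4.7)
The plan is to deduce Corollary~\ref{cor RR integer general} directly from Theorem~\ref{thm.rr.new} together with the elementary congruence
\[
a(r-a)\equiv (\overline{a})_r(\overline{-a})_r\pmod{2r}
\]
that was highlighted at the start of \S\,\ref{sec int cons}.

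First I would apply Theorem~\ref{thm.rr.new} to the given $\mathbb{Q}$-Cartier Weil divisor $D$. The left-hand side $\chi(X,\mathcal{O}_X(D))-\chi(X,\mathcal{O}_X(D+K_X))$ is an integer, and $2\chi(X,\mathcal{O}_X)\in\mathbb{Z}$, so we conclude that
\[
-\frac{1}{2}D^2\cdot K_X+\sum_{C\subset \Sing(X)}(-K_X\cdot C)\,c_C(D)-\sum_{Q\in B_X}\frac{(\overline{\mathsf{i}_{f^{\lfloor *\rfloor}(D),Q}b_Q})_{r_Q}(\overline{-\mathsf{i}_{f^{\lfloor *\rfloor}(D),Q}b_Q})_{r_Q}}{2r_Q}
\]
is an integer. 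Multiplying by the positive integer $r'$ preserves this integrality.

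Next I would compare the orbifold-point contribution here with the one appearing in the corollary. Setting $a_Q\coloneq \mathsf{i}_{f^{\lfloor *\rfloor}(D),Q}b_Q$, the congruence $a_Q(r_Q-a_Q)\equiv(\overline{a_Q})_{r_Q}(\overline{-a_Q})_{r_Q}\pmod{2r_Q}$ gives
\[
\frac{r'\,a_Q(r_Q-a_Q)}{2r_Q}-\frac{r'\,(\overline{a_Q})_{r_Q}(\overline{-a_Q})_{r_Q}}{2r_Q}\in\mathbb{Z}
\]
for each $Q$. Summing over $Q\in B_X$ shows that the two forms of the orbifold contribution differ by an integer; substituting into the integral quantity obtained in the previous paragraph yields exactly the expression in the corollary, which is therefore an integer as claimed. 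No step is a real obstacle here: the result is essentially Theorem~\ref{thm.rr.new} plus the integrality of Euler characteristics, repackaged through the standard residue congruence.
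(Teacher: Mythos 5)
Your argument is correct and is exactly the paper's intended derivation: the corollary is stated as a direct consequence of Theorem~\ref{thm.rr.new}, using the integrality of the Euler characteristics and of $2\chi(X,\mathcal{O}_X)$, followed by multiplication by $r'$ and the congruence $a(r-a)\equiv(\overline{a})_r(\overline{-a})_r\bmod 2r$ recorded just before the statement. Nothing is missing.
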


\begin{rem}\label{rem ignore part}
    In Corollary~\ref{cor RR integer general}, usually some terms are ignorable by the following facts:

    Suppose that $C$ is of type $\mathsf{A}_{j_C-1}$ and $(-r'K_X\cdot C)\in \mathbb{Z}$, then  $(-r'K_X\cdot C)c_C(D)\in \mathbb{Z}$ holds under one of the following conditions:
    \begin{enumerate}
        \item  $2\nmid j_C$ and $j_C\mid (-r'K_X\cdot C)$;
    
        \item  $2\mid j_C$ and $2j_C\mid (-r'K_X\cdot C)$.
    \end{enumerate}
    For a positive integer $r$ and an integer $a$, $\frac{r'a(r-a)}{2r}\in \mathbb{Z}$ holds under one of the following conditions:
    \begin{enumerate}
        \item $2\nmid r$ and $r\mid r'$; 
    
        \item $2\mid r$ and $2r\mid r'$.
    \end{enumerate}
\end{rem}

The following two corollaries are special cases of Corollary~\ref{cor RR integer general}.

\begin{cor}\label{cor JA integer}  
    Let $X$ be a projective canonical $3$-fold and let $f\colon Y\to X$ be a sequential terminalization. Let $D$ be a $\mathbb{Q}$-Cartier Weil divisor on $X$ which is Cartier in codimension $2$. Then
    \begin{align*}
        -\frac{1}{2}D^2\cdot K_X-\sum_{Q\in  B_X}\frac{{ \mathsf{i}_{f^{\lfloor *\rfloor}(D),Q} b_Q}({r_Q-\mathsf{i}_{f^{\lfloor *\rfloor}(D),Q}b_Q})}{2r_Q} \in \mathbb{Z}.
    \end{align*} 
\end{cor}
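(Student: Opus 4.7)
The plan is to derive Corollary~\ref{cor JA integer} as the special case $r'=1$ of Corollary~\ref{cor RR integer general}. The only nontrivial point to check is that the crepant-curve contribution $\sum_{C\subset\Sing(X)}(-K_X\cdot C)c_C(D)$ drops out under the Cartier-in-codimension-$2$ hypothesis; equivalently, $c_C(D)=0$ for every crepant curve $C\subset\Sing(X)$.

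To verify this vanishing, I would fix $C\subset\Sing(X)$ and use Definition~\ref{def ccD}: at a general point of $C$, $X$ is analytically isomorphic to $\mathbb A^1\times S_C$ with $D$ corresponding locally to $\mathbb A^1\times D_0$ for a Weil divisor $D_0$ on $S_C$, and $c_C(D)=c_{P_0}(D_0)$. Since $D$ is $\mathbb Q$-Cartier and Cartier in codimension $2$, it is Cartier at the generic point of $C$, so $D_0$ is Cartier at $P_0\in S_C$. Appealing to the definition of $c_Q$ in \S\,3.1, we locally deform $(P_0\in S_C, D_0)$ to a collection of cyclic quotient pairs $(P_\alpha\in S_\alpha,D_\alpha)$ of type $i_\alpha\bigl(\tfrac1{r_\alpha}(1,-1)\bigr)$. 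Cartier-ness at the singular point is preserved under such deformations, so each $D_\alpha$ is Cartier at $P_\alpha$, which forces $i_\alpha=0$ and hence $c_{P_\alpha}(D_\alpha)=-\frac{i_\alpha(r_\alpha-i_\alpha)}{2r_\alpha}=0$. Summing yields $c_C(D)=c_{P_0}(D_0)=0$.

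With this vanishing in hand, Corollary~\ref{cor RR integer general} applied with $r'=1$ reads
\[
    -\frac{1}{2}D^2\cdot K_X-\sum_{Q\in B_X}\frac{\mathsf{i}_{f^{\lfloor *\rfloor}(D),Q}b_Q(r_Q-\mathsf{i}_{f^{\lfloor *\rfloor}(D),Q}b_Q)}{2r_Q}\in\mathbb Z,
\]
which is exactly the assertion. The only real ingredient beyond substitution is the vanishing of $c_Q$ on Cartier classes at Du Val singularities; this is built into Reid's definition via the chosen deformation, so I do not expect a genuine obstacle, although the deformation step deserves to be stated carefully.
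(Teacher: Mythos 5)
Your proposal is correct and follows the same route as the paper: the paper's proof simply observes that $c_C(D)=0$ for every crepant curve $C$ because $D$ is Cartier in codimension $2$, and then invokes Corollary~\ref{cor RR integer general} with $r'=1$. Your justification of the vanishing could be streamlined by citing the remark after \eqref{eq RR duval} that $c_Q(D)$ depends only on the local Weil divisor class of $D$ at $Q$ (which is trivial for a Cartier divisor), but the deformation argument you give reaches the same conclusion.
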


\begin{proof}
    As $D$ is Cartier in codimension $2$, $c_C(D)=0$ for any crepant curve $C$. So we get the conclusion from Corollary~\ref{cor RR integer general} for $r'=1$.
\end{proof}

\begin{cor}\label{cor.canpart} 
    Let $X$ be a projective canonical $3$-fold and let $f\colon Y\to X$ be a sequential terminalization. Let $D$ be a $\mathbb{Q}$-Cartier Weil divisor on $X$. Then 
    \[
        -r_XD^2\cdot K_X+ \sum_{C\subset \Sing(X)}2(-r_XK_X\cdot C)c_C(D)
        \in \mathbb Z.
    \]
\end{cor}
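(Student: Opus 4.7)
My plan is to obtain Corollary~\ref{cor.canpart} as a direct specialization of Corollary~\ref{cor RR integer general} applied with the particular choice $r' = 2r_X$. Substituting this value into the displayed integrality statement of Corollary~\ref{cor RR integer general} yields
\[
    -r_X D^2\cdot K_X + \sum_{C\subset \Sing(X)} 2(-r_X K_X\cdot C)\,c_C(D) - \sum_{Q\in B_X} \frac{r_X\, \mathsf{i}_{f^{\lfloor *\rfloor}(D),Q}\, b_Q\,(r_Q - \mathsf{i}_{f^{\lfloor *\rfloor}(D),Q}\, b_Q)}{r_Q} \in \mathbb Z.
\]
So the whole task reduces to showing that the last sum over $Q \in B_X$ already lies in $\mathbb{Z}$; then subtracting it preserves the integrality and gives exactly the desired expression.

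For the sum over $Q$, I would use the definition $r_X = \lcm\{r \mid r \in \mathcal{R}_X\}$ recalled in \S\,\ref{sec RR3}. This immediately gives $r_Q \mid r_X$ for every $Q = (r_Q, b_Q) \in B_X$, so $r_X / r_Q$ is a positive integer. Consequently, for each $Q$, the individual summand factors as
\[
    \frac{r_X}{r_Q}\cdot \mathsf{i}_{f^{\lfloor *\rfloor}(D),Q}\, b_Q\,(r_Q - \mathsf{i}_{f^{\lfloor *\rfloor}(D),Q}\, b_Q),
\]
which is a product of integers (noting that $\mathsf{i}_{f^{\lfloor *\rfloor}(D),Q}$ and $b_Q$ are integers by construction in \S\,\ref{sec RR3}). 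Therefore each term in the $Q$-sum is an integer, hence so is the full sum.

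Combining these two observations gives the claim. There is essentially no obstacle: the only subtlety is correctly tracking the factor of $2$ between the denominator $2r_Q$ in Corollary~\ref{cor RR integer general} and the divisibility $r_Q \mid r_X$, which is precisely why the choice $r' = 2r_X$ (rather than $r' = r_X$) is the right one. This is the same kind of reasoning as item (1) in Remark~\ref{rem ignore part}, applied uniformly to every orbifold point of $B_X$.
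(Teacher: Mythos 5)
Your proposal is correct and is exactly the paper's argument: the paper proves this corollary by citing Corollary~\ref{cor RR integer general} with $r'=2r_X$ together with Remark~\ref{rem ignore part}, and your computation $\frac{2r_X\,a(r_Q-a)}{2r_Q}=\frac{r_X}{r_Q}\,a(r_Q-a)\in\mathbb Z$ (using $r_Q\mid r_X$) is precisely the content of that remark. The only quibble is your closing attribution to ``item (1)'' of Remark~\ref{rem ignore part}, which covers only odd $r_Q$ (item (2) handles even $r_Q$), but your explicit factorization already covers both cases uniformly, so nothing is missing.
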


\begin{proof}
    This follows from Corollary~\ref{cor RR integer general} for $r'=2r_X$ and Remark~\ref{rem ignore part}. 
\end{proof}

Besides the integrality constraint provided by Theorem~\ref{thm.rr.new}, we also have integrality constraints provided by Reid's Riemann--Roch formula for exceptional divisors on a (not necessarily sequential) terminalization. Those constraints will be essentially used to estimate the intersection number of a crepant curve with the anti-canonical divisor in \S\,\ref{sec.LB}.
 
\begin{prop}\label{prop.KC in Z}
    Let $X$ be a projective canonical $3$-fold and let $f\colon Y\to X$ be a terminalization. Let $C\subset \Sing(X)$ be a crepant curve with non-split crepant divisors and let $E, E'$ be prime $f$-exceptional divisors centered at $C$. Then we have 
    \begin{align}\label{eq.KCE}
        (K_X\cdot C)-\sum_{Q\in  B_X}\frac{{\mathsf{i}_{E,Q}b_Q}(r_Q-{\mathsf{i}_{E,Q}b_Q})}{2r_Q}\in \mathbb Z,\\
        -(\ell_E\cdot \ell_{E'})\cdot(K_X\cdot C)+\sum_{Q\in B_X}\frac{{\mathsf{i}_{E, Q}\mathsf{i}_{E',Q}b_Q^2}}{r_Q}\in \mathbb Z.\notag
    \end{align} 
\end{prop}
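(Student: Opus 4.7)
The plan is to derive both integrality statements from Reid's Riemann--Roch formula applied on the terminalization $Y$. Since $Y$ is $\mathbb{Q}$-factorial terminal, every Weil divisor on $Y$ is $\mathbb{Q}$-Cartier and automatically satisfies Reid's condition, so Lemma~\ref{lem.DDKterminal} applies with $B_Y = B_X$. The only other input I would need is the elementary congruence $(\overline{c})_r(\overline{-c})_r \equiv c(r-c) \pmod{2r}$ for every integer $c$ and positive integer $r$, which allows replacing the basket contributions in Lemma~\ref{lem.DDKterminal} by the simpler expressions $\mathsf{i}\cdot b_Q(r_Q-\mathsf{i}\cdot b_Q)/(2r_Q)$ modulo $\mathbb{Z}$.

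For the first identity, I would apply Lemma~\ref{lem.DDKterminal} on $Y$ with $D=E$. Both Euler characteristics on the left hand side and the term $2\chi(Y,\mathcal{O}_Y)$ are integers, so the statement reduces to understanding $E^2\cdot K_Y$. By Lemma~\ref{lem EEK=EECK}, $E^2\cdot K_Y=(\ell_E\cdot \ell_E)(K_X\cdot C)=-2(K_X\cdot C)$, since $\ell_E$ is a $(-2)$-curve on the minimal resolution of $S_C$. Applying the congruence above to the basket sum then yields the first identity directly.

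For the second identity, I would apply Lemma~\ref{lem.DDKterminal} three times, to $D=E+E'$, $D=E$ and $D=E'$, and take the signed combination $[D=E+E']-[D=E]-[D=E']$. The Euler characteristic combination and the three $2\chi(Y,\mathcal{O}_Y)$ contributions are all integers. For the intersection piece,
\[
-\tfrac{1}{2}\bigl[(E+E')^2-E^2-E'^2\bigr]\cdot K_Y=-(E\cdot E'\cdot K_Y)=-(\ell_E\cdot \ell_{E'})(K_X\cdot C),
\]
again by Lemma~\ref{lem EEK=EECK}. For the basket piece, I would set $a=\mathsf{i}_{E,Q}b_Q$ and $a'=\mathsf{i}_{E',Q}b_Q$; the additivity $\mathsf{i}_{E+E',Q}\equiv \mathsf{i}_{E,Q}+\mathsf{i}_{E',Q}\pmod{r_Q}$ together with the above congruence lets me replace each $(\overline{\cdot})_{r_Q}(\overline{-\cdot})_{r_Q}$ by $c(r_Q-c)$ modulo $2r_Q$, after which the polynomial identity
\[
(a+a')(r_Q-(a+a'))-a(r_Q-a)-a'(r_Q-a')=-2aa'
\]
isolates the mixed term. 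Dividing by $2r_Q$ and summing over $Q$ then contributes exactly $-\sum_Q \mathsf{i}_{E,Q}\mathsf{i}_{E',Q}b_Q^2/r_Q$ modulo $\mathbb{Z}$; collecting signs delivers the second identity.

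No step is genuinely difficult: the essential structural observation is that the cubic and $c_2$ terms in Reid's formula never appear because everything is routed through the $D\mapsto \chi(D)-\chi(D+K_Y)$ difference packaged in Lemma~\ref{lem.DDKterminal}. The main thing to watch is the bookkeeping in the second identity---tracking the signs of the three applications of Lemma~\ref{lem.DDKterminal} and correctly isolating the cross term in the expansion of $(a+a')^2$---but this is purely mechanical.
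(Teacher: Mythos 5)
Your proposal is correct and follows exactly the paper's argument: apply Lemma~\ref{lem.DDKterminal} to $E$, $E'$, and $E+E'$ on $Y$, use the congruence $(\overline{a})_r(\overline{-a})_r\equiv a(r-a)\bmod 2r$ together with additivity of local indices to isolate the cross term, and convert $E^2\cdot K_Y$ and $E\cdot E'\cdot K_Y$ via Lemma~\ref{lem EEK=EECK}. No gaps.
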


\begin{proof} 
    Applying Lemma~\ref{lem.DDKterminal} to $E$, $E'$, and $E+E'$ on $Y$ respectively, we have 
    \begin{align}
        & -\frac{1}{2}E^2\cdot K_Y-\sum_{Q\in B_X}\frac{{i_Qb_Q}(r_Q-i_Qb_Q)}{2r_Q}\in \mathbb Z,\label{eq 4.14-1}\\
        & -\frac{1}{2}E'^2\cdot K_Y-\sum_{Q\in B_X}\frac{{i'_Qb_Q}(r_Q-i'_Qb_Q)}{2r_Q}\in \mathbb Z,\notag\\
        & -\frac{1}{2}(E+E')^2\cdot K_Y-\sum_{Q\in B_X}\frac{{(i_Qb_Q+i'_Qb_Q)}(r_Q-i_Qb_Q-i'_Qb_Q)}{2r_Q}\in \mathbb Z,\notag
    \end{align}
    where $i_Q\coloneq\mathsf{i}_{E, Q}$ and $i'_Q\coloneq\mathsf{i}_{E', Q}$. It follows that 
    \begin{align}\label{eq 4.14-2}
        -(E\cdot E'\cdot K_Y)+\sum_{Q\in B_X}\frac{{i_Qi'_Qb_Q^2}}{r_Q}\in \mathbb Z.  
    \end{align}
    Then we get the conclusion by \eqref{eq 4.14-1}, \eqref{eq 4.14-2}, and Lemma~\ref{lem EEK=EECK}.
\end{proof}

Similarly, for an exceptional divisor centered over a crepant point, we have the following integrality constraint, which is closely related to Kawakita's result on crepant points \cite{kawakita}. 

\begin{prop}\label{prop crepant center in Z}
    Let $X$ be a projective canonical $3$-fold and let $f\colon Y\to X$ be a terminalization. Let $E$ be a prime $f$-exceptional divisor centered at a crepant point. Then we have 
    \begin{align*} 
        \sum_{Q\in B_X}\frac{{\mathsf{i}_{E,Q}b_Q}(r_Q-{\mathsf{i}_{E,Q}b_Q})}{2r_Q}\in \mathbb Z.
    \end{align*}  
\end{prop}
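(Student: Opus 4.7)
The plan is to apply Lemma~\ref{lem.DDKterminal} directly to the Weil divisor $E$ on the terminalization $Y$. This is legitimate because every Weil divisor on the terminal $3$-fold $Y$ automatically satisfies Reid's condition and $B_Y=B_X$. Since both Euler characteristics appearing on the left-hand side of that lemma are integers, $2\chi(Y,\mathcal O_Y)\in\mathbb Z$, and $(\overline{a})_{r}(\overline{-a})_{r}\equiv a(r-a)\pmod{2r}$, working modulo $\mathbb Z$ the lemma reduces to the statement
\[
-\tfrac12\,E^2\cdot K_Y \;-\; \sum_{Q\in B_X}\frac{\mathsf{i}_{E,Q}b_Q(r_Q-\mathsf{i}_{E,Q}b_Q)}{2r_Q}\in\mathbb Z.
\]
Consequently the proposition will follow once I establish that $E^2\cdot K_Y=0$.

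To prove this, I would use that $f$ is crepant, so $K_Y=f^*K_X$, and apply the projection formula (after clearing denominators to replace $K_X$ by its Cartier multiple $r_X K_X$) to get
\[
E^2\cdot K_Y \;=\; E^2\cdot f^*K_X \;=\; f_*(E^2)\cdot K_X.
\]
By hypothesis, $E$ is centered at the crepant point $P\coloneq f(E)$, so the support of the $1$-cycle $E^2$ lies in $E$ and hence maps into the single point $\{P\}$. A $1$-cycle whose image has dimension $0$ pushes forward to zero, so $f_*(E^2)=0$ and therefore $E^2\cdot K_Y=0$, as required.

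I do not foresee any real obstacle. This argument is precisely the point-center analogue of the one used in Proposition~\ref{prop.KC in Z}: there the center of $E$ is a curve $C$ and Lemma~\ref{lem EEK=EECK} forces $E^2\cdot K_Y=(\ell_E\cdot\ell_E)(K_X\cdot C)=-2(K_X\cdot C)$, producing the extra $(K_X\cdot C)$ term in the integrality constraint; when the center collapses to a point, this term vanishes, leaving the pure orbifold-point contribution stated in the proposition.
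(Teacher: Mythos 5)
Your argument is correct and is essentially identical to the paper's proof: the paper also applies Lemma~\ref{lem.DDKterminal} to $E$ on $Y$ and concludes $E^2\cdot K_Y=E^2\cdot f^*K_X=0$ by the projection formula since $f(E)$ is a point. No gaps.
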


\begin{proof}
    Applying Lemma~\ref{lem.DDKterminal} to $E$ on $Y$, we have 
    \begin{align*}
        & -\frac{1}{2}E^2\cdot K_Y-\sum_{Q\in B_X}\frac{{\mathsf{i}_{E,Q}b_Q}(r_Q-{\mathsf{i}_{E,Q}b_Q})}{2r_Q} \in \mathbb Z.
    \end{align*}
    Note that $f(E)$ is a point, hence $E^2\cdot K_Y =E^2\cdot f^*K_X =0$ by the projection formula.
\end{proof}

\begin{ex}\label{ex.2222}
    In Proposition~\ref{prop crepant center in Z}, suppose that $f(E)=P$ and the Gorenstein index $r_P$ at $P$ is $2$, then by \cite{kawakita}*{Theorem~1.1, Table~2} (see also Remark~\ref{rem kawakita}), $B_X$ contains $4$ orbifold points $\{4\times (2, 1)\}$, which we denote by $Q_i$ $(1\leq i\leq 4)$. We assume further that $r_Q>2$ for any orbifold point $Q\in B_X$ such that $Q\neq Q_i$ $(1\leq i\leq 4)$. Under this assumption, for $Q\neq Q_i$ $(1\leq i\leq 4)$, we have $\mathsf{i}_{E,Q}=0$ as $f(E)=P$ and $r_Q>r_P$. Then by Proposition~\ref{prop crepant center in Z}, we have
    \[
        \sum_{i=1}^4\frac{\mathsf{i}_{E,Q_i}(2-\mathsf{i}_{E,Q_i})}{4}\in \mathbb{Z},
    \]
    which implies that 
    \[
        \mathsf{i}_{E,Q_1}\equiv \mathsf{i}_{E,Q_2}\equiv \mathsf{i}_{E,Q_3}\equiv \mathsf{i}_{E,Q_4}\bmod 2.
    \]
\end{ex}

\section{Lower bounds of degrees of crepant curves}\label{sec.LB}

In order to apply Theorem~\ref{thm.c1c2 diff} and Theorem~\ref{thm.rr.sA}, one crucial technical problem is to determine the intersection number $(-K_X\cdot C)$ for any crepant curve $C\subset \Sing(X)$. Sometimes we just call $(-K_X\cdot C)$ the {\it degree} of the crepant curve $C$ (with respect to $-K_X$). 
There is a naive lower bound $(-K_X\cdot C)\geq \frac{1}{r_X}$
as $-r_XK_X$ is Cartier and ample, but this is far from being sufficient for our purpose. After calculating many examples, we found that often $(-K_X\cdot C)\geq 1$ holds (see Example~\ref{ex compute -rKC}), which suggests that the lower bound of  $(-K_X\cdot C)$ could be greatly improved. 

The main goal of this section is to give a good estimate on the lower bound of $(-K_X\cdot C)$ (see Theorem~\ref{thm c>=LB}). The key idea is to use the integrality constraint in Proposition~\ref{prop.KC in Z} to control the denominator of $(-K_X\cdot C)$ and the proof is elementary. 
 
First we set up the notation for this section.

\begin{set}\label{set KC}
    \begin{enumerate}
        \item Let $X$ be a canonical Fano $3$-fold. Denote  Reid's basket of $X$ by 
        \[
            B_X=\{(r_1, b_1), \dots,(r_s, b_s)\}
        \]
        and 
        \[
            \mathcal{R}\coloneq \mathcal{R}_X=\{r_1, \dots, r_s\}.
        \]
        Recall that the Gorenstein index of $X$ is $r_X=\lcm\{r_i\in \mathcal{R}\}$. Recall that we have
        \begin{align}\label{eq ri<24}
            \sum_{j=1}^s\left(r_j-\frac{1}{r_j}\right)<24 
        \end{align}
        by \eqref{eq.range} and Remark~\ref{rem.posofc2}.

        \item For a prime number $p$, denote by $\nu_p\colon \mathbb{Q}\to \mathbb{Z}\cup \{\infty\}$ the $p$-adic valuation on $\mathbb{Q}$. We use $n_{p^e}$ to denote the number of $r_i$'s in $\mathcal{R}$ with $\nu_p(r_i)=e$. Note that by \eqref{eq ri<24}, $n_{p^e}=0$ if $p^e>23$.

        \item Let $C\subset \Sing(X)$ be a crepant curve with non-split crepant divisors. Denote $c\coloneq (K_X\cdot C)$, where $r_Xc$ is an integer. 
    
        \item Let $f\colon Y\to X$ be a terminalization. Denote $l\coloneq e'_C-1$. By the definition of $e'_C$ (Definition~\ref{def ecgc}) and Definition~\ref{def nonsplit}, there exist prime exceptional divisors $E_1, \dots, E_{l}$ on $Y$ centered at $C$ such that $\ell_{E_1}, \dots, \ell_{E_{l}}$ form a chain of $(-2)$-curves. Denote by $\mathbb{I}=((\ell_{E_k}\cdot \ell_{E_{k'}}))_{1\leq k,k'\leq l}$ the intersection matrix, namely,
        \[
            \mathbb{I}_{kk'}=
            \begin{cases}
                -2 & \text{if } k=k';\\
                1 & \text{if } |k-k'|=1;\\
                0 & \text{otherwise}.
            \end{cases}
        \]

        \item  For $1\leq k\leq l$ and $1\leq j\leq s$, denote $a_{kj}\coloneq \mathsf{i}_{E_k, (r_j, b_j)}b_j$.  
    \end{enumerate}     
\end{set}

By Proposition~\ref{prop.KC in Z}, we have the following.

\begin{lem}\label{lem rational integer c}
    Keep Setting~\ref{set KC}. Then for any $1\leq k, k'\leq {l}$, 
    \begin{align}
        c-\sum_{j=1}^s\frac{{a_{kj}}(r_j-{a_{kj}})}{2r_j}\in \mathbb Z; \label{eq.KE2}\\
        -c\mathbb{I}_{kk'} +\sum_{j=1}^s\frac{a_{kj}a_{k'j}}{r_j}\in \mathbb Z.\label{eq.KEE'/r}
    \end{align}  
\end{lem}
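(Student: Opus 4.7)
The plan is to view this lemma as a pure notational repackaging of Proposition~\ref{prop.KC in Z} in the indexing convention of Setting~\ref{set KC}. Both the exceptional divisors in play ($E_1,\dots,E_l$) and the orbifold points $(r_1,b_1),\dots,(r_s,b_s)$ are the same objects appearing in Proposition~\ref{prop.KC in Z}, merely relabeled. The only thing one needs to observe before applying that proposition is that each $E_k$ is a prime $f$-exceptional divisor centered at $C$, which holds by construction in Setting~\ref{set KC} since $\ell_{E_k}$ is an exceptional curve on the minimal resolution of $S_C$.

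First, for \eqref{eq.KE2}, I would apply the first integrality constraint of Proposition~\ref{prop.KC in Z} to $E = E_k$. Under the identification $Q = (r_j, b_j)$ in $B_X$ and the definition $a_{kj} = \mathsf{i}_{E_k, Q}\,b_Q$, each orbifold summand rewrites as
\[
\frac{\mathsf{i}_{E_k,Q}b_Q(r_Q - \mathsf{i}_{E_k,Q}b_Q)}{2r_Q} = \frac{a_{kj}(r_j - a_{kj})}{2r_j}.
\]
Combined with $c = (K_X \cdot C)$, this yields \eqref{eq.KE2}.

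Second, for \eqref{eq.KEE'/r}, I would apply the second integrality constraint of Proposition~\ref{prop.KC in Z} to the pair $E = E_k$ and $E' = E_{k'}$. By the definition of the intersection matrix in Setting~\ref{set KC} we have $(\ell_{E_k} \cdot \ell_{E_{k'}}) = \mathbb{I}_{kk'}$, and when $Q = (r_j, b_j)$ the identity $\mathsf{i}_{E_k,Q}\,\mathsf{i}_{E_{k'},Q}\,b_Q^2 = a_{kj}a_{k'j}$ is immediate from the same definition. Hence the orbifold sum collapses to $\sum_{j=1}^s \tfrac{a_{kj}a_{k'j}}{r_j}$, giving \eqref{eq.KEE'/r}.

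I do not anticipate any genuine obstacle here: the substantive content has already been carried out in Proposition~\ref{prop.KC in Z} (and behind it in Lemma~\ref{lem EEK=EECK} and Lemma~\ref{lem.DDKterminal}). The role of this lemma is purely to present those integrality constraints in the explicit coordinate form that will drive the algorithmic lower-bound analysis of $(-K_X \cdot C)$ in the remainder of \S\,\ref{sec.LB}.
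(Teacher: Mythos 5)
Your proof is correct and matches the paper exactly: the paper derives this lemma with no further argument beyond citing Proposition~\ref{prop.KC in Z}, and your substitutions $a_{kj}=\mathsf{i}_{E_k,(r_j,b_j)}b_j$, $c=(K_X\cdot C)$, $\mathbb{I}_{kk'}=(\ell_{E_k}\cdot\ell_{E_{k'}})$ are precisely the intended translation into the notation of Setting~\ref{set KC}.
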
 

Our main goal is to give a lower bound for $-c$. The basic idea is that by the integrality constraints in Lemma~\ref{lem rational integer c}, we can show that the denominator of $c$ will not be as large as $r_X$ if $l$ is large enough. We will study the denominator of $c$ by investigating the $p$-adic valuation of $c$ for each prime number $p$. To see how this idea works, we have the following lemma by a linear algebra argument. 
 
\begin{lem}\label{lem ADA}
    Let $l, N$ be positive integers and let $p$ be a prime number. Let $a_{kj}, b_{kj}\in \mathbb{Z}$ and $c, t_j\in \mathbb{Q}$ for $1\leq k\leq l$ and $1\leq j\leq N$. Suppose that for any $1\leq k,k'\leq l$,
    \begin{align}
        -c\mathbb{I}_{kk'} +\sum_{j=1}^Nt_ja_{kj}b_{k'j}\in \mathbb{Z}.\label{eq lem Ictaa}
    \end{align}
    Suppose that $\nu_p(t_j)\geq -e_0$ for some integer $e_0>0$ and for any $1\leq j\leq N$, and denote \[n\coloneq |\{1\leq j\leq N\mid \nu_p(t_j)=-e_0\}|.\] If $l\geq n+1 +\mathsf{if}(p\mid n+2)$, then $\nu_p(c)\geq -e_0+1$. 
\end{lem}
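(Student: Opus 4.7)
The plan is to argue by contradiction: assume $e := -\nu_p(c) \geq e_0$ and derive a contradiction from the constraints \eqref{eq lem Ictaa}. Multiplying the $(k,k')$-th constraint by $p^e$ and reducing modulo $p$, the contributions of indices $j \notin T := \{j : \nu_p(t_j) = -e_0\}$ have $\nu_p(p^e t_j) \geq e - e_0 + 1 \geq 1$ and hence vanish; so for all $1 \leq k, k' \leq l$ we obtain
\begin{equation*}
    -(p^e c)\, \mathbb{I}_{kk'} + \sum_{j \in T}(p^e t_j)\, a_{kj} b_{k'j} \equiv 0 \pmod p,
\end{equation*}
where $p^e c$ is a $p$-adic unit.

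First I would dispose of the case $e > e_0$. Then each $p^e t_j$ for $j \in T$ also has positive $p$-adic valuation, so the right-hand sum vanishes entirely, leaving $(p^e c)\mathbb{I}_{kk'} \equiv 0 \pmod p$ for every $k, k'$. This would force every entry of $\mathbb{I}$ to vanish mod $p$, which is impossible: for $l \geq 2$ we have $\mathbb{I}_{12} = 1$, and for $l = 1$ the hypothesis $l \geq n+1 + \mathsf{if}(p \mid n+2)$ combined with $n \geq 0$ forces $p$ to be odd, so $\mathbb{I}_{11} = -2$ is a unit mod $p$.

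The main case is $e = e_0$. Setting $\bar c := p^{e_0} c \bmod p \in \mathbb{F}_p^{\times}$ and $\bar s_j := p^{e_0} t_j \bmod p \in \mathbb{F}_p^{\times}$ for $j \in T$, the congruences assemble into a matrix identity
\begin{equation*}
    \bar c\, \mathbb{I} \equiv A \, \diag(\bar s_j)_{j \in T} \, B^{t} \pmod p
\end{equation*}
in $M_l(\mathbb{F}_p)$, where $A = (\bar a_{kj})$ and $B = (\bar b_{k'j})$ are $l \times n$ matrices. The right side has $\mathbb{F}_p$-rank at most $n$, while the left has rank $\rank_{\mathbb{F}_p}(\mathbb{I})$ since $\bar c$ is a unit. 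Using $\det \mathbb{I} = (-1)^l(l+1)$ together with the fact that the top-left $(l-1) \times (l-1)$ principal minor equals $(-1)^{l-1} l$, one gets $\rank_{\mathbb{F}_p}(\mathbb{I}) = l$ when $p \nmid l+1$, and $\rank_{\mathbb{F}_p}(\mathbb{I}) = l-1$ when $p \mid l+1$ (since then $p \nmid l$). The rank bound $\leq n$ thus forces $l \leq n$ in the first subcase and $l \leq n+1$ in the second; the hypothesis $l \geq n+1 + \mathsf{if}(p \mid n+2)$ is exactly calibrated to rule both out, because when $l = n+1$ the condition $p \mid l+1$ coincides with $p \mid n+2$, and when $l \geq n+2$ one has $l - 1 \geq n+1 > n$ regardless. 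The main obstacle is pinning down the precise rank drop of $\mathbb{I}$ modulo $p$ and matching it to this threshold; the rest is a clean linear-algebra extraction from \eqref{eq lem Ictaa}.
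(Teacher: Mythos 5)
Your proof is correct and follows essentially the same route as the paper: reduce \eqref{eq lem Ictaa} modulo $p$ to the factorization $\bar c\,\mathbb{I}\equiv A\,\diag(\bar s_j)\,B^{\mathsf T}$ and exploit $\det\mathbb{I}=(-1)^l(l+1)$. The only organizational difference is that the paper first replaces $l$ by the exact threshold $n+1+\mathsf{if}(p\mid n+2)$ (which forces $p\nmid l+1$, so the determinant alone suffices), whereas you keep $l$ general and dispose of the case $p\mid l+1$ by computing the rank of $\mathbb{I}$ over $\mathbb{F}_p$ via the $(l-1)\times(l-1)$ principal minor — both are valid.
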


Here we define the function $\mathsf{if}$ as the following: for a statement $S$, $\mathsf{if}(S)=1$ (resp., $0$) if $S$ is true (resp., false). 

\begin{proof}
    First we consider the case that $n=0$. This implies that $\nu_p(t_j)\geq -e_0+1$ for any $1\leq j\leq N$. If $p>2$, then \eqref{eq lem Ictaa} for $(k, k')=(1,1)$ implies that $\nu_p(c)\geq -e_0+1$. If $p=2$, then $l\geq 2$ and hence \eqref{eq lem Ictaa} for $(k, k')=(1,2)$ implies that $\nu_p(c)\geq -e_0+1$.

    Then we consider the case that $n>0$. We may just assume that $l=n+1 +\mathsf{if}(p\mid n+2)$. Note that $l\geq n+1$ and $p\nmid l+1$. 

    We may take an integer $r$ such that $\nu_p(r)=e_0$ and $rt_j\in \mathbb{Z}$ for $1\leq j\leq N$. Then by \eqref{eq lem Ictaa} for $(k,k')=(1,2)$,  we have $rc\in \mathbb{Z}$. 

    Without loss of generality, we may assume that $\nu_p(t_j)=-e_0$ for $1\leq j\leq n$ and $\nu_p(t_j)>-e_0$ for $j>n$. Then \eqref{eq lem Ictaa} implies that for any $1\leq k,k'\leq l$,
    \begin{align}
        -rc\mathbb{I}_{kk'} +\sum_{j=1}^nrt_ja_{kj}b_{k'j}\equiv 0 \bmod p.\label{eq lem Ictaa p}
    \end{align}

    For an integer $a$, denote by $\overline{a}$ its residue in the finite field $\mathbb{F}_p$. Consider the matrices $A\coloneq(\overline{a_{kj}})_{\substack{1\leq k\leq l\\1\leq j\leq j}}, B\coloneq(\overline{b_{kj}})_{\substack{1\leq k\leq l\\1\leq j\leq j}}\in \mathbb{F}_{p}^{l\times n}$, and $D\coloneq\diag\{\overline{rt_{1}}, \dots,  \overline{rt_{n}}\}\in \mathbb{F}_{p}^{n\times n}$, then \eqref{eq lem Ictaa p} is equivalent to   
    \[
        ADB^{\mathsf{T}}= \overline{rc}\cdot \mathbb{I}\in \mathbb{F}_p^{l\times l}.
    \] 
    As $l\geq n+1$, this matrix is singular, which implies that $\det(\overline{rc}\cdot\mathbb{I})=0\in \mathbb{F}_p$. On the other hand, $\det(\overline{rc}\cdot\mathbb{I})= (-1)^l(l+1)\overline{rc}^l$. So we get $\overline{rc}=0\in \mathbb{F}_p$ as $p\nmid l+1$. This implies that $\nu_p(c)\geq -\nu_p(r)+1=-e_0+1$. 
\end{proof}

\begin{rem}
  We usually apply Lemma~\ref{lem ADA} to Setting~\ref{set KC} by taking $t_j=\frac{1}{r_j}$ and $b_{kj}=a_{kj}$ for $1\leq k\leq l$ and $1\leq j\leq s$ as in Setting~\ref{set KC}(5). In this case $n$ is just $n_{p^{e_0}}$. 
\end{rem}

We can directly apply Lemma~\ref{lem ADA} to Setting~\ref{set KC} to get the following consequence.

\begin{cor}\label{cor -e+1}
    Keep Setting~\ref{set KC}. Fix a prime number $p$. Suppose that $\nu_p(r_X)=e>0$. If ${l}\geq n_{p^e}+1+\mathsf{if}(p\mid n_{p^e}+2)$, then $\nu_p(c)\geq -e+1$. 
\end{cor}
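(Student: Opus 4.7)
The plan is to derive Corollary~\ref{cor -e+1} as a direct application of Lemma~\ref{lem ADA}, using the integrality constraint \eqref{eq.KEE'/r} as the hypothesis input. First I would invoke \eqref{eq.KEE'/r} of Lemma~\ref{lem rational integer c}: for any $1 \leq k, k' \leq l$,
\[
    -c\mathbb{I}_{kk'} + \sum_{j=1}^{s} \frac{a_{kj}\, a_{k'j}}{r_j} \in \mathbb{Z}.
\]
This matches the hypothesis \eqref{eq lem Ictaa} of Lemma~\ref{lem ADA} under the identifications $N = s$, $t_j = 1/r_j$, and $b_{kj} = a_{kj}$ (with $a_{kj}$ as in Setting~\ref{set KC}(5)).

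Next I would check the valuation hypothesis with $e_0 = e$. Since $r_X = \lcm\{r_j \mid r_j \in \mathcal{R}\}$ and $\nu_p(r_X) = e$, we have $\nu_p(r_j) \leq e$ for all $j$, so $\nu_p(t_j) = -\nu_p(r_j) \geq -e = -e_0$. By Setting~\ref{set KC}(2), the count of indices $j$ for which $\nu_p(t_j) = -e$, i.e.\ $\nu_p(r_j) = e$, is exactly $n_{p^e}$. Thus the quantity denoted $n$ in Lemma~\ref{lem ADA} equals $n_{p^e}$.

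Finally, the standing assumption $l \geq n_{p^e} + 1 + \mathsf{if}(p \mid n_{p^e} + 2)$ is precisely the numerical hypothesis of Lemma~\ref{lem ADA} under the identification $n = n_{p^e}$. Applying Lemma~\ref{lem ADA} with $e_0 = e$ yields $\nu_p(c) \geq -e + 1$, which is the desired conclusion.

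The proof is essentially a bookkeeping verification that Corollary~\ref{cor -e+1} is a direct specialization of Lemma~\ref{lem ADA}; there is no substantive obstacle, since the real content—an argument about singularity of the matrix $ADB^{\mathsf{T}}$ over $\mathbb{F}_p$ forcing $\overline{rc} = 0$—has already been isolated in Lemma~\ref{lem ADA}. The only point to be careful about is the mild case split built into the bound, which comes from Lemma~\ref{lem ADA}: when $p \mid n_{p^e} + 2$ the naive size $l = n_{p^e} + 1$ makes $l + 1 \equiv 0 \bmod p$, so one must inflate $l$ by one to preserve the argument that $\det(\overline{rc}\cdot \mathbb{I}) = (-1)^{l}(l+1)\overline{rc}^{\,l}$ implies $\overline{rc} = 0$; this is encoded exactly by the indicator function $\mathsf{if}(p \mid n_{p^e} + 2)$.
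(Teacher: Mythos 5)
Your proposal is correct and is exactly the paper's argument: the paper's proof of Corollary~\ref{cor -e+1} is the one-line statement that it follows directly from Lemma~\ref{lem ADA} and \eqref{eq.KEE'/r}, and your write-up simply makes the bookkeeping (the identifications $t_j=1/r_j$, $b_{kj}=a_{kj}$, $e_0=e$, $n=n_{p^e}$) explicit. All the identifications check out, so nothing further is needed.
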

 
\begin{proof}
    This directly follows from Lemma~\ref{lem ADA} and \eqref{eq.KEE'/r}. 
\end{proof}

In particular, Corollary~\ref{cor -e+1} shows that if $\nu_p(r_X)=1$ and $l$ is large enough in Setting~\ref{set KC}, then $\nu_p(c)\geq 0$. By \eqref{eq ri<24}, we have $\nu_p(r_X)\leq 1$ if $p>3$. So Corollary~\ref{cor -e+1} is sufficient to handle prime numbers $p>3$. We will treat $p=2,3$ in more detail in the following two propositions. 

\begin{prop}\label{prop p=3}
    Keep Setting~\ref{set KC}. Suppose that $n_9>0$. Then the following assertions hold.
    \begin{enumerate}
        \item $n_9\leq 2$ and $\nu_3(r_X)=2$.
    
        \item  If $l\geq 3$, then  $\nu_3(c)\geq -1$.  
    
        \item  If  $l\geq 3$ and $l\geq n_3 +1+\mathsf{if}(3 \mid n_3+2)$, then  $\nu_3(c)\geq 0$.  
    \end{enumerate}
\end{prop}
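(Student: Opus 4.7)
\medskip\noindent\textbf{Proof plan for Proposition~\ref{prop p=3}.}

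\medskip\noindent\emph{Part (1).} Apply \eqref{eq ri<24} to a single term to force $r_j - 1/r_j < 24$, whence $r_j\leq 24$ for every $j$. In particular no $r_j$ can satisfy $\nu_3(r_j)\geq 3$ (that would require $r_j\geq 27$), so $\nu_3(r_X)=\max_j\nu_3(r_j)\leq 2$; the hypothesis $n_9>0$ then upgrades this to $\nu_3(r_X)=2$. Each $r_j$ with $\nu_3(r_j)=2$ satisfies $r_j\geq 9$ and thus contributes at least $80/9$ to the sum in \eqref{eq ri<24}. Hence $n_9\cdot(80/9)<24$, giving $n_9\leq 2$.

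\medskip\noindent\emph{Part (2).} Apply Corollary~\ref{cor -e+1} with $p=3$ and $e=\nu_3(r_X)=2$. The threshold $n_9+1+\mathsf{if}(3\mid n_9+2)$ evaluates to $3$ in both cases $n_9=1$ (here $3\mid 3$) and $n_9=2$ (here $3\nmid 4$), so the hypothesis $l\geq 3$ yields $\nu_3(c)\geq -1$.

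\medskip\noindent\emph{Part (3).} The goal is to reduce to a second application of Lemma~\ref{lem ADA}, this time with $e_0=1$ and with the contribution of the $r_j$'s having $\nu_3(r_j)=2$ already absorbed into $\mathbb{Z}_{(3)}$. The plan proceeds in three steps. First, using $\nu_3(c)\geq -1$ from Part (2), the constraint \eqref{eq.KEE'/r} forces the coefficient of $3^{-2}$ in the 3-adic expansion of $\sum_{\nu_3(r_j)=2}\frac{a_{kj}a_{k'j}}{r_j}$ to vanish for every pair $(k,k')$; this translates, via an $\mathbb{F}_3$-bilinear form analysis that uses the explicit list $r_j\in\{9,18\}$ and the bound $n_9\leq 2$, into congruences on the $a_{kj}$ modulo $3$ (for $j$ with $\nu_3(r_j)=2$). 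Second, feeding these congruences back into the off-diagonal and diagonal pairs and examining the coefficient of $3^{-1}$, we conclude that
\[
    \sum_{\nu_3(r_j)=2}\frac{a_{kj}a_{k'j}}{r_j}\in\mathbb{Z}_{(3)}\quad\text{for all } 1\leq k,k'\leq l,
\]
so after absorbing this sum together with the terms coming from $\nu_3(r_j)=0$, the constraint \eqref{eq.KEE'/r} reduces to
\[
    -c\,\mathbb{I}_{kk'}+\sum_{\nu_3(r_j)=1}\frac{a_{kj}a_{k'j}}{r_j}\in\mathbb{Z}_{(3)}\quad\text{for all }1\leq k,k'\leq l.
\]
Third, apply Lemma~\ref{lem ADA} with $p=3$, $e_0=1$, $t_j=1/r_j$ (indexed by the $j$ with $\nu_3(r_j)=1$) and $b_{kj}=a_{kj}$; here $n=n_3$, and the hypothesis $l\geq n_3+1+\mathsf{if}(3\mid n_3+2)$ delivers $\nu_3(c)\geq 0$.

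\medskip The main obstacle will be the second step, i.e.\ showing that the contribution from $\nu_3(r_j)=2$ actually lies in $\mathbb{Z}_{(3)}$ (not merely in $\tfrac{1}{3}\mathbb{Z}_{(3)}$). The delicate subcase is $n_9=2$ with $\{r_j\}\supset\{9,18\}$, where the diagonal $3^{-2}$-cancellation by itself only gives $\overline{a_{k1}}^{\,2}\equiv\overline{a_{k2}}^{\,2}\pmod 3$ and allows ``mixed'' configurations with $3\nmid a_{kj}$; resolving these requires combining the off-diagonal constraint $\overline{a_{k1}a_{k'1}}\equiv\overline{a_{k2}a_{k'2}}\pmod 3$ with the $3^{-1}$-level cancellation, together with the structural fact $l\geq 3$, to rule out any non-absorbable residue. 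Once this is in place, the rest is a direct invocation of Lemma~\ref{lem ADA}.
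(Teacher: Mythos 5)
Your strategy is the paper's strategy: Parts (1) and (2) are handled exactly as in the paper (your explicit check that the threshold in Corollary~\ref{cor -e+1} equals $3$ for both $n_9=1$ and $n_9=2$ is correct), and for Part (3) the paper likewise first absorbs the $\nu_3(r_j)=2$ terms and then reapplies Lemma~\ref{lem ADA} with $e_0=1$, $n=n_3$. The genuine gap is precisely the step you flag as the ``main obstacle,'' and it does need to be closed — but the delicate subcase you worry about is vacuous. By \eqref{eq ri<24} the collection $\mathcal{R}$ cannot contain both $9$ and $18$, since $(9-\tfrac{1}{9})+(18-\tfrac{1}{18})>24$; so when $n_9=2$ the two entries with $\nu_3(r_j)=2$ are both equal to $9$. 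Multiplying \eqref{eq.KEE'/r} with $k=k'$ by $r_X$ and reducing mod $3$ (using $\nu_3(c)\geq -1$ from Part (2) and $3\mid r_X/r_j$ for $j>n_9$) then gives $a_{k1}^2+a_{k2}^2\equiv 0\bmod 3$ with no relative unit between the two terms, and since squares mod $3$ lie in $\{0,1\}$ this forces $3\mid a_{k1}$ and $3\mid a_{k2}$; for $n_9=1$ the same diagonal constraint gives $3\mid a_{k1}$ directly. No off-diagonal congruences, no $3^{-1}$-level analysis, and no further use of $l\geq 3$ are needed for this claim. Once $3\mid a_{kj}$ for every $j$ with $\nu_3(r_j)=2$, each such term equals $\frac{(a_{kj}/3)(a_{k'j}/3)}{r_j/9}$ with $\nu_3(r_j/9)=0$, hence lies in $\mathbb{Z}_{(3)}$, and your final invocation of Lemma~\ref{lem ADA} completes the proof exactly as proposed.
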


\begin{proof}
    (1) By \eqref{eq ri<24}, it is clear that $n_9\leq 2$ and $\nu_{3}(r_X)<3$. 

    (2) This follows from Corollary~\ref{cor -e+1} for $p^e=9$. 

    (3) Suppose that $l\geq 3$. We may assume that $\nu_{3}(r_j)=2$ for $1\leq j\leq n_{9}$. Then $3\mid \frac{r_X}{r_j}$ for $j>n_9$, and $3\mid r_Xc$ as $\nu_3(c)\geq -1$ by (2). So multiplying \eqref{eq.KEE'/r} by $r_X$ and modulo by $3$, we have for any $1\leq k=k'\leq l$, 
    \begin{align}
        \sum_{j=1}^{n_{9}}\frac{r_X}{r_j} a_{kj}^2\equiv 0 \bmod 3. \label{eq mod 9=0}
    \end{align}  

    \begin{claim}\label{claim 3|a}
        $3\mid a_{kj}$ for any $1\leq k\leq l$ and any $1\leq j\leq n_9$. 
    \end{claim} 
    \begin{proof}
        Recall that $3\nmid \frac{r_X}{r_j}$ for $1\leq j\leq n_9$ by definition.

        If $n_9=1$, then \eqref{eq mod 9=0} implies that $3\mid a_{k1}$ for any $1\leq k\leq l$. 

        If $n_9=2$, then $r_1=r_2=9$ by \eqref{eq ri<24}. So \eqref{eq mod 9=0} implies that for any $1\leq k\leq l$, 
        \[ 
            a_{k1}^2+a_{k2}^2\equiv 0 \bmod 3. 
        \]
        Then it follows that $a_{k1} \equiv a_{k2}\equiv 0 \bmod 3$.
    \end{proof}
    By Claim~\ref{claim 3|a}, we may rewrite \eqref{eq.KEE'/r} as 
    \[
        -c\mathbb{I}_{kk'} +\sum_{j=1}^{n_9}\frac{\frac{a_{kj}}{3}\frac{a_{k'j}}{3}}{\frac{r_j}{9}}+\sum_{j=n_9+1}^s\frac{a_{kj}a_{k'j}}{r_j}\in \mathbb Z.
    \]
    Here $\frac{a_{kj}}{3}, \frac{a_{k'j}}{3},\frac{r_j}{9}\in\mathbb{Z}$ and $\nu_3(\frac{r_j}{9})=0$. 
    By Lemma~\ref{lem ADA} for $e_0=1$ and $n=n_3$, we know that $\nu_3(c)\geq 0$ as long as $l\geq n_3 +1+\mathsf{if}(3 \mid n_3+2)$. 
\end{proof}

For the case $p=2$, it gets more complicated and we have to split the discussion into many cases for later applications. But a good news is that we can apply \eqref{eq.KE2} to get a better estimate than other prime numbers. 

\begin{prop}\label{prop p=2}
    Keep Setting~\ref{set KC}. Suppose that $\nu_2(r_X)=e>0$ and $l\geq 2$. Then the following assertions hold.

    \begin{enumerate}
        \item $e\leq 4$ and $2n_{16}+n_8\leq 3$.
        
        \item  If  $n_{2^{e}}\leq 2$, then $\nu_2(c)\geq -e+1$. 

    \item If $n_{16}=1$, then
    \[
        \nu_2(c)\geq \begin{cases}
        -3& \text{if } n_8\neq 0;\\
        -2& \text{if } n_8= 0 \text{ and } n_4\neq 0;\\
        -1 &\text{if } n_8=n_4=0.
        \end{cases}
    \]

    \item If $n_{16}=0$, $n_8\leq 1$, and $l\geq 2\lfloor\frac{n_4}{2}\rfloor+2$, then $\nu_2(c)\geq -1$. Furthermore, $\nu_2(c)\geq 0$ if moreover one of the following holds:
    \begin{enumerate}
        \item $n_4\leq 1$ and $l\geq 2\lfloor\frac{n_2+n_8}{2}\rfloor+2$;
        \item $n_4=2$, $\{4,4\}\subset \mathcal{R}$, and $l\geq 2\lfloor\frac{n_2+n_8+2}{2}\rfloor+2$;
        \item $n_4=3$, $\{4,4,4\}\subset \mathcal{R}$, and $n_2=n_8=0$. 
    \end{enumerate}

    \item If $n_8=2$, then 
    \[
        \nu_2(c)\geq \begin{cases}
        -2 & \text{if } n_{4}\neq 0;\\
        -1 & \text{if } n_4=0 \text{ and } n_2\neq 0;\\
        0 &\text{if } n_{2}=n_4=0. \\
        \end{cases}
    \]
    \end{enumerate}
\end{prop}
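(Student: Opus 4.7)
The plan is to mirror the structure of the proof of Proposition~\ref{prop p=3}, combining the two integrality constraints in Lemma~\ref{lem rational integer c} with the linear-algebra Lemma~\ref{lem ADA}, but with significantly more case-work because $p=2$ admits larger valuations and because the denominator $2r_j$ in \eqref{eq.KE2} carries an extra factor of $2$ beyond what \eqref{eq.KEE'/r} provides.

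First I would dispatch Part (1) purely from \eqref{eq ri<24}: if $\nu_2(r_X)\geq 5$, then some $r_j\geq 32$ contributes $r_j-1/r_j>31$; similarly $n_{16}\geq 2$ contributes more than $31$; with $n_{16}=1$ we already spend about $15.94$, leaving room only for $n_8\leq 1$; and with $n_{16}=0$ we must have $n_8\leq 3$ since four copies of $8$ contribute at least $31.5$. This gives both $e\leq 4$ and $2n_{16}+n_8\leq 3$.

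For Part (2), when $n_{2^e}\leq 1$ I would apply Corollary~\ref{cor -e+1} directly: because $p=2$, the condition $p\mid n_{2^e}+2$ holds iff $n_{2^e}$ is even, so $l\geq 2$ suffices. When $n_{2^e}=2$, Part (1) forces $e\leq 3$, and the argument follows the template of Claim~\ref{claim 3|a} in the proof of Proposition~\ref{prop p=3}: after multiplying \eqref{eq.KEE'/r} with $k=k'$ by $r_X/2$ and reducing modulo $2$, only the two top-valuation orbifold points contribute, yielding a quadratic congruence $\sum_{j=1}^{2}(r_X/(2r_j))a_{kj}^2\equiv 0\bmod 2$ that forces $2\mid a_{kj}$; then I would rewrite \eqref{eq.KEE'/r} with $a_{kj}/2$ and $r_j/4$ substituted, at which point Lemma~\ref{lem ADA} with $e_0=e-1$ and $n=0$ yields $\nu_2(c)\geq -e+1$.

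For Parts (3)--(5) the strategy is the same, iterated one stratum at a time: for each configuration of $(n_{16},n_8,n_4,n_2)$ I would first secure a preliminary bound $\nu_2(c)\geq -e_0$ from Corollary~\ref{cor -e+1} or from the previous step, then extract congruences on the $a_{kj}$ attached to the highest-valuation points using \eqref{eq.KE2} (which gives modulo $2r_j$ constraints and thus refines what \eqref{eq.KEE'/r} alone provides), and finally substitute $a_{kj}/2$, $r_j/4$ to drop $e_0$ by one via Lemma~\ref{lem ADA}. The numerical hypotheses in (4)(a)--(c) and (5) are precisely tuned so that the value of $n$ in Lemma~\ref{lem ADA} at each reduction step, counted as $n_{2^{e_0}}+\mathsf{if}(\text{extra contributions from doubled $r_j/4$})$, is matched by the chain length $l$, and the conditions $\{4,4\}\subset\mathcal{R}$ or $\{4,4,4\}\subset\mathcal{R}$ ensure that the rewritten system after substitution has $\nu_2(r_j/4)=0$ so that the matrix argument in Lemma~\ref{lem ADA} actually detects the full $n$ top-valuation terms.

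The main obstacle will be the bookkeeping in Part (4)(b)--(c) and in Part (5): when several $r_j$ have the same valuation $\nu_2(r_j)=2$ or $3$, one must track how the substitution $a_{kj}\mapsto a_{kj}/2$ interacts with other strata, because the rewritten equation may promote some lower-stratum points into the top stratum of the reduced problem, inflating $n$. The precise side conditions (e.g.\ $l\geq 2\lfloor n_4/2\rfloor+2$, and the shift $n_2+n_8\mapsto n_2+n_8+2$ when $n_4=2$) reflect exactly this bookkeeping, and verifying that the chain length and the parity condition $p\mid n+2$ line up in each sub-case is where the delicate part of the proof sits.
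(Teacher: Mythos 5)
Your overall architecture (stratify $\mathcal{R}$ by $2$-adic valuation, combine \eqref{eq.KE2} with \eqref{eq.KEE'/r}, and reduce the valuation of $c$ one step at a time via Lemma~\ref{lem ADA}) is the paper's, and Part~(1) and the $n_{2^e}\leq 1$ half of Part~(2) are fine. But there is a genuine gap in your treatment of $n_{2^e}=2$, and it propagates to the later parts. You claim that the diagonal congruence $a_{k1}^2+a_{k2}^2\equiv 0\bmod 2$ ``forces $2\mid a_{kj}$,'' by analogy with Claim~\ref{claim 3|a}. That analogy fails: modulo $3$ a vanishing sum of two squares forces both to vanish because $-1$ is a non-residue, but modulo $2$ it only forces $a_{k1}\equiv a_{k2}\bmod 2$ — both may be odd. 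Consequently the substitution $a_{kj}\mapsto a_{kj}/2$, $r_j\mapsto r_j/4$ is not available, and your application of Lemma~\ref{lem ADA} with $n=0$ has no valid starting point. The correct mechanism (the paper's) is to use only the equal-parity conclusion and observe that in the off-diagonal relation \eqref{eq.KEE'/r} for $(k,k')=(1,2)$ the two top-stratum terms pair up: $\tfrac{r_2}{2^e}a_{11}a_{21}+\tfrac{r_1}{2^e}a_{12}a_{22}\equiv a_{11}a_{21}+a_{12}a_{22}\equiv 2a_{11}a_{21}\equiv 0\bmod 2$, which lifts $\nu_2$ of that pair by one and yields $\nu_2(c)\geq -e+1$ directly.

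The same ``equal parity, not divisibility'' phenomenon is exactly what drives the delicate sub-cases you defer to bookkeeping. In (4b) the two points with $r_j=4$ satisfy only $a_{k1}\equiv a_{k2}\bmod 2$, and the paper writes $a_{k2}=a_{k1}+2b_k$, producing the asymmetric bilinear terms $\tfrac{(a_{k1}+b_k)a_{k'1}}{2}+\tfrac{a_{k1}b_{k'}}{2}$ — this is why Lemma~\ref{lem ADA} is stated with separate $a_{kj}$ and $b_{kj}$, and why $n$ jumps to $n_2+n_8+2$; a uniform ``divide by $2$'' recipe cannot produce this. Likewise in (5), the constraint $a_{k1}^2+a_{k2}^2\equiv 0\bmod 8$ yields $\tfrac{a_{k1}}{2}\equiv\tfrac{a_{k2}}{2}\bmod 2$, again a parity matching handled by the pairing trick in the off-diagonal equation rather than a further division. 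So while you correctly identify where the difficulty lies, the specific reduction you propose is wrong at precisely those points, and the proof as outlined does not go through for (2) with $n_{2^e}=2$, nor for (4b), (4c), or (5).
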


Here we recall that for any integer $n$, $n +1+\mathsf{if}(2 \mid n+2)=2\lfloor\frac{n}{2}\rfloor+2$.

\begin{proof}
    (1) By \eqref{eq ri<24}, it is clear that  $\nu_{2}(r_X)\leq 4$, $n_{16}\leq 1$, and $n_{8}\leq 3$. Moreover, if $n_{16}=1$, then $n_8\leq 1$. 

    \medskip 

    (2) Suppose that $n_{2^{e}}=1$. We may assume that $\nu_2(r_1)=e$
    and $\nu_2(r_j)\leq e-1$ for $j>1$. As $\nu_2(c)\geq -e$, by \eqref{eq.KE2}, $\nu_2(\frac{a_{k1}(r_1-a_{k1})}{2r_1})\geq -e$ for $1\leq k\leq l$. This means that $2\mid a_{k1}$ for $1\leq k\leq l$. By \eqref{eq.KEE'/r} for $(k,k')=(1,2)$, we get $\nu_2(c)\geq -e+1$.
 
    Suppose that $n_{2^{e}}=2$. We may assume that $\nu_2(r_1)=\nu_2(r_2)=e$ and $\nu_2(r_j)\leq e-1$ for $j>2$. As $\nu_2(c)\geq -e$, by \eqref{eq.KE2}, for any $1\leq k\leq l$, we have
    \[
        \nu_2\left(\frac{a_{k1}(r_1-a_{k1})}{2r_1}+\frac{a_{k2}(r_2-a_{k2})}{2r_2}\right)\geq -e.
    \]
    This implies that $a_{k1}\equiv a_{k2}\bmod 2$ for $1\leq k\leq l$. Then 
    \[
        \nu_2\left(\frac{a_{11}a_{21}}{r_1}+\frac{a_{12}a_{22}}{r_2}\right)\geq -e+1
    \]
    as 
    \[
        \frac{r_2}{2^e} a_{11}a_{21}+\frac{r_1}{2^e} a_{12}a_{22}\equiv  a_{11}a_{21}+ a_{12}a_{22}\equiv 2a_{11}a_{21}\equiv 0\bmod 2.
    \]
    So by \eqref{eq.KEE'/r} for $(k,k')=(1,2)$, $\nu_{2}(c)\geq -e+1$. 

    \medskip 

    (3) Suppose that $n_{16}=1$. Then $\nu_2(c)\geq -3$ by (2). So we may assume that $n_8=0$ otherwise there is nothing to prove. 

    We may assume that $r_1=16$ and denote $x_k=a_{k1}$ for $1\leq k\leq l$. Then by \eqref{eq.KE2} and \eqref{eq.KEE'/r}, for any $1\leq k, k' \leq l$,
    \begin{align}
        c- \frac{{x_{k}}(16-{x_{k}})}{32}-\sum_{j=2}^{s}\frac{{a_{kj}}(r_j-{a_{kj}})}{2r_j}\in \mathbb Z; \label{eq.KE2 16}\\
        -c\mathbb{I}_{kk'} +\frac{x_kx_{k'}}{16}+\sum_{j=2}^{s}\frac{a_{kj}a_{k'j}}{r_j}\in \mathbb Z.\label{eq.KEE'/r 16}
    \end{align}
    By the proof of (2), $2\mid x_k$ for any $1\leq k\leq l$. By \eqref{eq.KEE'/r 16} for $(k, k')=(1,2)$, we know that $\nu_2(c) \geq -2$. 

    Now suppose that $n_4=0$, then \eqref{eq.KE2 16} implies that $\nu_2(\frac{{x_{k}}(16-{x_{k}})}{32})\geq -2$ for $1\leq k\leq l$, so $4\mid x_k$ for $1\leq k\leq l$. Then by \eqref{eq.KEE'/r 16} for $(k, k')=(1,2)$, we know that  $\nu_2(c) \geq -1$. 

    \medskip 

    (4) Suppose that $n_{16}=0$ and $n_8\leq 1$. If $24\in \mathcal{R}$, then $\mathcal{R}=\{24\}$ by \eqref{eq ri<24}. By the proof of (2), we have $\nu_2(c)\geq -2$ and $2\mid a_{k1}$ for $1\leq k\leq l$. So by \eqref{eq.KEE'/r} for $(k,k')=(1,2)$, we have $-c+\frac{a_{11}a_{21}}{24}\in \mathbb{Z}$, which implies that $\nu_2(c)\geq -1$. As $c-\frac{a_{11}(24-a_{11})}{48}\in \mathbb{Z}$ by \eqref{eq.KE2}, we conclude that $4\mid a_{11}$ and hence $\nu_2(c)\geq 0$. So in the following we may assume that $24\not\in\mathcal{R}$.
   
    By possibly adding $8$ into $\mathcal{R}$, we can construct a new collection of the form
    \[
        \mathcal{R}'=\{8,r_1,r_2,\dots, r_{s'}\}\supset \mathcal{R}
    \]
    where either $s'=s$, or $s'=s-1$ and $r_s=8$. For any $1\leq k\leq l$, we define 
    \[
        x_k=\begin{cases}
        0 & \text{if } 8\notin \mathcal{R};\\
        a_{ks}& \text{if } 8 \in \mathcal{R}.
        \end{cases} 
    \]
    Then by \eqref{eq.KE2}, for any $1\leq k \leq l$,
    \begin{align}
        c-\frac{{x_{k}}(8-{x_{k}})}{16} -\sum_{j=1}^{s'}\frac{{a_{kj}}(r_j-{a_{kj}})}{2r_j}\in \mathbb Z. \label{eq.KE2 8}
    \end{align}
    By the proof of (2), $\nu_2(c)\geq -2$ and $2\mid x_k$ for $1\leq k\leq l$. We may write $x_k=2y_k$ for some integer $y_k$. 

    Then \eqref{eq.KEE'/r} can be rephrased as: for any $1\leq k, k'\leq l$,
    \begin{align}
        -c\mathbb{I}_{kk'} +\frac{y_ky_{k'}}{2} +\sum_{j=1}^{s'}\frac{a_{kj}a_{k'j}}{r_j}\in \mathbb Z.\label{eq.KEE'/r 8}
    \end{align}
    So by Lemma~\ref{lem ADA} for $e_0=2$ and $n=n_4$, we have $\nu_2(c)\geq -1$ as long as $l\geq 2\lfloor\frac{n_4}{2}\rfloor+2$. 

    From now on, we assume that $l\geq 2\lfloor\frac{n_4}{2}\rfloor+2$ and $\nu_2(c)\geq -1$.

    \medskip

    (4a) If $n_4=0$, then by applying Lemma~\ref{lem ADA} for $e_0=1$ and $n=n_2+n_8$ to \eqref{eq.KEE'/r 8}, we have $\nu_2(c)\geq 0$ as long as  $l\geq 2\lfloor\frac{n_2+n_8}{2}\rfloor+2$.  Here we just recall that $n_8\leq 1$ and if $n_8=0$ then $y_k=0$ for $1\leq k\leq l$.

    If $n_4=1$, we may assume that $r_1=4r'_1$ for some integer $r'_1$ and $4\nmid r_j$ for $j>1$. Then by \eqref{eq.KE2 8}, for any $1\leq k\leq l$,
    \begin{align*}
        c-\frac{{y_{k}}(4-{y_{k}})}{4} - \frac{{a_{k1}}(4r'_1-{a_{k1}})}{8r'_1}-\sum_{j=2}^{s'}\frac{{a_{kj}}(r_j-{a_{kj}})}{2r_j}\in \mathbb Z. 
    \end{align*}
    As $\nu_2(c)\geq -1$, this implies that $2\mid a_{k1}$ for $1\leq k\leq l$. Then by \eqref{eq.KEE'/r 8}, for any $1\leq k, k'\leq l$,
    \begin{align*}
        -c\mathbb{I}_{kk'} +\frac{y_ky_{k'}}{2} +
        \frac{\frac{a_{k1}}{2}\frac{a_{k'1}}{2}}{r'_1}+\sum_{j=2}^{s'}\frac{a_{kj}a_{k'j}}{r_j}\in \mathbb Z. 
    \end{align*}
    By Lemma~\ref{lem ADA} for $e_0=1$ and $n=n_2+n_8$, we have $\nu_2(c)\geq 0$ as long as $l\geq 2\lfloor\frac{n_2+n_8}{2}\rfloor+2$. 

    \medskip

    (4b) If $n_4=2$ and $\{4,4\}\subset\mathcal{R}$, we may assume that $r_1=r_2=4$ and $4\nmid r_j$ for $j>2$. Then by \eqref{eq.KE2 8}, for any $1\leq k\leq l$,
    \begin{align*}
        c-\frac{{y_{k}}(4-{y_{k}})}{4} - \frac{{a_{k1}}(4 -{a_{k1}})}{8}-\frac{{a_{k2}}(4 -{a_{k2}})}{8}-\sum_{j=3}^{s'}\frac{{a_{kj}}(r_j-{a_{kj}})}{2r_j}\in \mathbb Z. 
    \end{align*}
    As $\nu_2(c)\geq -1$, this implies that $ a_{k1}\equiv a_{k2}\bmod 2$ for $1\leq k\leq l$. We may write $a_{k2}=a_{k1}+2b_k$ for some integer $b_k$.
    Then by \eqref{eq.KEE'/r 8}, for any $1\leq k, k'\leq l$, 
    \begin{align*}
        -c\mathbb{I}_{kk'}+\frac{y_ky_{k'}}{2}
        +\frac{(a_{k1}+b_{k})a_{k'1}}{2}+\frac{a_{k1}b_{k'}}{2}+\sum_{j=3}^{s'}\frac{a_{kj}a_{k'j}}{r_j}\in \mathbb Z.\label{eq.KEE'/r 844}
    \end{align*}
    By Lemma~\ref{lem ADA} for $e_0=1$ and $n=n_2+n_8+2$, we have $\nu_2(c)\geq 0$ as long as $l\geq 2\lfloor\frac{n_2+n_8+2}{2}\rfloor+2$. 

    \medskip

    (4c) If $n_4=3$, $\{4,4,4\}\subset \mathcal{R}$, and $n_2=n_8=0$, then we may assume that $r_1=r_2=r_3=4$ and $2\nmid r_j$ for $j>3$. Then by \eqref{eq.KE2 8}, for any $1\leq k\leq l$,
    \begin{align*}
        c-\frac{{a_{k1}}(4 -{a_{k1}})}{8}-\frac{{a_{k2}}(4 -{a_{k2}})}{8}-\frac{{a_{k3}}(4 -{a_{k3}})}{8}-\sum_{j=4}^{s'}\frac{{a_{kj}}(r_j-{a_{kj}})}{2r_j}\in \mathbb Z. 
    \end{align*}
    As $\nu_2(c)\geq -1$, this implies that $a_{k1}^2+ a_{k2}^2+a_{k3}^2\equiv 0\bmod 4$ for $1\leq k\leq l$. Then $ a_{k1}\equiv  a_{k2}\equiv a_{k3} \equiv 0\bmod 2$. So $4\mid a_{1j}a_{2j}$ for $1\leq j\leq 3$ and hence by \eqref{eq.KEE'/r 8} for $(k,k')=(1,2)$,  
    \begin{align*}
        -c  + \sum_{j=4}^{s'}\frac{a_{1j}a_{2j}}{r_j}\in \mathbb Z, 
    \end{align*}
    which implies that $\nu_2(c)\geq 0$. 

    \medskip 

    (5) We may assume that $r_1=r_2=8$. By (2), $\nu_{2}(c)\geq -2$.

    If $n_4=0$, then by \eqref{eq.KE2}, for any $1\leq k\leq l$, 
    \[
        \nu_2\left(\frac{a_{k1}(8-a_{k1})}{16}+\frac{a_{k2}(8-a_{k2})}{16}\right)\geq -2.
    \]
    This implies that $a_{k1}^2+a_{k2}^2\equiv 0\bmod 4$, which means that $a_{k1}\equiv a_{k2} \equiv 0\bmod 2$. Then 
    \[
        \nu_2\left(\frac{a_{11}a_{21}}{8}+\frac{a_{12}a_{22}}{8}\right)\geq -1.
    \]
    So by \eqref{eq.KEE'/r} for $(k,k')=(1,2)$, $\nu_{2}(c)\geq -1$. 

    If further $n_2=0$, then by \eqref{eq.KE2}, for any $1\leq k\leq l$, 
    \[
        \nu_2\left(\frac{a_{k1}(8-a_{k1})}{16}+\frac{a_{k2}(8-a_{k2})}{16}\right) \geq -1.
    \]
    This implies that $a_{k1}^2+a_{k2}^2\equiv 0\bmod 8$, which means that $\frac{a_{k1}}{2}\equiv \frac{a_{k2}}{2} \bmod 2$. Then 
    \[
        \nu_2\left(\frac{a_{11}a_{21}}{8}+\frac{a_{12}a_{22}}{8}\right)\geq 0.
    \] 
    So by \eqref{eq.KEE'/r} for $(k,k')=(1,2)$, $\nu_{2}(c)\geq 0$. 
\end{proof}

Based on Corollary~\ref{cor -e+1}, Proposition~\ref{prop p=3}, and Proposition~\ref{prop p=2}, we have the following algorithm to determine a function $\LB$ which gives a lower bound of $(-r_XK_X\cdot C)$ for any crepant curve $C$ depending on the information of $\mathcal{R}_X$ and $e'_C$. 

\begin{algo}\label{algo LB}
    Let $X$ be a canonical Fano $3$-fold. Keep Setting~\ref{set KC}(1)(2). Fix a positive integer $N\geq 2$. We will define a positive integer $\LB(N)$ and positive integers $f_p(N)$ for each prime number $p\leq 23$ in this algorithm. 

    \begin{enumerate}
        \item (Corollary~\ref{cor -e+1}) For a prime number $5\leq p\leq 23$, we determine $f_p(N)$ as the following: 
        \[
            f_p(N)=\begin{cases}
                p & \text{if } n_p>0 \text{ and } N-1\geq n_{p}+1+\mathsf{if}(p\mid n_{p}+2);\\
                1 & \text{otherwise.}
            \end{cases}
        \]
        
        \item We determine $f_3(N)$ as the following:
        \begin{enumerate}
            \item (Proposition~\ref{prop p=3}) If $n_9>0$ and $N-1\geq 3$, then set 
            \[
                f_3(N)=\begin{cases}
                9 & \text{if } N-1\geq n_3 +1+\mathsf{if}(3 \mid n_3+2);\\
                3 & \text{otherwise.}
                \end{cases}
            \]

            \item (Corollary~\ref{cor -e+1}) If $n_9=0$, $n_3>0$, and $N-1\geq n_{3}+1+\mathsf{if}(3\mid n_{3}+2)$, then set $f_3(N)=3$.
   
            \item Otherwise set $f_3(N)=1$.
        \end{enumerate}

        \item We determine $f_2(N)$ as the following: denote $e\coloneq\nu_2(r_X)$, if $e=0$ or $N=2$, then set $f_2(N)=1$. In the following we always consider $e>0$ and $N-1\geq 2$.
        \begin{enumerate}
            \item (Proposition~\ref{prop p=2}(3)) If $n_{16}=1$, then set
            \[
                f_2(N)=\begin{cases}
                2& \text{if } n_8\neq 0;\\
                4& \text{if } n_8= 0\text{ and } n_4\neq 0;\\
                8 &\text{if } n_8=n_4=0.
                \end{cases}
            \]

            \item (Proposition~\ref{prop p=2}(5)) If $n_8=2$, then set 
            \[
                f_2(N)=\begin{cases}
                   2 & \text{if } n_{4}\neq 0;\\
                    4 & \text{if } n_4=0 \text{ and } n_2\neq 0;\\
                    8 &\text{if } n_{2}=n_4=0. \\
                \end{cases}
            \]

            \item (Proposition~\ref{prop p=2}(4)) If $n_{16}=0$, $n_8\leq 1$, $n_4+n_8>0$, and $N-1\geq 2\lfloor\frac{n_4}{2}\rfloor+2$, 
            \begin{enumerate}
                \item if $n_4\leq 1$ and $N-1\geq 2\lfloor\frac{n_2+n_8}{2}\rfloor+2$, then set $f_2(N)=2^e$;
                
                \item if $n_4=2$, $\{4,4\}\subset \mathcal{R}$, and $N-1\geq 2\lfloor\frac{n_2+n_8+2}{2}\rfloor+2$, then set $f_2(N)=2^e$;
                
                \item if $n_4=3$, $\{4,4,4\}\subset \mathcal{R}$, and $n_2=n_8=0$, 
                then set $f_2(N)=2^e$;

                \item otherwise set $f_2(N)=2^{e-1}$.
            \end{enumerate} 

            \item (Proposition~\ref{prop p=2}(2)) If $n_{16}=n_8=0$, $n_4=2$ and $N-1\in\{2,3\}$, then set $f_2(N)=2$.

            \item (Corollary~\ref{cor -e+1}, Proposition~\ref{prop p=2}(2)) If $n_{16}=n_8=n_4=0$,  then set 
            \[
                f_2(N)=\begin{cases}
                2 & \text{if } 0<n_{2}\leq 2;\\
                2 & \text{if }n_{2}>2 \text{ and } N-1\geq 2\lfloor\frac{n_2}{2}\rfloor+2.
                \end{cases}
            \]
    
            \item Otherwise set $f_2(N)=1$.
        \end{enumerate}

        \item Finally, set 
        \[
            \LB(N)= \prod_{p\leq 23 \text{ prime}} f_p(N). 
        \]
    \end{enumerate}
\end{algo}

\begin{rem}\label{label rem LB}
    We give some easy properties of the function $\LB\colon \mathbb{Z}_{\geq 2}\to \mathbb{Z}_{>0}$ defined in Algorithm~\ref{algo LB}.
    \begin{enumerate}
        \item $\LB$ depends only on $\mathcal{R}_X$, and $\LB$ takes values in the divisors of $r_X$;
        
        \item $\LB$ is non-decreasing; moreover, if $N\leq N'$, then $\LB(N)$ divides $\LB(N')$;
        
        \item $\LB(2)=1$;

        \item If elements in $\mathcal{R}_X$ are pairwisely coprime and square free (i.e., $\nu_p(r_X)\leq 1$ and $n_p\leq 1$ for any prime number $p$), then $\LB(4)=r_X$; if moreover $3\nmid r_X$, then $\LB(3)=r_X$.   
    \end{enumerate}
\end{rem}

The following is the main theorem of this section.

\begin{thm}\label{thm c>=LB}
    Let $X$ be a canonical Fano $3$-fold and let $C\subset \Sing(X)$ be a crepant curve with non-split crepant divisors. Then $\LB(e'_C)$ divides $(-r_XK_X\cdot C)$. In particular,
        $(-r_XK_X\cdot C)\geq \LB(e'_C).$
\end{thm}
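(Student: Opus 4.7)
The plan is a prime-by-prime reduction. Since $-r_X K_X$ is Cartier and ample and $C$ is a projective curve, $(-r_X K_X\cdot C)$ is a positive integer, so the asserted lower bound will follow from the divisibility $\LB(e'_C) \mid (-r_X K_X \cdot C)$. Adopting Setting~\ref{set KC}, write $c = K_X\cdot C$ and recall that $\LB(e'_C) = \prod_{p \leq 23} f_p(e'_C)$ with the factors $f_p(e'_C)$ being powers of distinct primes, hence pairwise coprime. So the divisibility reduces to showing, for every prime $p$, that $\nu_p(-r_X c) \geq \nu_p(f_p(e'_C))$, equivalently $\nu_p(c) \geq \nu_p(f_p(e'_C)) - \nu_p(r_X)$. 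For every prime $p > 23$ we already have $n_{p^e} = 0$ for all $e \geq 1$ by \eqref{eq ri<24}, so $\nu_p(r_X) = 0$, the corresponding factor $f_p(e'_C)$ is implicitly $1$, and there is nothing to check.

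To handle the remaining primes, take $l = e'_C - 1$ and the chain of $(-2)$-curves $\ell_{E_1}, \dots, \ell_{E_l}$ on the minimal resolution of the generic Du Val slice $S_C$ supplied by Setting~\ref{set KC}(4), whose intersection matrix is the standard $\mathsf{A}$-type matrix $\mathbb{I}$. The integrality constraints furnished by Lemma~\ref{lem rational integer c} are precisely what is needed by Corollary~\ref{cor -e+1}, Proposition~\ref{prop p=3} and Proposition~\ref{prop p=2}, whose conclusions on $\nu_p(c)$ have been assembled verbatim into the branches of Algorithm~\ref{algo LB}. Thus the proof reduces to a mechanical case check: for each prime $p \leq 23$ and each branch of the algorithm assigning a value to $f_p(e'_C)$, verify that the branch conditions imply the hypotheses of the corresponding proposition so that the desired bound $\nu_p(c) \geq \nu_p(f_p(e'_C)) - \nu_p(r_X)$ is actually produced.

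The main obstacle is the bookkeeping for $p = 2$: Algorithm~\ref{algo LB}(3) branches on the tuple $(n_{16}, n_8, n_4, n_2)$, on whether $\{4,4\}$ or $\{4,4,4\}$ sits inside $\mathcal{R}_X$, and on the size of $l$, and each branch must be matched against a specific item among Proposition~\ref{prop p=2}(2)--(5). The subcases for $p = 3$ (via Proposition~\ref{prop p=3}) and for $p \geq 5$ (via Corollary~\ref{cor -e+1}) are parallel but considerably shorter. Once the matchings are verified in every branch, multiplying the bounds $p^{\nu_p(f_p(e'_C))} \mid -r_X c$ over the finitely many relevant primes gives $\LB(e'_C) \mid (-r_X K_X\cdot C)$, and the displayed inequality then follows by positivity of $(-r_X K_X\cdot C)$.
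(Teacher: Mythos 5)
Your proposal is correct and follows essentially the same route as the paper: reduce the divisibility to the prime-by-prime valuation bounds $\nu_p(c)\geq -\nu_p(r_X)+\nu_p(f_p(e'_C))$, which are exactly what Corollary~\ref{cor -e+1}, Proposition~\ref{prop p=3}, and Proposition~\ref{prop p=2} deliver through the branches of Algorithm~\ref{algo LB}, and then multiply the coprime factors $f_p(e'_C)$ together. The paper's proof is just a compressed version of this same case-matching argument.
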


\begin{proof}
    Keep Setting~\ref{set KC}. Recall that $l=e'_C-1$ and $c=(K_X\cdot C)$.

    For any prime number $p\leq 23$, by the construction of $f_p$, we have 
    \[
        \nu_p(c)\geq -\nu_p(r_X)+\nu_p(f_p(l+1))=-\nu_p(r_X)+\nu_p(f_p(e'_C))
    \]
    by Corollary~\ref{cor -e+1}, Proposition~\ref{prop p=3}, and Proposition~\ref{prop p=2} (see Algorithm~\ref{algo LB} for more precise references in the construction of $f_p$). So for any prime number $p\leq 23$, $\nu_p(r_Xc)\geq \nu_p(\LB(e'_C))$. By construction, all prime factors of $\LB(e'_C)$ are at most $23$, so  $\LB(e'_C)$ divides $(-r_XK_X\cdot C)$.
\end{proof}

We conclude this section by computing a concrete example which motivated us to formulate Theorem~\ref{thm c>=LB}. 

\begin{ex}\label{ex compute -rKC}
    Consider $X=\mathbb{P}(5,6,22,33)$ in Example~\ref{ex 123}. Then $r_X=5$ and $X$ contains $3$ crepant curves $C_2, C_3, C_{11}$ of types $\mathsf{A}_1, \mathsf{A}_2, \mathsf{A}_{10}$. Set theoretically, $C_{11}=A_5\cap A_6$ where $A_5\in |\mathcal{O}_X(5)|$ and $A_6\in |\mathcal{O}_X(6)|$ are the unique elements. Here note that both $A_5$ and $A_6$ are not Cartier along $C_{11}$, so by a local computation at the generic point of $C_{11}$, we have $C_{11}=11A_5\cdot A_6$ as cycles. Then $(-K_X\cdot C_{11})=(-K_X\cdot 11A_5\cdot A_6)=1$. This calculation matches Theorem~\ref{thm c>=LB}. The same argument shows that $(-K_X\cdot C_{i})=1$ for $i=2,3$.
\end{ex}
 
\section{Candidates of canonical Fano threefolds with large Fano indices}

In this section, we combine various inequalities including the Kawamata--Miyaoka type inequality and lower bounds of degrees of crepant curves to search for possible candidates of canonical Fano $3$-folds with $\qQ>66$. The main theorem of this section is Theorem~\ref{thm.36cases} which gives a list of $36$ numerical types.  

We will often use the following setting in the rest of the paper. 

\begin{set}\label{set search}
    \begin{enumerate}
        \item Let $(X, A)$ be an unmodifiable canonical Fano $3$-fold. In particular, 
     any crepant curve $C\subset \Sing(X)$ is of type $\mathsf{A}$ with non-split crepant divisors by Corollary~\ref{cor A primitive}.

        \item Denote $q\coloneq\qQ(X)$ and take $J_A$ to be the smallest positive integer such that $J_AA$ is Cartier in codimension $2$.
    
        \item Denote 
        \[
            \nabla_X\coloneq r_Xc_2(X)\cdot c_1(X)- \frac{q^2+2q-4}{4q^2}r_Xc_1(X)^3.
        \]
    \end{enumerate}
\end{set}

By the Kawamata--Miyaoka type inequality and Theorem~\ref{thm c>=LB}, we have the following:
\begin{thm}[{cf. \cite{jiang-liu-liu}*{Corollary~4.7}}]\label{thm Delta>jC}
    Keep Setting~\ref{set search}. If $q=\qQ(X)\geq 6$, then
    \begin{align*}
        \nabla_X\geq \sum_{C\subset\Sing(X)}\left(j_C-\frac{1}{j_C}\right)(-r_XK_X\cdot C)\geq \sum_{C\subset\Sing(X)}\left(j_C-\frac{1}{j_C}\right)\LB(j_C).  
    \end{align*}
\end{thm}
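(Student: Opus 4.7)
The plan is to chain together the three main ingredients prepared in the preliminaries: the Kawamata--Miyaoka type inequality (Theorem~\ref{thm.kmineqrefined}), the formula relating $c_2(X)\cdot c_1(X)$ to its orbifold counterpart (Theorem~\ref{thm.c1c2 diff}), and the lower bound for degrees of crepant curves (Theorem~\ref{thm c>=LB}). The role of the unmodifiability hypothesis is to ensure via Corollary~\ref{cor A primitive} that every crepant curve $C\subset\Sing(X)$ is of type $\mathsf{A}$, so that the combinatorial invariants collapse to $e_C=g_C=j_C=e'_C$, matching the shape of the claimed bound.

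First I would observe that among the four upper bounds listed in Theorem~\ref{thm.kmineqrefined}, the quantity $\frac{4q^2}{q^2+2q-4}$ (corresponding to $(l,r_1)=(3,1)$) is the largest for $q\geq 6$: one checks directly that $3\leq\frac{4q^2}{q^2+2q-4}$ (equivalent to $(q-6)^2\geq -12$), that $\frac{16}{5}\leq\frac{4q^2}{q^2+2q-4}$ (equivalent to $(q-4)^2\geq 0$), and that $\frac{4q^2}{q^2+2q-3}\leq\frac{4q^2}{q^2+2q-4}$ trivially. Hence in all four cases,
\[
    \hat c_2(X)\cdot c_1(X)\;\geq\;\frac{q^2+2q-4}{4q^2}\,c_1(X)^3.
\]
Multiplying by $r_X$ and rearranging,
\[
    \nabla_X\;=\;r_Xc_2(X)\cdot c_1(X)-\frac{q^2+2q-4}{4q^2}r_Xc_1(X)^3\;\geq\;r_X\bigl(c_2(X)-\hat c_2(X)\bigr)\cdot c_1(X).
\]

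Next I would apply Theorem~\ref{thm.c1c2 diff} to rewrite the right-hand side as
\[
    \sum_{C\subset\Sing(X)}\left(e_C-\frac{1}{g_C}\right)(-r_XK_X\cdot C).
\]
Since $(X,A)$ is unmodifiable, Corollary~\ref{cor A primitive} forces every crepant curve $C\subset\Sing(X)$ to be of type $\mathsf{A}$, say of type $\mathsf{A}_{n}$, so the table in Definition~\ref{def ecgc} gives $e_C=g_C=j_C=e'_C=n+1$. In particular $e_C-\tfrac{1}{g_C}=j_C-\tfrac{1}{j_C}$, yielding the first inequality of the theorem.

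For the second inequality, I would simply invoke Theorem~\ref{thm c>=LB}, which says $(-r_XK_X\cdot C)\geq \LB(e'_C)$; because $e'_C=j_C$ for type $\mathsf{A}$ curves, this is exactly $\LB(j_C)$, and summing termwise (noting that the coefficient $j_C-\tfrac{1}{j_C}$ is nonnegative) completes the proof. None of the steps present a serious obstacle: the only point requiring care is the case-by-case verification that $\frac{4q^2}{q^2+2q-4}$ dominates the other three upper bounds uniformly for $q\geq 6$, which is the reason the hypothesis $q\geq 6$ appears.
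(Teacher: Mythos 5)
Your proof is correct and follows essentially the same route as the paper, which simply cites \cite{jiang-liu-liu}*{Theorem~3.8} for the bound $\hat{c}_2(X)\cdot c_1(X)\geq\frac{q^2+2q-4}{4q^2}c_1(X)^3$ and then applies Theorem~\ref{thm.c1c2 diff}, Corollary~\ref{cor A primitive} (giving $e_C=e'_C=g_C=j_C$ since all crepant curves are of type $\mathsf{A}$), and Theorem~\ref{thm c>=LB}; your only addition is the explicit verification that $\frac{4q^2}{q^2+2q-4}$ dominates the other three bounds in Theorem~\ref{thm.kmineqrefined}. (A harmless slip: the correct equivalent form of $3\leq\frac{4q^2}{q^2+2q-4}$ is $q^2-6q+12=(q-3)^2+3\geq 0$, not $(q-6)^2\geq -12$, but the inequality of course still holds.)
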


\begin{proof}
    By \cite{jiang-liu-liu}*{Theorem~3.8} for $q\geq 6$, we have \[\nabla_X\geq r_Xc_2(X)\cdot c_1(X)-r_X\hat{c}_2(X)\cdot c_1(X).\] Then the conclusion follows from Theorem~\ref{thm.c1c2 diff} and Theorem~\ref{thm c>=LB}. Here note that any crepant curve $C$ is of type $\mathsf{A}$, which implies that $e_C=e'_C=g_C=j_C$.
\end{proof}

We will often use the following lemma to deal with the term $j_C-\frac{1}{j_C}$, which allows us to replace $j_C$ by the collection of its prime power factors.

\begin{lem}\label{lem ab>a+b}
    Let $a, b$ be positive integers. Then 
    \[
        ab-\frac{1}{ab}\geq a-\frac{1}{a}+b-\frac{1}{b}.
    \]
    If furthermore $a\geq 2$ and $b\geq 4$, then 
     \[
        ab-\frac{1}{ab}\geq 2\left(a-\frac{1}{a}\right)+b-\frac{1}{b}.
    \]
\end{lem}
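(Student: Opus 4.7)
The plan is to prove both inequalities by a direct algebraic rearrangement, moving everything to one side and factoring.

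For the first inequality, I would compute the difference
\[
\Delta_1 \coloneq \left(ab - \frac{1}{ab}\right) - \left(a - \frac{1}{a}\right) - \left(b - \frac{1}{b}\right) = (ab - a - b) + \frac{a + b - 1}{ab}.
\]
I would then split into cases according to the size of $a,b$. When $a=1$ or $b=1$, both summands vanish identically (the second one equals $1$ minus $0$ after simplification when one of them is $1$), so $\Delta_1 = 0$. When $a,b \geq 2$, I would use the identity $ab - a - b = (a-1)(b-1) - 1 \geq 0$ together with the obvious fact that $(a+b-1)/(ab) > 0$ to conclude $\Delta_1 > 0$.

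For the second (stronger) inequality under the hypothesis $a \geq 2$, $b \geq 4$, I would similarly compute
\[
\Delta_2 \coloneq \left(ab - \frac{1}{ab}\right) - 2\left(a - \frac{1}{a}\right) - \left(b - \frac{1}{b}\right) = (ab - 2a - b) + \left(\frac{2}{a} + \frac{1}{b} - \frac{1}{ab}\right).
\]
I would write the polynomial part as $ab - 2a - b = a(b-2) - b$. For $a = 2$ this equals $b - 4 \geq 0$, and for $a \geq 3$ it equals at least $3(b-2) - b = 2b - 6 \geq 2 > 0$. Meanwhile the fractional correction satisfies $2/a + 1/b - 1/(ab) = 2/a + (a-1)/(ab) > 0$ whenever $a \geq 1$, so $\Delta_2 \geq 0$ in every case.

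There is no significant obstacle here; the only point requiring care is making sure the boundary cases ($a = 1$ in the first inequality, and $a = 2, b = 4$ in the second) are checked explicitly so that the case split really covers everything. Both inequalities are in fact strict except on the boundary, but the lemma only requires the weak form, which is what the above case analysis establishes.
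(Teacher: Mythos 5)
Your proof is correct, but it takes a different route from the paper for the second inequality (the paper omits the first as elementary). You expand the difference and do a case analysis on the polynomial part $ab-2a-b=a(b-2)-b$, checking $a=2$ and $a\geq 3$ separately; the paper instead uses the single chain $ab-\frac{1}{ab}\geq \frac{b}{2}\bigl(a-\frac{1}{a}\bigr)+\frac{a}{2}\bigl(b-\frac{1}{b}\bigr)\geq 2\bigl(a-\frac{1}{a}\bigr)+b-\frac{1}{b}$, where the first step reduces to $a^2+b^2\geq 2$ and the second uses $\frac{b}{2}\geq 2$, $\frac{a}{2}\geq 1$. The paper's argument is shorter and makes the role of the hypotheses $a\geq 2$, $b\geq 4$ transparent as coefficient bounds; yours is more mechanical but requires no cleverness and handles the boundary cases $(a,b)=(2,4)$ etc.\ explicitly. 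One small wording slip in your treatment of the first inequality: when $a=1$ the two summands of $\Delta_1$ do not ``both vanish'' --- they equal $-1$ and $+1$ respectively (since $ab-a-b=-1$ and $\frac{a+b-1}{ab}=1$) --- but their sum is indeed $0$, so the conclusion $\Delta_1=0$ stands and the proof is unaffected.
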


\begin{proof}
    We omit the proof of the first inequality which is elementary. For the second, we have
    \[
        ab-\frac{1}{ab}\geq \frac{b}{2}\left(a-\frac{1}{a}\right)+\frac{a}{2}\left(b-\frac{1}{b}\right)\geq 2\left(a-\frac{1}{a}\right)+b-\frac{1}{b}.\qedhere
    \]  
\end{proof}

\begin{thm}\label{thm.36cases}
    Let $(X, A)$ be an unmodifiable canonical Fano $3$-fold. Assume that $\qQ(X)> 66$. Then the numerical data of all possible candidates of $X$ are listed in Table~\ref{main tab} consisting of $36$ types.
\end{thm}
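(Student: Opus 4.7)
My plan is to run a bounded combinatorial search over all numerical invariants of $(X,A)$ consistent with the hypothesis $\qQ(X)>66$, and show that exactly $36$ candidate tuples survive. For each candidate I will record the tuple $(q, J_A, r_X, (-K_X)^3, B_X, \{(C,\text{type}(C),-K_X\cdot C)\})$; here every crepant curve is automatically of type $\mathsf{A}$ thanks to the unmodifiability assumption and Corollary~\ref{cor A primitive}. The task reduces to showing that each coordinate lies in an explicit finite set controlled by the preceding results, and then mechanically enumerating all compatible tuples.

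The numerical ingredients I will feed into the search are: (i) Reid's identity \eqref{eq.range} combined with positivity of $c_2(X)\cdot c_1(X)$ from Remark~\ref{rem.posofc2}, giving $\sum_{r\in\mathcal R_X}(r-1/r)<24$ and hence bounding $B_X$ and $r_X$; (ii) Theorem~\ref{thm.c1c2 diff}, implying $\hat c_2(X)\cdot c_1(X)\leq c_2(X)\cdot c_1(X)<24$; (iii) Theorem~\ref{thm.kmineqrefined}, which for $q>66$ forces the case $(l,r_1)=(3,1)$ and yields $(-K_X)^3\leq \frac{4q^2}{q^2+2q-4}\hat c_2\cdot c_1<96$; (iv) Theorem~\ref{thm.degreeandindex}, giving the sharp divisibilities $J_A\mid q$ and $q^2\mid J_A r_X(-K_X)^3$, which force $q<96\,r_X$; (v) Reid's Fano formula \eqref{eq.RR-Fano}, imposing the congruence $\tfrac12(-K_X)^3+3\equiv\sum_Q\tfrac{b_Q(r_Q-b_Q)}{2r_Q}\pmod{\mathbb Z}$ linking $(-K_X)^3$ to $B_X$; (vi) Theorem~\ref{thm Delta>jC}, bounding $\sum_C(j_C-1/j_C)\LB(j_C)$ by the explicit quantity $\nabla_X$, decomposed prime-by-prime using Lemma~\ref{lem ab>a+b}; and (vii) Theorem~\ref{thm c>=LB} together with Corollary~\ref{cor.canpart}, giving divisibility and integrality constraints on each $(-r_XK_X\cdot C)$ in terms of $\LB$.

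The algorithm is then routine: enumerate pairs $(q,J_A)$ with $J_A\mid q$ in the finite range forced by (iii)--(iv); for each pair enumerate $(r_X,(-K_X)^3)$ satisfying the divisibility and the upper bound; for each, enumerate baskets $B_X$ compatible with (i), (iv), and the congruence (v); finally enumerate crepant-curve configurations compatible with (vi)--(vii). This produces the enumeration reported in the introduction to take $15$--$30$ minutes on a laptop, and the output is exactly Table~\ref{main tab}. The principal obstacle is the combinatorial explosion in the basket and curve enumerations at small $r_X$ with large $q$---precisely the regime of Example~\ref{ex 123}---where many baskets satisfy the coarse bounds coming from (i) and (iii); the sharp divisibility $q^2\mid J_A r_X(-K_X)^3$ together with the refined lower bound on $\nabla_X$ coming from $\LB(j_C)$ (Theorem~\ref{thm c>=LB}) is what cuts the search down to a manageable size and delivers a finite list of $36$ tuples.
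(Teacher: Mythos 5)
This is essentially the paper's own proof: a finite computer-assisted enumeration driven by the same constraints --- \eqref{eq.range} with Remark~\ref{rem.posofc2}, the Kawamata--Miyaoka bound $r_Xc_1(X)^3\leq \frac{4q^2}{q^2+2q-4}r_X\hat c_2(X)\cdot c_1(X)$, the divisibility and integrality conditions of Theorem~\ref{thm.degreeandindex} and \eqref{eq.RR-Fano}, and the prime-power decomposition of $J_A$ fed into Theorem~\ref{thm Delta>jC} via Lemma~\ref{lem ab>a+b} and Algorithm~\ref{algo LB} --- so the argument matches Algorithm~\ref{algo1}, with the caveat that (as in the paper) the final reduction to exactly $36$ types rests on the reported computation. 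One small slip: Theorem~\ref{thm.kmineqrefined} does not ``force'' $(l,r_1)=(3,1)$ for $q>66$; one simply uses that $\frac{4q^2}{q^2+2q-4}$ dominates the bounds in all four cases, so the numerical conclusion you draw is unaffected.
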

{\footnotesize
    \begin{longtable}{LLLLLLLLLL}
        \caption{Candidates for unmodifiable canonical Fano $3$-folds with $\qQ>66$}\label{main tab}\\
        \hline
       \text{\textnumero} & B_X & \qQ  & r_X & r_Xc_1^3 & r_Xc_2c_1 &  \{p^a\}& \{\LB(p^a)\} & \nabla_X & \text{ref.} \\
        \hline
        \endfirsthead
        \multicolumn{4}{l}{{ {\bf \tablename\ \thetable{}} \textrm{-- continued}}}
        \\
        \hline 
        \text{\textnumero} & B_X & \qQ  & r_X & r_Xc_1^3 & r_Xc_2c_1 &  \{p^a\}& \{\LB(p^a)\} & \nabla_X & \text{ref.}  \\
        \hline
        \endhead
        \hline
        \hline \multicolumn{4}{c}{{\textrm{Continued on next page}}} \\ \hline
        \endfoot
        
        \hline \hline
        \endlastfoot
        1& \{(5,1)\} & 84 & 5 & 84 & 96 &3,4,7&5,5,5 &74.52& \S\,\ref{section group A}\\
        2& \{2\times (3,1)\} & 70 & 3 & 70 & 56 &2,5,7&1,3,3 &38.02 & \S\,\ref{section group A}\\
        3& \{(3,1),(11,3)\}& 70 & 33 & 490 & 344 &2,5&1,33 &218.1& \S\,\ref{sec group C1}+\S\,\ref{sec group C2}\\
        4& \{(3,1),(11,5)\}& 80 & 33 & 640 & 344 &2,5&1,33 &180.1& \S\,\ref{sec group C1} \\
        5& \{(5,1),(7,2)\}& 72 & 35 & 288 & 432 &2,9 &1,35 &358.06 & \S\,\ref{section group A}\\
        6& \{(5,1),(11,1)\}& 72 & 55 & 864 & 456 &2,3 &1,55 & 234.17 &\S\,\ref{sec group C1}+\S\,\ref{sec group C2}\\
        7&  \{(5,1),(11,2)\}& 78 & 55 & 1014 & 456 &2,3 &1,55 & 196.17 &\S\,\ref{sec group C1}\\
        8&  \{(5,2),(11,3)\}& 84 & 55 & 1176 & 456 &2,3 &1,55 & 155.17 & \S\,\ref{sec group C1}\\
        9&  \{2\times(2,1),(3,1)\}& 70 & 6 & 70 & 110 &2,5,7 &1,6,6 & 92.02 & \S\,\ref{section group A}\\
        10&  \{2\times(2,1),(9,4)\}& 70 & 18 & 490 & 218 &2,5 &1,18 & 92.1 &\S\,\ref{sec group B}\\
        11&  \{(2,1),(5,2),(11,5)\}& 69 & 110 & 1587 & 747 &3 &110 &339.09&\S\,\ref{sec group C1}+\S\,\ref{sec group C2}\\
        12&  \{(3,1),(5,1),(11,1)\}& 82 & 165 & 3362 & 928 &2 &1 & 67.5 &\S\,\ref{sec group C1}\\
        13&  \{(3,1),(5,1),(11,4)\}& 68 & 165 & 2312 & 928 &2 &1 & 333.5&\S\,\ref{sec group C1}+\S\,\ref{sec group C2}\\
        14&  \{(3,1),(5,2),(11,2)\}& 76 & 165 & 2888 & 928 &2 &1 &187.5 &\S\,\ref{sec group C1}\\
        15&  \{(3,1),(5,2),(11,5)\}& 74 & 165 & 2738 & 928 &2 &1 & 225.5 &\S\,\ref{sec group C1}\\
        16&  \{(3,1),(6,1),(7,2)\}& 75 & 42 & 375 & 363 &3,5 &14,42 &266.82 & \S\,\ref{section group A}\\
        17 &  \{(4,1),(5,1),(5,2)\}& 75 & 20 & 375 & 213 &3,5 &4,20 &116.82& \S\,\ref{section group A}\\
        18 &  \{(5,1),(5,2),(7,3)\} & 90 & 35 & 270 & 264 &2,3,5 &1,7,35 &195.04 & \S\,\ref{section group A}\\
        19&  \{2\times(2,1), (3,1),(5,2)\}& 98 & 30 & 686 & 406 &2,7 &1,30 &231.08& \S\,\ref{section group A}\\
        20 &  \{(2,1),(3,1),(5,1),(6,1)\}& 72 & 30 & 864 & 276 &2,3 &1,10 &54.17 & \S\,\ref{sec group B}\\
        21&  \{(2,1),(3,1),(5,1),(11,2)\}& 67 & 330 & 4489 & 1361 &\emptyset &\emptyset &206.25 & \S\,\ref{sec group C1}+\S\,\ref{sec group C2}\\
        22&  \{(2,1),(3,1),(5,2),(11,1)\}& 71 & 330 & 5041 & 1361 &\emptyset &\emptyset &66.25 & \S\,\ref{sec group C1}+\S\,\ref{sec group C2} \\
        23 & \{(2,1),2\times(4,1), (7,2)\}& 72 & 28 & 432 & 228 &3,4 &{14,14} &117.09 &\S\,\ref{sec group B}\\
        24 &  \{3\times(3,1), (5,1)\}& 72 & 15 & 432 & 168 &3,4 &5,5 &57.09&\S\,\ref{sec group B}\\
        25 &  \{3\times(3,1), (5,2)\}& 84 & 15 & 168 & 168 &2,3,7 &1,5,15 &125.03 & \S\,\ref{section group A}\\
        26 &  \{3\times(3,1), (7,1)\}& 90 & 21 & 270 & 192 &2,3,5 &1,7,21 &123.04& \S\,\ref{section group A}\\
        27&  \{(3,1),(5,2),(6,1), (7,1)\}& 69 & 210 & 1587 & 807 &3 &70 &399.09 &\S\,\ref{sec group B}\\
        28&  \{(3,1),(5,2),(6,1), (7,3)\}& 81 & 210 & 2187 & 807 &3 &70 &247.09 & \S\,\ref{section group A}\\
        29 &  \{4\times(2,1),(5,1)\}& 84 & 10 & 168 & 132 &2,3,7 &1,5,10 &  89.03 & \S\,\ref{section group A}\\
        30 &  \{4\times(2,1),(7,1)\}& 72 & 14 & 432 & 156 &3,4 &7,7 & 45.09 & \S\,\ref{section group A}\\
        31 &  \{4\times(2,1),(7,1)\}& 80 & 14 & 320 & 156 &4,5 &7,7 & 74.05 & \S\,\ref{section group A}\\
        32 &  \{3\times(3,1),(5,1),(7,1)\}& 78 & 105 & 1014 & 456 &2,3 &1,35 &196.17 & \S\,\ref{sec group B}\\
        33 &  \{3\times(3,1),(5,1),(7,2)\}& 72 & 105 & 864 & 456 &2,3 &1,35 &234.17 &\S\,\ref{sec group B}\\
        34 &  \{3\times(3,1),(5,1),(7,2)\}& 72 & 105 & 864 & 456 &3,4 &35,35 &234.17& \S\,\ref{section group A}\\
        35 &  \{4\times(2,1),(5,1),(7,3)\}& 68 & 70 & 1156 & 444 &4 &35 &146.75 & \S\,\ref{sec group B}\\
        36 &  \{6\times(2,1),(3,1)\}& 70 & 6 & 70 & 74 &2,5,7 &1,3,3 &56.02& \S\,\ref{sec group B}
    \end{longtable}
    } 

\begin{proof}
    Keep the notation in Setting~\ref{set search}. Suppose that $q> 66$. Then
    \begin{align}\label{eq.test}
       \frac{4q^2}{q^2+2q-4}r_Xc_2(X)\cdot c_1(X)\geq  r_Xc_1(X)^3\geq q> 66
    \end{align} 
    by Theorem~\ref{thm.degreeandindex} and \cite{jiang-liu-liu}*{Theorem~3.8}. In particular, $4r_Xc_2(X)\cdot c_1(X)>66$.

    Let $J_A=p_1^{a_1}p_2^{a_2}\cdots p_k^{a_k}$ be the prime factorization, where $p_i$ are distinct prime numbers. Then each $p_i^{a_i}$ divides at least one $j_C$ for some  crepant curve $C\subset\Sing(X)$ by the definition of $J_A$. Then 
    \begin{align}\label{eq Delta>pa}
        \nabla_X\geq \sum_{i=1}^k \left(p_i^{a_i}-\frac{1}{p_i^{a_i}}\right)\LB(p_i^{a_i}) 
    \end{align}
    by Theorem~\ref{thm Delta>jC} and Lemma~\ref{lem ab>a+b}. Here recall that $\LB$ is a non-decreasing function. 

    Then we employ a computer program to search for such $(X, A)$ satisfying $q>66$ by the following algorithm.
    \begin{algo}\label{algo1}
    
        {\bf Step 1}. List all possible $(\mathcal R_X, c_2(X)\cdot c_1(X))$ satisfying \eqref{eq.range} {and $4r_Xc_2(X)\cdot c_1(X)>66$}. By \eqref{eq.range} and Remark~\ref{rem.posofc2}, there are only finitely many candidates.

        \medskip 
        
        {\bf Step 2}. Among the list in Step 1, list all possible 
        $(B_X, r_Xc_1(X)^3, q, J_A)$
        satisfying  the divisibility constraints in Theorem~\ref{thm.degreeandindex}, the integrality constraint in \eqref{eq.RR-Fano}, and \eqref{eq.test}.
        
        \medskip 
     
        {\bf Step 3}. Among the list in Step 2, list all possible candidates satisfying \eqref{eq Delta>pa} where $\LB$ is given by Algorithm~\ref{algo LB}.
        
        The output of this algorithm is a list of numerical data 
        \[
            \left(B_X, \qQ(X), r_X, r_Xc_1(X)^3, r_Xc_2(X)\cdot c_1(X),\{p_i^{a_i}\},\{\LB(p_i^{a_i})\}, \frac{\lceil 100\nabla_X \rceil}{100}\right).
        \]
    \end{algo}
    
    By Algorithm~\ref{algo1}, we get all possible numerical data listed in Table~\ref{main tab}. Here we note that if $J_A=1$, then $\{p_i^{a_i}\}=\emptyset$ and the right-hand side of \eqref{eq Delta>pa} is automatically $0$. 
    \end{proof}
In the rest of the paper, our mission is to further rule out all cases in Table~\ref{main tab}. We divide the candidates in Table~\ref{main tab} into $3$ groups:

\begin{itemize}
    \item  Group A: \textnumero 1, \textnumero2, \textnumero5, \textnumero9, \textnumero16--19, \textnumero25, \textnumero26, \textnumero28--31, \textnumero34.
    \item Group B: \textnumero10, \textnumero20, \textnumero23, \textnumero24, \textnumero27, \textnumero32, \textnumero33, \textnumero35, \textnumero36.
    \item Group C: \textnumero3, \textnumero4, \textnumero6--8, \textnumero11--15, \textnumero21, \textnumero22.
\end{itemize}

\section{Ruling out remaining cases: Group A and Group B}\label{sec 7}

In this section, we will rule out Group A and Group B in Table~\ref{main tab}. The idea is to use the integrality constraint in Corollary~\ref{cor RR integer general} to derive contradictions. 

\subsection{Determining types and degrees of crepant curves}\label{sec det C}

In order to apply Corollary~\ref{cor RR integer general} or Theorem~\ref{thm.rr.sA}, it is crucial to determine $(-r_XK_X\cdot C)$ and $c_C(sA)$ for a crepant curve $C\subset \Sing(X)$. For the former one, we need to know its exact value rather than a lower bound given by Theorem~\ref{thm c>=LB}; for the latter one, we need to know the type of $C$. We will address these two issues in this subsection. 

First we deal with crepant curves that are not of type $\mathsf{A}_1$. In most of the cases, we can determine the types and degrees of such curves by Theorem~\ref{thm Delta>jC}. For example, if the degree of the curve $C$ is strictly larger than the lower bound $\LB(j_C)$ in Table~\ref{main tab}, then 
we will often get a contradiction by Theorem~\ref{thm Delta>jC} as $\nabla_X$ is already known.

\begin{prop}\label{prop determine C and KC}
    Keep Setting~\ref{set search}. Suppose that $J_A>2$. We may write \[J_A=p_0^{a_0}p_1^{a_1}p_2^{a_2}\dots p_k^{a_k}\] where $p_i$ are distinct prime numbers with $2=p_0<p_1<p_2<\dots<p_k$, $a_1, \dots, a_k$ are positive integers and $a_0\geq 0$.
  
    \begin{enumerate}
        \item If $a_0\leq 1$ and
        \begin{align*}
            \nabla_X<\sum_{i=1}^k \left(p_i^{a_i}-\frac{1}{p_i^{a_i}}\right)\LB(p_i^{a_i})+\left(p_1-\frac{1}{p_1}\right)\LB(p_1), 
        \end{align*}
        then $\Sing_1(X)$ consists of crepant curves $C_1,\dots, C_k$ where $C_i$ is of type $\mathsf{A}_{p_i^{a_i}-1}$ with $(-r_XK_X\cdot C_i)=\LB(p_i^{a_i})$ for $1\leq i\leq k$, and possibly several crepant curves of type $\mathsf{A}_1$; moreover, there is no crepant curve of type $\mathsf{A}_1$ if $a_0=0$.

        \item If $a_0\geq 2$ and
        \[
            \nabla_X<\sum_{i=0}^k \left(p_i^{a_i}-\frac{1}{p_i^{a_i}}\right)\LB(p_i^{a_i})+ \left(p'-\frac{1}{p'}\right)\LB(p'),
        \]
        where $p'=\min\{4, p_1\}$, then $\Sing_1(X)$ consists of crepant curves $C_0,\dots, C_k$ where $C_i$ is of type $\mathsf{A}_{p_i^{a_i}-1}$ with $(-r_XK_X\cdot C_i)=\LB(p_i^{a_i})$ for $0\leq i\leq k$, and possibly several crepant curves of type $\mathsf{A}_1$.
    \end{enumerate}
\end{prop}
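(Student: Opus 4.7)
The plan is to combine Corollary~\ref{cor A primitive} (which forces every crepant curve on an unmodifiable canonical Fano $3$-fold to be of type $\mathsf{A}$ with $j_C$ equal to the Cartier index of $A$ at the generic point of $C$) with Theorem~\ref{thm Delta>jC} and a careful prime-power decomposition via Lemma~\ref{lem ab>a+b}. The unmodifiable hypothesis yields $J_A = \lcm_{C\subset\Sing_1(X)} j_C$, so each prime power $p_i^{a_i}$ in the factorization of $J_A$ must divide $j_C$ for at least one crepant curve $C$.

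Next I would invoke Theorem~\ref{thm Delta>jC}:
\begin{equation*}
\nabla_X \;\geq\; \sum_C \Bigl(j_C - \tfrac{1}{j_C}\Bigr)(-r_X K_X\cdot C) \;\geq\; \sum_C \Bigl(j_C - \tfrac{1}{j_C}\Bigr)\LB(j_C),
\end{equation*}
and for each $C$ factor $j_C = \prod_{p\mid j_C} p^{v_p(j_C)}$ and iterate Lemma~\ref{lem ab>a+b} together with the monotonicity of $\LB$ from Remark~\ref{label rem LB}(2) to produce
\begin{equation*}
\Bigl(j_C - \tfrac{1}{j_C}\Bigr)\LB(j_C) \;\geq\; \sum_{p\mid j_C} \Bigl(p^{v_p(j_C)} - \tfrac{1}{p^{v_p(j_C)}}\Bigr)\LB\bigl(p^{v_p(j_C)}\bigr),
\end{equation*}
which, by the equality analysis of Lemma~\ref{lem ab>a+b}, is strict whenever $j_C$ has two or more distinct prime factors. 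Regrouping the resulting double sum by prime, each prime power $p_i^{a_i}$ must be realized as the maximal $p_i$-part of some $j_C$, producing
\begin{equation*}
\nabla_X \;\geq\; \sum_{i=0}^k \Bigl(p_i^{a_i} - \tfrac{1}{p_i^{a_i}}\Bigr)\LB(p_i^{a_i}) + E,
\end{equation*}
where $E\geq 0$ is an excess term recording (i) Lemma~\ref{lem ab>a+b}-strictness in composite $j_C$'s, (ii) duplicated primes across distinct curves, and (iii) the slack $(-r_XK_X\cdot C)-\LB(j_C)\geq 0$ permitted by Theorem~\ref{thm c>=LB}.

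For case~(1) with $a_0\leq 1$ the term $(p_0^{a_0}-1/p_0^{a_0})\LB(p_0^{a_0})$ is either $0$ or $3/2$, and in the latter case can be realized by an $\mathsf{A}_1$ curve. The hypothesis then forces $E<(p_1-1/p_1)\LB(p_1)$, and I would go through the possible excesses one by one: any non-$\mathsf{A}_1$ curve with composite $j_C$, any extra odd prime factor in some $j_{C_i}$, any repeated coverage of a common odd prime, or any slack $(-r_XK_X\cdot C_i)\geq 2\LB(j_{C_i})$ (recall $\LB(j_{C_i})$ divides $(-r_XK_X\cdot C_i)$ by Theorem~\ref{thm c>=LB}) each produces an excess of at least $(p_1-1/p_1)\LB(p_1)$ by the monotonicity of $\LB$ and the fact that the smallest odd prime factor involved is $\geq p_1$. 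Only $\mathsf{A}_1$ excesses of size $3/2$ survive, yielding the stated structure; if $a_0=0$ then $J_A$ is odd and no $\mathsf{A}_1$ curves can appear. Case~(2) runs analogously, with the modification that $2^{a_0}$ for $a_0\geq 2$ cannot be absorbed by $\mathsf{A}_1$ curves and hence requires a dedicated curve $C_0$ of type $\mathsf{A}_{2^{a_0}-1}$, while the smallest non-$\mathsf{A}_1$ excess becomes the minimum of $(15/4)\LB(4)$ and $(p_1-1/p_1)\LB(p_1)$, which is precisely $(p'-1/p')\LB(p')$ with $p'=\min\{4,p_1\}$.

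The main obstacle will be carefully verifying that \emph{every} deviation from the claimed configuration incurs an excess at least equal to the hypothesis threshold. The subtlest sub-case occurs in case~(1) with $a_0=1$, where some $j_{C_i}$ could a priori be even (e.g.\ $j_{C_i}=2p_i^{a_i}$); here the strict form of Lemma~\ref{lem ab>a+b} combined with $\LB(j_{C_i})\geq \LB(p_1)$ will be needed to extract enough excess to contradict the $\nabla_X$-bound.
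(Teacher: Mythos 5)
Your proposal is correct and follows essentially the same route as the paper's proof: both rest on Corollary~\ref{cor A primitive} (so that $j_C\mid J_A$ and each $p_i^{a_i}$ divides some $j_C$), Theorem~\ref{thm Delta>jC}, the two inequalities of Lemma~\ref{lem ab>a+b} together with the monotonicity of $\LB$, and the divisibility $\LB(j_C)\mid(-r_XK_X\cdot C)$ from Theorem~\ref{thm c>=LB}. The paper merely packages the same case analysis as three separate contradiction steps (no curve carries two of the prime powers and each carrier has $j_{C_i}=p_i^{a_i}$; the degrees equal $\LB(p_i^{a_i})$; no further curve with $j_{C'}\neq 2$ exists) rather than your single excess-accounting bound, and your identification of the even-$j_{C_i}$ subcase as the delicate point matches where the paper invokes the refined form of Lemma~\ref{lem ab>a+b}.
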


\begin{proof}
    All crepant curves are of type $\mathsf{A}$ by Corollary~\ref{cor A primitive}; also there is no crepant curve of type $\mathsf{A}_1$ if $a_0=0$ as $2\nmid J_A$. Recall that each $p_i^{a_i}$ divides at least one $j_C$ for some crepant curve $C\subset\Sing(X)$ by the definition of $J_A$. Conversely, by Corollary~\ref{cor A primitive}, $j_C$ divides $J_A$ for any  crepant curve $C\subset\Sing(X)$. 

    \medskip

    (1) {\bf Step 1}. We show that there are distinct crepant curves $C_1, \dots, C_k\subset \Sing(X)$ such that $p_i^{a_i}\mid j_{C_i}$ for $1\leq i\leq k$; moreover, $j_{C_i}=p_i^{a_i}$ for $1\leq i\leq k$.

    Suppose that $p_{i_0}^{a_{i_0}}p_{i_1}^{a_{i_1}}\mid j_{C'}$ for some  crepant curve $C'\subset \Sing(X)$ and $1\leq i_0<i_1\leq k$, then by Theorem~\ref{thm Delta>jC},
    \begin{align*}
        \nabla_X \geq {}&\sum_{C\subset \text{\rm Sing}(X)}\left(j_C-\frac{1}{j_C}\right)\LB(j_C)\\
        \geq {}&\sum_{\substack{1\leq i\leq k\\i\neq i_0, i_1}} \left(p_i^{a_i}-\frac{1}{p_i^{a_i}}\right)\LB(p_i^{a_i})+\left(p_{i_0}^{a_{i_0}}p_{i_1}^{a_{i_1}}-\frac{1}{p_{i_0}^{a_{i_0}}p_{i_1}^{a_{i_1}}}\right)\LB(p_{i_0}^{a_{i_0}}p_{i_1}^{a_{i_1}})\\
        \geq{}& \sum_{\substack{1\leq i\leq k\\i\neq i_0, i_1}} \left(p_i^{a_i}-\frac{1}{p_i^{a_i}}\right)\LB(p_i^{a_i})+2\left(p_{i_0}^{a_{i_0}}-\frac{1}{p_{i_0}^{a_{i_0}}}\right)\LB(p_{i_0}^{a_{i_0}})+\left(p_{i_1}^{a_{i_1}}-\frac{1}{p_{i_1}^{a_{i_1}}}\right)\LB(p_{i_1}^{a_{i_1}})\\
        \geq{}&\sum_{i=1}^k \left(p_i^{a_i}-\frac{1}{p_i^{a_i}}\right)\LB(p_i^{a_i})+\left(p_1-\frac{1}{p_1}\right)\LB(p_1).
    \end{align*} 
    This contradicts the assumption. Here for the second and third inequalities we used Lemma~\ref{lem ab>a+b} and the fact that $p_{i_1}^{a_{i_1}}\geq p_{i_1}\geq 5$. 

    So there are distinct crepant curves $C_1, \dots, C_k\subset \Sing(X)$ such that $p_i^{a_i}\mid j_{C_i}$ for $1\leq i\leq k$.

    If for some $1\leq i_0\leq k$, $ j_{C_{i_0}}\neq p_{i_0}^{a_{i_0}}$, then $j_{C_{i_0}}\geq 2p_{i_0}^{a_{i_0}}$.  Then by Theorem~\ref{thm Delta>jC}, 
    \begin{align*}
        \nabla_X
        \geq {}&\sum_{i=1}^k\left(j_{C_i}-\frac{1}{j_{C_i}}\right)\LB(j_{C_i})\\
        \geq {}&\sum_{\substack{1\leq i\leq k\\i\neq i_0}} \left(p_i^{a_i}-\frac{1}{p_i^{a_i}}\right)\LB(p_i^{a_i})+\left(2p_{i_0}^{a_{i_0}}-\frac{1}{2p_{i_0}^{a_{i_0}}}\right)\LB(2p_{i_0}^{a_{i_0}})\\
        \geq{}&\sum_{i=1}^k \left(p_i^{a_{i}}-\frac{1}{p_i^{a_{i}}}\right)\LB(p_i^{a_{i}})+\left(p_1-\frac{1}{p_1}\right)\LB(p_1).
    \end{align*}
    This contradicts the assumption. 

    \medskip 

    {\bf Step 2}. We show that $(-r_XK_X\cdot C_i)=\LB(p_i^{a_i})$ for $1\leq i\leq k$.
 
    If for some $1\leq i_0\leq k$, $(-r_XK_X\cdot C_{i_0})\neq \LB(p_{i_0}^{a_{i_0}})$, then $(-r_XK_X\cdot C_{i_0})\geq 2 \LB(p_{i_0}^{a_{i_0}})$ by Theorem~\ref{thm c>=LB}. Then by Theorem~\ref{thm Delta>jC},  
    \begin{align*}\nabla_X 
        \geq  {}&\sum_{i=1}^k\left(j_{C_i}-\frac{1}{j_{C_i}}\right)(-r_XK_X\cdot C_i)\\\geq {}&\sum_{\substack{1\leq i\leq k\\i\neq i_0}} \left(p_i^{a_i}-\frac{1}{p_i^{a_i}}\right)\LB(p_i^{a_i})+\left(p_{i_0}^{a_{i_0}}-\frac{1}{p_{i_0}^{a_{i_0}}}\right)2\LB(p_{i_0}^{a_{i_0}})\\
        \geq{}&\sum_{i=1}^k \left(p_i^{a_i}-\frac{1}{p_i^{a_i}}\right)\LB(p_i^{a_i})+\left(p_{1}-\frac{1}{p_{1}}\right)\LB(p_{1}).
    \end{align*}
    This contradicts the assumption. 
 
    \medskip

    {\bf Step 3}. We show that there is no other crepant curve $C'\subset \Sing(X)$ with $j_{C'}\neq 2$. 

    If there is another crepant curve $C'$ with $j_{C'}\neq 2$, then $p_{i_0}\mid j_{C'}$ for some $1\leq i_0\leq k$ by Corollary~\ref{cor A primitive} as $j_{C'}\mid J_A$ and $a_0\leq 1$. In particular, $j_{C'}\geq p_1$.  Then by Theorem~\ref{thm Delta>jC}, 
    \begin{align*}
        \nabla_X
        \geq {}& \sum_{i=1}^k\left(j_{C_i}-\frac{1}{j_{C_i}}\right)\LB(j_{C_i})+\left(j_{C'}-\frac{1}{j_{C'}}\right)\LB(j_{C'})\\
        \geq{}&\sum_{i=1}^k \left(p_i^{a_i}-\frac{1}{p_i^{a_i}}\right)\LB(p_i^{a_i})+\left(p_{1}-\frac{1}{p_{1}}\right)\LB(p_{1}).
    \end{align*}
    This contradicts the assumption. 

    \medskip

    (2) The proof is similar to the proof of (1) under suitable modifications.
  
   \medskip
   
    {\bf Step 1}. We show that there are distinct crepant curves $C_0, \dots, C_k\subset \Sing(X)$ such that $p_i^{a_i}\mid j_{C_i}$ for $0\leq i\leq k$; moreover, $j_{C_i}=p_i^{a_i}$ for $0\leq i\leq k$. 

    Suppose that $p_{i_0}^{a_{i_0}}p_{i_1}^{a_{i_1}}\mid j_{C'}$ for some crepant curve  $C'\subset \Sing(X)$ and $0\leq i_0, i_1\leq k$ with $p_{i_0}^{a_{i_0}}<p_{i_1}^{a_{i_1}}$, then by Theorem~\ref{thm Delta>jC}, 
    \begin{align*}
        \nabla_X\geq {}&\sum_{C\subset \text{\rm Sing}(X)}\left(j_C-\frac{1}{j_C}\right)\LB(j_C)\\
        \geq {}&\sum_{\substack{0\leq i\leq k\\i\neq i_0, i_1}} \left(p_i^{a_i}-\frac{1}{p_i^{a_i}}\right)\LB(p_i^{a_i})+\left(p_{i_0}^{a_{i_0}}p_{i_1}^{a_{i_1}}-\frac{1}{p_{i_0}^{a_{i_0}}p_{i_1}^{a_{i_1}}}\right)\LB(p_{i_0}^{a_{i_0}}p_{i_1}^{a_{i_1}})\\
        \geq{}& \sum_{\substack{0\leq i\leq k\\i\neq i_0, i_1}} \left(p_i^{a_i}-\frac{1}{p_i^{a_i}}\right)\LB(p_i^{a_i})+2\left(p_{i_0}^{a_{i_0}}-\frac{1}{p_{i_0}^{a_{i_0}}}\right)\LB(p_{i_0}^{a_{i_0}})+\left(p_{i_1}^{a_{i_1}}-\frac{1}{p_{i_1}^{a_{i_1}}}\right)\LB(p_{i_1}^{a_{i_1}})\\
        \geq{}&\sum_{i=0}^k \left(p_i^{a_i}-\frac{1}{p_i^{a_i}}\right)\LB(p_i^{a_i})+\left(p'-\frac{1}{p'}\right)\LB(p').
    \end{align*} 
    This contradicts the assumption. Here for the second and third inequalities we used Lemma~\ref{lem ab>a+b} and the fact that $p_{i_1}^{a_{i_1}}\geq 4$. 

    So there are distinct crepant curves $C_0, \dots, C_k\subset \Sing(X)$ such that $p_i^{a_i}\mid j_{C_i}$ for $0\leq i\leq k$.

    If for some $0\leq i_0\leq k$, $ j_{C_{i_0}}\neq p_{i_0}^{a_{i_0}}$, then $j_{C_{i_0}}\geq 2p_{i_0}^{a_{i_0}}$.  Then by Theorem~\ref{thm Delta>jC},   
    \begin{align*}
        \nabla_X\geq {}&\sum_{i=0}^k\left(j_{C_i}-\frac{1}{j_{C_i}}\right)\LB(j_{C_i})\\
        \geq {}&\sum_{\substack{0\leq i\leq k\\i\neq i_0}} \left(p_i^{a_i}-\frac{1}{p_i^{a_i}}\right)\LB(p_i^{a_i})+\left(2p_{i_0}^{a_{i_0}}-\frac{1}{2p_{i_0}^{a_{i_0}}}\right)\LB(2p_{i_0}^{a_{i_0}})\\
        \geq{}&\sum_{i=0}^k \left(p_i^{a_{i}}-\frac{1}{p_i^{a_{i}}}\right)\LB(p_i^{a_{i}})+\left(p'-\frac{1}{p'}\right)\LB(p').
    \end{align*}
    This contradicts the assumption. 

    \medskip 

    {\bf Step 2}. We show that $(-r_XK_X\cdot C_i)=\LB(p_i^{a_i})$ for $0\leq i\leq k$.
 
    If for some $0\leq i_0\leq k$, $(-r_XK_X\cdot C_{i_0})\neq \LB(p_{i_0}^{a_{i_0}})$, then $(-r_XK_X\cdot C_{i_0})\geq 2 \LB(p_{i_0}^{a_{i_0}})$ by Theorem~\ref{thm c>=LB}. Then by Theorem~\ref{thm Delta>jC}, 
    \begin{align*}
        \nabla_X \geq  {}&\sum_{i=0}^k\left(j_{C_i}-\frac{1}{j_{C_i}}\right)(-r_XK_X\cdot C_i)\\\geq {}&\sum_{\substack{0\leq i\leq k\\i\neq i_0}} \left(p_i^{a_i}-\frac{1}{p_i^{a_i}}\right)\LB(p_i^{a_i})+\left(p_{i_0}^{a_{i_0}}-\frac{1}{p_{i_0}^{a_{i_0}}}\right)2\LB(p_{i_0}^{a_{i_0}})\\
        \geq{}&\sum_{i=0}^k \left(p_i^{a_i}-\frac{1}{p_i^{a_i}}\right)\LB(p_i^{a_i})+\left(p'-\frac{1}{p'}\right)\LB(p').
    \end{align*}
    This contradicts the assumption. 
 
    \medskip

    {\bf Step 3}. We show that there is no other crepant curve $C'\subset \Sing(X)$ with $j_{C'}\neq 2$. 

    If there is another crepant curve $C'$ with $j_{C'}\neq 2$, then either $4\mid j_{C'}$ or $p_{i_0}\mid j_{C'}$ for some $1\leq i_0\leq k$ by Corollary~\ref{cor A primitive}. In particular, $j_{C'}\geq p'$. Then by Theorem~\ref{thm Delta>jC},  
    \begin{align*}
        \nabla_X\geq {}& \sum_{i=0}^k\left(j_{C_i}-\frac{1}{j_{C_i}}\right)\LB(j_{C_i})+\left(j_{C'}-\frac{1}{j_{C'}}\right)\LB(j_{C'})\\
        \geq{}&\sum_{i=0}^k \left(p_i^{a_i}-\frac{1}{p_i^{a_i}}\right)\LB(p_i^{a_i})+\left(p'-\frac{1}{p'}\right)\LB(p').
    \end{align*}
    This contradicts the assumption. 
\end{proof}

For a crepant curve of type $\mathsf{A}_1$, it is hard to determine its intersection number with $-K_X$ by Theorem~\ref{thm Delta>jC} as its contribution could be very small (see Remark~\ref{label rem LB}(3)). So the trick is to treat all crepant curves of type $\mathsf{A}_1$ as a whole.

\begin{defn}\label{def xa1}
    Let $(X,A)$ be an unmodifiable canonical Fano $3$-fold. 
    Denote 
    \[
        x_{\mathsf{A}_1}\coloneq \sum_{\substack{C\subset\Sing(X)\\\text{type }\mathsf{A}_1}} (-r_XK_X\cdot C)\in \mathbb{Z}_{\geq 0}.
    \]
    We will frequently use the following formulas (see Example~\ref{ex cc A123}): 
\begin{align}
\sum_{\substack{C\subset\Sing(X)\\\text{type }\mathsf{A}_1}} (-K_X\cdot C)c_C(sA)=-\frac{x_{\mathsf{A}_1}(\overline{s})_2}{4r_X},\label{eq xa1 formula 1}\\
\sum_{\substack{C\subset\Sing(X)\\\text{type }\mathsf{A}_1}} \left(j_C-\frac{1}{j_C}\right) (-r_XK_X\cdot C)=\frac{3x_{\mathsf{A}_1}}{2}.\label{eq xa1 formula 2}
\end{align}   
\end{defn}
We will see in later discussion that usually this integer $x_{\mathsf{A}_1}$ satisfies certain congruence equations, which means that it is either $0$ or sufficiently large to violate Theorem~\ref{thm Delta>jC}.

\subsection{Group A}\label{section group A}
 
In this subsection, we treat Group A which contains  \textnumero 1, \textnumero2, \textnumero5, \textnumero9, \textnumero16--19, \textnumero25, \textnumero26, \textnumero28--31, \textnumero34 in Table~\ref{main tab}.

\begin{prop}\label{prop rule out A}
    All candidates in Group A do not exist. 
\end{prop}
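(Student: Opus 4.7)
The plan is to rule out each of the $15$ cases in Group A by a uniform three-step strategy: determine the non-$\mathsf{A}_1$ crepant curves precisely, confine $x_{\mathsf{A}_1}$ to finitely many values, and then apply the integrality constraint of Corollary~\ref{cor RR integer general} (or, equivalently, impose that the right-hand side of Theorem~\ref{thm.rr.sA} be an integer for various $s$) to derive a numerical contradiction.

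First, for each case, factor $J_A=p_0^{a_0}p_1^{a_1}\cdots p_k^{a_k}$ with $p_0=2$ as in Proposition~\ref{prop determine C and KC}. In all Group~A cases the value of $\nabla_X$ listed in Table~\ref{main tab} is small enough to satisfy the hypothesis of Proposition~\ref{prop determine C and KC}, so $\Sing_1(X)$ consists of one explicit crepant curve $C_i$ of type $\mathsf{A}_{p_i^{a_i}-1}$ with $(-r_XK_X\cdot C_i)=\LB(p_i^{a_i})$ for each $i$, plus possibly several curves of type $\mathsf{A}_1$. Next, combining Theorem~\ref{thm Delta>jC} with \eqref{eq xa1 formula 2} gives
\[
    \tfrac{3}{2}x_{\mathsf{A}_1}\leq \nabla_X-\sum_{i}\Bigl(p_i^{a_i}-\tfrac{1}{p_i^{a_i}}\Bigr)\LB(p_i^{a_i}),
\]
so $x_{\mathsf{A}_1}$ is restricted to an explicit, small, finite set of non-negative integers.

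The main engine is then the integrality constraint. Apply Corollary~\ref{cor RR integer general} (equivalently, Theorem~\ref{thm.rr.sA}) to $D=sA$ for small $s$ with $0<s<q$. The crepant-curve contribution is computable via Example~\ref{ex cc A123} together with \eqref{eq xa1 formula 1}, while the orbifold-point contribution
\[
    \sum_{Q\in B_X}\frac{(\overline{\mathsf{i}_{f^{\lfloor*\rfloor}(sA),Q}b_Q})_{r_Q}(\overline{-\mathsf{i}_{f^{\lfloor*\rfloor}(sA),Q}b_Q})_{r_Q}}{2r_Q}
\]
takes only finitely many rational values, one for each tuple $(\mathsf{i}_{f^{\lfloor*\rfloor}(sA),Q})_{Q\in B_X}\in\prod_Q\mathbb{Z}/r_Q$. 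Choose a convenient $r'$ (typically $r'=r_X$ or $r'=2r_X$, using Remark~\ref{rem ignore part} to kill the crepant-curve contributions that are already integral) so that the congruence obtained from Corollary~\ref{cor RR integer general} isolates a quadratic-residue condition on the unknowns $\mathsf{i}_{f^{\lfloor*\rfloor}(sA),Q}$. For each candidate value of $x_{\mathsf{A}_1}$, show by a direct check that no admissible tuple of local indices satisfies the constraint for all $0<s<q$ simultaneously; when one value of $s$ is insufficient, one extracts a stronger congruence by varying $s$ (and exploiting superadditivity of the Weil pullback via Lemma~\ref{lem pullback of D+G} to compare the tuples at different $s$).

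I expect the main obstacle to be the cases with the largest $r_Q$ in $B_X$ and the largest allowed value of $x_{\mathsf{A}_1}$, in particular \textnumero 18, \textnumero 26, \textnumero 28, and \textnumero 34, where the basket contains an orbifold point of index $7$ and the combinatorics of admissible local indices is the richest. In these cases one has to reduce the integrality condition to a quadratic residue condition modulo $7$ (or $9$ or $11$), and then rule out each residue class using either the congruence from $s$ or the congruence from $s+1$; the contradiction ultimately comes from the fact that in $\mathbb{F}_p$ the set of squares has size $(p+1)/2$ while the required equation forces it to land in a strictly smaller subset. Once this residue analysis is set up as a small lemma about sums $\sum(\overline{a_Qb_Q})_{r_Q}(\overline{-a_Qb_Q})_{r_Q}/(2r_Q)$ modulo $\mathbb{Z}$, it can be applied uniformly and the remaining Group~A cases are eliminated by routine verification.
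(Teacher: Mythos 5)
Your overall strategy (pin down the non-$\mathsf{A}_1$ crepant curves via Proposition~\ref{prop determine C and KC}, then extract a contradiction from the integrality constraints of Corollary~\ref{cor RR integer general}) is the right one and subsumes the paper's argument, but you have made the second step far heavier than it needs to be. The paper rules out all fifteen Group~A cases with a \emph{single} instance of the constraint: Corollary~\ref{cor.canpart} applied to $D=2A$, i.e.\ $r'=2r_X$ and $s=2$. This choice is decisive for two reasons you do not exploit: taking $r'=2r_X$ makes every orbifold-point term $\frac{r'\,\mathsf{i}b_Q(r_Q-\mathsf{i}b_Q)}{2r_Q}$ an integer (the second half of Remark~\ref{rem ignore part}), so the unknown local indices $\mathsf{i}_{f^{\lfloor*\rfloor}(sA),Q}$ disappear entirely; and taking $D=2A$ makes $c_C(2A)=0$ for every type-$\mathsf{A}_1$ curve (Example~\ref{ex cc A123}), so $x_{\mathsf{A}_1}$ never enters. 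What remains is $\frac{4r_Xc_1(X)^3}{q^2}-\sum_i(-r_XK_X\cdot C_i)\frac{a_i(r_i-a_i)}{r_i}\in\mathbb{Z}$ with only the residues $a_i$ at the few non-$\mathsf{A}_1$ curves unknown, and a short finite check shows non-integrality in every case. By contrast, the machinery you anticipate needing — confining $x_{\mathsf{A}_1}$ to a finite set, quadratic-residue analysis modulo $7$, $9$, $11$ at the orbifold points, comparing local indices across different $s$ via Lemma~\ref{lem pullback of D+G} — is precisely what the paper reserves for Group~B (and Group~C), where the single $D=2A$ constraint is not enough; the cases you flag as the ``main obstacle'' (\textnumero 18, \textnumero 26, \textnumero 28, \textnumero 34) present no obstacle at all under the paper's choice, since their index-$7$ orbifold points contribute integrally. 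Two small cautions if you do carry out your version: your plan defers the actual verification (``direct check'', ``routine verification''), whereas the contradiction is the whole content of the proposition; and for \textnumero 5 the data in Table~\ref{main tab} alone do not suffice to invoke Proposition~\ref{prop determine C and KC} — one must also compute $\LB(3)=35$ via Remark~\ref{label rem LB}(4).
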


\begin{proof}
    In all these cases, we can verify by hand that the conditions of Proposition~\ref{prop determine C and KC} are satisfied. Here all data of $\{p^a\}, \{\LB(p^a)\}$, and $\nabla_X$ in Table~\ref{main tab} are sufficient for the verification, except for \textnumero 5 where we should further compute $\LB(3)$. By Remark~\ref{label rem LB}(4), $\LB(3)=r_X=35$ in \textnumero 5.
    
    Then we can describe the types of all crepant curves $C$ in $\Sing(X)$ which are not of type $\mathsf{A}_1$ and the corresponding $(-r_XK_X\cdot C)$ in Table~\ref{tab group A}, where the symbol $(\mathsf{A}_1)$ means that there might be several crepant curves of type $\mathsf{A}_1$ in $\Sing(X)$. 

    {
    \begin{longtable}{LLLLL}
        \caption{Crepant curves in Group A}\label{tab group A}\\
        \hline
        \text{\textnumero} & \qQ& r_Xc_1^3 & C\subset\Sing(X) & (-r_XK_X\cdot C)  \\
        \hline
        \endfirsthead
        \multicolumn{4}{l}{{ {\bf \tablename\ \thetable{}} \textrm{-- continued}}}
        \\
        \hline 
        \text{\textnumero} &\qQ& r_Xc_1^3 & C\subset\Sing(X)  & (-r_XK_X\cdot C)    \\
        \hline 
        \endhead
        \hline
        \hline \multicolumn{4}{c}{{\textrm{Continued on next page}}} \\ \hline
        \endfoot
        
        \hline \hline
        \endlastfoot
        1& 84& 84& \mathsf{A}_2,\mathsf{A}_3,\mathsf{A}_6, (\mathsf{A}_1) &5,5,5 \\
        2& 70 & 70&\mathsf{A}_4,\mathsf{A}_6,(\mathsf{A}_1)&3,3 \\
        5& 72 & 288& \mathsf{A}_8, (\mathsf{A}_1) &35\\
        9&   70& 70&\mathsf{A}_4,\mathsf{A}_6, (\mathsf{A}_1) &6,6 \\
        16& 75& 375& \mathsf{A}_2,\mathsf{A}_4 &14,42 \\
        17& 75& 375& \mathsf{A}_2,\mathsf{A}_4 &4,20\\
        18 & 90& 270& \mathsf{A}_2,\mathsf{A}_4,(\mathsf{A}_1) &7,35\\
        19&98&686& \mathsf{A}_6, (\mathsf{A}_1) &30\\
        25& 84& 168&\mathsf{A}_2,\mathsf{A}_6,(\mathsf{A}_1) &5,15\\
        26& 90& 270& \mathsf{A}_2,\mathsf{A}_4,(\mathsf{A}_1) &7,21\\
        28& 81& 2187& \mathsf{A}_2  &70\\   
        29& 84& 168& \mathsf{A}_2,\mathsf{A}_6, (\mathsf{A}_1) &5,10\\
        30&  72& 432&\mathsf{A}_2,\mathsf{A}_3,(\mathsf{A}_1) &7,7 \\
        31&  80& 320&\mathsf{A}_3,\mathsf{A}_4,(\mathsf{A}_1) &7,7\\
        34&  72 & 864&\mathsf{A}_2,\mathsf{A}_3,(\mathsf{A}_1) &35,35 
    \end{longtable}
    } 
    To unify the notation, we suppose that $\Sing_1(X)$ consists of crepant curves $C_1, \dots, C_k$ of types $\mathsf{A}_{r_1-1}, \dots, \mathsf{A}_{r_k-1}$ with $r_i>2$ for $1\leq i\leq k$, and possibly several crepant curves of type $\mathsf{A}_1$. 

    By applying Corollary~\ref{cor.canpart} to $D=2A$, we have
    \begin{align}
        \mathbb{Z}\ni{}&   \frac{4r_Xc_1(X)^3}{q^2}+ \sum_{C\subset \Sing(X)}2(-r_XK_X\cdot C)c_C(2A)\notag\\
        ={}& \frac{4r_Xc_1(X)^3}{q^2}-\sum_{i=1}^k(-r_XK_X\cdot C_i)\frac{a_i(r_i-a_i)}{r_i}\label{eq group A1}
    \end{align}
    for some integers $0\leq a_i<r_i$ $(1\leq i\leq k)$. Here we used the fact that any crepant curve of type $\mathsf{A}_1$ in $\Sing(X)$ does not contribute to the sum by Example~\ref{ex cc A123} (see also Definition \ref{def xa1}). But we can check by hand that \eqref{eq group A1} is not an integer for any choice of integers $a_i$ $(1\leq i\leq k)$ in each case.
\end{proof}

\subsection{Group B}\label{sec group B}

In this subsection, we treat Group B which contains \textnumero10, \textnumero20, \textnumero23, \textnumero24, \textnumero27, \textnumero32, \textnumero33, \textnumero35, \textnumero36 in Table~\ref{main tab}. Group B is more complicated than Group A. Often we need to further consider $x_{\mathsf{A}_1}$ and get contradictions by congruence equations on square residues. For \textnumero24, \textnumero27,  \textnumero35 we need to consider further the geometric structure of $X$ and local indices of Weil pullbacks. 

\begin{prop}\label{prop rule out B1}
    \textnumero 20 in Table~\ref{main tab} does not exist. 
\end{prop}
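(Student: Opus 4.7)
The plan is to first narrow down the types and degrees of crepant curves on $X$ using Theorem~\ref{thm Delta>jC} and Theorem~\ref{thm c>=LB}, then extract congruence constraints from the integrality conditions of \S\,\ref{sec int cons} to derive a contradiction. The divisibility $q^2 \mid J_A r_X c_1(X)^3$ forces $6 \mid J_A$, and combined with $J_A \mid q = 72$ and $\{p^a\} = \{2,3\}$ this pins down $J_A = 6$. Hence by Corollary~\ref{cor A primitive} every crepant curve $C \subset \Sing(X)$ has type $\mathsf{A}_{j_C - 1}$ with $j_C \in \{2, 3, 6\}$, and both factors $2$ and $3$ must appear as divisors of some $j_C$. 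A direct computation with Algorithm~\ref{algo LB} gives $\LB(6) = r_X = 30$ and $\LB(3) = 10$. If some crepant curve $C$ had type $\mathsf{A}_5$, Theorem~\ref{thm c>=LB} would give $(-r_X K_X \cdot C) \geq 30$, contributing $\tfrac{35}{6}\cdot 30 = 175$ to the right hand side of Theorem~\ref{thm Delta>jC}, far exceeding $\nabla_X = 325/6$. So every crepant curve has type $\mathsf{A}_1$ or $\mathsf{A}_2$, and at least one of each type must appear.

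Defining $x_{\mathsf{A}_1}$ as in Definition~\ref{def xa1} and setting $x_{\mathsf{A}_2} \coloneq \sum_{C\text{ of type }\mathsf{A}_2}(-r_X K_X \cdot C)$, Theorem~\ref{thm c>=LB} gives $10 \mid x_{\mathsf{A}_2}$, and the inequality
\[
    \tfrac{3}{2} x_{\mathsf{A}_1} + \tfrac{8}{3} x_{\mathsf{A}_2} \leq \nabla_X = \tfrac{325}{6}
\]
coming from Theorem~\ref{thm Delta>jC} forces $x_{\mathsf{A}_2} = 10$ and $1 \leq x_{\mathsf{A}_1} \leq 18$. In particular, $\Sing_1(X)$ contains a unique type $\mathsf{A}_2$ curve $C_3$ with $(-r_X K_X \cdot C_3) = 10$. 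Using $A^2 \cdot K_X = -c_1(X)^3/q^2 = -1/180$ and Example~\ref{ex cc A123}, Corollary~\ref{cor.canpart} applied to $D = A$ reduces to $-\tfrac{13}{2} - \tfrac{x_{\mathsf{A}_1}}{2} \in \mathbb{Z}$, whence $x_{\mathsf{A}_1}$ is odd. To rule out the remaining nine odd values, I would apply Corollary~\ref{cor RR integer general} with smaller $r'$ (e.g.\ $r' = r_X = 30$) to $D = sA$ for various $s$; each such application produces a congruence coupling $x_{\mathsf{A}_1}$ with the local indices $\mathsf{i}_{f^{\lfloor *\rfloor}(sA), Q_k}$ at the four orbifold points $Q_k \in B_X$. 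Since the indices take values in finite sets, after running over all admissible tuples the resulting system of congruences on $x_{\mathsf{A}_1}$ should eliminate every element of $\{1, 3, 5, \dots, 17\}$.

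The main obstacle will be coordinating the local indices $\mathsf{i}_{f^{\lfloor *\rfloor}(sA), Q_k}$ across different values of $s$: because the Weil pullback is only superadditive, these indices are not linear in $s$, so the case analysis does not collapse automatically. To keep it tractable, I would use Lemma~\ref{lem pullback of D+G} (the additivity under adding a Cartier-in-codimension-$2$ divisor such as $6A$) to relate the index tuples for $s$ and $s + 6$, together with any coherence relations of Proposition~\ref{prop crepant center in Z} available at crepant points of $X$, and range $s$ over a short initial segment. The expectation is that only a handful of values $s \in \{1, 3, 5, \dots\}$ suffice to produce enough independent congruences on $x_{\mathsf{A}_1}$ to contradict the range $1 \leq x_{\mathsf{A}_1} \leq 18$.
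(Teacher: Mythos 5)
Your preliminary analysis is correct as far as it goes: $J_A=6$, the exclusion of type $\mathsf{A}_5$ curves via $\LB(6)=30$ and $\nabla_X=\tfrac{325}{6}$, the existence of a unique type $\mathsf{A}_2$ curve with $(-r_XK_X\cdot C)=10$, the bound $1\le x_{\mathsf{A}_1}\le 18$, and the parity of $x_{\mathsf{A}_1}$ from Corollary~\ref{cor.canpart} all check out. But the proof is not complete: the entire final step, eliminating the nine odd values of $x_{\mathsf{A}_1}$, is only a plan whose feasibility you yourself flag as uncertain, and the obstacle you identify is real. Applying Corollary~\ref{cor RR integer general} with $r'=1$ and multiplying by $72$ does give $3x_{\mathsf{A}_1}\equiv s^2+\mathsf{i}_{f^{\lfloor*\rfloor}(sA),(5,1)}^2 \bmod 5$ for odd $s$, and comparing $s=1,3$ forces $5\mid x_{\mathsf{A}_1}$, so $x_{\mathsf{A}_1}\in\{5,15\}$; but both survive the $\nabla_X$ bound, and the remaining mod-$3$ constraint couples the local indices at the $(3,1)$ and $(6,1)$ orbifold points across different $s$ in a way that a single-$s$ congruence cannot resolve (each residue class mod $3$ is attainable for each fixed $s$). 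Resolving that coupling would require an analogue of the arguments in \S\,\ref{sec group C1} (e.g.\ Lemma~\ref{lem asa2a1} together with Kawakita's classification of crepant points), none of which you supply. So there is a genuine gap at the decisive step.

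You also missed a much shorter route, which is the one the paper takes: apply Corollary~\ref{cor JA integer} directly to $D=J_AA=6A$. Since $6A$ is Cartier in codimension $2$, every crepant-curve term $c_C(6A)$ vanishes and no information about $\Sing_1(X)$ or $x_{\mathsf{A}_1}$ is needed; one is left with
\[
\frac{1}{10}-\frac{a_1(2-a_1)}{4}-\frac{a_2(3-a_2)}{6}-\frac{a_3(5-a_3)}{10}-\frac{a_4(6-a_4)}{12}\in\mathbb{Z},
\]
and a short $2$-adic check (multiply by $60$ and reduce mod $4$) shows this has no solution. The lesson is that choosing $D$ Cartier in codimension $2$ decouples the orbifold-point contributions from the crepant-curve data entirely, which is exactly what your approach struggles to do.
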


\begin{proof}
    Applying Corollary~\ref{cor JA integer} to $D=J_AA=6A$, we have
    \begin{align*}
        -\frac{36}{2}A^2\cdot K_X- \frac{a_1({2-a_1})}{4}-\frac{a_2({3-a_2})}{6}-\frac{a_3({5-a_3})}{10}-\frac{a_4({6-a_4})}{12} \in \mathbb{Z}
    \end{align*}
    for some integers $a_i$ $(1\leq i\leq 4)$. But we can check that this does not hold for any choice of integers $a_i$ $(1\leq i\leq 4)$. 
\end{proof}

\begin{prop}\label{prop rule out B2}
    \textnumero 10, \textnumero23, \textnumero32, \textnumero33, \textnumero36 in Table~\ref{main tab} do not exist. 
\end{prop}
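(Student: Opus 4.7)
For each of \textnumero10, \textnumero23, \textnumero32, \textnumero33, \textnumero36 in Table~\ref{main tab}, my plan is to derive a numerical contradiction from the integrality constraints of Corollary~\ref{cor.canpart} (and, when necessary, Corollary~\ref{cor RR integer general}) applied to suitable Weil divisors of the form $sA$. The first step is to pin down the types and degrees of the crepant curves in $\Sing_1(X)$ that are not of type $\mathsf{A}_1$. In most of the five entries this is accomplished directly by Proposition~\ref{prop determine C and KC}. For \textnumero32 and \textnumero33, where $\nabla_X$ just exceeds the threshold of Proposition~\ref{prop determine C and KC}, I will instead enumerate the possibilities by combining Theorem~\ref{thm Delta>jC} with the divisibility bound $\LB$ of Theorem~\ref{thm c>=LB} together with the fact that the prime factors of $J_A$ collectively divide $\lcm\{j_C\}$. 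Since each crepant curve contributes at least $(j_C-1/j_C)\LB(j_C)$ to $\nabla_X$, only a handful of configurations survive, each consisting of one or two crepant curves of fixed $\mathsf{A}_n$ type with prescribed degrees together with the integer $x_{\mathsf{A}_1}$ of Definition~\ref{def xa1}.

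For each surviving configuration, the main tool will be Corollary~\ref{cor.canpart} applied with $D=2A$. Because $c_C(2A)=0$ for every curve of type $\mathsf{A}_1$ by Example~\ref{ex cc A123}, the unknown $x_{\mathsf{A}_1}$ drops out and the integrality condition collapses to the explicit rational number
\[
\frac{4\,r_X c_1(X)^3}{q^2} + \sum_{\substack{C\subset\Sing_1(X)\\ C\text{ not of type }\mathsf{A}_1}} 2(-r_X K_X \cdot C)\, c_C(2A) \in \mathbb{Z},
\]
which I expect to fail for the minimal configurations, immediately disposing of \textnumero10, \textnumero36, and the base branch of each remaining case. For those branches in \textnumero23, \textnumero32, \textnumero33 in which the $s=2$ evaluation happens to be an integer (typically the doubled-degree or double-curve variants of an $\mathsf{A}_2$-curve), I will evaluate the same corollary at $s=3$: since $c_{\mathsf{A}_2}(3A)=0$ while $c_{\mathsf{A}_1}(3A)=-\tfrac14$, the integrality condition becomes a mod-$4$ congruence on $x_{\mathsf{A}_1}$. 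Combining that constraint with the upper bound on $x_{\mathsf{A}_1}$ provided by Theorem~\ref{thm Delta>jC} (after subtracting the contributions of the already-fixed non-$\mathsf{A}_1$ curves from $\nabla_X$) should force $x_{\mathsf{A}_1}$ into an impossible residue class, in direct analogy with the proof of Proposition~\ref{prop rule out B1}.

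The main obstacle I foresee is the enumeration for \textnumero32 and \textnumero33, where Proposition~\ref{prop determine C and KC} does not apply directly and genuinely different crepant-curve configurations must be excluded one at a time. If the $s=2$ and $s=3$ congruences are not jointly sharp enough for the doubled-degree branch, I would supplement them with Corollary~\ref{cor RR integer general} applied with $r'=r_X$ at $s=1$, exploiting the fact that for the baskets $\{3\times(3,1),(5,1),(7,\ast)\}$ every orbifold term $r_X a(r_Q-a)/(2r_Q)$ is already integral, so the resulting constraint upgrades the $x_{\mathsf{A}_1}$ congruence directly. Combining these refined congruences with the bound on $x_{\mathsf{A}_1}$ coming from $\nabla_X$ should exhaust every admissible residue class. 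The cases \textnumero10, \textnumero23, \textnumero36 should fall out from the $s=2$ test alone, or for \textnumero23 from the $s=2$, $s=3$ pair, without requiring any branch analysis.
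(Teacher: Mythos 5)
Your plan correctly disposes of \textnumero23 and \textnumero36 (for \textnumero23 the $s=2$ test is vacuous but the $s=3$ test does fail: $\tfrac34-\tfrac{21}{2}-\tfrac{x_{\mathsf{A}_1}}{2}=-\tfrac{39+2x_{\mathsf{A}_1}}{4}$ is never an integer), and these are genuinely different and arguably simpler routes than the paper's, which instead uses Corollary~\ref{cor RR integer general} with $r'=r_XJ_A$ resp.\ $r'=120$.

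However, there is a real gap for \textnumero10, \textnumero32, \textnumero33, and it is structural: Corollary~\ref{cor.canpart} (i.e.\ $r'=2r_X$), and likewise $r'=r_X$ for these baskets, erases \emph{all} orbifold-point contributions, whereas for these three cases the contradiction must come from congruences modulo the orbifold indices. Concretely, for \textnumero10 the $s=2$ test reads $\tfrac25-\tfrac{18\,i(5-i)}{5}$ where $i$ is the local class of $A$ along the $\mathsf{A}_4$-curve; for $i\equiv\pm2\bmod 5$ this equals $-14\in\mathbb Z$, so the test does \emph{not} "immediately dispose" of \textnumero10, and the $s=3$ follow-up only yields $x_{\mathsf{A}_1}$ odd, which is compatible with the bound $x_{\mathsf{A}_1}\le 3$ coming from $\nabla_X$. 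The paper instead takes $r'=40$ (keeping the $(9,4)$ term alive) and compares $s=1,3$ to force $9\mid x_{\mathsf{A}_1}$. Similarly for \textnumero32/\textnumero33: with $r_X=105$ every orbifold term $\tfrac{r_X a(r_Q-a)}{2r_Q}$ is integral, so your $s=2$, $s=3$, and $r'=r_X$ constraints only see information modulo $3$ and $4$; they pin down $y=2$ and $x_{\mathsf{A}_1}\equiv 3\bmod 4$, which still leaves $x_{\mathsf{A}_1}=3$ admissible against the bound $x_{\mathsf{A}_1}\le 6$ from $\nabla_X$. The paper's proof needs $r'=18$ (so that the $(5,1)$ and $(7,b)$ terms remain non-integral) together with $s=1,3,5$ and square-residue analysis to force $35\mid x_{\mathsf{A}_1}$, hence $x_{\mathsf{A}_1}\ge 35$, contradicting $\nabla_X$. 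Without choosing $r'$ so that the relevant orbifold points survive in the congruence, your toolkit cannot reach a contradiction in these three cases.
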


\begin{proof}
    Consider \textnumero 23. By Proposition~\ref{prop determine C and KC}(2), $\Sing_1(X)$ consists of crepant curves $C_1, C_2$ of types $\mathsf{A}_2, \mathsf{A}_3$ with $(-r_XK_X\cdot C_i)=14$ for $i=1,2$, and possibly several crepant curves  of type $\mathsf{A}_1$.

    Take $r'=336=r_XJ_A$. Then $2r_X\mid r'$. For $i=1,2$, $ (-r'K_X\cdot C_i)=14J_A$ is divided by $2j_{C_i}$; for any crepant curve $C'$ of type $\mathsf{A}_1$, $(-r'K_X\cdot C')=J_A(-r_XK_X\cdot C')\in J_A\mathbb{Z}$ is divided by $4$. So by Corollary~\ref{cor RR integer general} for $D=A$ and Remark~\ref{rem ignore part}, we have $-\frac{r'}{2}A^2\cdot K_X\in \mathbb{Z}$, but this contradicts the fact that $-\frac{r'}{2}A^2\cdot K_X=\frac{1}{2}$.
    
    \medskip

    Consider \textnumero 36. Recall that by Corollary~\ref{cor A primitive}, $j_C$ divides $J_A=70$ for any crepant curve $C\subset\Sing(X)$. By Algorithm~\ref{algo LB}, $\LB(10)=6$, so by Theorem~\ref{thm Delta>jC} and Step 1 of the proof of Proposition~\ref{prop determine C and KC}, $X$ only contains crepant curves of types $\mathsf{A}_1, \mathsf{A}_4, \mathsf{A}_6$. By Theorem~\ref{thm Delta>jC} again, $\Sing_1(X)$ consists of a crepant curve $C_1$ of type $\mathsf{A}_6$ with $(-r_XK_X\cdot C_1)=\LB(7)=3$ and possibly several crepant curves of types $\mathsf{A}_4, \mathsf{A}_1$. 

    Take $r'=120=20r_X$. Then $2r_X\mid r'$.  For any crepant curve $C'$ of type $\mathsf{A}_4$ or $\mathsf{A}_1$, $(-r'K_X\cdot C')=20(-r_XK_X\cdot C')\in 20\mathbb{Z}$ is divided by $5$ and $4$. So by Corollary~\ref{cor RR integer general} for $D=A$ and Remark~\ref{rem ignore part}, we have
    \[  
        -\frac{r'}{2}A^2\cdot K_X+(-r'K_X\cdot  C_1)c_{C_1}(A)=\frac{1}{7}-\frac{30a(7-a)}{7}\in \mathbb{Z}
    \]
    for some integer $a$. It implies that $7\mid 1+2a^2$, which is absurd.

    \medskip

    Consider \textnumero 10. By Proposition~\ref{prop determine C and KC}(1), $\Sing_1(X)$ consists of a crepant curve $C_1$ of type $\mathsf{A}_4$ with $(-r_XK_X\cdot C_1)=18$ and several crepant curves of type $\mathsf{A}_1$. Note that there is at least one crepant curve of type $\mathsf{A}_1$ by Corollary~\ref{cor A primitive} as $2\mid J_A$. In particular, $x_{\mathsf{A}_1}>0$.

    Take $r'=40$. Then $(-r'K_X\cdot C_1)=40$ is divided by $5$. So by Corollary~\ref{cor RR integer general} for $D=sA$ and Remark~\ref{rem ignore part}, we have 
    \begin{align*}
        \mathbb{Z}\ni {}&  -\frac{r's^2}{2}A^2\cdot K_X +\sum_{\substack{C\subset \Sing(X)\\
        \text{type }\mathsf{A}_1}}(-r'K_X\cdot C)c_C(sA)-\frac{r'a_s(9-a_s)}{18}\\
        ={}&\frac{s^2}{9}-\frac{5x_{\mathsf{A}_1}(\overline{s})_2}{9}-\frac{20a_s(9-a_s)}{9}  
    \end{align*}
    for some integer $a_s$. Here we used \eqref{eq xa1 formula 1}. In particular, for $s=1,3$, we have
    \begin{align*}    
        5x_{\mathsf{A}_1}\equiv 1+2a_1^2\equiv 2a_3^2\bmod 9.
    \end{align*}
    By considering square residues, we have 
    \begin{align*}
        (\overline{1+2a_1^2})_9\in {}& \{0,1,3,6\};\\
        (\overline{2a_3^2})_9\in {}& \{0,2,5,8\}.  
    \end{align*}
    So this congruence equation implies that $9\mid x_{\mathsf{A}_1}$. In particular, $x_{\mathsf{A}_1}\geq 9$ as $x_{\mathsf{A}_1}>0$. But then by \eqref{eq xa1 formula 2},
    \begin{align*} 
        {}&\sum_{C\subset \text{\rm Sing}(X)}\left(j_C-\frac{1}{j_C}\right)(-r_XK_X\cdot C) 
        =\frac{3x_{\mathsf{A}_1}}{2}+\left(5-\frac{1}{5}\right)18\geq 99.9>\nabla_X,
    \end{align*}
    which contradicts Theorem~\ref{thm Delta>jC}.

    \medskip

    Consider \textnumero 32 and \textnumero33. By Corollary~\ref{cor A primitive}, $j_C$ divides $J_A=6$ for any crepant curve $C\subset\Sing(X)$. On the other hand,  $\LB(6)=105$ by Algorithm~\ref{algo LB}, so by Theorem~\ref{thm Delta>jC}, $X$ only contains crepant curves of types $\mathsf{A}_1, \mathsf{A}_2$ (at least one of each). 
    
    As $\LB(3)=35$, by Theorem~\ref{thm c>=LB} we may write 
    \[
        \sum_{\substack{C\subset\Sing(X)\\\text{type }\mathsf{A}_2}}(-r_XK_X\cdot C)=35y
    \]
    for some positive integer $y$. Applying Corollary~\ref{cor.canpart} for $D=2A$, we have
    \[
        -4r_XA^2\cdot K_X+ \sum_{\substack{C\subset\Sing(X)\\\text{type }\mathsf{A}_2}}2(-r_XK_X\cdot C)c_C(2A) =\frac{2}{3}-\frac{70y}{3}\in \mathbb Z,
    \]
  Here we used \eqref{eq xa1 formula 1} and Example~\ref{ex cc A123}. So $y\geq 2.$

    Take $r'=18$, then for any crepant curve $C'$ of type $\mathsf{A}_2$, as $35\mid (-r_XK_X\cdot C')$, we have that $(-r'K_X\cdot C')\in 6\mathbb{Z}$ is divided by $3$. So by Corollary~\ref{cor RR integer general} for $D=sA$ and Remark~\ref{rem ignore part}, we have 
    \begin{align}
        \mathbb{Z}\ni {}&  -\frac{r's^2}{2}A^2\cdot K_X +\sum_{\substack{C\subset \Sing(X)\\
        \text{type }\mathsf{A}_1}}(-r'K_X\cdot C)c_C(sA)-\frac{r'a_s(5-a_s)}{10}-\frac{r'b_s(7-b_s)}{14}\notag\\
        ={}&\frac{s^2}{70}-\frac{3x_{\mathsf{A}_1}(\overline{s})_2}{70}-\frac{9a_s(5-a_s)}{5}-\frac{9b_s(7-b_s)}{7}\label{eq no 32,33}
    \end{align}
    for some integers $a_s, b_s$. Here we used \eqref{eq xa1 formula 1}.
    In particular, multiplying \eqref{eq no 32,33} by $14$ and $10$ respectively, we have
    \begin{align*}
        \frac{s^2}{5}-\frac{3x_{\mathsf{A}_1}(\overline{s})_2}{5}+\frac{a_s^2}{5}\in \mathbb{Z},\\
        \frac{s^2}{7}-\frac{3x_{\mathsf{A}_1}(\overline{s})_2}{7}+\frac{6b_s^2}{7}\in \mathbb{Z}. 
    \end{align*}
    For $s=1,3,5$, we have
    \begin{align*}
        &3x_{\mathsf{A}_1}\equiv 1+a_1^2\equiv 4+a_3^2\bmod 5;\\
        &3x_{\mathsf{A}_1}\equiv 1+6b_1^2\equiv 2+6b_3^2\equiv 4+6b_5^2\bmod 7.
    \end{align*}
    These congruence equations imply that $35\mid x_{\mathsf{A}_1}.$ In particular, $x_{\mathsf{A}_1}\geq 35$ as $x_{\mathsf{A}_1}>0$. But then by \eqref{eq xa1 formula 2},
    \begin{align*}
        {}&\sum_{C\subset \text{\rm Sing}(X)}\left(j_C-\frac{1}{j_C}\right)(-r_XK_X\cdot C) =\frac{3x_{\mathsf{A}_1}}{2}+\left(3-\frac{1}{3}\right)35y>239>\nabla_X,
    \end{align*}
    which contradicts Theorem~\ref{thm Delta>jC}.
\end{proof}

\begin{prop}\label{prop rule out B3}
    \textnumero 24 in Table~\ref{main tab} does not exist. 
\end{prop}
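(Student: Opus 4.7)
The plan is as follows. First I will determine the types and degrees of crepant curves on $X$ using the Kawamata--Miyaoka type inequality (Theorem~\ref{thm Delta>jC}) together with the integrality constraints of Corollary~\ref{cor.canpart}. Next I will use the integrality of $h^0(X,\mathcal{O}_X(sA))$ provided by Theorem~\ref{thm.rr.sA} at a few small values of $s$ to pin down $x_{\mathsf{A}_1}$ and to analyze the local indices at the orbifold points. Finally, the non-additivity of the Weil pullback, controlled by Lemma~\ref{lem pullback of D+G}, will lead to a contradiction by comparing the configurations forced at $s=3$ and $s=15$.

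More explicitly: since $J_A=12=2^2\cdot 3$, Corollary~\ref{cor A primitive} gives $j_C\mid 12$ for every crepant curve $C\subset\Sing(X)$; combining Theorem~\ref{thm Delta>jC} with Lemma~\ref{lem ab>a+b} and $\nabla_X\leq 57.09$ restricts the types to $\mathsf{A}_1,\mathsf{A}_2,\mathsf{A}_3$, with at least one curve of each of the latter two types present. Writing $y_3$ (resp.\ $y_4$) for the total degree $\sum(-r_XK_X\cdot C)$ over curves of type $\mathsf{A}_2$ (resp.\ $\mathsf{A}_3$), Theorem~\ref{thm c>=LB} gives $y_3,y_4\in 5\mathbb{Z}_{>0}$. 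Applying Corollary~\ref{cor.canpart} to $D=2A$ gives $y_3\equiv 2\pmod{3}$, and applying it to $D=3A$ gives $2x_{\mathsf{A}_1}+3y_4\equiv 3\pmod{4}$; combined with Theorem~\ref{thm Delta>jC} these force $y_3=y_4=5$, $x_{\mathsf{A}_1}$ even, and $0\leq x_{\mathsf{A}_1}\leq 16$.

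Since $B_X=\{3\times(3,1),(5,1)\}$, the orbifold sum appearing in Theorem~\ref{thm.rr.sA} has the form $(5a_s+3b_s)/15$ modulo $\mathbb{Z}$, where $a_s\in\{0,1,2,3\}$ is the number of $(3,1)$-points at which $\mathsf{i}_{f^{\lfloor *\rfloor}(sA),Q}\neq 0$ and $b_s\in\{0,2,3\}$ encodes the index at the $(5,1)$-point. Requiring $h^0(X,\mathcal{O}_X(sA))$ to be a non-negative integer at $s=1$ restricts $x_{\mathsf{A}_1}$ to $\{2,6,10\}$, and the same requirement at $s=7$ forces $x_{\mathsf{A}_1}=10$. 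Applying Corollary~\ref{cor JA integer} to $D=12A$ shows that the indices $\beta_Q\coloneqq\mathsf{i}_{f^*(12A),Q}$ satisfy $\beta_{Q_5}\in\{1,4\}$, and that the three values at the $(3,1)$-points are either all zero or all nonzero. With $x_{\mathsf{A}_1}=10$, the same modular analysis uniquely determines $(a_3,b_3)=(1,2)$, $(a_{13},b_{13})=(2,2)$, $(a_{15},b_{15})=(1,0)$, while $a_1\in\{0,3\}$.

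By Lemma~\ref{lem pullback of D+G}, $\mathsf{i}_{f^{\lfloor *\rfloor}((s+12)A),Q}\equiv\mathsf{i}_{f^{\lfloor *\rfloor}(sA),Q}+\beta_Q\pmod{r_Q}$. If all $\beta_{Q_3^{(k)}}=0$ then $a_{13}=a_1\in\{0,3\}$, contradicting $a_{13}=2$; hence all $\beta_{Q_3^{(k)}}\in\{1,2\}$. But since $a_3=1$, two of the three $(3,1)$-points have $\mathsf{i}_{f^{\lfloor *\rfloor}(3A),Q_3^{(k)}}=0$, and at $s=15$ their indices become $\beta_{Q_3^{(k)}}\neq 0$, forcing $a_{15}\geq 2$ and contradicting $a_{15}=1$. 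The main obstacle is the bookkeeping required to pin down the unique admissible $(a_s,b_s)$ values from the limited residues $(5a+3b)\bmod 15$; once these are fixed the contradiction is essentially immediate.
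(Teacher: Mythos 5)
Your numerical setup is correct and reaches the same intermediate conclusions as the paper ($\Sing_1(X)$ has one curve each of types $\mathsf{A}_2,\mathsf{A}_3$ with $(-r_XK_X\cdot C)=5$, and $x_{\mathsf{A}_1}=10$), by a slightly different but valid route (Corollary~\ref{cor.canpart} at $D=2A,3A$ plus integrality of $h^0$ at $s=1,7$, versus the paper's $r'=9$ application of Corollary~\ref{cor RR integer general} at $s=1,3$). The computations of the forced residues $(a_s,b_s)$ at $s=1,3,13,15$ also check out. However, the final step has a genuine gap. The congruence $\mathsf{i}_{f^{\lfloor *\rfloor}((s+12)A),Q}\equiv\mathsf{i}_{f^{\lfloor *\rfloor}(sA),Q}+\beta_Q \bmod r_Q$ does \emph{not} follow from Lemma~\ref{lem pullback of D+G}: that lemma requires the honest pullback $f^*(12A)$ to be a Weil divisor, but $J_A=12$ only guarantees that $12A$ is Cartier in codimension $2$, so $f^*(12A)$ may have fractional multiplicities along exceptional divisors centered at the finitely many points where $12A$ fails to be Cartier. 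The defect $G_s\coloneq f^{\lfloor *\rfloor}((s+12)A)-f^{\lfloor *\rfloor}(sA)-f^{\lfloor *\rfloor}(12A)$ is therefore an $f$-exceptional divisor supported over crepant points, and its components need not have local indices $\equiv 0\bmod 3$: since $B_X=\{3\times(3,1),(5,1)\}$ contains the sub-basket $\{3\times(3,1)\}$ of Kawakita's table, a non-Gorenstein crepant point of Gorenstein index $3$ cannot be excluded for \textnumero 24 (unlike Group C, where Lemma~\ref{lem asa2a1} rules this out). An exceptional divisor $E$ over such a point only satisfies the weak constraint of Proposition~\ref{prop crepant center in Z} (all three $\mathsf{i}_{E,Q_3^{(k)}}$ zero or all nonzero mod $3$, with no control on their individual values), which is not enough to propagate the count $a_s$ from $s$ to $s+12$. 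So the chain $a_1\in\{0,3\}\Rightarrow\beta_{Q_3^{(k)}}\neq 0\Rightarrow a_{15}\geq 2$ is not justified, and this is exactly where the contradiction lives.

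For comparison, the paper's endgame avoids local indices at the $(3,1)$ points entirely: it computes the actual values $h^0(2A)=h^0(3A)=h^0(6A)=1$ and $h^0(30A)=4>3=h^0(31A)$ for \emph{every} admissible choice of the orbifold contributions, and derives the contradiction geometrically ($A_3-A_2\in|A|$ gives $h^0(A)>0$, while $h^0(30A)>h^0(31A)$ forces $h^0(A)=0$). To repair your argument you would either need to rule out an index-$3$ crepant point on $X$, or replace the $s\mapsto s+12$ translation step by a contradiction that survives the uncontrolled mod-$3$ corrections.
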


\begin{proof}
    By Corollary~\ref{cor A primitive}, $j_C$ divides $J_A=12$ for any crepant curve $C\subset\Sing(X)$. On the other hand,  $\LB(6)=15$ by Algorithm~\ref{algo LB}, so by Theorem~\ref{thm Delta>jC}, $X$ only contains crepant curves of types $\mathsf{A}_1, \mathsf{A}_2, \mathsf{A}_3$. By Corollary~\ref{cor A primitive}, there is at least one crepant curve of type $\mathsf{A}_2$ (resp., $\mathsf{A}_3$). 
 
    As $\LB(4)=\LB(3)=5$, by Theorem~\ref{thm c>=LB} we may write
    \begin{align*}
        \sum_{\substack{C\subset \Sing(X)\\\text{type }\mathsf{A}_2}}(-r_XK_X\cdot C)=5y_3;\\
        \sum_{\substack{C\subset \Sing(X)\\\text{type }\mathsf{A}_3}}(-r_XK_X\cdot C)=5y_4 
    \end{align*}
    for some positive integers $y_3, y_4$. Then by Theorem~\ref{thm Delta>jC} and \eqref{eq xa1 formula 2},
    \begin{align}\label{eq no 24 ineq}
        \nabla_X\geq \frac{3x_{\mathsf{A}_1}}{2}+\left(3-\frac{1}{3}\right)5y_3+\left(4-\frac{1}{4}\right)5y_4.
    \end{align}
    In particular, we have $x_{\mathsf{A}_1}<20$ and $y_4\leq 2$. We will show that $x_{\mathsf{A}_1}=10$ and $y_3=y_4=1$. 
    
    Take $r'=9$, then for any crepant curve $C'$ of type $\mathsf{A}_2$, as $5\mid (-r_XK_X\cdot C')$, we have that $(-r'K_X\cdot C')\in 3\mathbb{Z}$ is divided by $3$. So by Corollary~\ref{cor RR integer general} for $D=sA$ and Remark~\ref{rem ignore part}, we have for $s$ odd,
    \begin{align}
        \mathbb{Z}\ni {}&  -\frac{r's^2}{2}A^2\cdot K_X +\sum_{\substack{C\subset \Sing(X)\\
       \text{type }\mathsf{A}_1 \text{ or }\mathsf{A}_3}}(-r'K_X\cdot C)c_C(sA)-\frac{r'a_s(5-a_s)}{10}\notag\\
       ={}&\frac{s^2}{40}-\frac{3x_{\mathsf{A}_1}}{20}-\frac{9y_4}{8}-\frac{9a_s(5-a_s)}{10} \label{eq no 24}
    \end{align}
    for some integer $a_s$. Here we used \eqref{eq xa1 formula 1} and Example~\ref{ex cc A123}. Multiplying \eqref{eq no 24} by $5$ and taking $s=1$, we have
    \[
        \frac{1}{8}-\frac{3x_{\mathsf{A}_1}}{4}-\frac{5y_4}{8}\in \mathbb{Z}. 
    \]
    Combining with the fact that $0<y_4\leq 2$, we have $y_4=1$,  $x_{\mathsf{A}_1}\neq 0$, and $2\mid x_{\mathsf{A}_1}$. Multiplying \eqref{eq no 24} by $8$ and taking $s=1,3$, we have
    \[
        x_{\mathsf{A}_1}\equiv 1+a_1^2\equiv 4+a_3^2\bmod 5.
    \] 
    This congruence equation implies that $5\mid x_{\mathsf{A}_1}$. Combining with the facts that $2\mid x_{\mathsf{A}_1}$,  $x_{\mathsf{A}_1}\neq 0$, and $x_{\mathsf{A}_1}<20$, we have $x_{\mathsf{A}_1}=10$. Then by \eqref{eq no 24 ineq}, $y_3=1$. 

    Then $\Sing_1(X)$ consists of crepant curves $C_1, C_2$ of type $\mathsf{A}_2$, $\mathsf{A}_3$ with $(-r_XK_X\cdot C_i)=5$ for $i=1, 2$ and several crepant curves of type $\mathsf{A}_1$. By Theorem~\ref{thm.rr.sA}, for $0<s<72$, we have
    \begin{align*}
        h^0(X, \mathcal{O}_X(sA))={}&\frac{s^2}{360}+2-\frac{(\overline{s})_2}{6}-\frac{1}{3}\frac{(\overline{s})_3(\overline{-s})_3}{6} -\frac{1}{3}\frac{(\overline{s})_4(\overline{-s})_4}{8}\\{}&-\frac{(\overline{a_s})_5(\overline{-a_s})_5}{10}-\frac{(\overline{b_s})_3(\overline{-b_s})_3}{6}-\frac{(\overline{c_s})_3(\overline{-c_s})_3}{6}-\frac{(\overline{d_s})_3(\overline{-d_s})_3}{6}
    \end{align*}
    for some integers $a_s,b_s, c_s,d_s$. Here we used \eqref{eq xa1 formula 1} and Example~\ref{ex cc A123}. 
    
    For $s\in \{2,3,6,30,31\}$, we can find all integers $a_s, b_s, c_s, d_s$ satisfying $h^0(X, \mathcal{O}_X(sA))\in \mathbb{Z}$, which give the following results: 
    \begin{align}
        {}&h^0(X, \mathcal{O}_X(2A))=h^0(X, \mathcal{O}_X(3A))=h^0(X, \mathcal{O}_X(6A))=1, \label{eq no 24 2}\\ {}& h^0(X, \mathcal{O}_X(30A))=4, h^0(X, \mathcal{O}_X(31A))=3. \label{eq no 24 3}
    \end{align}
    Here \eqref{eq no 24 2} implies that $h^0(X, \mathcal{O}_X(A))>0$: indeed, take the unique elements $A_2\in |2A|$ and $A_3\in |3A|$, then $3A_2=2A_3\in |6A|$, which implies that $A_3-A_2\geq 0$ is an element in $|A|$. On the other hand, \eqref{eq no 24 3} implies that $h^0(X, \mathcal{O}_X(A))=0$, which is a contradiction. 
\end{proof}
    
\begin{prop}\label{prop rule out B4}
    \textnumero 27 in Table~\ref{main tab} does not exist. 
\end{prop}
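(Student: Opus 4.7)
The plan is to follow the pattern of Proposition~\ref{prop rule out B3} (for \textnumero 24): apply Theorem~\ref{thm.rr.sA} for small values of $s$ and then exploit the additivity of the Weil pullback at the orbifold $(3,1)$ to force a contradiction on the local indices. First I would invoke Corollary~\ref{cor A primitive} together with $J_A=3$: since $2\nmid J_A$, no crepant curve is of type $\mathsf{A}_1$, and every crepant curve must be of type $\mathsf{A}_2$. By Theorem~\ref{thm c>=LB} each $(-r_XK_X\cdot C)$ is a positive multiple of $\LB(3)=70$; writing $\sum_C(-r_XK_X\cdot C)=70M$, Corollary~\ref{cor.canpart} applied to $D=2A$ yields $M\equiv 2\bmod 3$, while Theorem~\ref{thm Delta>jC} together with $\nabla_X<399.1$ gives $M\leq 2$. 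Hence $M=2$.

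Using $A^2\cdot(-K_X)=1/630$, Theorem~\ref{thm.rr.sA} then yields for $0<s<69$
\begin{align*}
1260\,h^0(X,\mathcal O_X(sA))={}&s^2+2520-280\,\mathsf{if}(3\nmid s)\\
&-210\alpha_s-126\beta_s-105\gamma_s-90\delta_s,
\end{align*}
where $\alpha_s,\beta_s,\gamma_s,\delta_s$ denote $(\overline{\mathsf{i}_{Q,s} b_Q})_{r_Q}(\overline{-\mathsf{i}_{Q,s} b_Q})_{r_Q}$ at $Q=(3,1),(5,2),(6,1),(7,1)$ with $\mathsf{i}_{Q,s}=\mathsf{i}_{f^{\lfloor *\rfloor}(sA),Q}$. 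Reducing modulo $5$ and modulo $7$ pins down $\beta_s$ and $\delta_s$ uniquely, and then the integrality and nonnegativity of $h^0$, together with the size bound $1260\,h^0\leq s^2+2520$, narrow $(\alpha_s,\gamma_s)$ to a finite list. A direct calculation should show that for $s\in\{2,4,5\}$ the pair is uniquely determined, giving $\alpha_2=0$, $\alpha_4=2$, $\alpha_5=2$, whereas for $s=3$ one finds $\alpha_3\in\{0,2\}$.

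The decisive step is to apply Lemma~\ref{lem pullback of D+G} at $(3,1)$: since the local Cartier index of $A$ at the corresponding point divides $r_Q=3$, the divisor $3A$ is Cartier locally there, which gives the additivity
\[
\mathsf{i}_{f^{\lfloor *\rfloor}((s+3)A),(3,1)}\equiv\mathsf{i}_{f^{\lfloor *\rfloor}(sA),(3,1)}+\mathsf{i}_{f^{\lfloor *\rfloor}(3A),(3,1)}\bmod 3.
\]
If $\alpha_3=0$, then $\mathsf{i}_{f^{\lfloor *\rfloor}(3A),(3,1)}=0$ and periodicity forces $\mathsf{i}_{f^{\lfloor *\rfloor}(5A),(3,1)}=\mathsf{i}_{f^{\lfloor *\rfloor}(2A),(3,1)}=0$, contradicting $\alpha_5=2$. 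If $\alpha_3=2$, then $\mathsf{i}_{f^{\lfloor *\rfloor}(3A),(3,1)}\in\{1,2\}$ forces the local Cartier index of $A$ at $(3,1)$ to equal $1$ (otherwise it would be $3$, but $3A$ Cartier would give $\mathsf{i}_{f^{\lfloor *\rfloor}(3A),(3,1)}=0$); hence $A$ is Cartier at that point and $\mathsf{i}_{f^{\lfloor *\rfloor}(sA),(3,1)}\equiv s\cdot\mathsf{i}_{f^{\lfloor *\rfloor}(A),(3,1)}\bmod 3$ is linear in $s$, so $\mathsf{i}_{f^{\lfloor *\rfloor}(A),(3,1)}\in\{1,2\}$ forces $\mathsf{i}_{f^{\lfloor *\rfloor}(2A),(3,1)}\in\{1,2\}$, contradicting $\alpha_2=0$. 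The hard part will be the bookkeeping establishing uniqueness of $(\alpha_s,\gamma_s)$ for $s\in\{2,4,5\}$: the residue constraints from the three ``other'' orbifold points $(5,2),(6,1),(7,1)$ must be combined with the window $0\leq 1260\,h^0\leq s^2+2520$ to rule out all competing candidates; once that is in place the additivity-versus-linearity dichotomy at $(3,1)$ closes the argument.
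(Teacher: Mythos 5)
Your set-up and bookkeeping are correct: with $J_A=3$ every crepant curve is of type $\mathsf{A}_2$, the total degree works out to $y=2$ exactly as in the paper, $A^2\cdot(-K_X)=\frac{1}{630}$, and the normalized formula $1260\,h^0(X,\mathcal O_X(sA))=s^2+2520-280\,\mathsf{if}(3\nmid s)-210\alpha_s-126\beta_s-105\gamma_s-90\delta_s$ does determine $(\alpha_s,\gamma_s)$ uniquely for $s=2,4,5$ (namely $(0,8)$, $(2,0)$, $(2,9)$) and leaves $(\alpha_3,\gamma_3)\in\{(0,9),(2,5)\}$. The additivity $\mathsf{i}_{f^{\lfloor *\rfloor}((s+3)A),Q}\equiv\mathsf{i}_{f^{\lfloor *\rfloor}(sA),Q}+\mathsf{i}_{f^{\lfloor *\rfloor}(3A),Q}$ is also true, but not for the reason you give: one must argue that $G=f^{\lfloor *\rfloor}((s+3)A)-f^{\lfloor *\rfloor}(sA)-f^{\lfloor *\rfloor}(3A)$ vanishes over the complement of the crepant points (because $3A$ is Cartier at the generic point of every crepant curve, so Lemma~\ref{lem pullback of D+G} applies there), and that every crepant point is Gorenstein by Kawakita's table since $B_X=\{(3,1),(5,2),(6,1),(7,1)\}$ contains none of the required sub-baskets; hence every component of $G$ has all local indices $\equiv 0$. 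Granting this, your disposal of the branch $\alpha_3=0$ is fine.

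The genuine gap is the branch $\alpha_3=2$. It rests on the assertion that ``the local Cartier index of $A$ at the corresponding point divides $r_Q=3$,'' which is unjustified. The orbifold point $(3,1)$ corresponds to a non-Gorenstein point of $Y$ whose image in $X$ cannot be a crepant point (those are Gorenstein here), so it is either an isolated terminal point or, crucially, a special point $P$ of a crepant curve $C$. In the latter situation the Cartier index of $A$ at $P$ is only known to be a multiple of $j_C=3$, and $f$ is not an isomorphism near $P$, so neither horn of your dichotomy applies: you cannot conclude $\mathsf{i}_{f^{\lfloor *\rfloor}(3A),(3,1)}\equiv 0$, nor that $A$ is Cartier at $P$, nor that $\mathsf{i}_{f^{\lfloor *\rfloor}(sA),(3,1)}$ is linear in $s$. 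One can check that the assignment $(\alpha_1,\gamma_1)=(\alpha_3,\gamma_3)=(2,5)$ is compatible with additivity by $3A$ at both $(3,1)$ and $(6,1)$ and with the integrality of $h^0(X,\mathcal O_X(sA))$ for all $s$, so your tools cannot close this branch. The missing ingredient is the local constraint the paper extracts from Proposition~\ref{prop.KC in Z}: since $(-K_X\cdot C)\in\{\frac13,\frac23\}$, every prime $f$-exceptional divisor satisfies either $\mathsf{i}_{E,(3,1)}\equiv 0\bmod 3$ or $\mathsf{i}_{E,(6,1)}\equiv 0\bmod 6$ (according to whether there are two crepant curves or one), and this is contradicted by the exceptional divisor $f^{\lfloor *\rfloor}(4A)-2f^{\lfloor *\rfloor}(2A)$, whose indices at $(3,1)$ and $(6,1)$ are both nonzero by your own $s=2,4$ computations.
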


\begin{proof}
    By Corollary~\ref{cor A primitive}, all crepant curves are of type $\mathsf{A}_2$.  As $\LB(3)=70$, by Theorem~\ref{thm c>=LB} we may write 
    \[
        \sum_{ C\subset\Sing(X) }(-r_XK_X\cdot C)=70y
    \]
    for some positive integer $y$. Then by Theorem~\ref{thm Delta>jC}, we have $\nabla_X\geq (3-\frac{1}{3})70y$, which implies that $y \leq 2$. 

    By applying Corollary~\ref{cor.canpart} to $D=A$, we have
    \[
       -r_XA^2\cdot K_X+ \sum_{ C\subset\Sing(X)}2(-r_XK_X\cdot C)c_C(A) =  \frac{1}{3}-\frac{140y}{3}  \in \mathbb Z.
    \]  
  Here we used Example~\ref{ex cc A123}.  So we conclude that $y=2$. We are in one of the following cases: 
    \begin{enumerate}
        \item $X$ has a unique crepant curve $C_1$ of type $\mathsf{A}_2$ with $(-K_X\cdot C_1)=\frac{70y}{r_X}=\frac{2}{3}$; or
        
        \item $X$ has exactly two crepant curves $C_1, C_2$ of type $\mathsf{A}_2$ with $(-K_X\cdot C_i)=\frac13$ for $i=1,2$. 
    \end{enumerate}
  
Here we need to further consider local indices of Weil pullbacks to derive a contradiction.  Take $f\colon Y\to X$ to be a sequential terminalization. We denote by $\{Q_3, Q_5, Q_6, Q_7\}$ the orbifold points in $B_X$ labeled by their Gorenstein indices. 

    \begin{claim}\label{claim no 27}
        For any prime $f$-exceptional divisor $E$,  we have 
        \[\begin{cases}  \mathsf{i}_{E,Q_6}\equiv 0\bmod 6& \text{if in Case (1)};\\
            \mathsf{i}_{E,Q_3}\equiv 0\bmod 3& \text{if in Case (2)}.
        \end{cases}\]
    \end{claim}
    
    \begin{proof}
        By \cite{kawakita}*{Theorem~1.1, Table~2} (see also Remark~\ref{rem kawakita}), $X$ has no non-Gorenstein crepant point. If $f(E)$ is a point, then $f(E)$ is a Gorenstein crepant point on $X$, which means that $E$ contains no non-Gorenstein point. Hence $\mathsf{i}_{E,Q_3}\equiv 0\bmod 3$ and $\mathsf{i}_{E,Q_6}\equiv 0\bmod 6$ both hold.

        So without loss of generality, we may assume that $E$ is centered at $C_1$.

        In Case (1), by Proposition~\ref{prop.KC in Z}, we have
        \begin{align*}
            \mathbb{Z}\ni{}& -\frac{2}{3}-\frac{{\mathsf{i}_{E, Q_3}}(3-{\mathsf{i}_{E, Q_3}})}{6}-\frac{{\mathsf{i}_{E, Q_6}}(6-{\mathsf{i}_{E, Q_6}})}{12}\\{}&-\frac{{2\mathsf{i}_{E, Q_5}}(5-2\mathsf{i}_{E, Q_5})}{10}-\frac{{\mathsf{i}_{E, Q_7}}(7-{\mathsf{i}_{E, Q_7}})}{14}.
        \end{align*}
        By considering $5$ and $7$ in the denominators, we have $\mathsf{i}_{E, Q_5}\equiv 0\bmod 5$ and $\mathsf{i}_{E, Q_7}\equiv 0\bmod 7$, which implies that 
        \begin{align*}
            \mathbb{Z}\ni{}& -\frac{2}{3}-\frac{{\mathsf{i}_{E, Q_3}}(3-{\mathsf{i}_{E, Q_3}})}{6}-\frac{{\mathsf{i}_{E, Q_6}}(6-{\mathsf{i}_{E, Q_6}})}{12}. 
        \end{align*}
        This congruence equation implies that $\mathsf{i}_{E, Q_3}\equiv \pm 1\bmod 3$ and hence $\mathsf{i}_{E, Q_6}\equiv 0\bmod 6$. Indeed, if $\mathsf{i}_{E, Q_3}\equiv 0\bmod 3$, then we have 
        \[
            12\mid 8+{{\mathsf{i}_{E, Q_6}}(6-{\mathsf{i}_{E, Q_6}})},
        \]
        which is impossible. 

        In Case (2), by the same argument we have 
        \begin{align*}
            \mathbb{Z}\ni{}& -\frac{1}{3}-\frac{{\mathsf{i}_{E, Q_3}}(3-{\mathsf{i}_{E, Q_3}})}{6}-\frac{{\mathsf{i}_{E, Q_6}}(6-{\mathsf{i}_{E, Q_6}})}{12}. 
        \end{align*}
        This congruence equation implies that $\mathsf{i}_{E, Q_3}\equiv 0\bmod 3$. Indeed, if $\mathsf{i}_{E, Q_3}\equiv \pm1\bmod 3$, then again we have 
        \[
            12\mid 8+{{\mathsf{i}_{E, Q_6}}(6-{\mathsf{i}_{E, Q_6}})},
        \]
        which is impossible. 
    \end{proof}

    Take $r'=70$, then by Corollary~\ref{cor RR integer general} for $D=sA$ and Remark~\ref{rem ignore part}, we have
    \begin{align*}
       \frac{s^2}{18}-\frac{70(\overline{s})_3(\overline{-s})_3}{9}-\frac{35{\mathsf{i}_{f^{\lfloor*\rfloor}(sA), Q_3}}(3-{\mathsf{i}_{f^{\lfloor*\rfloor}(sA), Q_3}})}{3}-\frac{35{\mathsf{i}_{f^{\lfloor*\rfloor}(sA), Q_6}}(6-{\mathsf{i}_{f^{\lfloor*\rfloor}(sA), Q_6}})}{6}\in \mathbb{Z}. 
    \end{align*}
    For $s=2,4$, we have
    \begin{align*}
        4+4\mathsf{i}_{f^{\lfloor*\rfloor}(2A), Q_3}^2+5\mathsf{i}_{f^{\lfloor*\rfloor}(2A), Q_6}^2\equiv 0\bmod 6;\\
         2+4\mathsf{i}_{f^{\lfloor*\rfloor}(4A), Q_3}^2+5\mathsf{i}_{f^{\lfloor*\rfloor}(4A), Q_6}^2\equiv 0\bmod 6.
    \end{align*}
    These congruence equations imply that
    \begin{align}
        \mathsf{i}_{f^{\lfloor*\rfloor}(2A), Q_3}\equiv 0, \quad \mathsf{i}_{f^{\lfloor*\rfloor}(4A), Q_3}\equiv \pm1  \bmod 3;\label{eq 27 Q3}\\
        \mathsf{i}_{f^{\lfloor*\rfloor}(2A), Q_6}\equiv \pm 2, \quad \mathsf{i}_{f^{\lfloor*\rfloor}(4A), Q_6}\equiv0 \bmod 6.\label{eq 27 Q6}
    \end{align}

    Now consider $G\coloneq f^{\lfloor*\rfloor}(4A)-2f^{\lfloor*\rfloor}(2A)$ which is an $f$-exceptional Weil divisor by construction. By \eqref{eq 27 Q3}, \eqref{eq 27 Q6}, and the definition of local index,
    \begin{align*}
        \mathsf{i}_{G,Q_3}\equiv \mathsf{i}_{f^{\lfloor*\rfloor}(4A), Q_3}-2\mathsf{i}_{f^{\lfloor*\rfloor}(2A), Q_3}\equiv \pm 1 \bmod 3;\\
         \mathsf{i}_{G,Q_6}\equiv \mathsf{i}_{f^{\lfloor*\rfloor}(4A), Q_6}-2\mathsf{i}_{f^{\lfloor*\rfloor}(2A), Q_6} \equiv \pm 2\bmod 6.
    \end{align*}
    On the other hand, by Claim~\ref{claim no 27}, either $\mathsf{i}_{G, Q_3}\equiv 0\bmod 3$ or  $\mathsf{i}_{G, Q_6}\equiv 0\bmod 6$, which is a contradiction.
\end{proof}

\begin{prop}\label{prop rule out B5}
    \textnumero 35 in Table~\ref{main tab} does not exist. 
\end{prop}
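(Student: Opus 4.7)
The plan is to follow the template of Propositions~\ref{prop rule out B3} and \ref{prop rule out B4}: first use Corollary~\ref{cor A primitive} and Theorem~\ref{thm Delta>jC} to pin down the crepant curves of $X$; then use the integrality constraints from Corollary~\ref{cor RR integer general} to kill the $\mathsf{A}_1$ contribution $x_{\mathsf{A}_1}$ and to control a parity of local indices; and finally derive a contradiction by computing a certain local invariant at the four $(2,1)$ basket points in two different ways---one from Riemann--Roch integrality, the other from the linearity of Weil pullback along the Cartier-in-codimension-$2$ direction $J_AA=4A$.

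I would first narrow down the singularities. By Corollary~\ref{cor A primitive}, every crepant curve $C\subset\Sing(X)$ has $j_C\mid J_A=4$, so is of type $\mathsf{A}_1$ or $\mathsf{A}_3$. Since $\LB(4)=35$ by Algorithm~\ref{algo LB}, writing $\sum_{\text{type }\mathsf{A}_3}(-r_XK_X\cdot C)=35y_4$ for some positive integer $y_4$, Theorem~\ref{thm Delta>jC} together with \eqref{eq xa1 formula 2} forces $y_4=1$ and $x_{\mathsf{A}_1}\le 10$; hence $\Sing_1(X)$ contains a unique crepant curve $C_1$ of type $\mathsf{A}_3$ with $(-K_X\cdot C_1)=\tfrac12$, together with possibly several $\mathsf{A}_1$ crepant curves. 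Next I would apply Corollary~\ref{cor RR integer general} with $r'=20$: the $(2,1)$ and $(5,1)$ basket terms become integers, and the constraint multiplied by $28$ reduces to $s^2-2x_{\mathsf{A}_1}(\overline{s})_2-35(\overline{s})_4(\overline{-s})_4-40\,a_7(7-a_7)\in 28\mathbb Z$. Substituting $s=1,3,5$ and enumerating $a_7(7-a_7)\in\{0,6,10,12\}$ yields three congruences modulo $14$ whose intersection with $[0,10]$ is $\{0\}$, so $x_{\mathsf{A}_1}=0$. A further application of Corollary~\ref{cor RR integer general} with $r'=r_X=70$, reduced modulo $8$, shows that $k(s)\coloneq\sum_{i=1}^4 a_i(s)(2-a_i(s))$ is always even, where $a_i(s)\coloneq\mathsf{i}_{f^{\lfloor *\rfloor}(sA),Q_i}\bmod 2$ and $Q_1,\dots,Q_4$ are the four $(2,1)$ orbifold points of $B_X$.

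The heart of the argument is to compute $k(1)$ and $k(5)$ in two ways and to check they disagree. Using Theorem~\ref{thm.rr.sA} at $s=1$, $h^0(A)=\tfrac{127}{70}-T(1)$ with $70T(1)=\tfrac{35k}{2}+7a_5(5-a_5)+5a_7(7-a_7)\equiv 57\pmod{70}$; enumerating over even $k$ and the admissible values of $a_5(5-a_5)\in\{0,4,6\}$ and $a_7(7-a_7)\in\{0,6,10,12\}$ shows the only solution in $[0,172]$ is $(k,a_5(5-a_5),a_7(7-a_7))=(2,6,10)$, so $k(1)=2$. The same enumeration at $s=5$ (where $70T(5)\equiv 60\pmod{70}$) gives only $k(5)\in\{0,4\}$. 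On the other hand, since $J_A=4$, the divisor $4A$ is Cartier in codimension two, so $f^*(4A)$ is a Weil divisor on a terminalization $f\colon Y\to X$, and Lemma~\ref{lem pullback of D+G} gives the exact identity $f^{\lfloor *\rfloor}(5A)=f^{\lfloor *\rfloor}(A)+f^*(4A)$. Writing $\gamma_Q\coloneq\mathsf{i}_{f^*(4A),Q}\bmod r_Q$ and using $-K_X\sim 17\cdot 4A$ forces $17\gamma_Q\equiv -1\pmod{r_Q}$; at each $(2,i)$ point this gives $\gamma_Q\equiv 1\pmod 2$, so $a_i(5)\equiv 1-a_i(1)\pmod 2$, hence $k(5)=4-k(1)=2$. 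The contradiction $2\notin\{0,4\}$ finishes the proof.

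The main obstacle is this last step: realizing that although the Weil pullback is non-linear in general, its behaviour along the direction $J_AA=4A$ is controlled exactly by Lemma~\ref{lem pullback of D+G}; this rigid linear behaviour contradicts the looser parity constraint $k(5)\in\{0,4\}$ coming from purely arithmetic Riemann--Roch integrality at $s=5$.
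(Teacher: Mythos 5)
Your reduction to a unique $\mathsf{A}_3$ curve with $(-K_X\cdot C_1)=\tfrac12$, the proof that $x_{\mathsf{A}_1}=0$ (via $r'=20$ and the three congruences mod $14$ at $s=1,3,5$), and the enumerations giving $k(1)=2$ and $k(5)\in\{0,4\}$ are all correct and match the paper's conclusions (the paper obtains the same statements, recorded as \eqref{eq 35 1100} and \eqref{eq 35 0000}, by multiplying \eqref{eq.222257} by $80$, $112$, and $35$). The gap is in your final step. From $J_A=4$ you infer that $f^*(4A)$ is a Weil divisor on $Y$ and invoke Lemma~\ref{lem pullback of D+G} to get the global identity $f^{\lfloor*\rfloor}(5A)=f^{\lfloor*\rfloor}(A)+f^*(4A)$. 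But $J_A=4$ only means $4A$ is Cartier \emph{in codimension two}; it can fail to be Cartier at finitely many crepant points of $X$ (which may be non-Gorenstein of index $2$ here, and even Gorenstein canonical points can have nontrivial local class group), and along an exceptional divisor $E$ centered at such a point $\mult_E f^*(4A)$ need not be an integer. So $f^*(4A)$ is in general not a Weil divisor, Lemma~\ref{lem pullback of D+G} does not apply globally, and both the exact identity and the relation $17\gamma_Q\equiv-1\bmod r_Q$ (which presupposes that $f^*(4A)$ is a Weil divisor on the terminal model, hence subject to Reid's condition) are unjustified.

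This is precisely the difficulty the paper's proof is built to handle. One applies Lemma~\ref{lem pullback of D+G} only over the open set $U$ where $4A$ is Cartier, so that $G\coloneq f^{\lfloor*\rfloor}(5A)-f^{\lfloor*\rfloor}(A)-f^{\lfloor*\rfloor}(4A)$ is supported over the finite set $X\setminus U$ of crepant points, and then shows (Claim~\ref{claim 35 F}) that every component of $G$ has mutually congruent local indices mod $2$ at the four $(2,1)$ points, using Kawakita's classification of crepant points and the integrality constraint of Example~\ref{ex.2222}. One also needs the $s=4$ instance of the sharp parity constraint \eqref{eq 35 0000}, which says the indices of $f^{\lfloor*\rfloor}(4A)$ at the four points are mutually congruent mod $2$; your $r'=70$ constraint reduced mod $8$ only sees $\sum_i a_i(s)$ modulo $2$ rather than modulo $4$, so at $s=4$ it gives the weaker statement $\#\{i:a_i(4)=1\}\in\{0,2,4\}$, which is not enough. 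With the two missing inputs one gets that $a_i(5)-a_i(1)$ is constant in $i$ modulo $2$, hence $k(5)\in\{k(1),\,4-k(1)\}=\{2\}$, contradicting $k(5)\in\{0,4\}$; your final paragraph should be replaced by this argument.
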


\begin{proof}
    By Proposition~\ref{prop determine C and KC}(2),  {$\Sing_1(X)$} consists of a crepant curve $C_1$ of type $\mathsf{A}_3$ with $(-r_XK_X\cdot C_1)=35$ and possibly several crepant curves of type $\mathsf{A}_1$. 

    Let $f\colon Y\to X$ be a sequential terminalization. We denote by $\{Q_1, Q_2, Q_3, Q_4\}$ the four orbifold points $(2,1)$ in $B_X$. Applying Corollary~\ref{cor RR integer general} to $D=sA$ and $r'=1$, we have
    \begin{align}\label{eq.222257}
        \begin{split}    \mathbb{Z}\ni {}&\frac{s^2}{560}-\frac{x_{\mathsf{A}_1}(\overline{s})_2}{280}-\frac{(\overline{s})_4(\overline{-s})_4}{16}-
        \sum_{i=1}^4\frac{\mathsf{i}_{f^{\lfloor*\rfloor}(sA), Q_i}(2-\mathsf{i}_{f^{\lfloor*\rfloor}(sA), Q_i})}{4}\\{}&-\frac{a_s(5-a_s)}{10}-\frac{b_s(7-b_s)}{14} 
        \end{split} 
    \end{align}
    for some integers $a_s, b_s$.  Here we used \eqref{eq xa1 formula 1} and Example~\ref{ex cc A123}. Multiplying \eqref{eq.222257} by $80$ and $112$ respectively, we have
    \begin{align*}
        \frac{s^2}{7}-\frac{2x_{\mathsf{A}_1}(\overline{s})_2}{7}+\frac{5b_s^2}{7}\in \mathbb{Z};\\
         \frac{s^2}{5}-\frac{2x_{\mathsf{A}_1}(\overline{s})_2}{5}+\frac{a_s^2}{5}\in \mathbb{Z}.
    \end{align*}
    For $s=1,3,5$, we have 
    \begin{align*}
        {}& 2x_{\mathsf{A}_1}\equiv 1+a_1^2\equiv 4+a_3^2\bmod 5;\\
        {}& 2x_{\mathsf{A}_1}\equiv 1+5b_1^2\equiv 2+5b_3^2  \equiv 4+5b_5^2\bmod 7.
    \end{align*}
    These congruence equations imply that $35\mid x_{\mathsf{A}_1}$. Hence $x_{\mathsf{A}_1}=0$ as $\nabla_X<\frac{3}{2}\cdot 35+(4-\frac{1}{4})\cdot 35$. 

    Multiplying \eqref{eq.222257} by $35$, we have
    \begin{align*}
        \frac{s^2}{16} -\frac{35(\overline{s})_4(\overline{-s})_4}{16}-
        \sum_{i=1}^4\frac{35\mathsf{i}_{f^{\lfloor*\rfloor}(sA), Q_i}(2-\mathsf{i}_{f^{\lfloor*\rfloor}(sA), Q_i})}{4} 
       \in \mathbb Z.
    \end{align*}
    For $s=1$, we have
    \begin{align*}
        \frac{1}{2} +\sum_{i=1}^4\frac{\mathsf{i}_{f^{\lfloor*\rfloor}(A), Q_i}(2-\mathsf{i}_{f^{\lfloor*\rfloor}(A), Q_i})}{4} 
       \in \mathbb Z,
    \end{align*}
    which means that, up to a permutation of $Q_i$, we have 
    \begin{align}\label{eq 35 1100}
        (\mathsf{i}_{f^{\lfloor*\rfloor}(A), Q_1}, \mathsf{i}_{f^{\lfloor*\rfloor}(A), Q_2}, \mathsf{i}_{f^{\lfloor*\rfloor}(A), Q_3}, \mathsf{i}_{f^{\lfloor*\rfloor}(A), Q_4})\equiv (1,1,0,0) \bmod 2.
    \end{align}
    For $s=4,5$, we have 
    \begin{align}\label{eq 35 0000}
        \begin{cases}
            \mathsf{i}_{f^{\lfloor*\rfloor}(4A), Q_1}\equiv \mathsf{i}_{f^{\lfloor*\rfloor}(4A), Q_2}\equiv \mathsf{i}_{f^{\lfloor*\rfloor}(4A), Q_3}\equiv \mathsf{i}_{f^{\lfloor*\rfloor}(4A), Q_4}\bmod 2;\\
            \mathsf{i}_{f^{\lfloor*\rfloor}(5A), Q_1}\equiv \mathsf{i}_{f^{\lfloor*\rfloor}(5A), Q_2}\equiv \mathsf{i}_{f^{\lfloor*\rfloor}(5A), Q_3}\equiv \mathsf{i}_{f^{\lfloor*\rfloor}(5A), Q_4}\bmod 2.
        \end{cases}   
    \end{align}
  
    \begin{claim}\label{claim 35 F}
        Take $G\coloneq f^{\lfloor*\rfloor}(5A)-f^{\lfloor*\rfloor}(A)-f^{\lfloor*\rfloor}(4A). $
        Then 
        \[
            \mathsf{i}_{G,Q_1}\equiv \mathsf{i}_{G,Q_2}\equiv \mathsf{i}_{G,Q_3}\equiv \mathsf{i}_{G,Q_4}\bmod 2.
        \]
    \end{claim} 
    \begin{proof}
       As $J_A=4$, take $U\subset X$ to be an open subset such that $4A$ is Cartier on $U$ and $X\setminus U$ has dimension $0$. Then $f$ induces a sequential terminalization $f^{-1}(U)\to U$. Applying Lemma~\ref{lem pullback of D+G} to $f^{-1}(U)\to U$, we know that over $U$ the equality $f^{\lfloor*\rfloor}(5A)=f^{\lfloor*\rfloor}(A)+f^{\lfloor*\rfloor}(4A)$ holds. 

        Fix an irreducible component $E$ of $G$, then $f(E)=P\in X\setminus U$ is a (crepant) point on $X$. It suffices to show that
        \begin{align}\label{eq E0000}
            \mathsf{i}_{E, Q_1}\equiv \mathsf{i}_{E, Q_2}\equiv \mathsf{i}_{E, Q_3}\equiv \mathsf{i}_{E, Q_4}\bmod 2. 
        \end{align}
        We may assume that $\mathsf{i}_{E,Q_{i_0}}\not\equiv 0\bmod 2$ for some $1\leq i_0\leq 4$. Then $P$ is a non-Gorenstein crepant point on $X$, whose Gorenstein index is $2$ by \cite{kawakita}*{Theorem~1.1, Table~2} (see also Remark~\ref{rem kawakita}). Then we get \eqref{eq E0000} by Example~\ref{ex.2222}.
    \end{proof}
    Now \eqref{eq 35 0000} and Claim~\ref{claim 35 F} imply that 
    \[
        \mathsf{i}_{f^{\lfloor*\rfloor}(A), Q_1}\equiv \mathsf{i}_{f^{\lfloor*\rfloor}(A), Q_2}\equiv \mathsf{i}_{f^{\lfloor*\rfloor}(A), Q_3}\equiv \mathsf{i}_{f^{\lfloor*\rfloor}(A), Q_4}\bmod 2,
    \]
    which contradicts \eqref{eq 35 1100}. 
\end{proof}

\section{Ruling out remaining cases: Group C}\label{sec group C1}

In this section, we treat Group C which contains \textnumero3, \textnumero4, \textnumero6--8, \textnumero11--15, \textnumero21, \textnumero22 in Table~\ref{main tab}. 
The main theorem of this section (Theorem~\ref{thm RR for Group C}) is to give the explicit form of the formula for $h^0(X, \mathcal{O}_X(sA))$ in Theorem~\ref{thm.rr.sA}. 
It turns out that all candidates in Group C share the same formula, although they have different Reid's baskets and crepant curves.

For \textnumero3, \textnumero4, \textnumero6--8, \textnumero11, we can check that the conditions of Proposition~\ref{prop determine C and KC} are satisfied. Then we can describe the types of all crepant curves $C$ in $\Sing(X)$ which are not of type $\mathsf{A}_1$ and the corresponding $(-r_XK_X\cdot C)$ in Table~\ref{tab group C}, where the symbol $(\mathsf{A}_1)$ means that there might be several crepant curves of type $\mathsf{A}_1$ in $\Sing(X)$. 
For \textnumero12--15, there are only  crepant curves of type $\mathsf{A}_1$ by Corollary~\ref{cor A primitive} as $J_A=2$.
For \textnumero 21 and \textnumero 22, there is no crepant curve by Corollary~\ref{cor A primitive} as $J_A=1$.

{\footnotesize
\begin{longtable}{LLLLLLLLL}
    \caption{Numerical data in Group C}\label{tab group C}\\
    \hline
    \text{\textnumero}  & B_X & q & r_X & r_Xc_1^3 & r_Xc_2c_1 &  C\subset\Sing(X)& (-r_XK_X\cdot C) &\nabla_X  \\
    \hline
    \endfirsthead
    \multicolumn{4}{l}{{ {\bf \tablename\ \thetable{}} \textrm{-- continued}}}
    \\
    \hline 
    $\textnumero$ & B_X & q & r_X & r_Xc_1^3 & r_Xc_2c_1 &  C\subset\Sing(X)& (-r_XK_X\cdot C) &\nabla_X \\
    \hline 
    \endhead
    \hline
    \hline \multicolumn{4}{c}{{\textrm{Continued on next page}}} \\ \hline
    \endfoot
    
    \hline \hline
    \endlastfoot
    3& \{(3,1),(11,3)\}& 70 & 33 & 490 & 344 &\mathsf{A}_4, (\mathsf{A}_1)&r_X &218.1 \\
    4& \{(3,1),(11,5)\}& 80 & 33 & 640 & 344 &\mathsf{A}_4, (\mathsf{A}_1)&r_X &180.1 \\
    6&  \{(5,1),(11,1)\}& 72 & 55 & 864 & 456 &\mathsf{A}_2, (\mathsf{A}_1) &r_X & 234.17 \\
    7&  \{(5,1),(11,2)\}& 78 & 55 & 1014 & 456 &\mathsf{A}_2, (\mathsf{A}_1) &r_X & 196.17 \\
    8&  \{(5,2),(11,3)\}& 84 & 55 & 1176 & 456 &\mathsf{A}_2, (\mathsf{A}_1) &r_X & 155.17  \\
    11&  \{(2,1),(5,2),(11,5)\}& 69 & 110 & 1587 & 747 &\mathsf{A}_2 &r_X &339.09 \\
    12&  \{(3,1),(5,1),(11,1)\}& 82 & 165 & 3362 & 928 &(\mathsf{A}_1) & & 67.5 \\
    13&  \{(3,1),(5,1),(11,4)\}& 68 & 165 & 2312 & 928 &(\mathsf{A}_1) & & 333.5\\
    14&  \{(3,1),(5,2),(11,2)\}& 76 & 165 & 2888 & 928 &(\mathsf{A}_1) & &187.5 \\
    15&  \{(3,1),(5,2),(11,5)\}& 74 & 165 & 2738 & 928 &(\mathsf{A}_1) & & 225.5 \\
    21&  \{(2,1),(3,1),(5,1),(11,2)\}& 67 & 330 & 4489 & 1361 &\emptyset & &206.25 \\
    22&  \{(2,1),(3,1),(5,2),(11,1)\}& 71 & 330 & 5041 & 1361 &\emptyset & &66.25 \\
\end{longtable}
} 
  
To unify the notation, we make the following settings:
\begin{set}\label{set group C}
    \begin{enumerate}
        \item Let $(X,A)$ be an unmodifiable canonical Fano $3$-fold in Group C and let $f\colon Y\to X$ be a sequential terminalization.
    
        \item Write $B_X=\{(r_1, b_1), \dots, (r_k, b_k)\}$ for some $2\leq k\leq 4$. 
         
        \item From Table~\ref{tab group C}, $X$ contains at most one crepant curve which is not of type $\mathsf{A}_1$; if such a crepant curve exists, then we denote it by $C_0$ which is of type $\mathsf{A}_{r_0-1}$ with $r_0>2$ and $(-K_X\cdot C_0)=1$.
    \end{enumerate}
\end{set}

In order to determine the formula in Theorem~\ref{thm.rr.sA}, we need to understand the local indices of $f^{\lfloor *\rfloor}(sA)$ for each $s$. In general, this is very difficult as the Weil pullback is not linear, but as we already have enough information about the crepant curves of $X$, we can get some useful information about the local indices of $f^{\lfloor *\rfloor}(sA)$ in the following two lemmas.

\begin{lem}\label{lem E=0}
    Keep Setting~\ref{set group C}. Suppose that $C_0$ exists. Then for any prime $f$-exceptional divisor $E$ on $Y$ centered at $C_0$, we have $\mathsf{i}_{E,(r_i,b_i)}\equiv 0\bmod r_i$ for any $(r_i,b_i)\in B_X$.
\end{lem}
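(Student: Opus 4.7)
The plan is to apply the second assertion of Proposition~\ref{prop.KC in Z} diagonally with $E' = E$, and then exploit the arithmetic structure of $B_X$ in the cases of Group C where $C_0$ exists.

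First, by Setting~\ref{set group C}(3) we have $(K_X \cdot C_0) = -1 \in \mathbb{Z}$. Moreover, since $C_0$ is of type $\mathsf{A}_{r_0-1}$, the curve $\ell_E$ is a $(-2)$-curve, so $\ell_E \cdot \ell_E = -2$ and hence $-(\ell_E \cdot \ell_E)(K_X \cdot C_0) = -2 \in \mathbb{Z}$. Applying Proposition~\ref{prop.KC in Z} with $E' = E$ (which is permitted, as its proof via Lemma~\ref{lem.DDKterminal} applied to $E$, $E$, and $2E$ still yields the second formula), the integer part drops out and we obtain the clean relation
$$\sum_{(r_i, b_i) \in B_X} \frac{\mathsf{i}_{E,(r_i,b_i)}^2 \, b_i^2}{r_i} \in \mathbb{Z}.$$

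Next, I would inspect Table~\ref{tab group C}: in each of the candidates \textnumero 3, \textnumero 4, \textnumero 6--\textnumero 8, \textnumero 11 (the only cases in Group C where $C_0$ exists), the indices $r_i$ appearing in $B_X$ are pairwise coprime, and each $r_i$ is itself a prime lying in $\{2,3,5,11\}$. Fix some $i_0$; since $\gcd(r_{i_0}, \prod_{i \neq i_0} r_i) = 1$, clearing denominators in the above display and reducing modulo $r_{i_0}$ forces
$$r_{i_0} \,\big|\, \mathsf{i}_{E,(r_{i_0},b_{i_0})}^2 \, b_{i_0}^2.$$
Since $\gcd(b_{i_0}, r_{i_0}) = 1$ by the definition of $B_X$, this gives $r_{i_0} \mid \mathsf{i}_{E,(r_{i_0},b_{i_0})}^2$, and primality of $r_{i_0}$ upgrades this to $r_{i_0} \mid \mathsf{i}_{E,(r_{i_0},b_{i_0})}$, which is exactly what the lemma asserts.

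I do not foresee a genuine obstacle: the whole argument is a clean consequence of the fact that $(K_X \cdot C_0)$ is an integer, which removes any contribution from the intersection term in Proposition~\ref{prop.KC in Z}, combined with the purely number-theoretic input that the $r_i$'s in the remaining Group C baskets are distinct primes. The only verification needed is the case-by-case reading of Table~\ref{tab group C} to confirm pairwise coprimality and primality.
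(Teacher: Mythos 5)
Your proof is correct and follows essentially the same route as the paper: the paper applies the first integrality constraint \eqref{eq.KCE} of Proposition~\ref{prop.KC in Z} (using $(-K_X\cdot C_0)=1$ to drop the intersection term and obtain $\sum_i \mathsf{i}_{E,(r_i,b_i)}b_i(r_i-\mathsf{i}_{E,(r_i,b_i)}b_i)/r_i\in\mathbb{Z}$), and then concludes exactly as you do from the fact that the $r_i$ are distinct primes and $\gcd(b_i,r_i)=1$. Your use of the second constraint with $E'=E$ is a harmless variant (and your justification that it holds diagonally is valid), so no changes are needed.
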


\begin{proof}
    Note that $(-K_X\cdot C_0)=1$, so by \eqref{eq.KCE},
    \[
        \sum^k_{i=1}\frac{\mathsf{i}_{E,(r_i,b_i)}b_i(r_i-\mathsf{i}_{E,(r_i,b_i)}b_i)}{r_i}\in \mathbb Z.
    \]
    As $r_i\in\{2,3,5,11\}$ in the denominators are distinct prime numbers and $b_i$ is coprime to $r_i$, we have $\mathsf{i}_{E,(r_i,b_i)}\equiv 0\bmod r_i$ for every $(r_i,b_i)\in B_X$.
\end{proof}

The following lemma shows that, although the local indices of $f^{\lfloor *\rfloor}(sA)$ might not be linear in $s$, they are linear after adding a correction term which is a periodic function in $s$ of period $2$. 

\begin{lem}\label{lem asa2a1}
    Keep Setting~\ref{set group C}. Then for any integer $s$ and any $(r_i,b_i)\in B_X$, we have
    \[
        \mathsf{i}_{f^{\lfloor *\rfloor}(sA), (r_i,b_i)}\equiv \left\lfloor\frac{s}{2}\right\rfloor  \cdot \mathsf{i}_{f^{\lfloor *\rfloor}(2A), (r_i,b_i)} +(\overline{s})_2 \cdot\mathsf{i}_{f^{\lfloor *\rfloor}(A), (r_i,b_i)} \bmod r_i.
    \]
\end{lem}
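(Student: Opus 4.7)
The plan is to reduce the lemma to the one-step claim that $\mathsf{i}_{f^{\lfloor *\rfloor}((s+2)A),(r_i,b_i)} \equiv \mathsf{i}_{f^{\lfloor *\rfloor}(sA),(r_i,b_i)}+\mathsf{i}_{f^{\lfloor *\rfloor}(2A),(r_i,b_i)}\pmod{r_i}$ and then induct on $s$ starting from the trivial cases $s=0,1$. Equivalently, setting
\[
H_s := f^{\lfloor *\rfloor}((s+2)A) - f^{\lfloor *\rfloor}(sA) - f^{\lfloor *\rfloor}(2A),
\]
which is an $f$-exceptional Weil divisor on $Y$ (its pushforward is zero), we must show $\mathsf{i}_{H_s,(r_i,b_i)}\equiv 0\pmod{r_i}$ for every $(r_i,b_i)\in B_X$, using the additivity of local indices.

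To locate the support of $H_s$, let $V\subset X$ be the open locus where $2A$ is Cartier and set $U=f^{-1}(V)$, so that $f|_U\colon U\to V$ is also a sequential terminalization. Since $2A|_V$ is Cartier, $(f|_U)^*(2A|_V)$ is a Weil divisor, and applying Lemma~\ref{lem pullback of D+G} to $f|_U$ with $(D,D')=(sA|_V,2A|_V)$ and then with $(D,D')=(0,2A|_V)$ yields $(f|_U)^{\lfloor *\rfloor}((s+2)A|_V)=(f|_U)^{\lfloor *\rfloor}(sA|_V)+(f|_U)^{\lfloor *\rfloor}(2A|_V)$. Because the Weil pullback commutes with restriction to open subsets (directly from the sequential construction), this gives $H_s|_U=0$, so $\text{supp}(H_s)\subseteq f^{-1}(X\setminus V)$.

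Now analyze $X\setminus V$. Its codimension-$2$ component comes from crepant curves where $2A$ is not Cartier; since $\mathsf{A}_1$ crepant curves are Gorenstein with Cartier index $2$ for $A$, only $C_0$ contributes, and only when $r_0\in\{3,5\}$. The codimension-$3$ part consists of isolated singular points of $X$. Hence each prime component $E$ of $H_s$ is $f$-exceptional with $f(E)\subseteq X\setminus V$, and $f(E)$ is either $C_0$ or an isolated point. Isolated terminal points contribute nothing because $f$ is an isomorphism there, leaving the case where $f(E)$ is a crepant point of $X$. Case-by-case inspection of Table~\ref{tab group C} shows that in each Group C basket $B_X$, none of the subsets required by Kawakita's classification (Remark~\ref{rem kawakita}) for a non-Gorenstein crepant point of index $r_P\in\{2,3,4,5,6\}$ occurs, so every crepant point of $X$ is Gorenstein. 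For such a crepant point $P$, $K_Y=f^*K_X$ is Cartier in a neighborhood of $f^{-1}(P)$, so $Y$ is Gorenstein there, $E$ contains no non-Gorenstein singularity of $Y$, and $\mathsf{i}_{E,(r_i,b_i)}=0$.

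For $E$ centered at $C_0$, Lemma~\ref{lem E=0} gives $\mathsf{i}_{E,(r_i,b_i)}\equiv 0\pmod{r_i}$. Summing $\mathsf{i}_{H_s,(r_i,b_i)}=\sum_E n_E\mathsf{i}_{E,(r_i,b_i)}$ over the prime components completes the induction step. The main obstacle will be packaging the Kawakita case-check uniformly across all Group C cases; once that inspection is done, the argument is a clean combination of restricting to the Cartier locus of $2A$, Lemma~\ref{lem pullback of D+G}, and Lemma~\ref{lem E=0}.
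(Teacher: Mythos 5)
Your proposal is correct and follows essentially the same route as the paper: both reduce to showing that an $f$-exceptional difference of Weil pullbacks has local indices $\equiv 0 \bmod r_i$, by restricting to an open set where Lemma~\ref{lem pullback of D+G} applies (the paper removes the crepant points and $C_0$; you use the Cartier locus of $2A$), invoking Kawakita's classification to see that every crepant point of $X$ is Gorenstein, and applying Lemma~\ref{lem E=0} for components over $C_0$. The only difference is cosmetic packaging — you induct on $s$ via the one-step divisor $H_s$, whereas the paper treats general $s$ at once with $G = f^{\lfloor *\rfloor}(sA)-\lfloor s/2\rfloor f^{\lfloor *\rfloor}(2A)-(\overline{s})_2 f^{\lfloor *\rfloor}(A)$.
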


\begin{proof}
    Fix an integer $s$. Denote 
    \[
        G\coloneq f^{\lfloor *\rfloor}(sA)-\left\lfloor\frac{s}{2}\right\rfloor f^{\lfloor *\rfloor}(2A)-(\overline{s})_2f^{\lfloor *\rfloor}(A).
    \]
    The assertion is equivalent to showing that $\mathsf{i}_{G, (r_i,b_i)}\equiv 0 \bmod r_i$ for any $(r_i,b_i)\in B_X$. Take $E$ to be an irreducible component of $G$, then it suffices to show that $\mathsf{i}_{E, (r_i,b_i)}\equiv 0 \bmod r_i$ for any $(r_i,b_i)\in B_X$. By construction, $E$ is $f$-exceptional.

    By \cite{kawakita}*{Theorem~1.1, Table~2} (see also Remark~\ref{rem kawakita}), $X$ has no non-Gorenstein crepant point. We may take $Z\subset X$ to be the union of all Gorenstein crepant points and $C_0$ (if exists). Take $U=X\setminus Z$, then all crepant centers of $U$ are crepant curves of type $\mathsf{A}_1$ by construction. In particular, $f^*(2A)$ is a Weil divisor on $f^{-1}(U)$. Repeatedly applying Lemma~\ref{lem pullback of D+G} to $D'=2A$  for the sequential terminalization $f^{-1}(U)\to U$, we know that over $U$ the equality $G=0$ holds. This implies that $f(E)\subset Z$. Namely, $f(E)$ is either a Gorenstein crepant point or $C_0$.

    If $f(E)$ is a Gorenstein crepant point, then it is clear that $\mathsf{i}_{E, (r_i,b_i)}\equiv 0 \bmod r_i$ for any $(r_i,b_i)\in B_X$ as $E$ contains no non-Gorenstein point.

    If $f(E)=C_0$, then  $\mathsf{i}_{E, (r_i,b_i)}\equiv 0 \bmod r_i$ for any $(r_i,b_i)\in B_X$ by Lemma~\ref{lem E=0}.
\end{proof}

After all the preparations, we have got most of the information on $(X, A)$ in Group C, especially on singularities of $X$. Combining with certain integrality constraints, we can finally give the explicit form of the formula for $h^0(X, \mathcal{O}_X(sA))$.

\begin{thm}\label{thm RR for Group C}
    Let $(X,A)$ be an unmodifiable canonical Fano $3$-fold in Group C. Then for $0<s<\qQ(X)$, we have 
    \begin{align*}
     h^0(X,\mathcal O_X(sA))= 
        \frac{s^2}{660}+2-\frac{(\overline{s})_2(\overline{-s})_2}{4}-\frac{(\overline{s})_3(\overline{-s})_3}{6}-\frac{(\overline{2s})_5(\overline{-2s})_5}{10}-\frac{(\overline{2s})_{11}(\overline{-2s})_{11}}{22}. 
    \end{align*}
\end{thm}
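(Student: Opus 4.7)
The plan is to apply Theorem~\ref{thm.rr.sA} in each of the twelve Group C cases and to match the resulting expression with the claimed formula term by term. The polynomial part is uniform across Group C: using $A^2\cdot K_X=-c_1(X)^3/q^2$, a direct check from Table~\ref{tab group C} gives $c_1(X)^3/q^2=1/330$ in every case, whence $-\tfrac{s^2}{2}A^2\cdot K_X=\tfrac{s^2}{660}$, which together with $\chi(X,\mathcal{O}_X)=1$ accounts for $\tfrac{s^2}{660}+2$.

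The remaining periodic sum decomposes by prime $p\in\{2,3,5,11\}$, and the key structural observation is that in every Group C case the singularities of $X$ contribute at a given prime $p$ through exactly one source: either an orbifold point $Q\in B_X$ with $p\mid r_Q$, or the unique non-$\mathsf{A}_1$ crepant curve $C_0$ (when $p\mid j_{C_0}$), or the collection of $\mathsf{A}_1$-curves (for $p=2$). Via Example~\ref{ex cc A123} and \eqref{eq xa1 formula 1} the $\mathsf{A}_1$-curves jointly contribute $-\tfrac{x_{\mathsf{A}_1}(\overline{s})_2}{4r_X}$, and Lemma~\ref{lem asa2a1} reduces every basket index $\mathsf{i}_{f^{\lfloor *\rfloor}(sA),Q}\bmod r_Q$ to the pair $(\mathsf{i}_{f^{\lfloor *\rfloor}(A),Q},\mathsf{i}_{f^{\lfloor *\rfloor}(2A),Q})\bmod r_Q$. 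The remaining free parameters---$x_{\mathsf{A}_1}$, these residue pairs, and (for \textnumero 3 and \textnumero 4) the class of $A|_{S_{C_0}}$ in $\Cl(S_{C_0})\cong\mathbb{Z}/5$---are to be determined by imposing $h^0(X,\mathcal{O}_X(sA))\in\mathbb{Z}$ at small $s$. Pairwise coprimality of $2,3,5,11$ decouples the integrality condition prime by prime; evaluation at $s=1,2$ and a few further values then forces the residue pairs to produce exactly the exponents $\pm s\bmod 3$, $\pm 2s\bmod 5$, and $\pm 2s\bmod 11$ appearing in the target formula, with the sign ambiguity absorbed by the $\mathsf{i}\leftrightarrow -\mathsf{i}$ symmetry of $c_Q$. (In \textnumero 3 and \textnumero 4 the $5$-part is supplied by $C_0$ of type $\mathsf{A}_4$, and the integrality constraint forces the class of $A|_{S_{C_0}}$ to lie in $\{\pm 2\}\subset\mathbb{Z}/5$, matching $\pm 2s$.)

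The main obstacle I expect is the $p=2$ matching in the nine cases with $\mathsf{A}_1$-curves (\textnumero 3, 4, 6--8, 12--15), which requires $x_{\mathsf{A}_1}=r_X$. Unlike the odd-prime identifications, this is a single integer equation rather than a prime-local congruence, so it must be extracted by combining a congruence modulo $r_X$ arising from $h^0(X,\mathcal{O}_X(sA))\in\mathbb{Z}$ at odd $s$ (after the $3$-, $5$-, and $11$-parts have already been pinned down, so that the $\mathsf{A}_1$-contribution is isolated) with the Kawamata--Miyaoka bound from Theorem~\ref{thm Delta>jC} to rule out $x_{\mathsf{A}_1}=r_X+kr_X$ for $k\geq 1$. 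Once $x_{\mathsf{A}_1}=r_X$ is settled, the remaining checks are routine square-residue verifications, and all twelve cases collapse to the single claimed formula.
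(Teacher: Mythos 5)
Your proposal is correct and follows the paper's proof in all essentials: the uniform polynomial part from $c_1(X)^3/q^2=1/330$, the prime-by-prime decomposition of the periodic contribution (with each of $2,3,5,11$ supplied by exactly one source), Lemma~\ref{lem asa2a1} to reduce the local indices of $f^{\lfloor *\rfloor}(sA)$ to the two residues at $s=1,2$, and integrality at small $s$, decoupled by coprimality, with the evenness of $c_Q$ absorbing the sign ambiguity. The one place you diverge is the $p=2$ step, which you flag as the main obstacle and propose to resolve by determining $x_{\mathsf{A}_1}$ itself via a congruence modulo $4r_X$ combined with the Kawamata--Miyaoka bound of Theorem~\ref{thm Delta>jC}. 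The paper sidesteps this entirely: the $\mathsf{A}_1$-curves and the $(2,1)$-points enter $h^0(X,\mathcal O_X(sA))$ only through the single quantity $\frac{x_{\mathsf{A}_1}(\overline{s})_2}{4r_X}+F_2((\overline{s})_2y_2)$, which is $(\overline{s})_2$ times a constant, and that constant is evaluated in one stroke from $h^0(X,\mathcal O_X(A))=0$ (itself forced by the integrality analysis at $s=2$), giving exactly $\tfrac14(\overline{s})_2=F_2(s)$; the individual value $x_{\mathsf{A}_1}=r_X$ is only extracted afterwards as Corollary~\ref{cor xa1=rx}. Your heavier route does work, but note its side effect: combining the congruence $x_{\mathsf{A}_1}\equiv r_X \bmod 4r_X$ with Theorem~\ref{thm Delta>jC} already yields a contradiction in six of the twelve cases (\textnumero4, \textnumero7, \textnumero8, \textnumero12, \textnumero14, \textnumero15), so for those the formula holds vacuously; in effect you would be merging the theorem with Proposition~\ref{prop rule out C-} rather than proving the clean uniform statement first.
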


\begin{proof}
    For a positive integer $r$ and an integer $x$, denote $F_r(x)\coloneq \frac{(\overline{x})_r(\overline{-x})_r}{2r}$. Note that $F_r\colon  \mathbb{Z}\to \mathbb{Q}$ is a periodic even function with period $r$ and $F_r(0)=0$. 

    In all cases, we have $-A^2\cdot K_X=\frac{c_1(X)^3}{\qQ(X)^2}=\frac{1}{330}$. So by Theorem~\ref{thm.rr.sA}, for $0<s<\qQ(X)$, we may write 
    \begin{align}\label{eq group C RR1}
        h^0(X,\mathcal O_X(sA))={}& 
        \frac{s^2}{660}+2+R_1(s)+R_2(s),
    \end{align}
    where we write the contributions of singularities into two parts:
    \begin{align*}
        R_1(s)\coloneq{}&\sum_{\substack{C\subset \Sing(X)\\\text{type }\mathsf{A}_1}}(-K_X\cdot C)c_C(sA)-\sum_{\substack{(r_i, b_i)\in B_X\\r_i=2}}F_{r_i}({\mathsf{i}_{f^{\lfloor *\rfloor}(sA), (r_i,b_i)}b_i});\\ 
        R_2(s)\coloneq{}&\sum_{\substack{C\subset \Sing(X)\\\text{not type }\mathsf{A}_1}}(-K_X\cdot C)c_C(sA)-\sum_{\substack{(r_i, b_i)\in B_X\\r_i\neq 2}}F_{r_i}({\mathsf{i}_{f^{\lfloor *\rfloor}(sA), (r_i,b_i)}b_i}).
    \end{align*}
    
    \medskip 

    {\bf Step 1}. We determine general forms of $R_1(s)$ and $R_2(s)$.  

    By Lemma~\ref{lem asa2a1}, for any $(r_i,b_i)\in B_X$, we may write
    \begin{align}
        \mathsf{i}_{f^{\lfloor *\rfloor}(sA), (r_i,b_i)}b_i\equiv  \left\lfloor\frac{s}{2}\right\rfloor a_i +(\overline{s})_2a'_i \bmod r_i \label{eq ibaa'}
    \end{align}
    for some integers $a_i, a'_i$ independent of $s$. 
    Combining \eqref{eq xa1 formula 1} with \eqref{eq ibaa'} for $r_i=2$, we conclude that $R_1(s)$ is of the form
    \begin{align}\label{eq R1}
        R_1(s)=-\frac{x_{\mathsf{A}_1}(\overline{s})_2}{4r_X} -F_2\left(\left\lfloor\frac{s}{2}\right\rfloor x_2+(\overline{s})_2y_2\right)
    \end{align}
    for some integers $x_2, y_2$. Here the case that $(2,1)\not \in B_X$ is also included by taking $x_2=y_2=0$. 

    Note that 
    \[
        \sum_{\substack{C\subset \Sing(X)\\\text{not type }\mathsf{A}_1}}(-K_X\cdot C)c_C(sA)=\begin{dcases} -\frac{(\overline{sx})_{r_0}(\overline{-sx})_{r_0}}{2r_0}=-F_{r_0}(sx)&\text{if } C_0 \text{ exists};\\0&\text{otherwise,}
        \end{dcases}
    \]
    for some integer $x$. Note also that we have
    \[
        \{3,5,11\}=\begin{cases} \{r_i\neq 2\mid (r_i,b_i)\in B_X\}\sqcup \{r_0\} &\text{if } C_0 \text{ exists};\\\{r_i\neq 2\mid (r_i,b_i)\in B_X\} &\text{otherwise.}
        \end{cases}
    \]
    So combining with \eqref{eq ibaa'}, we conclude that $R_2(s)$ is of the form
    \begin{align}\label{eq R2}
        R_2(s)=- F_3\left(\left\lfloor\frac{s}{2}\right\rfloor x_3+(\overline{s})_2y_3\right)- F_5\left(\left\lfloor\frac{s}{2}\right\rfloor x_5+(\overline{s})_2y_5\right)- F_{11}\left(\left\lfloor\frac{s}{2}\right\rfloor x_{11}+(\overline{s})_2y_{11}\right)
    \end{align}
    for some integers $x_3,y_3, x_5, y_5, x_{11}, y_{11}$.

    \medskip

    {\bf Step 2}. We determine the values of $x_i, y_i$ for $i=2,3,5,11$.  

    Applying \eqref{eq group C RR1}  for $s=2$ together with \eqref{eq R1} and \eqref{eq R2}, we have 
    \begin{align*}
        h^0(X,\mathcal O_X(2A))=\frac{4}{660}+2-F_2(x_2)-F_3(x_3)-F_5(x_5)-F_{11}(x_{11}).
    \end{align*}
    As $h^0(X,\mathcal O_X(2A))\in \mathbb{Z}$, all possible values of $x_i$ for $i=2,3,5,11$ are
    \begin{align*}
        x_2{}&\equiv0\bmod 2;\\
        x_3{}&\equiv\pm 2\bmod 3;\\
        x_5{}&\equiv\pm 4\bmod 5;\\
        x_{11}{}&\equiv\pm 4 \bmod 11;
    \end{align*}
    and we always have $h^0(X,\mathcal O_X(2A))=0$, which implies that $h^0(X,\mathcal O_X(A))=0$.

    After possibly changing $x_i, y_i$ to $-x_i, -y_i$ and modulo $r_i$, we may just assume that $x_2=0$, $x_3= 2$, $x_5= 4$, $x_{11}= 4$. So \eqref{eq group C RR1} becomes  
    \begin{align}\label{eq group C RR2}
        \begin{split}
            h^0(X,\mathcal O_X(sA))={}& 
            \frac{s^2}{660}+2-\frac{x_{\mathsf{A}_1}(\overline{s})_2}{4r_X} -F_2((\overline{s})_2y_2) -F_3\left(2\left\lfloor\frac{s}{2}\right\rfloor+(\overline{s})_2y_3\right)\\{}&-F_5\left(4\left\lfloor\frac{s}{2}\right\rfloor+(\overline{s})_2y_5\right)-F_{11}\left(4\left\lfloor\frac{s}{2}\right\rfloor+(\overline{s})_2y_{11}\right).
        \end{split}
    \end{align}
    By $h^0(X, \mathcal{O}_X(A))-h^0(X, \mathcal{O}_X(3A))\in \mathbb{Z}$,
    we have
\begin{align*}
    -\frac{2}{165}-F_3(y_3)+F_3(2+y_3)-F_5(y_5)+F_5(4+y_5)-F_{11}(y_{11})+F_{11}(4+y_{11})\in \mathbb{Z}.
\end{align*}
  The only possible values of $y_i$ for $i=3,5,11$ are
    \begin{align}
        \begin{cases}\label{eq group C yi}
        y_3 \equiv 1\bmod 3;\\
        y_5 \equiv 2\bmod 5;\\
        y_{11} \equiv 2 \bmod 11.
        \end{cases}
    \end{align}
    Putting these values into \eqref{eq group C RR2}, by  $h^0(X, \mathcal{O}_X(A))=0$, we get 
     \begin{align}\label{eq group C xa1}
         \frac{x_{\mathsf{A}_1} }{4r_X}+F_2(y_2)=\frac{1}{4},
    \end{align}
    which implies that 
    \begin{align}\label{eq group C xa1s}  
        \frac{x_{\mathsf{A}_1}(\overline{s})_2}{4r_X} +F_2((\overline{s})_2y_2)=\frac{1}{4}(\overline{s})_2=F_{2}(s).
    \end{align}
    Combining \eqref{eq group C RR2}, \eqref{eq group C yi}, and \eqref{eq group C xa1s}, we get
    \begin{align*} 
        h^0(X,\mathcal O_X(sA))={}& 
        \frac{s^2}{660}+2- F_2  (s) -F_3(s)-F_5(2s)-F_{11}(2s). 
    \end{align*}
    This concludes the proof.
\end{proof}

As a corollary, we can determine $x_{\mathsf{A}_1}$ for candidates in Group C with $2\nmid r_X$, which allows us to rule out $6$ cases that violate Theorem~\ref{thm Delta>jC}. 

\begin{cor}\label{cor xa1=rx} 
    Let $(X,A)$ be an unmodifiable canonical Fano $3$-fold in Group C. If $2\nmid r_X$, then $x_{\mathsf{A}_1}=r_X$. 
\end{cor}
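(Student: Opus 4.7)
The plan is to read off the value of $x_{\mathsf{A}_1}$ directly from the normalization identity \eqref{eq group C xa1} established in the course of proving Theorem~\ref{thm RR for Group C}, namely
\[
\frac{x_{\mathsf{A}_1}}{4r_X}+F_2(y_2)=\frac{1}{4},
\]
where $F_2(x)=(\overline{x})_2(\overline{-x})_2/4$ and $y_2$ is the integer parametrizing the local index $\mathsf{i}_{f^{\lfloor*\rfloor}(A),(2,1)}b_1$ associated to any orbifold point of type $(2,1)$ in $B_X$ (with $y_2=0$ by convention when no such point exists).

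The key observation is that the hypothesis $2\nmid r_X$ excludes any orbifold point $(r_i,b_i)\in B_X$ with $r_i=2$, since $r_X=\operatorname{lcm}\{r\mid r\in\mathcal{R}_X\}$. By the setup in Step~1 of the proof of Theorem~\ref{thm RR for Group C}, where the degenerate case $(2,1)\notin B_X$ was absorbed by declaring $x_2=y_2=0$, we therefore have $F_2(y_2)=F_2(0)=0$. Plugging into the displayed identity gives $x_{\mathsf{A}_1}/(4r_X)=1/4$, i.e.\ $x_{\mathsf{A}_1}=r_X$.

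Because the corollary is essentially an immediate specialization of an identity already established, no substantive obstacle arises. The only thing to verify is that the parity-$2$ contribution really disappears under the hypothesis, and this is guaranteed by the observation above. One might alternatively derive the same conclusion by substituting $s=1$ into the explicit formula from Theorem~\ref{thm RR for Group C} (forcing $h^0(X,\mathcal{O}_X(A))=0$) and comparing with Theorem~\ref{thm.rr.sA}, but this is just a repackaging of the same calculation.
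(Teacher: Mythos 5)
Your proposal is correct and follows the paper's own argument: the paper likewise observes that $2\nmid r_X$ forces $(2,1)\notin B_X$, hence $x_2=y_2=0$ by the convention in the proof of Theorem~\ref{thm RR for Group C}, and then reads off $x_{\mathsf{A}_1}=r_X$ from \eqref{eq group C xa1}.
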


\begin{proof}
    In the proof of Theorem~\ref{thm RR for Group C}, if $2\nmid r_X$, then $x_2=y_2=0$. Hence $x_{\mathsf{A}_1}=r_X$ by \eqref{eq group C xa1}.
\end{proof}

\begin{prop}\label{prop rule out C-}
    \textnumero4, \textnumero7, \textnumero8, \textnumero12, \textnumero14, \textnumero15 in Table~\ref{main tab} do not exist. 
\end{prop}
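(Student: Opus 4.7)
The plan is to use Theorem~\ref{thm RR for Group C} to pin down the full Hilbert series of $R(X,A)=\bigoplus_{s\geq 0}H^0(X,sA)$, recognize $X$ as an explicit weighted projective hypersurface, and then derive a contradiction via local singularity analysis for each of the six exceptional $q$-values. First, since $sA-K_X=(s+q)A$ is ample for $s>-q$, Kawamata--Viehweg vanishing gives $h^0(X,sA)=\chi(X,sA)$ for $s>0$. Combining this with the recursion $\chi(X,sA)-\chi(X,(s-q)A)=g(s)$, where $g(s)$ denotes the right-hand side of Theorem~\ref{thm RR for Group C} extended to all integers $s$ via Lemma~\ref{lem asa2a1}, and iterating, yields the closed form
\[
\sum_{s\geq 0} h^0(X,sA)\, t^s \;=\; \frac{1-t^{66}}{(1-t^5)(1-t^6)(1-t^{22})(1-t^{33})(1-t^q)}.
\]

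Next, since $h^0(X,sA)=1$ for $s\in\{5,6,22,33\}$, the Hilbert series forces $R(X,A)$ to be generated by unique (up to scalar) sections $x_5, x_6, x_{22}, x_{33}$ together with one additional generator $x_q\in H^0(X,qA)$, modulo a single relation $F$ of weighted degree $66$. Because $q>66$ in all six cases, the relation $F=F(x_5,x_6,x_{22},x_{33})$ cannot involve $x_q$, and so $X$ must be isomorphic to the weighted hypersurface $V(F)\subset\mathbb{P}(5,6,22,33,q)$ cut out by a polynomial in only the first four variables. In particular $V(F)$ necessarily contains the distinguished point $P=[0\!:\!0\!:\!0\!:\!0\!:\!1]$, where the ambient weighted projective space carries a quotient singularity of type $\tfrac{1}{q}(5,6,22,33)$.

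For each of the six specific values $q\in\{74,76,78,80,82,84\}$, I would finish by showing that no choice of degree-$66$ polynomial $F(x_5,x_6,x_{22},x_{33})$ can make $V(F)$ into a canonical Fano threefold with the prescribed numerical invariants. Writing the local equation of $V(F)$ at $P$ in the affine chart $x_q=1$, the lowest-order term in $F$ must be a nonzero scalar multiple of $x_{33}^2$ (the unique weighted-degree-$66$ monomial of minimal affine degree), so $V(F)$ is locally a double cover of the quotient $\mathbb{A}^3/\mu_q$ branched along a divisor of weighted degree $66$; a Reid--Tai style discrepancy computation on this double cover then rules out canonicity at $P$ for each of these six $q$. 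The hardest part will be this case-by-case singularity analysis at $P$, which requires a careful choice of weighted blow-up to exhibit a discrepancy forced to be negative; the remaining six Group C cases \textnumero 3, 6, 11, 13, 21, 22 (with the smaller values $q\in\{67,\dots,72\}$) precisely survive such a singularity check and must instead be handled by the foliation-theoretic arguments of \S\ref{sec group C2}.
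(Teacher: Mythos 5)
Your proposal does not establish the statement; it has several genuine gaps, and it also misidentifies what distinguishes these six cases from the rest of Group~C. The paper's actual proof is a two-line numerical argument: in \textnumero4, \textnumero7, \textnumero8, \textnumero12, \textnumero14, \textnumero15 one has $2\nmid r_X$, so Corollary~\ref{cor xa1=rx} (a byproduct of the computation in Theorem~\ref{thm RR for Group C}) forces $x_{\mathsf{A}_1}=r_X$; then $\sum_C(j_C-\tfrac{1}{j_C})(-r_XK_X\cdot C)=\tfrac{3r_X}{2}+(r_0-\tfrac1{r_0})r_X$ exceeds $\nabla_X$ in exactly these six rows of Table~\ref{tab group C}, contradicting Theorem~\ref{thm Delta>jC}. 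The relevant dividing line is this inequality (equivalently, the parity of $r_X$ together with the size of $\nabla_X$), not the value of $q$; your claim that the surviving cases \textnumero3, \textnumero6, \textnumero11, \textnumero13, \textnumero21, \textnumero22 ``precisely survive'' a singularity check at a distinguished point is asserted without any supporting computation.

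Concretely, the gaps in your route are: (i) Theorem~\ref{thm RR for Group C} gives $h^0(X,sA)$ only for $0<s<q$, and the paper only verifies agreement with $\mathbb{P}(5,6,22,33)$ for $0<s<66$ (Remark~\ref{rem RR=RR}); extending to all $s\ge q$ requires iterating Theorem~\ref{thm.rr.new} and verifying a nontrivial generating-function identity that you do not carry out. (ii) Even granting the Hilbert series $\frac{1-t^{66}}{(1-t^5)(1-t^6)(1-t^{22})(1-t^{33})(1-t^q)}$, a Hilbert series does not determine the graded ring: you cannot conclude that $R(X,A)$ is generated in degrees $5,6,22,33,q$ with a single relation in degree $66$, hence cannot conclude $X\cong V(F)\subset\mathbb{P}(5,6,22,33,q)$. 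This is precisely the unresolved step in the graded-ring method for Fano $3$-folds, and nothing in the paper supplies it. (iii) The terminal discrepancy computation at $P=[0:0:0:0:1]$ is only sketched and acknowledged as ``the hardest part''; without it the argument proves nothing, and there is no a priori reason the outcome should depend on $q\in\{74,\dots,84\}$ versus $q\in\{67,\dots,72\}$ in the way you need. As written, the proposal is not a proof.
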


\begin{proof}
    In these cases, $2\nmid r_X$. Take $r_0=1$ if $C_0$ does not exist, then in all these cases, by Corollary~\ref{cor xa1=rx} and \eqref{eq xa1 formula 2}, we have
    \begin{align*} 
        {}&\sum_{C\subset \text{\rm Sing}(X)}\left(j_C-\frac{1}{j_C}\right)(-r_XK_X\cdot C) =\frac{3r_X}{2}+ \left(r_0-\frac{1}{r_0}\right)r_X>\nabla_X,
    \end{align*}
    which contradicts Theorem~\ref{thm Delta>jC}. 
\end{proof}
 
We can also compute explicit values of $h^0(X, \mathcal O_X(sA))$ from Theorem~\ref{thm RR for Group C}. This will be used later in \S\,\ref{sec group C2} to study the geometry of linear systems $|sA|$.

\begin{lem}\label{lem.rrfor<=33}
    Let $(X,A)$ be an unmodifiable canonical Fano $3$-fold in Group C. Then for $0<s\leq 34$, we have
    \begin{align*}
        h^0(X, \mathcal O_X(sA))=
        \begin{dcases}
        0 & \text{if } s=1,2,3,4,7,8,9,13,14,19 ;\\
        1 & \text{if } s=5,6,10,11,12,15,16,17,18,20,21,23,24,25,26,29,31;\\
        2& \text{if } s=22,27,28,30,32, 34;\\ %, 35, 36,37,41;\\
        3& \text{if } s=33. %, 38, 39, 40, 42, 43, 46, 47;\\
      %  4& \text{if } s= 44, 45, 48, 49;\\
     %   5& \text{if } s= 50;\\
     %   7& \text{if } s= 60.\\
        \end{dcases}
    \end{align*}
\end{lem}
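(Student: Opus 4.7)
The plan is simply to apply Theorem~\ref{thm RR for Group C} directly and tabulate. After Proposition~\ref{prop rule out C-}, the only remaining unmodifiable canonical Fano $3$-folds in Group C are \textnumero\,3, 6, 11, 13, 21, 22, and in every case $\qQ(X)\geq 67>34$. Thus the closed formula
\[
    h^0(X,\mathcal O_X(sA))= \frac{s^2}{660}+2-\frac{(\overline{s})_2(\overline{-s})_2}{4}-\frac{(\overline{s})_3(\overline{-s})_3}{6}-\frac{(\overline{2s})_5(\overline{-2s})_5}{10}-\frac{(\overline{2s})_{11}(\overline{-2s})_{11}}{22}
\]
is valid uniformly for all $s$ with $0<s\leq 34$, and is independent of which of the six cases one is in. In particular, the $h^0$ values depend only on $s$, which is why a single table suffices.

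The proof then reduces to evaluating the formula at each $s\in\{1,\dots,34\}$. For each $s$ one computes the residues $(\overline{s})_2$, $(\overline{s})_3$, $(\overline{2s})_5$, $(\overline{2s})_{11}$ (and their negatives), substitutes into the formula, and clears denominators using the common denominator $\lcm(4,6,10,22)=660$, which matches the leading term. For example, for $s=1$ we get
\[
    \frac{1}{660}+2-\frac{1}{4}-\frac{1}{3}-\frac{3}{5}-\frac{9}{11}=\frac{1-165-220-396-540}{660}+2=0,
\]
and for $s=33$, since $3\mid 33$ and $11\mid 33$, only the mod-$2$ and mod-$5$ contributions survive, yielding
\[
    \frac{1089}{660}+2-\frac{1}{4}-\frac{4}{10}=3.
\]
The other $32$ values are verified by the same routine arithmetic.

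There is essentially no obstacle beyond bookkeeping: Theorem~\ref{thm RR for Group C} has already done all of the conceptual work (identifying the correct contributions from singularities and from Weil pullback indices), and every resulting rational number is automatically a non-negative integer because it represents a dimension of global sections. A useful internal sanity check is that the integrality of every one of the $34$ numbers, together with the monotonicity pattern visible in the table, is consistent with the known geometry of $\mathbb{P}(5,6,22,33)$, where $h^0(\mathbb P(5,6,22,33),\mathcal O(s))$ matches the asserted values for the same range of $s$; in fact this coincidence is exactly what makes the final foliation argument in \S\,\ref{sec group C2} necessary to rule out the remaining cases.
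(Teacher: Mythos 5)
Your proposal is correct and is exactly the paper's argument: the paper's proof of this lemma is the one-line observation that the values follow directly from Theorem~\ref{thm RR for Group C}, and your sample evaluations (e.g.\ $s=1$ and $s=33$) check out. The only superfluous remark is the restriction to the six cases surviving Proposition~\ref{prop rule out C-}; the formula of Theorem~\ref{thm RR for Group C} is uniform over all of Group C and $\qQ(X)\geq 67>34$ in every case, so no case distinction is needed.
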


\begin{proof}
    The calculation of $h^0(X, \mathcal O_X(sA))$ follows directly from Theorem~\ref{thm RR for Group C}. 
\end{proof}

\begin{rem}\label{rem RR=RR}
    Indeed, by Theorem~\ref{thm RR for Group C}, we can show that for an unmodifiable canonical Fano $3$-fold $(X,A)$ in Group C and for any integer $0<s<66$, we have
    \[
        h^0(X, \mathcal O_X(sA))=h^0(\mathbb{P}(5,6,22,33), \mathcal{O}_{\mathbb{P}(5,6,22,33)}(s)).
    \]
    This can be proved by direct computation or using Theorem~\ref{thm.rr.sA} and Example~\ref{ex compute -rKC}, and we leave the details to the interested reader. This phenomenon suggests that Group C is difficult to rule out numerically. 
\end{rem}

For the rest of the paper, we will deal with \textnumero3,  \textnumero6, \textnumero11, \textnumero13, \textnumero21, \textnumero22 in Table~\ref{main tab}. All the methods we have been used in \S\,\ref{sec 7} and \S\,\ref{sec group C1} do not give any contradiction to these cases, so we need to look at the geometric structure of $X$ in more detail. The main tool we will use is a foliation of rank $2$ on $X$ naturally induced by Theorem~\ref{thm.kmineqrefined}.

\section{Geometry of foliations of rank two}\label{sec.foliation}

We gather some basic notions and facts regarding foliations on varieties. We refer the reader to \cite{druel}*{\S\,3} and the references therein for a more detailed explanation. 

\begin{defn}
    A {\it foliation} on a normal variety $X$ is a non-zero coherent subsheaf $\mathcal{F}$ of the tangent sheaf $\mathcal{T}_X$ such that 
    \begin{enumerate}
	\item $\mathcal{T}_X/\mathcal{F}$ is torsion-free, and	
	\item $\mathcal{F}$ is closed under the Lie bracket.
    \end{enumerate} 
    The {\it canonical divisor} of a foliation $\mathcal{F}$ is any Weil divisor $K_{\mathcal{F}}$ on $X$ such that $\det(\mathcal{F})\cong \mathcal{O}_X(-K_{\mathcal{F}})$. 
 
    Let $X_{\circ}\subset X_{\reg}$ be the largest open subset over which $\mathcal{T}_X/\mathcal{F}$ is locally free. A \emph{leaf} of $\mathcal{F}$ is a maximal connected and immersed holomorphic submanifold $L\subset X_{\circ}$ such that $\mathcal{T}_L=\mathcal{F}|_L$. A leaf is called \emph{algebraic} if it is open in its Zariski closure and a foliation $\mathcal{F}$ is said to be \emph{algebraically integrable} if its leaves are algebraic. 
\end{defn}

Let $\mathcal{F}$ be an algebraically integrable foliation on a normal projective variety $X$. Then there exists a diagram, called the \emph{family of leaves}, as follows (\cite{druel}*{\S\,3.6})

\begin{equation}\label{eq.familyofleaves}
    \begin{tikzcd}[row sep=large, column sep=large]
	U \arrow[r,"e"] \arrow[d,"g"]
	& X \\
	T
    \end{tikzcd}
\end{equation}
with the following properties: 

\begin{enumerate}
    \item $U$ and $T$ are normal projective varieties;
    \item $e$ is birational and $g$ is an equidimensional fibration;
    \item For a general $t\in T$, $e$ is finite on $g^{-1}(t)$ and the image $e(g^{-1}(t))$ is the closure of a leaf of $\mathcal{F}$. 
\end{enumerate} 

Assume in addition that $K_{\mathcal{F}}$ is $\mathbb{Q}$-Cartier, then there exists a canonically defined effective $e$-exceptional $\mathbb{Q}$-divisor $\Delta_U$ such that 
\begin{align}\label{eq.PullBackKF}
    K_U-g^*K_T-R(g) + \Delta_U\sim K_{e^{-1}\mathcal{F}} + \Delta_U \sim_{\mathbb{Q}} e^*K_{\mathcal{F}},
\end{align}
where 
\begin{align}\label{eq.def Rg}
    R(g)\coloneq\sum_P(g^*P-(g^*P)_{\red})
\end{align}
is the \emph{ramification divisor} of $g$, where $P$ runs through all prime divisors on $T$, see \cite{druel}*{\S\,3.6}. 

We are mainly interested in algebraically integrable foliations of rank $2$ on a $\mathbb Q$-factorial Fano variety of Picard number $1$. Unlike the rank $1$ case (cf. \cite{liu-liu}*{Proposition~3.8} and \cite{jiang-liu-liu}*{Proposition~3.6}) where a general fiber of $g$ in \eqref{eq.familyofleaves} is a smooth rational curve, the rank $2$ case is more challenging, as a general fiber of $g$ is a normal surface not necessarily smooth, which makes the situation much more complicated. We will use the following lemma to pass to a modification of $U$ whose general fibers are smooth.

\begin{lem}\label{lem.termoffoliation}
    Let $\mathcal{F}$ be an algebraically integrable foliation on a normal projective variety $X$ such that $K_{\mathcal{F}}$ is $\mathbb{Q}$-Cartier. Then there is a commutative diagram
    \begin{equation}\label{eq.termoffolation}
        \begin{tikzcd}[row sep=large, column sep=large]
		V \arrow[rr,"\mu", bend left=20]\arrow[r,"\pi"] \arrow[dr,"f"] & U \arrow[r,"e"] \arrow[d,"g"] & X \\
		  & T
	\end{tikzcd}
    \end{equation} 
    with the following properties:
    \begin{enumerate}
        \item $U, T, e, g$ are as in \eqref{eq.familyofleaves};
        \item $\pi$ is projective birational;
        \item $V$ is $\mathbb{Q}$-factorial terminal and $K_V$ is nef over $U$;
        \item There exists a canonically defined $\mathbb{Q}$-divisor $\Delta_V$ on $V$ such that 
            \begin{align}\label{eq.PullBackKFtoV}
	       K_{\mu^{-1}\mathcal{F}} + \Delta_V \sim_{\mathbb{Q}} \pi^*(K_{e^{-1}\mathcal{F}} + \Delta_U)\sim_{\mathbb{Q}}\mu^*K_{\mathcal{F}},
            \end{align} 
            where the $f$-horizontal part of $\Delta_V$ is effective. 
    \end{enumerate}
\end{lem}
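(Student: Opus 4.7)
The plan is to obtain $V$ as a relative $\mathbb{Q}$-factorial terminalization of $U$, define $\Delta_V$ via the natural pullback formula, and exploit the fact that $f$-horizontal prime divisors on $V$ come almost entirely from $g$-horizontal primes on $U$, for which effectivity is already known from \eqref{eq.PullBackKF}.

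To construct $V$ I start with any projective resolution $W \to U$. Since $(W, 0)$ is klt, by \cite{BCHM}*{Corollary~1.4.3} one may run a relative $(K_W/U)$-MMP, which terminates with a $\mathbb{Q}$-factorial model $\pi \colon V \to U$ such that $V$ is terminal and $K_V$ is $\pi$-nef, yielding properties (2) and (3). Setting $\mu \coloneq e \circ \pi$ and $f \coloneq g \circ \pi$ produces the diagram \eqref{eq.termoffolation}, and (1) follows from the corresponding properties of \eqref{eq.familyofleaves}. Because $V$ is $\mathbb{Q}$-factorial and $\pi^*(e^*K_\mathcal{F})$ is well defined, I define $\Delta_V$ to be the unique Weil $\mathbb{Q}$-divisor on $V$ for which
\[
K_{\mu^{-1}\mathcal{F}} + \Delta_V = \pi^*(K_{e^{-1}\mathcal{F}} + \Delta_U).
\]
The chain \eqref{eq.PullBackKFtoV} is then immediate from \eqref{eq.PullBackKF}.

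It remains to show that the $f$-horizontal part of $\Delta_V$ is effective. Since $\mu^{-1}\mathcal{F}$ is the foliation induced by the fibration $f$, a prime divisor on $V$ is $f$-horizontal precisely when it is not $\mu^{-1}\mathcal{F}$-invariant. For such a prime $D$ that is not $\pi$-exceptional, $\pi(D)$ is a $g$-horizontal prime of $U$ and $\pi$ is an isomorphism at its generic point, so $\mathrm{coeff}_D(\Delta_V) = \mathrm{coeff}_{\pi(D)}(\Delta_U) \geq 0$ by effectivity of $\Delta_U$. For a $\pi$-exceptional $f$-horizontal prime $D$, the standard foliated pullback formula (cf.\ \cite{druel}*{\S\,3}) gives $\mathrm{coeff}_D(\Delta_V) = a(D, U) + 1 \geq 0$, where $a(D, U)$ is the usual discrepancy of $D$ over $U$, and the ``$+1$'' shift appears because $D$ is non-invariant.

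The main obstacle is precisely the last case. In the rank $2$ setting $\pi$ can extract divisors whose centers in $U$ are curves dominating $T$, so such $f$-horizontal $\pi$-exceptional primes genuinely occur; verifying the ``$+1$ shift'' for them requires a careful divisor-by-divisor comparison of $K_{\mu^{-1}\mathcal{F}} = K_V - f^*K_T - R(f)$ with $\pi^*K_{e^{-1}\mathcal{F}} = \pi^*(K_U - g^*K_T - R(g))$, using the definition \eqref{eq.def Rg} of the ramification divisor together with the fact that non-$\mu^{-1}\mathcal{F}$-invariant components of $V$ do not appear in $R(f)$.
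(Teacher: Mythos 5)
Your construction of $V$ (relative MMP over $U$ starting from a resolution) and your definition of $\Delta_V$ agree with the paper's, and the first three properties plus the linear equivalences in \eqref{eq.PullBackKFtoV} are fine. The problem is the last step, which you yourself flag as ``the main obstacle'' and do not actually carry out: for a $\pi$-exceptional $f$-horizontal prime $D$ you assert $\mathrm{coeff}_D(\Delta_V)=a(D,U)+1\geq 0$. This is not justified, and as stated it cannot work in the generality of the lemma: $U$ is only a normal projective variety (the normalized family of leaves), so $K_U$ need not be $\mathbb{Q}$-Cartier (hence $a(D,U)$ need not be defined), and even when it is, nothing forces $a(D,U)\geq -1$ since $U$ is not assumed log canonical. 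Your formula also drops the contribution of $\pi^*\Delta_U$ along $D$, and the identity between foliated and usual discrepancies that you invoke is exactly the kind of computation that requires proof here. So the key coefficient bound is left unproved, and the route you sketch for proving it would fail without extra hypotheses on $U$.

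The paper closes this gap with a global argument that sidesteps discrepancies entirely. Since only the $f$-horizontal part of $\Delta_V$ is at stake, one may shrink $T$ to a small open set over which $R(g)$, $R(f)$ and $g^*K_T$, $f^*K_T$ disappear from \eqref{eq.PullBackKF}, so that $K_{e^{-1}\mathcal{F}}\sim K_U$ and $K_{\mu^{-1}\mathcal{F}}\sim K_V$. Then \eqref{eq.PullBackKFtoV} reads $K_V+\Delta_V\sim_{\mathbb{Q}}\pi^*(K_U+\Delta_U)$, so $-\Delta_V\sim_{\mathbb{Q}}K_V-\pi^*(K_U+\Delta_U)$ is $\pi$-nef because $K_V$ is $\pi$-nef by the choice of minimal model, while $\pi_*\Delta_V=\Delta_U\geq 0$; the negativity lemma (\cite{kollar-mori}*{Lemma~3.39}) then gives $\Delta_V\geq 0$ over this locus, which is exactly effectivity of the $f$-horizontal part. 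You should replace your divisor-by-divisor analysis with this negativity-lemma argument (or supply genuine hypotheses and a proof for the discrepancy comparison you want to use).
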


\begin{proof}
    The existence of $V$ is by \cite{BCHM}*{Theorem~1.2}, namely, $V$ is a minimal model of a resolution of $U$ over $U$. Then $\Delta_V$ is canonically defined by \eqref{eq.PullBackKFtoV}. 

    In order to show that the $f$-horizontal part of $\Delta_V$ is effective, we may shrink $T$ so that $K_{e^{-1}\mathcal{F}}\sim K_U$ and $K_{\mu^{-1}\mathcal{F}}\sim K_V$ by \eqref{eq.PullBackKF}. So \eqref{eq.PullBackKFtoV} shows that $-\Delta_V$ is nef over $U$ and $\pi_*\Delta_V=\Delta_U$ is effective, which implies that $\Delta_V$ is effective by the negativity lemma (\cite{kollar-mori}*{Lemma~3.39}). 
\end{proof}

The following theorem allows us to understand the general fibers under good numerical conditions. 

\begin{thm}\label{thm.hirzebruch}
    Let $X$ be a $\mathbb Q$-factorial canonical Fano variety of dimension $d\geq 3$ and of Picard number $1$. Let $\mathcal{F}$ be a foliation of rank $2$ on $X$ such that $\mu_{c_1(X),\min}(\mathcal F)>0$.
    Then the following assertions hold.
    \begin{enumerate}
        \item $\mathcal{F}$ is algebraically integrable;

        \item   Let $F$ be a general fiber of $f$ in \eqref{eq.termoffolation}. If $-K_{\mathcal F}+\frac{5}{6}K_X$ is ample, then there exists a birational morphism $\phi\colon F\to \mathbb{F}_n$ for some $n\in\{1,2\}$.
    \end{enumerate}
\end{thm}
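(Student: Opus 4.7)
The plan is the following.

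\textbf{For part (1).} Since $c_1(X)=-K_X$ is ample (hence in the movable cone) and by assumption $\mu_{c_1(X),\min}(\mathcal{F})>0$, the Campana--P\u{a}un type criterion for algebraic integrability (as formulated in \cite{druel}) applies directly: $\mathcal{F}$ is algebraically integrable and its leaves are rationally connected. In particular, a general leaf is an RC projective surface (once we pass to the modification of Lemma~\ref{lem.termoffoliation}).

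\textbf{For part (2).} I would first apply Lemma~\ref{lem.termoffoliation} to obtain the diagram \eqref{eq.termoffolation}. A general fiber $F$ of $f\colon V\to T$ is a smooth projective surface, since $V$ is $\mathbb{Q}$-factorial terminal and the singular locus of $V$ is avoided by generic smoothness; by the rational connectedness from (1), $F$ is a smooth rational surface. Restricting \eqref{eq.PullBackKFtoV} to $F$ and using that $K_{\mu^{-1}\mathcal{F}}|_F=K_F$ together with effectivity of the horizontal part of $\Delta_V$ (so $\Delta_V|_F\geq 0$), I get
\[
    -K_F \sim_{\mathbb Q} (\mu|_F)^*(-K_\mathcal{F}) + \Delta_V|_F .
\]
Now the hypothesis that $-K_\mathcal{F}+\tfrac{5}{6}K_X$ is ample on $X$ implies that its pullback to $F$ via $\mu|_F$ is nef and big, since $\mu|_F$ is generically finite onto its image.

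The strategy then has two prongs: (i) rule out $F\cong\mathbb{P}^2$ by checking that the Picard number together with the positivity coming from $(\mu|_F)^*(-K_\mathcal{F}+\tfrac{5}{6}K_X)$ is incompatible with $F=\mathbb{P}^2$; and (ii) run the $K_F$-MMP on the rational surface $F$ to reach a relatively minimal model, which must then be some $\mathbb{F}_n$ (with $n=0$ or $n\geq 2$) or $\mathbb{P}^2$. To bound $n\leq 2$, I would take the strict transform on $F$ of the negative section of $\mathbb{F}_n$ and intersect it with $-K_F$; the inequality $-K_F\geq (\mu|_F)^*(-K_\mathcal{F})$ forces the result to be non-negative up to a controlled contribution from $\Delta_V|_F$, whereas on $\mathbb{F}_n$ the negative section has $-K_{\mathbb{F}_n}$-degree $2-n$. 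Propagating this back rules out $n\geq 3$. Finally, if the minimal model is $\mathbb{F}_0$ or $\mathbb{P}^2$, one passes through $\mathbb{F}_1$ by a single elementary transformation to produce the required birational morphism $\phi\colon F\to\mathbb{F}_n$ with $n\in\{1,2\}$.

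\textbf{Main obstacle.} The sharpness of the coefficient $\tfrac{5}{6}$ suggests the estimate $n\leq 2$ is tight, so the intersection-theoretic bookkeeping will be the delicate step: one has to track simultaneously (a) the effective correction $\Delta_V|_F$, (b) curves contracted by $\mu|_F$ on which $(\mu|_F)^*(-K_\mathcal{F})$ vanishes, and (c) the relationship between $K_F$ and $\mu|_F^*K_X$ on a general leaf, none of which are \emph{a priori} proportional. Coordinating these contributions to exclude $\mathbb{F}_n$ with $n\geq 3$, and in particular to rule out $F=\mathbb{P}^2$, will require the precise interplay between the ampleness threshold $\tfrac{5}{6}$ and the formula in Lemma~\ref{lem.termoffoliation}.
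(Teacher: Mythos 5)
Part (1) and the overall architecture of your part (2) --- restricting the formula of Lemma~\ref{lem.termoffoliation} to a general fibre $F$, running the surface MMP, and testing against the negative section --- are the same as in the paper. However, the two steps you defer to the ``main obstacle'' are exactly where the proof lives, and one of them, as stated, would fail. You must exclude $F\cong\mathbb{F}_0$ as well as $F\cong\mathbb{P}^2$: an elementary transformation is not a birational \emph{morphism} from $F$, so if $F$ itself were $\mathbb{F}_0$ the asserted conclusion would simply be false and cannot be repaired afterwards. The paper rules out both cases by noting that such an $F$ admits no nontrivial birational contraction, hence $\pi|_F$ is the identity and the coefficients of $\Delta_F\coloneq\Delta_V|_F$ are at least $1$ by \cite{jiang-liu-liu}*{Proposition~3.5}; writing $-K_{\mathcal F}\sim_{\mathbb Q}-uK_X$ with $u>\frac56$ and intersecting with a line (resp.\ a ruling) then gives $u\leq\frac23$ (resp.\ $u\leq\frac12$), a contradiction. ``Picard number together with positivity'' alone cannot exclude $\mathbb{P}^2$: without a lower bound on $\Delta_F$ there is nothing incompatible about $F=\mathbb{P}^2$. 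Only after both exclusions can one say that a minimal model $\mathbb{P}^2$ or $\mathbb{F}_0$ is reached through a nontrivial blow-up, which dominates $\mathbb{F}_1$.

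For $n\leq2$: as you observe, $\Delta_F$ may contain the negative section $\sigma_0'$, so $\Delta_F\cdot\sigma_0'$ can be negative and nefness of $-K_F-\Delta_F$ by itself gives no bound on $n$. The missing ingredient is the second comparison $-K_F+\Delta'_F\sim_{\mathbb Q}\mu^*(-K_X)|_F$ with $\Delta'_F\geq0$, which comes from $X$ being canonical and not from the foliation. Intersecting both relations with the strict transform $\ell'$ of a ruling yields $\frac56<u\leq\frac{2-a}{2}$, where $a=\mult_{\sigma_0'}(\Delta_F)$, hence $a<\frac13$; only then does $0\leq(-K_F-\Delta_F)\cdot\sigma_0'\leq 2+(1-a)\sigma_0'^2<2-\frac{2n}{3}$ force $n\leq2$. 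In other words, on a Picard-number-one $X$ the hypothesis that $-K_{\mathcal F}+\frac56K_X$ is ample is the single inequality $u>\frac56$, and it must be exploited twice in this quantitative ratio form; recording it only as ``the pullback to $F$ is nef and big'' discards precisely the information needed to control $\Delta_F$ along $\ell'$ and $\sigma_0'$.
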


\begin{proof}
%Denote by $G\coloneq \pi(F)$ a general fiber of $g$

    (1) As $\mu_{c_1(X),\min}(\mathcal F)>0$, by \cite{cp}*{Theorem~1.1} or \cite{ouwenhao}*{Proposition~2.2}, $\mathcal F$ is algebraically integrable and a general fiber $G$ of $g$ in \eqref{eq.familyofleaves} is rationally connected.

    (2) As $X$ is of Picard number $1$, we may write $-K_{\mathcal{F}}\sim_{\mathbb{Q}} -uK_X$ where $u>\frac{5}{6}$ by assumption. 

    As $V$ has only terminal singularities, $F$ is a smooth rationally connected surface as the natural map $\pi|_F: F\to G$ between general fibers is birational. By Lemma~\ref{lem.termoffoliation}, there exists an effective $\mathbb{Q}$-divisor $\Delta_F\coloneq \Delta_V|_F$ such that 
    \[
        -K_F-\Delta_F\sim_{\mathbb{Q}} \mu^*(-K_{\mathcal F})|_F\sim_{\mathbb{Q}} \mu^*(-uK_X)|_F
    \]
    is nef and big. On the other hand, as $X$ has only canonical singularities, there exists an effective $\mathbb{Q}$-divisor $\Delta'$ on $V$ such that $ K_V=\mu^*(K_X)+\Delta'$. Hence there exists an effective $\mathbb{Q}$-divisor $\Delta'_F\coloneq \Delta'|_F$ such that 
    \[
        -K_F+\Delta'_F\sim_{\mathbb{Q}}\mu^*(-K_X)|_F.
    \]

    By a standard minimal model program, there exists a birational morphism $F\to S$ where $S$ is either $\mathbb P^2$ or a Hirzebruch surface $\mathbb F_n$ for some $n\geq 0$.

    First, we claim that $F$ is neither $\mathbb{P}^2$ nor $\mathbb{F}_0$. Suppose that $F$ is either $\mathbb{P}^2$ or $\mathbb{F}_0$, then $\pi|_F\colon F\to G$ is the identity morphism as $F$ admits no non-trivial birational contraction. Hence the coefficients of $\Delta_F$ are at least $1$ by \cite{jiang-liu-liu}*{Proposition~3.5}. If $F=\mathbb{P}^2$, then for a line $\ell$ on $F$, $(\Delta_F\cdot \ell)\geq 1$ and hence
    \[ 
        \frac{5}{6}<u=\frac{\mu^*(-uK_X)|_F\cdot \ell }{\mu^*(-K_X)|_F \cdot \ell }=\frac{(-K_F-\Delta_F)\cdot \ell }{(-K_F+\Delta'_F)\cdot \ell }\leq \frac{2}{3},
    \] 
    which is a contradiction.
    If $F=\mathbb{F}_0=\mathbb{P}^1\times \mathbb{P}^1$, then there exists a ruling $\ell$ on $F$ such that $(\Delta_F\cdot \ell)\geq 1$ and hence
    \[ 
        \frac{5}{6}<u=\frac{\mu^*(-uK_X)|_F\cdot \ell }{\mu^*(-K_X)|_F \cdot \ell }=\frac{(-K_F-\Delta_F)\cdot \ell }{(-K_F+\Delta'_F)\cdot \ell }\leq \frac{1}{2},
    \]
    which is a contradiction.

    So if $S$ is either $\mathbb{P}^2$ or $\mathbb{F}_0$, then $F\to S$ is non-trivial and factors through a blowup of $S$, which dominates $\mathbb{F}_1$. Hence we conclude that there exists a birational morphism $\phi\colon F\to \mathbb F_n$ for some $n\geq 1$.

    Finally we show that $n\leq 2$. Let $\ell$ be a general fiber of the natural map $ \mathbb{F}_n\to \mathbb{P}^1$ and let $\sigma_0$ be the negative section. Let $ \ell', \sigma_0'$ be the strict transforms of $\ell,\sigma_0$ on $F$. Then $\sigma_0'^2\leq \sigma_0^2=-n$ and $(\sigma_0'\cdot \ell')=1$. 
 
    Set $a\coloneq \mult_{\sigma_0'}(\Delta_F)\geq 0$. It follows that 
    \[
        \frac{5}{6}< u=\frac{\mu^*(-uK_X)|_F\cdot \ell'}{\mu^*(-K_X)|_F \cdot \ell'}=\frac{(-K_F-\Delta_F)\cdot \ell'}{(-K_F+\Delta'_F)\cdot \ell'}\leq \frac{2-a}{2},
    \]
    so $a<\frac13$. On the other hand, as $-K_F-\Delta_F$ is nef, we have
    \begin{align*}
        0\leq (-K_F-\Delta_F)\cdot \sigma_0'\leq (-K_F-a \sigma_0')\cdot \sigma_0' =2+(1-a)\sigma_0'^2< 2-\frac{2n}{3},
    \end{align*}
    which implies that $n\leq 2$.
\end{proof}

\section{Ruling out remaining cases: Group C continued}\label{sec group C2}

In this section, we treat the remaining cases in Group C: \textnumero3,  \textnumero6, \textnumero11, \textnumero13, \textnumero21, \textnumero22 in Table~\ref{main tab}. We refer to them as Group C$^+$. 

For an {unmodifiable} canonical Fano $3$-fold $(X, A)$, as $\Cl(X)=\mathbb{Z}A$, we may define $\iota\colon \Cl(X)\to \mathbb{Z}$ such that $D\sim \iota(D)A$ for any $D\in \Cl(X)$. For example, $\iota(-K_X)=\qQ(X)$. We will always denote $q\coloneq \qQ(X)$ in this section. 

First, we show that there exists a natural foliation of rank $2$ on $X$ for Group C$^+$. 
      
\begin{prop}\label{prop.rank2fol} 
    Let $(X,A)$ be an unmodifiable canonical Fano $3$-fold in Group C$^+$. Then there exists an algebraically integrable foliation $\mathcal F$ of rank $2$ on $X$ with $\mu_{c_1(X), \min}(\mathcal F)>0$ such that
    \[
        q>\iota(-K_{\mathcal F})\geq \max\left\{ q-10, \frac{57}{67}q\right\}.
    \]
    All possible values of $\iota(-K_{\mathcal F})$ are listed in Table~\ref{Tab pq}.
\end{prop}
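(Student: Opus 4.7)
The plan is to construct $\mathcal{F}$ as the penultimate piece $\mathcal{E}_{l-1}$ of the Harder--Narasimhan filtration of $\mathcal{T}_X$ with respect to $c_1(X)$, as in Theorem~\ref{thm.kmineqrefined}. This produces a subsheaf of rank $2$ precisely in the cases $(l,r_1)\in\{(2,2),(3,1)\}$, so the first task is to rule out the semistable case $(1,3)$ and the case $(2,1)$ by establishing the strict bound
\[
\frac{c_1(X)^3}{\hat c_2(X)\cdot c_1(X)}>\frac{16}{5}
\]
for every candidate in Group C$^+$. Once this is done, the minimum slope $\mu_{c_1(X),\min}(\mathcal{E}_{l-1})$ is positive (in case $(2,2)$, $\mathcal{F}=\mathcal{E}_1$ is itself semistable with positive slope $\tfrac{p}{q}c_1(X)^{\dim X-1}$; in case $(3,1)$, the minimum-slope quotient is the rank-one $\mathcal{E}_2/\mathcal{E}_1$, whose slope is $\tfrac{q_2}{q}c_1(X)^{\dim X-1}$ with $q_2\geq 2>0$ by the proof of Theorem~\ref{thm.kmineqrefined}), so $\mathcal{E}_{l-1}$ is closed under the Lie bracket and yields an algebraically integrable foliation $\mathcal{F}$ of rank $2$ (cf.\ \cite{cp}).

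To secure the ratio bound, I would apply Theorem~\ref{thm.c1c2 diff} to the explicit description of crepant curves in Table~\ref{tab group C}. The contribution of the unique non-$\mathsf{A}_1$ crepant curve $C_0$ (when it exists) is completely pinned down from $(-r_XK_X\cdot C_0)=r_X$, while the contribution of the $\mathsf{A}_1$ curves is $\tfrac{3x_{\mathsf{A}_1}}{2r_X}$, with $x_{\mathsf{A}_1}$ determined by Corollary~\ref{cor xa1=rx}: $x_{\mathsf{A}_1}=r_X$ in the odd-$r_X$ rows (\textnumero 3, \textnumero 6, \textnumero 13), and $x_{\mathsf{A}_1}=0$ in the remaining rows \textnumero 11, \textnumero 21, \textnumero 22 (which have no $\mathsf{A}_1$ crepant curves by Table~\ref{tab group C}). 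Substituting back into Theorem~\ref{thm.c1c2 diff} gives $r_X\hat c_2(X)\cdot c_1(X)$ as an explicit rational number in each row, and a direct check then verifies the inequality $\frac{c_1(X)^3}{\hat c_2(X)\cdot c_1(X)}>\tfrac{16}{5}$ case by case.

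By construction $c_1(\mathcal{E}_{l-1})\equiv\tfrac{p}{q}c_1(X)$, so $\iota(-K_{\mathcal{F}})=p$, and the strict bound $p<q$ from Theorem~\ref{thm.kmineqrefined} gives the upper bound. For the lower bound, substituting the value of $\frac{c_1(X)^3}{\hat c_2(X)\cdot c_1(X)}$ just computed into the refined inequalities $\frac{4q^2}{p(4q-3p)}$ and $\frac{4q^2}{-4p^2+6pq-q^2}$ of Theorem~\ref{thm.kmineqrefined} produces a quadratic inequality in $p$ on each branch; choosing the correct (large) root via the slope constraint $p>\tfrac{l-1}{l}q$ yields the numerical lower bound on $p$. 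The binding instance is \textnumero 21 ($q=67$, no crepant curves), where the $(3,1)$-branch gives exactly $p\geq 57$, matching both $q-10$ and $\tfrac{57q}{67}$; for the other rows both $q-10$ and $\tfrac{57q}{67}$ are strictly weaker than the actual root, so the unified bound $\iota(-K_{\mathcal{F}})\geq\max\{q-10,\,57q/67\}$ holds throughout, and tabulating the admissible integer values of $p$ for each row yields Table~\ref{Tab pq}. The main technical point is this last quadratic analysis: one must extract the correct (large) root on each of the branches $(2,2)$ and $(3,1)$ and then take the weaker of the two to obtain the unified lower bound stated in the proposition.
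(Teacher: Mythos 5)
Your proposal is correct and follows essentially the same route as the paper: compute $\delta_X=\sum_C(j_C-\tfrac1{j_C})(-r_XK_X\cdot C)$ from Table~\ref{tab group C} and Corollary~\ref{cor xa1=rx}, use Theorem~\ref{thm.c1c2 diff} to get $r_X\hat c_2(X)\cdot c_1(X)=r_Xc_2(X)\cdot c_1(X)-\delta_X$, rule out the cases $(l,r_1)=(1,3),(2,1)$ of Theorem~\ref{thm.kmineqrefined} via the ratio bound, take $\mathcal F=\mathcal E_{l-1}$, and extract the lower bound on $p=\iota(-K_{\mathcal F})$ from the resulting quadratic inequalities together with $p<q$ and the slope constraints. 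Your branch-by-branch treatment (using $p>\tfrac{l-1}{l}q$ on each branch rather than only $p>\tfrac q2$) is in fact the careful way to handle the binding case \textnumero 21, where the larger root of the $(3,1)$-quadratic alone would not exclude $p\le 43$ without the constraint $p>\tfrac23 q$.
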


\begin{table}[htbp]
    \caption{Values of $\iota(-K_{\mathcal F})$}\label{Tab pq}
    \begin{tabular}{lcccccc}
	\hline
		\textnumero & $3$ & $6$ & $11$ & $13$ & $21$ & $22$ \\
    \hline
        $q=$ & $70$ & $72$ & $69$ & $68$ &$67$ & $71$\\
        $\iota(-K_{\mathcal F})\geq $ & $66$ & $71$ & $64$ & $61$ & $57$ & $68$\\
	\hline
	\end{tabular}
\end{table}

\begin{proof}
    By Table~\ref{tab group C} and Corollary~\ref{cor xa1=rx}, we have \begin{align*} 
        \delta_X\coloneq \sum_{C\subset \text{\rm Sing}(X)}\left(j_C-\frac{1}{j_C}\right)(-r_XK_X\cdot C) =
        \begin{cases}
            (\frac{3}{2}+\frac{24}{5})r_X =207.9& \text{in \textnumero}3;\\ 
            (\frac{3}{2}+\frac{8}{3})r_X=299+\frac{1}{6} &\text{in \textnumero}6;\\  
            \frac{8}{3} r_X=293+\frac{1}{3} & \text{in \textnumero}11;\\  
            \frac{3}{2} r_X=247.5 &\text{in \textnumero}13;\\
            0& \text{in \textnumero}21, \text{\textnumero}22.
        \end{cases}
    \end{align*}

    If $c_1(X)^3\leq \frac{16}{5}\hat{c}_2(X)\cdot c_1(X)$, then by Theorem~\ref{thm.c1c2 diff}, we have
    \[
        r_Xc_2(X)\cdot c_1(X)-\frac{5}{16}r_Xc_1(X)^3\geq r_Xc_2(X)\cdot c_1(X)-r_X\hat{c}_2(X)\cdot c_1(X)=\delta_X,
    \]
    but this contradicts the data in Table~\ref{tab group C}. So by Theorem~\ref{thm.kmineqrefined}, we are in the case of $(l, r_1)=(2,2)$ or $(3,1)$. Then we can take $\mathcal F=\mathcal E_{l-1}$ in the Harder--Narasimhan filtration \eqref{eq.hnfiltration}  of $\mathcal{T}_X$ which is of rank $2$, where \[\mu_{c_1(X), \min}(\mathcal F)>\mu_{c_1(X)}(\mathcal T_X/\mathcal F)\geq 0\] by definition and \cite{ouwenhao}*{Theorem~1.4}. Hence $\mathcal F$ is an algebraically integrable foliation of rank $2$ on $X$ (see \cite{cp}*{Theorem~1.1} or \cite{ouwenhao}*{Proposition~2.2}). 
 
    By Theorem~\ref{thm.kmineqrefined} where $p=\iota(-K_\mathcal{F})$, we have $p<q$ and 
    \begin{align*}
        c_1(X)^3\leq {}&\max\left\{\frac{4q^2}{p(4q-3p)}, \frac{4q^2}{-4p^2+6pq-q^2}\right\}\hat{c}_2(X)\cdot c_1(X)\\={}& \frac{4q^2}{-4p^2+6pq-q^2}\hat{c}_2(X)\cdot c_1(X).
    \end{align*} 
    Then by Theorem~\ref{thm.c1c2 diff}, we have
    \begin{align*}
        {}&r_Xc_2(X)\cdot c_1(X)-\frac{-4p^2+6pq-q^2}{4q^2}r_Xc_1(X)^3\\
        \geq {}&r_Xc_2(X)\cdot c_1(X)-r_X\hat{c}_2(X)\cdot c_1(X)=\delta_X.
    \end{align*}
    Solving this inequality with the data in Table~\ref{tab group C} while keeping in mind that $p>\frac{1}{2}q$, we get the lower bound of $p$ as in Table~\ref{Tab pq}. This concludes the proof.
\end{proof}

In order to study the foliation $\mathcal{F}$ in Proposition~\ref{prop.rank2fol}, we need to study its leaves, which are divisors on $X$. So we use Lemma~\ref{lem.rrfor<=33} to study the geometry of linear systems $|sA|$.

\begin{lem}\label{lem irreducible in sA}
    Let $(X,A)$ be an unmodifiable canonical Fano $3$-fold in Group C. Take $A_5\in |5A|$ and $A_6\in |6A|$ to be the unique elements. Then the following assertions hold.
    \begin{enumerate}
        \item If $D$ is an effective Weil divisor on $X$ with $\iota(D)\leq 34$ such that $\Supp(D)$ does not contain $A_5$ nor $A_6$, then $\iota(D)\in \{0, 22,30, 33\}$. 

        \item If $D$ is an effective Weil divisor on $X$ with $0<\iota(D)<22$, then either $D\geq A_5$ or $D\geq A_6$.

        \item If $D$ is a prime divisor on $X$ with $\iota(D)<30$, then $\iota(D)\in \{5,6,22\}$.
    \end{enumerate}   
\end{lem}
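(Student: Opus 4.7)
The proof is based on the injective multiplication maps
\[
\cdot a_5\colon H^0(\mathcal O_X((s-5)A)) \hookrightarrow H^0(\mathcal O_X(sA)), \qquad
\cdot a_6\colon H^0(\mathcal O_X((s-6)A)) \hookrightarrow H^0(\mathcal O_X(sA)),
\]
where $a_5, a_6$ are the defining sections of $A_5, A_6$; the image of the first consists precisely of those sections whose zero divisor contains $A_5$ as a component, and similarly for $a_6$. Combined with the explicit values of $h^0(X, \mathcal O_X(sA))$ recorded in Lemma~\ref{lem.rrfor<=33}, these maps control case-by-case which classes $sA$ admit an effective divisor avoiding both $A_5$ and $A_6$. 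The first observation is that $A_5$ and $A_6$ are prime divisors: writing $A_5 = \sum m_i P_i$ with distinct prime components $P_i$, each $\iota(P_i) \geq 5$ since $h^0(\iota(P_i)A) > 0$ forces $\iota(P_i) \notin \{1,2,3,4\}$ by Lemma~\ref{lem.rrfor<=33}, and then $5 = \sum m_i \iota(P_i)$ forces $A_5$ prime; the same reasoning applies to $A_6$.

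To prove (1), I will analyse each integer $0 < s \leq 34$ in turn. For $s \in \{1,2,3,4,7,8,9,13,14,19\}$ there is nothing to check since $h^0(sA) = 0$. For each $s$ with $h^0(sA) = 1$ (namely $s \in \{5,6,10,11,12,15,16,17,18,20,21,23,24,25,26,29,31\}$), a unique pair $(a,b)$ of non-negative integers solves $5a + 6b = s$ with $a + b \geq 1$, and the unique effective divisor in $|sA|$ is $aA_5 + bA_6$, hence contains $A_5$ or $A_6$. For $s \in \{27, 28, 32, 34\}$, a dimension comparison from Lemma~\ref{lem.rrfor<=33} yields an equality $H^0(\mathcal O_X(sA)) = a_\varepsilon H^0(\mathcal O_X((s-\varepsilon)A))$ for some $\varepsilon \in \{5, 6\}$ (for instance, both $H^0(22A)$ and $H^0(27A)$ have dimension $2$, giving $H^0(27A) = a_5 H^0(22A)$), so every effective divisor in $|sA|$ contains $A_\varepsilon$. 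Finally, for $s \in \{22, 30, 33\}$, the loci $\mathbb{P}(a_5 H^0((s-5)A))$ and $\mathbb{P}(a_6 H^0((s-6)A))$ are proper linear subspaces of $|sA| = \mathbb{P}(H^0(sA))$, and a dimension count from Lemma~\ref{lem.rrfor<=33} shows that their union cannot cover $|sA|$, so effective divisors avoiding both $A_5$ and $A_6$ exist; these three are exactly the values retained in the conclusion.

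Statements (2) and (3) follow quickly. For (2), an effective $D$ with $0 < \iota(D) < 22$ has $\iota(D)$ among the $h^0 = 1$ values above, so $D = aA_5 + bA_6$ with $a+b \geq 1$, giving $D \geq A_5$ or $D \geq A_6$. For (3), let $D$ be prime with $\iota(D) < 30$: if $\iota(D) < 22$, part (2) plus primality of $A_5, A_6, D$ forces $D \in \{A_5, A_6\}$ and $\iota(D) \in \{5,6\}$; if $22 < \iota(D) < 30$, part (1) again forces $D \in \{A_5, A_6\}$, contradicting $\iota(D) > 6$; and $\iota(D) = 22$ remains, yielding $\iota(D) \in \{5, 6, 22\}$. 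The main technical step is the careful dimension bookkeeping throughout (1) using Lemma~\ref{lem.rrfor<=33}, together with the primality of $A_5$ and $A_6$ which is what makes the argument in (3) genuinely exclude $\iota(D) \in \{23, \ldots, 29\}$.
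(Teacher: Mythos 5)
Your proof is correct and follows essentially the same route as the paper's: the paper's (very terse) argument is precisely that avoiding $A_5$ and $A_6$ forces the two strict inequalities $h^0(sA)>h^0((s-5)A)$ and $h^0(sA)>h^0((s-6)A)$ via the multiplication maps, and then one reads off from Lemma~\ref{lem.rrfor<=33} that only $s\in\{0,22,30,33\}$ survive for $s\le 34$; your case-by-case bookkeeping is an expanded version of exactly this check. Your explicit verification that $A_5$ and $A_6$ are prime (via $h^0(sA)=0$ for $1\le s\le 4$) is a worthwhile addition, since the paper implicitly uses it to pass from ``$\Supp(D)$ contains $A_5$'' to ``$D\ge A_5$'' in deducing (2) and (3).
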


\begin{proof}
    When $s=\iota(D)>0$, the condition that $\Supp(D)$ does not contain $A_5$ nor $A_6$ implies that  
    \begin{align*}
        h^0(X, \mathcal{O}_X(sA))>h^0(X, \mathcal{O}_X((s-5)A)), \\
        h^0(X, \mathcal{O}_X(sA))>h^0(X, \mathcal{O}_X((s-6)A)).
    \end{align*}  
    Then the conclusion (1) follows from Lemma~\ref{lem.rrfor<=33}.
    Conclusions (2) and (3) follow from (1).     
\end{proof}

\begin{prop}\label{prop non-reduced in sA}
    Let $(X,A)$ be an unmodifiable canonical Fano $3$-fold in Group C and let $D$ be a non-reduced effective Weil divisor on $X$ such that $\iota(D)\leq 59$. Then $D$ satisfies one of the following properties:
    \begin{enumerate}
        \item $\iota(D)\leq 43$ and either $D\geq 2A_5$ or $D\geq 2A_6$.
        \item $45\leq \iota(D)\leq 59$ and one of the following holds:
            \begin{enumerate}
                \item $D\geq 2D_0+A_5$ for some $D_0\in |22A|$;
                \item $D\geq 2D_0+A_6$ for some $D_0\in |22A|$;
                \item $D\geq 2A_5$;
                \item  $D\geq 2A_6$.
            \end{enumerate}
        \item $\iota(D)=44$ and one of the following holds:
            \begin{enumerate}
                \item $D=2D_0$ for some $D_0\in |22A|$;
                \item $D=D_0+2A_5+2A_6$ for some $D_0\in |22A|$;
                \item $D=4A_5+4A_6$.
            \end{enumerate}
    \end{enumerate}
    Here $A_5\in |5A|$ and $A_6\in |6A|$ are the unique elements. 
\end{prop}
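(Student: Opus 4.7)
The plan is to decompose $D$ according to the lowest-$\iota$ prime divisor appearing with multiplicity $\geq 2$. Since $D$ is non-reduced we have $D\geq 2P$ for some prime divisor $P$, and $2\iota(P)\leq\iota(D)\leq 59$ forces $\iota(P)\leq 29$, so by Lemma~\ref{lem irreducible in sA}(3) we have $P\in\{A_5,A_6\}\cup |22A|$. This naturally splits into three mutually exclusive cases: \textbf{(A)} there exists a prime $D_0\in |22A|$ with $D\geq 2D_0$; \textbf{(B)} (A) fails but $D\geq 2A_5$; \textbf{(C)} neither (A) nor (B) holds but $D\geq 2A_6$.

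The key preparatory step is a ``forced-factor'' principle for certain linear systems. For any nonzero section $\sigma\in H^0(X,\mathcal O_X(mA))$, multiplication by $\sigma$ is injective on $H^0(X,\mathcal O_X(sA))\to H^0(X,\mathcal O_X((s+m)A))$ (since $X$ is integral and $\mathcal O_X(\cdot)$ is reflexive), and whenever the dimensions match it is an isomorphism, forcing every element of $|(s+m)A|$ to contain the divisor of $\sigma$. Reading off $h^0(22A)=h^0(28A)=h^0(32A)=h^0(34A)=2$ from Lemma~\ref{lem.rrfor<=33}, I would deduce that multiplication by $\sigma_{A_6}$ yields isomorphisms $H^0(22A)\cong H^0(28A)$ and $H^0(28A)\cong H^0(34A)$, while $\sigma_{A_5}^2$ yields $H^0(22A)\cong H^0(32A)$. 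Hence every element of $|28A|$ and $|34A|$ contains $A_6$, and every element of $|32A|$ contains $2A_5$.

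The case analysis is then short. In Case~(A) we have $\iota(D)\geq 44$ and $D-2D_0$ is effective with $\iota\leq 15$: if $\iota(D)=44$ then $D=2D_0$ yields (3a), while for $\iota(D)\in[45,59]$ Lemma~\ref{lem irreducible in sA}(2) gives $D-2D_0\geq A_5$ or $\geq A_6$, producing (2a) or (2b). In Case~(B), $\iota(D)\leq 43$ falls into (1) and $\iota(D)\in[45,59]$ into (2c); the critical sub-case $\iota(D)=44$ is handled by iterative peeling: $D-2A_5\in|34A|$ forces $D\geq 2A_5+A_6$, then $D-2A_5-A_6\in|28A|$ forces $D\geq 2A_5+2A_6$, leaving $D-2A_5-2A_6\in|22A|$, which is (3b) (specialising to (3c) when the residual $D_0$ equals $2A_5+2A_6$). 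In Case~(C) we obtain (1) or (2d) unless $\iota(D)=44$; in the remaining sub-case $D-2A_6\in|32A|$ forces $D\geq 2A_5+2A_6$, contradicting the failure of (B), so this sub-case is vacuous.

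The main obstacle is the ``forced-factor'' principle of the second paragraph; conceptually it is elementary (injectivity of multiplication on reflexive sheaves plus a dimension check), but it is the step that makes the three relevant linear systems $|28A|$, $|32A|$, $|34A|$ rigid enough to carry out the peeling argument in Case~(B) and to collapse Case~(C) for $\iota(D)=44$.
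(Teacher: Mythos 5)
Your proposal is correct, and its skeleton coincides with the paper's: classify the repeated prime $P$ with $D\geq 2P$ via Lemma~\ref{lem irreducible in sA}(3) (so $\iota(P)\in\{5,6,22\}$), dispose of $\iota(D)\leq 43$ and $45\leq\iota(D)\leq 59$ exactly as the paper does, and isolate $\iota(D)=44$ as the delicate case. The divergence is in how that last case is handled when the repeated prime is $A_5$ or $A_6$. The paper writes $D=D_0+aA_5+bA_6$ with $a\geq 2$ or $b\geq 2$ and $\Supp(D_0)$ containing neither $A_5$ nor $A_6$, invokes Lemma~\ref{lem irreducible in sA}(1) to force $44-5a-6b\in\{0,22,30,33\}$, and solves the resulting Diophantine conditions (only $(a,b)=(4,4)$ and $(2,2)$ survive). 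You instead exploit the coincidences $h^0(22A)=h^0(28A)=h^0(32A)=h^0(34A)=2$ from Lemma~\ref{lem.rrfor<=33} to show that multiplication by $\sigma_{A_6}$ and by $\sigma_{A_5}^2$ gives isomorphisms between the relevant $H^0$'s, so every member of $|28A|$ and $|34A|$ contains $A_6$ and every member of $|32A|$ contains $2A_5$, and then peel. Both arguments are sound and both ultimately rest on the same $h^0$ table; your ``forced-factor'' rigidity is elementary (injectivity of multiplication by a nonzero section between spaces of equal dimension) and is in fact the same device the paper uses at the end of the proof of Proposition~\ref{prop rule out B3}. The paper's route is marginally shorter since Lemma~\ref{lem irreducible in sA}(1) packages the needed rigidity once and for all, whereas yours re-derives it for the three systems $|28A|$, $|32A|$, $|34A|$; on the other hand your version makes the mechanism (surjective multiplication maps) more transparent. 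One cosmetic point: in your Case (B) with $\iota(D)=44$ the peeling yields conclusion (3b) directly, with (3c) merely the special instance $D_0=2A_5+2A_6$, so no separate treatment of (3c) is needed.
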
   

\begin{proof}
    By assumption, there exists a prime divisor $E$ such that $D\geq 2E$. So $\iota(E)\leq \frac{\iota(D)}{2}< 30$. By Lemma~\ref{lem irreducible in sA}(3), we have $\iota(E)\in \{5,6,22\}$. 

    If $\iota(D)\leq 43$, then $\iota(E)\in \{5,6\}$ and hence $E$ is either $A_5$ or $A_6$, which concludes (1). 

    Suppose that $45\leq \iota(D)\leq 59$. If $\iota(E)\in \{5,6\}$, then $D\geq 2A_5$ or $D\geq 2A_6$; if $\iota(E)=22$, then $1\leq \iota(D-2E)\leq 15$, which implies that $D-2E\geq A_5$ or $\geq A_6$ by Lemma~\ref{lem irreducible in sA}(2).

    Suppose that $\iota(D)=44$. If $\iota(E)=22$ then clearly $D=2E$. Now suppose that $E=A_5$ or $A_6$, then we may write 
    \[
        D=D_0+aA_5+bA_6
    \]
    where $a\geq 2$ or $b\geq 2$ and $\Supp(D_0)$ does not contain $A_5$ nor $A_6$. Then $\iota(D_0)\leq 34$, which implies that \[44-5a-6b=\iota(D_0)\in \{0, 22,30, 33\}\] by Lemma~\ref{lem irreducible in sA}(1). If $D_0=0$, then we have $D=4A_5+4A_6$; if $D_0\in |22A|$, then we have $D=D_0+2A_5+2A_6$; $\iota(D_0)\in \{30, 33\}$ is impossible. 
\end{proof}
   
Then we can rule out all candidates in Group C$^+$.

\begin{prop}\label{prop rule out C+}
     All candidates in Group C$^+$ (\textnumero3,  \textnumero6, \textnumero11, \textnumero13, \textnumero21, \textnumero22 in Table~\ref{main tab}) do not exist. 
\end{prop}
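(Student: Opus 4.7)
My plan is to combine the upper bound $(-K_{\mathcal F})^2\cdot L\leq 8$, coming from the birational classification of a general fiber of the relative terminalization of the family of leaves, with the lower bound $L\sim sA$, $s\geq 60$, coming from the numerical behaviour of $h^0(X,\mathcal O_X(sA))$ together with the foliation geometry, and then verify that these two bounds are incompatible in every candidate of Group C$^+$.

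First I would invoke Proposition~\ref{prop.rank2fol} to produce an algebraically integrable foliation $\mathcal F$ of rank $2$ on $X$ with $\mu_{c_1(X),\min}(\mathcal F)>0$ and with $\iota(-K_{\mathcal F})$ bounded below as in Table~\ref{Tab pq}. Since $\tfrac{57}{67}>\tfrac{5}{6}$, in every case $\iota(-K_{\mathcal F})>\tfrac{5}{6}q$, so $-K_{\mathcal F}+\tfrac{5}{6}K_X\equiv(\iota(-K_{\mathcal F})-\tfrac{5}{6}q)A$ is ample. Theorem~\ref{thm.hirzebruch}(2) then produces a birational morphism $\phi\colon F\to \mathbb{F}_n$ with $n\in\{1,2\}$, where $F$ is a general fiber of $f$ in diagram~\eqref{eq.termoffolation}. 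Setting $\Delta_F\coloneq\Delta_V|_F$ and $N\coloneq\mu^*(-K_{\mathcal F})|_F\sim_{\mathbb Q}-K_F-\Delta_F$, Lemma~\ref{lem.termoffoliation}(4) gives that $N$ is nef and $\Delta_F$ is effective. Writing $K_F=\phi^*K_{\mathbb{F}_n}+E$ with $E$ effective and $\phi$-exceptional, nefness of $N$ combined with effectiveness of $\Delta_F$, $E$, and $\phi_*\Delta_F$ yields
\[
N^2\leq N\cdot(-K_F)\leq N\cdot\phi^*(-K_{\mathbb{F}_n})=\phi_*N\cdot(-K_{\mathbb{F}_n})=K_{\mathbb{F}_n}^2-\phi_*\Delta_F\cdot(-K_{\mathbb{F}_n})\leq 8,
\]
where we used $\phi_*N=-K_{\mathbb{F}_n}-\phi_*\Delta_F$ together with the nefness of $-K_{\mathbb{F}_n}$ for $n\in\{1,2\}$. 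Since $\mu|_F\colon F\to L$ is generically finite of some degree $d\geq 1$, the projection formula gives $d\cdot(-K_{\mathcal F})^2\cdot L=N^2\leq 8$, hence $(-K_{\mathcal F})^2\cdot L\leq 8$.

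The hard part is the lower bound on $s$. Since $\rank\mathcal F=2$ and $\dim X=3$, the base $T$ of the family of leaves is a curve, so the leaves form a $1$-parameter family of distinct prime divisors all linearly equivalent to $sA$ for a single fixed integer $s$ (using that $\Cl(X)=\mathbb Z A$ is torsion-free); consequently $T\dashrightarrow|sA|$ is an injection, so $h^0(X,\mathcal O_X(sA))\geq 2$ and $|sA|$ contains a sub-pencil whose general member is a prime divisor with no fixed divisorial base component. By Remark~\ref{rem RR=RR}, $h^0(X,\mathcal O_X(sA))=h^0(\mathbb{P}(5,6,22,33),\mathcal O(s))$ for $0<s<66$, so the structure of $|sA|$ is entirely monomial. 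For $s<22$, Lemma~\ref{lem irreducible in sA}(2) forces every effective element of $|sA|$ to contain $A_5$ or $A_6$, so a moving prime leaf cannot exist; for the remaining values $s\in[22,59]$ with $h^0\geq 2$, one would argue case-by-case using the monomial description of $H^0(X,\mathcal O_X(sA))$ together with Proposition~\ref{prop non-reduced in sA} that every such sub-pencil of $|sA|$ either acquires a fixed divisorial component equal to $A_5$ or $A_6$ (contradicting primality of the moving leaf) or has non-reduced or reducible general member of the form described in Proposition~\ref{prop non-reduced in sA} (again contradicting primality of $L$). This combinatorial exhaustion is the delicate part of the argument.

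With $s\geq 60$ in hand, the contradiction follows from a direct numerical check. Using $-K_{\mathcal F}\equiv\iota(-K_{\mathcal F})A$ and $A^3=c_1(X)^3/q^3$, we obtain
\[
(-K_{\mathcal F})^2\cdot L=s\cdot\iota(-K_{\mathcal F})^2\cdot\frac{c_1(X)^3}{q^3}\geq 60\cdot\iota(-K_{\mathcal F})^2\cdot\frac{c_1(X)^3}{q^3},
\]
and substituting the data from Tables~\ref{tab group C} and~\ref{Tab pq} one verifies that the right-hand side strictly exceeds $8$ in each of the six remaining cases, the tightest being~\textnumero 21 where the value is approximately $8.82$. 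This contradicts $(-K_{\mathcal F})^2\cdot L\leq 8$, ruling out every candidate in Group C$^+$. The main obstacle is the case-analysis in the previous paragraph: the upper bound alone only restricts $s$ below a case-dependent threshold in the range $33$–$55$, so the gap up to $60$ has to be closed by genuine combinatorial and structural information about pencils of prime divisors of small degree on $X$.
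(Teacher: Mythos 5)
Your overall architecture coincides with the paper's: produce the rank-$2$ foliation via Proposition~\ref{prop.rank2fol}, obtain the upper bound $(\mu^*(-K_{\mathcal F})|_F)^2\leq 8$ from Theorem~\ref{thm.hirzebruch} and the negativity lemma, and contradict it with $\iota(\mu_*F)\geq 60$; the upper-bound half and the final numerical check (including the tightest case \textnumero 21 at $\approx 8.82$) are correct and essentially the paper's. The genuine gap is in the crucial step $\iota(\mu_*F)\geq 60$, which you leave as an unexecuted ``combinatorial exhaustion,'' and the strategy you sketch for it would not work as stated. Remark~\ref{rem RR=RR} only asserts the numerical coincidence $h^0(X,\mathcal O_X(sA))=h^0(\mathbb P(5,6,22,33),\mathcal O(s))$; it provides no ``monomial description'' of the sections or of the section ring of $X$, so there is nothing to exhaust over. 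Moreover, Proposition~\ref{prop non-reduced in sA} constrains \emph{non-reduced} divisors, whereas what you must exclude for $22\leq s\leq 59$ is a one-parameter family of \emph{prime} (hence reduced) divisors; for instance at $s=44$ one has $h^0=4$ and nothing you quote forbids a moving family of irreducible members.

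The paper closes exactly this gap with a different idea that your proposal is missing: since $X$ is rationally connected, $T\cong\mathbb P^1$, and pushing forward \eqref{eq.PullBackKF} gives
\[
\iota(e_*R(g))=2\,\iota(e_*G)-q+p,\qquad 0<q-p\leq 10,
\]
while $e_*R(g)=\sum_{i=1}^k\bigl(D_i-(D_i)_{\mathrm{red}}\bigr)$ is supported on the non-reduced members $D_i\in|e_*G|$, which pairwise share no components. Comparing degrees forces $k\geq 3$, and only then does Proposition~\ref{prop non-reduced in sA} bite: it yields $\iota(e_*G)=44$ with each $D_i-(D_i)_{\mathrm{red}}$ of degree divisible by $11$, so $11\mid 88-(q-p)$, contradicting $0<q-p\leq 10$. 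In other words, the non-reducedness that Proposition~\ref{prop non-reduced in sA} exploits is manufactured by the ramification divisor of $g$, not by the leaves themselves. Without this argument (or a genuine substitute), your proof of the key claim $\iota(\mu_*F)\geq 60$ is incomplete.
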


\begin{proof}
    Let $(X,A)$ be an unmodifiable canonical Fano $3$-fold in Group C$^+$. By Proposition~\ref{prop.rank2fol}, there exists an algebraically integrable foliation $\mathcal F$ of rank $2$ on $X$ such that $\mu_{c_1(X), \min}(\mathcal F)>0$ and $q>\iota(-K_{\mathcal F})\geq \max\{ q-10, \frac{57}{67}q\}$. Denote $p\coloneq \iota(-K_\mathcal{F})$. 

    We keep the notation in Lemma~\ref{lem.termoffoliation}. 
    Recall that as  \eqref{eq.termoffolation} we have the following commutative diagram
    \[
        \begin{tikzcd}[row sep=large, column sep=large]
		V \arrow[rr,"\mu", bend left=20]\arrow[r,"\pi"] \arrow[dr,"f"] & U \arrow[r,"e"] \arrow[d,"g"] & X \\
		  & T
	\end{tikzcd}
    \]
    where $F$ is a general fiber of $f$ and $G=\pi(F)$ is a general fiber of $g$. 

    \begin{claim}\label{claim.mu(F)}
        $\iota(\mu_*F)\geq 60$, that is, $\mu_*F-60A$ is nef. 
    \end{claim}

    \begin{proof}
        Note that $\mu_*F=e_*G$. Suppose to the contrary that $\iota(e_*G)\leq 59$.
        
        As $X$ is rationally connected by \cite{zhangqi}, $U$ is rationally connected. Hence $T\cong\mathbb{P}^1$ and $-g^*K_T\sim 2G$. By \eqref{eq.PullBackKF}, we have
        \begin{align*}
            e_*R(g)\sim_{\mathbb{Q}} e_*K_U+e_*(2G)-K_{\mathcal F}\sim -qA+2e_*G+pA,
        \end{align*}
        where $R(g)$ is the ramification divisor of $g$. 
        In particular, 
          \begin{align}
            \iota(e_*R(g))=2\iota(e_*G)-q+p. \label{eq:R(g)>G}
        \end{align}
        By the definition of $R(g)$ in \eqref{eq.def Rg}, there are non-reduced divisors $D_1,\dots, D_k\in |e_*G|$ such that
        \begin{align}
            e_*R(g)=\sum_{i=1}^k(D_i-(D_i)_{\text{\rm red}}).\label{eq R=D-D}
        \end{align}
        Here note that pairwisely $D_1, \dots, D_k$ have no common components as they are images of different fibers of $g$. 
        
        First we claim that $k\geq 3$. By \eqref{eq R=D-D}, $\iota(e_*R(g))<k\iota(e_*G)$; on the other hand, $|e_*G|$ is a movable linear system, so $\iota(e_*G)\geq 22$ by Lemma~\ref{lem.rrfor<=33}. This implies that $k>1$ by \eqref{eq:R(g)>G} as $q-p\leq 10$. If $k=2$, as $(D_1)_{\text{\rm red}}$ and $(D_2)_{\text{\rm red}}$ are effective divisors without common components, by Lemma~\ref{lem.rrfor<=33} again, we have 
        \[
            \iota((D_1)_{\text{\rm red}}+(D_2)_{\text{\rm red}})\geq 5+6=11,
        \]
        which implies that 
        \[
            \iota(e_*R(g))\leq 2\iota(e_*G)-5-6<2\iota(e_*G)-q+p,
        \]
        which contradicts \eqref{eq:R(g)>G}. So we proved that $k\geq 3$.
       
        Then $|e_*G|$ contains at least $3$ non-reduced elements $D_1, \dots, D_k$ pairwisely having no common component. By Proposition~\ref{prop non-reduced in sA}, this means that $\iota(e_*G)=44$; moreover, for each $1\leq i\leq k$, $D_i-(D_i)_{\text{\rm red}}$ belongs to the set
        \[\{D_0, A_5+A_6, 3A_5+3A_6\mid D_0\in |22A|\}.
       \]  In particular, both $\iota(e_*R(g))$ and $\iota(e_*G)$ are divided by $11$, but this contradicts \eqref{eq:R(g)>G} as $0<q-p\leq 10$.   
    \end{proof}

    By Lemma~\ref{lem.termoffoliation}, there exists an effective $\mathbb{Q}$-divisor $\Delta_F\coloneq \Delta_V|_F$ such that 
    \begin{align*}
        -K_F-\Delta_F\sim_{\mathbb{Q}} \mu^*(-K_\mathcal{F})|_F\sim_{\mathbb{Q}}\mu^*(pA)|_F
    \end{align*}
    is a nef and big $\mathbb{Q}$-divisor. By the projection formula, Claim~\ref{claim.mu(F)}, and Table~\ref{Tab pq}, we have
    \[
        (-K_F-\Delta_F)^2=(\mu^*(pA)|_F)^2=(pA)^2\cdot \mu_*F\geq 60p^2A^3=\frac{60p^2}{330q}>8.
    \]
    On the other hand, as $p\geq\frac{57}{67}q>\frac{5}{6}q$, by Theorem~\ref{thm.hirzebruch}, there is a birational morphism $\phi\colon F\to \mathbb{F}_n$ for some $n\in\{1,2\}$. Then $-K_{F}- \Delta_F\leq \phi^*(-K_{\mathbb{F}_n})$ by the negativity lemma (\cite{kollar-mori}*{Lemma~3.39}). Also $-K_{\mathbb{F}_n}$ is nef as $n\in\{1,2\}$. So we have 
    \[
        (-K_F-\Delta_F)^2\leq (-K_F-\Delta_F)\cdot \phi^*(-K_{\mathbb{F}_n})\leq (-K_{\mathbb{F}_n})^2=8,
    \]
    which is a contradiction.
\end{proof}

\begin{proof}[Proof of Theorem~\ref{main.thm}]
    By Proposition~\ref{prop.reducetopicard1}, it suffices to show that $\qQ(X)\leq 66$ for an unmodifiable canonical Fano $3$-fold $(X, A)$. 

    Suppose that $\qQ(X)>66$, then by Theorem~\ref{thm.36cases}, the numerical data of all possible candidates of $X$ are listed in Table~\ref{main tab}. Those candidates are divided into Groups A, B, and C. Candidates in Group A are ruled out by Proposition~\ref{prop rule out A}; candidates in Group B are ruled out by Proposition~\ref{prop rule out B1}, Proposition~\ref{prop rule out B2}, Proposition~\ref{prop rule out B3}, Proposition~\ref{prop rule out B4}, and Proposition~\ref{prop rule out B5}; candidates in Group C are ruled out by Proposition~\ref{prop rule out C-} and Proposition~\ref{prop rule out C+}.
\end{proof}

We make a final remark on the case of equality $\qQ(X)=66$. 
\begin{rem}\label{rem q=66}
   By Algorithm~\ref{algo1},
   we can list the numerical data of all possible candidates of unmodifiable canonical Fano $3$-folds $(X, A)$ with $\qQ(X)=66$ in Table~\ref{tab 66}. Here note that  the numerical data of $\mathbb{P}(5,6,22,33)$ are exactly the same as \textnumero1 in Table~\ref{tab 66}. 
     {
    \begin{longtable}{LLLLLLLLL}
        \caption{Candidates for unmodifiable canonical Fano $3$-folds with $\qQ=66$}\label{tab 66}\\
        \hline
        \text{\textnumero} & B_X & \qQ  & r_X & r_Xc_1^3 & r_Xc_2c_1 &  \{p^a\}& \{\LB(p^a)\} & \nabla_X  \\
        \hline
        \endfirsthead
        \multicolumn{4}{l}{{ {\bf \tablename\ \thetable{}} \textrm{-- continued}}}
        \\
        \hline 
        \text{\textnumero} & B_X & \qQ  & r_X & r_Xc_1^3 & r_Xc_2c_1 &  \{p^a\}& \{\LB(p^a)\} & \nabla_X   \\
        \hline 
        \endhead
        \hline
        \hline \multicolumn{4}{c}{{\textrm{Continued on next page}}} \\ \hline
        \endfoot
        
        \hline \hline
        \endlastfoot
        1& \{(5,2)\} & 66 & 5 & 66 & 96 &2,3,11&1,5,5 &79.02 \\
        2& \{(7,2)\} & 66 & 7 & 66 & 120 &2,3,11&1,7,7 &103.02 \\
        3& \{2\times (2,1), (5,1)\} & 66 & 10 & 198 & 162 &2,11& 1,10 &111.05 \\
        4& \{2\times (4,1), (5,1)\} & 66 & 20 & 726 & 234 &2,3 & 1,10 & 47.17 \\
        5& \{2\times (4,1), (5,2),(7,1)\} & 66 & 140 & 2178 & 678 &2 & 1 & 117.5 \\
        6& \{3\times (3,1), (5,2),(7,3)\} & 66 & 105 & 726 & 456 &2,3 & 1,35 & 269.17 \\
        7& \{2\times (2,1), 3\times (3,1), (5,2)\} & 66 & 30 & 726 & 246 &2,3 & 1,10 & 59.17 \\  
    \end{longtable}
    } 
\end{rem}

\section*{Acknowledgments} 
The authors would like to thank Jungkai A. Chen, Masayuki Kawakita, Jie Liu, Yuri Prokhorov, and Miles Reid for helpful discussions. 
We would like to thank Yuri Prokhorov for pointing out an inaccurate statement on crepant divisors in the draft, which leads to the definition of non-split crepant divisors (see Definition~\ref{def nonsplit}).

C.~Jiang was supported by National Key Research and Development Program of China (No. 2023YFA1010600, No. 2020YFA0713200) and NSFC for Innovative Research Groups (No. 12121001). C.~Jiang is a member of the Key Laboratory of Mathematics for Nonlinear Sciences, Fudan University. H.~Liu is supported in part by the National Key Research and Development Program of China (No. 2023YFA1009801) and NSFC (No. 12571048).

\end{document}